\numberwithin{equation}{section}
\pgfplotsset{compat=newest} 
\pgfplotsset{plot coordinates/math parser=false}
\DeclareMathOperator{\tr}{tr}
\theoremstyle{definition}
\newtheorem{defn}{Definition}[section]
\newtheorem{prop}{Proposition}[section]
\newtheorem{lem}{Lemma}[section]
\newtheorem{thm}{Theorem}[section]
\newtheorem{cor}{Corollary}[section]
\newtheorem{rem}{Remark}[section]
\newtheorem{assu}{Assumption}[section]
\newtheorem{clawithinpf}{Claim}
\newcommand{\thickhline}{%
	\noalign {\ifnum 0=`}\fi \hrule height 1pt
	\futurelet \reserved@a \@xhline
}
\newcolumntype{"}{@{\hskip\tabcolsep\vrule width 1pt\hskip\tabcolsep}}
\def\amsbb{\use@mathgroup \M@U \symAMSb}
\newcommand*{\rom}[1]{\expandafter\@slowromancap\romannumeral #1@}
\newcommand{\suchthat}{\mathop{\mathrm{s.t.}}}
\newcommand{\bb}[1]{\mathbb{#1}} 
\newcommand{\bbM}[1]{\mathds{#1}} 
\newcommand{\cc}[1]{\mathcal{#1}} 
\newcommand{\zero}{\boldsymbol{0}}
\newcommand{\cov}{\mathop{\mathrm{cov}}} 
\newcommand{\var}{\mathop{\mathrm{var}}} 
\newcommand{\sd}{\mathop{\mathrm{std}}} 
\newcommand{\diam}{\mathop{\mathrm{diam}}}
\newcommand{\RHS}{\mathop{\mathrm{RHS}}} 
\newcommand{\argmin}{\operatornamewithlimits{arg\min}}
\newcommand{\argmax}{\operatornamewithlimits{arg\max}}
\newcommand{\brac}[1]{\left[#1\right]}
\newcommand{\set}[1]{\left\{#1\right\}}
\newcommand{\abs}[1]{\left\lvert #1 \right\rvert}
\newcommand{\paren}[1]{\left(#1\right)}
\newcommand{\brapar}[1]{\left[#1\right)}
\newcommand{\Bigparen}[1]{\Big(#1\Big)}
\newcommand{\Bigbrac}[1]{\Big[#1\Big]}
\newcommand{\Bigset}[1]{\Big\{#1\Big\}}
\newcommand{\InnerProd}[2]{\langle #1,#2 \rangle}
\newcommand{\cp}[1]{\overset{#1}{\rightarrow}}    
\newcommand{\eqd}{\overset{d}{=}}  
\newcommand{\RelNum}[2]{\overset{#1}{#2}}
\newcommand{\LpNorm}[2]{\left\| #1\right\|_{\ell_{#2}}}
\newcommand{\SpNorm}[2]{\left\| #1\right\|_{\cc{S}_{#2}}}
\newcommand{\norm}[2]{\left\|#1\right\|_{#2}}
\newcommand{\OpNorm}[3]{\left\| #1\right\|_{#2\rightarrow#3}}
\newcommand{\RNum}[1]{\uppercase\expandafter{\romannumeral #1\relax}}
\begin{document}	

\begin{frontmatter}
	
\title{Local inversion-free estimation of spatial Gaussian processes\protect\thanksref{Th1, Th2}}
\runtitle{Local inversion-free estimation of spatial Gaussian processes}
\thankstext{Th1}{This research is partially supported by NSF grant ACI-1047871. Additionally, CS is partially supported by NSF Grants 1838179 and 1422157, and LN by NSF CAREER award DMS-1351362, NSF CNS-1409303, and NSF CCF-1115769.}
\thankstext{Th2}{We are grateful to Mihai Anitescu and Michael Stein for valuable discussions and suggestions.}

\begin{aug}
\author{Hossein Keshavarz\ead[label=e1]{hkeshava@umn.edu}}
\and
\author{XuanLong Nguyen\ead[label=e2]{xuanlong@umich.edu}}
	
\address{Institute for Mathematics and its Applications, University of Minnesota, Minneapolis\\Department of Statistics, University of Michigan, Ann Arbor\\ \printead{e1,e2}}
	
\author{Clayton Scott \ead[label=e3]{clayscot@umich.edu}}
		
\address{Department of Electrical Engineering and Computer Science, University of Michigan, Ann Arbor\\ \printead{e3}}	

\runauthor{F. Author et al.}
\affiliation{University of Minnesota and University of Michigan}	
\end{aug}

\begin{abstract}
Maximizing the likelihood has been widely used for estimating the unknown covariance parameters of spatial Gaussian processes. However, evaluating and optimizing the likelihood function can be computationally intractable, particularly for large number of (possibly) irregularly spaced observations, due to the need to handle the inverse of ill-conditioned and large covariance matrices.  Extending the ``inversion-free'' method of Anitescu, Chen and Stein \cite{anderes2010consistent}, we investigate a broad class of covariance parameter estimation based on inversion-free surrogate losses and block diagonal approximation schemes of the covariance structure. This class of estimators yields a spectrum for negotiating the trade-off between statistical accuracy and computational cost. We present fixed-domain asymptotic properties of our proposed method, establishing $\sqrt{n}$-consistency and asymptotic normality results for isotropic Matern Gaussian processes observed on a multi-dimensional and irregular lattice. Simulation studies are also presented for assessing the scalability and statistical efficiency of the proposed algorithm for large data sets.
\end{abstract}
	
\begin{keyword}[class=MSC]
\kwd[Primary ]{62M30}
\kwd{62M40}
\kwd[; secondary ]{60G15}
\end{keyword}
	
\begin{keyword}
\kwd{Local inversion-free covariance estimation}
\kwd{Gaussian process}
\kwd{Computationally scalable}
\kwd{Fixed-domain asymptotic analysis}
\kwd{Irregularly spaced observations}
\end{keyword}
	
	
\end{frontmatter}

\section{Introduction}\label{Intro}

Gaussian processes (GPs) are one of the most common modelling tools for the analysis of
spatiotemporal data (see e.g., \cite{cressie1992statistics,gelfand2010handbook}). A crucial aspect of GP-based
inference is the estimation of its covariance function. The covariance function
is typically specified up to a finite number of parameters, the estimation of which is
pivotal for performing interpolation and prediction tasks.

While there are a number of likelihood-based techniques for covariance estimation, they do not scale well. Indeed, exact evaluation of the Gaussian likelihood requires computing the inverse of the covariance matrix, which generally requires $\cc{O}\paren{n^3}$ operations and $\cc{O}\paren{n^2}$ space storage. A number of authors have proposed ways of getting around this challenge, by working instead with an approximate version of the likelihood function. Vecchia \cite{vecchia1988estimation} considered an approximation by ignoring the conditional correlation of distant sites given their nearest neighbours. This idea was further extended by Stein et al. \cite{stein2004approximating} who studied more flexible choices of conditioning sets. The key to evaluating the exact log-likelihood function and its partial derivatives boils down to solving large and dense systems of linear equations. To accelerate such linear solvers, e.g., using the \emph{Krylov subspace iteration method}, Furrer et al. \cite{furrer2006covariance} and Kaufman et al. \cite{kaufman2008covariance} exploit the tapering technique to sparsify the dense covariance matrix. More recently, several authors investigated a stochastic optimization technique for implementing the MLE \cite{anitescu2012matrix,stein2013stochastic}. Their proposed algorithms are statistically comparable to MLE, if the condition number of the covariance matrix has a uniform upper bound (independent of the sample size). 

An attractive alternative to likelihood based techniques is to abandon the likelihood function altogether, and consider instead surrogate loss functions which may be evaluated and optimized more efficiently. Anitescu, Chen and Stein \cite{anitescu2017inversion} proposed one such surrogate loss based method for covariance estimation, and showed that it is considerably computationally more efficient than the standard MLE, especially for irregularly spaced observations. Indeed, their loss function, which we call \emph{inversion-free (IF)} in this manuscript, does not require computing the precision matrix (covariance inverse), and so it can be evaluated in $\cc{O}\paren{n^2}$ time. It was established by the authors that when the covariance matrix has a bounded condition number, the resulting estimate possesses consistency and asymptotic normality \cite{keshavarz2016consistency}. It is noted that the boundedness of condition number holds in the increasing domain setting, where the minimum distance among the sampling points is bounded away from zero. This is in contrast to the scenarios in which the GP is observed in a fixed and bounded domain, where the observations get denser as the sample size $n$ increases. In this new regime, which is referred to as \emph{fixed-domain} (or \emph{infill}) setting, because of strong spatial correlation the condition number often grows without bound with $n$. This points to an unresolved question regarding the statistical efficiency of the inversion-free algorithm in the fixed-domain setting, including the situation of irregularly spaced observations.

In this article, we adopt and extend the basic surrogate loss based approach of \cite{anitescu2017inversion}, while looking to address the theoretical questions described above. A natural adaptation of the IF loss function is to apply it to a transformed version of the data using a transformation technique that helps to reduce the strong correlation among
the (original) observations. A fast and root-$n$ consistent estimator studied by Anderes \cite{anderes2010consistent} can be viewed this way, as it is based on squared increments of the observed Gaussian process. In his work samples are transformed using directional increments of the Gaussian process. However this method is applicable only to regularly spaced observations. A general scheme for dependence reduction, which we refer to as \emph{preconditioning}, was introduced in \cite{chen2013use, stein2012difference} and chapter $3$ of \cite{lee2012local}. The preconditioning technique is one of the building blocks of our proposed estimation algorithm. It will be shown that this preconditioner provides a suitable transformation in the case of irregularly placed observations.

The second ingredient of our approach is to apply a divide-and-conquer technique to design of the surrogate loss function, which will be referred to as the \emph{local inversion-free (LIF)} loss. Specifically, the (preconditioned) samples are divided into $b_n$ possibly overlapping clusters (bins). The LIF loss is composed by taking a weighted average of the IF loss functions over these bins. The covariance estimates are obtained by optimizing with respect to the LIF loss function. The aforementioned preconditioning technique is crucial for the statistical efficiency of the LIF algorithm as it helps reduce the correlation between distant clusters.  

The resulting LIF procedure comprises a rich and flexible class of estimation algorithms, depending on the number of bins $b_n$, and specific binning scheme determined by the size and shape of each bin. When $b_n=1$, our algorithm reduces to the inversion-free method of \cite{anitescu2017inversion}, but applied with the preconditioning scheme that we will describe. Furthermore, the quadratic variation-based approach of \cite{anderes2010consistent} is a special instance in the LIF class, specifically corresponding to the other extreme scenario of $b_n = n$. Thus, the LIF class can be viewed as a spectrum of algorithms bridging between two distinct approaches in the literature. A noted advantage of our procedure in exploiting the divide and conquer strategy is to significantly expedite the estimation procedure, while preserving favorable statistical properties. Indeed, the LIF loss can be evaluated in order $n^2/b_n \ll n^2$ operations. 

A considerable portion of this article is devoted to the investigation of the asymptotic behavior of the proposed LIF based estimation method in the fixed-domain regime. Theoretical analysis for several specific instances of LIF based estimation have been carried out before, by \cite{anderes2010consistent} on his quadratic variation based method on regularly spaced observations in the fixed-domain framework, and in the increasing domain regime by the authors \cite{keshavarz2016consistency}. The asymptotic theory for the fixed-domain regime is considerably more involved than the increasing domain regime, especially for irregularly spaced observations.

It is established by \cite{zhang2004inconsistent} that for the isotropic Matern GP, the variance $\phi$ and the range parameter $\rho$ are not identifiable when dimension $d\leq 3$. Thus we only concentrate on estimating the so-called \emph{microergodic} parameter (see page $163$ of \cite{stein2012interpolation} for the exact definition), namely $\phi\rho^{-2\nu}$ where $\nu$ quantifies the smoothness of GP. The microergodic parameter is of great interest as it determines the asymptotic mean square estimation error in the fixed-domain setting (e.g., pages $174-175$ of \cite{stein2012interpolation}). We show that under some regularity conditions and for any binning scheme, all the stationary points of the LIF objective function are concentrated around the true parameter on a ball of radius $\cc{O}(\sqrt{n^{-1}\log n})$, with high probability. We also establish the asymptotic normality of this estimate. Hence, the LIF loss does not sacrifice asymptotic rate for increasing the computational speed and memory efficiency, even for irregularly spaced observations. The treatment of observations on irregular lattices distinguishes our theoretical contribution from the previous works of \cite{anderes2010consistent,wang2011fixed,ying1991asymptotic}.

Following the theoretical study, a comprehensive set of synthetic numerical experiments are conducted for assessing the role of preconditioning, the irregularity of sampling locations, and the binning scheme in the performance of the LIF estimate. Despite the robustness of the asymptotic rate to changes of $b_n$ and the shape of the bins, such factors can still affect the bias and variance of the LIF estimator, particularly for moderate sample sizes. Our simulation studies serve to corroborate the asymptotic theory, but also reveal the stability of the LIF estimate with respect to the size and shape of the bins. We evaluate the efficiency of our method for data sets up to $2.5\times 10^5$ data points. 

\paragraph{Plan of the paper.}
Section \ref{ProbForm} describes the geometry of sampling sites, preconditioning, and the IF method. In Section \ref{LocACSSection}, we propose the family of the LIF loss functions and introduce an efficient parallel technique for evaluating such functions. Section \ref{FxddomAnal} establishes the infill asymptotic properties of the LIF algorithm such as $\sqrt{n}$-consistency and asymptotic normality, given samples in a $d$-dimensional space with $d\leq 3$. In Section \ref{SimulStud} we present a series of simulation studies to assess the performance of the LIF estimator. Section \ref{Discus} serves as the conclusion and discusses future directions. We substantiate the main results of the paper in Section \ref{Proofs}. Finally, Appendices \ref{AppendixCovMatrixIrregLat} and \ref{AuxRes} not only contain some auxiliary technicalities which are crucial in Section \ref{Proofs}, but also present a comprehensive sensitivity analysis of the correlation matrix of the preconditioned data with respect to the range parameter, which may be useful for the asymptotic analysis of other estimation algorithms in geostatistics.

\paragraph{Notation.}
For the convenience of the reader, we collect standard pieces of notation here. $j = \sqrt{-1}$ denotes the imaginary unit. Boldface symbols denote vectors. $\wedge$ and $\vee$ stand for the minimum and maximum operators. For any $m\in\bb{N}$, $\zero_m$ denotes the all zeros column vector of length $m$. Furthermore, for any $p\in\set{1,\ldots,m}$, $\boldsymbol{e}_p$ denotes the unit vector along the $p^{\mbox{th}}$ coordinate. If $\boldsymbol{u}$ and $\boldsymbol{v}$ are vectors of length $m$, then $\boldsymbol{u}^{\boldsymbol{v}}$ denotes $\Pi^m_{i=1} u^{v_i}_i$ (we define $0^0$ to be $1$). For square matrices $A$ and $B$ of the same size, by writing $A \succeq B$, we mean that $A-B$ is symmetric positive semi-definite. Furthermore, $\InnerProd{A}{B}{}\coloneqq \tr\paren{A^\top B}$ refers to their trace inner product. We use various types of matrix norms on $A\in\bb{R}^{n\times n}$ in this paper. For any $p\in\brapar{1,\infty}$, $\LpNorm{A}{p}\; \coloneqq \paren{\sum_{i,j} \abs{A_{ij}}^p}^{1/p}$ stands for the element-wise $p-$norm of $A$. We also write $\OpNorm{A}{2}{2}$ to denote the usual operator norm (largest singular value) of $A$. Moreover $\SpNorm{A}{1}$ represents the sum of the singular values of $A$, which is called the nuclear norm. We also write $\diam\paren{\Omega} = \sup_{\omega_1,\omega_2\in\Omega} \LpNorm{\omega_2-\omega_1}{2}$ to denote the diameter of a bounded set $\Omega\subset\bb{R}^m$. For a symmetric, positive semi-definite $A\in\bb{R}^{n\times n}$ with spectral decomposition $A = U\Lambda U^\top$, $\sqrt{A}\coloneqq U \Lambda^{1/2} U^\top$ represents its symmetric square root. For two non-negative sequences $\set{a_m}^{\infty}_{m=1}$ and $\set{b_m}^{\infty}_{m=1}$, we write $a_m \asymp b_m$ if there are strictly positive and bounded scalars $C_{\min},C_{\max}$ such that $C_{\min} \leq \lim\limits_{m\rightarrow\infty} a_m/b_m \leq C_{\max}$. Moreover, $a_m \lesssim b_m$ refers to the case that $a_m/b_m \leq C_{\max} < \infty$ as $m\rightarrow\infty$. Lastly, $\cc{K}_{\nu}\paren{\cdot}$ and $\Gamma\paren{\cdot}$ respectively represent the modified Bessel function of the second kind of order $\nu$ and the Gamma function.
   
\section{Preconditioning and inversion-free surrogate loss}\label{ProbForm}

\subsection{Gaussian processes observed on irregular lattices}

Consider a zero mean, real valued, and stationary Gaussian process $G$ on domain $\cc{D}$,
where $\cc{D}$ is a bounded subset of $\bb{R}^d$ such as $\brac{0,1}^d$. 
The dependence structure of $G$ is typically parametrized by a
variance parameter $\phi_0>0$ and a (correlation) range parameter $\rho_0$. Specifically, if $G$ is a geometric anisotropic process on $\cc{D}$, then there are a fully known covariance function $K$ and a matrix $\rho_0\in\bb{R}^{d\times d}$ such that
\begin{equation*}
\bb{E} G\paren{\boldsymbol{s}}G\paren{\boldsymbol{t}} = \phi_0 K\paren{ \LpNorm{ \rho^{-1}_0\paren{\boldsymbol{t}-\boldsymbol{s}} }{2} },\quad\forall\;\boldsymbol{s},\boldsymbol{t}\in\cc{D}
\end{equation*}
The objective is to estimate the microergodic parameters of the covariance function, given $n$ measurements from one realization of $G$ at locations $\cc{D}_n = \set{\boldsymbol{s}_1,\ldots,\boldsymbol{s}_n}\subset\cc{D}$. 
Throughout the paper, we assume that $\rho_0$ belongs to a compact, connected space $\Theta_0$ 
(with respect to the Euclidean distance). We also restrict $d$ to be less than or equal $3$. 

As the first step we precisely formulate $\cc{D}_n$. $\cc{D}_n$ is called a $d$-dimensional regular (rectangular) lattice with $n = N^d$ point, if $\cc{D}_n = \set{1/N,\ldots,1}^d$. In such a lattice the smallest distance between neighboring locations decreases with the rate of $N^{-1}$. This fact provides a clue for extending the notion of the regular lattice into irregular ones, which can be formalized as follows (see \cite{lee2012local}):

\begin{assu}\label{RegCondLattice}
Let $\cc{D}_n\subset\cc{D}$ be a set of size $n$. For any $\boldsymbol{s}\in\cc{D}_n$, let $r_{\boldsymbol{s},i}$ denote the distance from $\boldsymbol{s}$ to its $i^{\mbox{th}}$ closest neighbor in $\cc{D}_n\setminus\set{\boldsymbol{s}}$. There are positive scalars $C_{\min}$ and $C_{\max}$ such that
\begin{equation}\label{RegCondLatticeEq}
C_{\min}\paren{\frac{i}{n}}^{\frac{1}{d}} \leq r_{\boldsymbol{s},i}\leq C_{\max} \paren{\frac{i}{n}}^{\frac{1}{d}},\quad \forall\;\boldsymbol{s}\in\cc{D}_n,\;\mbox{and}\; i = 1,\cdots,\paren{n-1}.
\end{equation} 
\end{assu}

The properties required by the assumption enlarge the notion of regular lattice in three aspects. First, in contrast to the number of points in a regular lattice, there is no restriction on $n$. Moreover, $\cc{D}$ is not restricted to be $\brac{0,1}^d$. For instance, $\cc{D}$ might be the union of a finite number of connected components, as long as each of them satisfy condition \eqref{RegCondLatticeEq} and encompasses a non-vanishing fraction of samples, as $n$ tends to infinity. Finally, $\cc{D}_n$ needs not form a $d-$dimensional regular lattice. 

\subsection{Preconditioning}

Controlling the strong spatial dependence between the observed samples $\set{G\paren{\boldsymbol{s}_1},\ldots,G\paren{\boldsymbol{s}_n}}$ via preconditioning is essential for reducing the condition number of the 
covariance matrix. It plays a crucial role in the estimation procedure we will propose. 
Various types of preconditioners have been studied for GPs observed on regular and irregular lattices in the literature (see e.g., \cite{chen2013use, lee2012local, stein2012difference}). 

We shall adopt a preconditioning scheme proposed by Lee \cite{lee2012local} for irregularly spaced observations. Before proceeding further, it is convenient to define $N \coloneqq \lfloor n^{1/d} \rfloor$. Furthermore for any $\boldsymbol{s}\in\cc{D}_n$,  $\cc{N}_m\paren{\boldsymbol{s}}$ represents a set points (in $\cc{D}_n$) in a small neighbourhood of radius $\cc{O}\paren{N^{-1}}$ around $\boldsymbol{s}$ whose size depends on $m$. Namely, $\LpNorm{\boldsymbol{t}-\boldsymbol{s}}{2} \lesssim 1/N$ for any $\boldsymbol{t}\in \cc{N}_m\paren{\boldsymbol{s}}$.

\begin{defn}\label{DecorFltNonReglat}
Let $m\in\bb{N}$ (which does not grow with $n$). Suppose that there are  sets of real coefficients $\set{a_{m,\boldsymbol{s}}\paren{\boldsymbol{t}}:\; \boldsymbol{t}\in \cc{N}_m\paren{\boldsymbol{s}} }, \; \boldsymbol{s}\in\cc{D}_n$, satisfying the following conditions:
\begin{enumerate}
\item For any $\boldsymbol{r}\in\bb{Z}^d_{+}$ (the entries of $\boldsymbol{r}$ are non-negative ) and $\LpNorm{\boldsymbol{r}}{1}< m$, $\sum_{\boldsymbol{t}\in \cc{N}_m\paren{\boldsymbol{s}}} a_{m,\boldsymbol{s}}\paren{\boldsymbol{t}} \paren{ \boldsymbol{t}-\boldsymbol{s} }^{\boldsymbol{r}} = 0$.
\item There is a vector $\boldsymbol{r}\in\set{0,1,\ldots}^d$ with  $\LpNorm{\boldsymbol{r}}{1}\geq m$ such that $\sum_{\boldsymbol{t}\in \cc{N}_m\paren{\boldsymbol{s}}} a_{m,\boldsymbol{s}}\paren{\boldsymbol{t}} \paren{ \boldsymbol{t}-\boldsymbol{s} }^{\boldsymbol{r}} \ne 0$.
\item $\sum_{\boldsymbol{t}\in \cc{N}_m\paren{\boldsymbol{s}}} a^2_{m,\boldsymbol{s}}\paren{\boldsymbol{t}} = 1$ and $a_{m,\boldsymbol{s}}\paren{\boldsymbol{t}} \neq 0$ for all $\boldsymbol{t}\in \cc{N}_m\paren{\boldsymbol{s}}$.
\end{enumerate}
We say $G_m$ is a \emph{preconditioned process of order} $m$, if  
\begin{equation}\label{G_mIrregLat}
G_m\paren{\boldsymbol{s}} \coloneqq N^{\nu} \sum_{\boldsymbol{t}\in \cc{N}_m\paren{\boldsymbol{s}}} a_{m,\boldsymbol{s}}\paren{\boldsymbol{t}} G\paren{\boldsymbol{t}},\quad \forall\; \boldsymbol{s}\in\cc{D}_n.
\end{equation}
\end{defn}

\begin{rem}
Since $\cc{N}_m\paren{\boldsymbol{s}}$ is constructed by the nearest neighbors of $\boldsymbol{s}$, the preconditioned process is approximately proportional to the $m$-th derivative of $G$ at $\boldsymbol{s}$, for large $N$. We also normalize the coefficients $\set{a_{m,\boldsymbol{s}}\paren{\boldsymbol{t}}:\; \boldsymbol{t}\in \cc{N}_m\paren{\boldsymbol{s}} }$ by their Euclidean norm to uniformly control the magnitude of $G_m$ over $\cc{D}_n$. Moreover, for reducing ambiguity in the definition of $G_m$, $\cc{N}_m\paren{\boldsymbol{s}}$ is chosen to be a minimal set, with respect to the inclusion ordering, satisfying the conditions in Definition \ref{DecorFltNonReglat}. The cardinality of $\cc{N}_m\paren{\boldsymbol{s}}$ depends on $d,m$ and the geometric structure of neighboring observations around $\boldsymbol{s}$ in $\cc{D}_n$ and may vary across $\cc{D}_n$. The reader can deduce from a simple combinatorial argument that the first condition in Definition \ref{DecorFltNonReglat} is translated as ${d+m-1 \choose d}$ linear constraints on the set of coefficients $\set{a_{m,\boldsymbol{s}}\paren{\boldsymbol{t}}:\; \boldsymbol{t}\in \cc{N}_m\paren{\boldsymbol{s}}}$. This fact gives a rough estimate of the size of $\cc{N}_m\paren{\boldsymbol{s}}$. 
\end{rem}

\begin{rem}\label{PreCondRegLat}
A preconditioning method for the $d$-dimensional regular lattices $\cc{D}_n = \set{1/N,\ldots,1}^d$ has been studied in Stein et al. \cite{stein2012difference}. Discarding the boundary points of $\cc{D}_n$, the preconditioned process is constructed on $\cc{D}^\circ_n = \set{\paren{m+1}/N,\ldots, 1-m/N}^d$ by $m-$times application of the discrete Laplace operator. More specifically, the preconditioner is recursively defined via
\begin{align}\label{G_mRegLat}
& G_0\paren{\boldsymbol{s}} = N^\nu G\paren{\boldsymbol{s}},\quad\forall\; \boldsymbol{s}\in\cc{D}_n,\nonumber\\
& G_{2k}\paren{\boldsymbol{s}} = \sum_{r=1}^{d} \brac{G_{2k-2}\paren{\boldsymbol{s}+\frac{\boldsymbol{e}_r}{N}}-2G_{2k-2}\paren{\boldsymbol{s}} + G_{2k-2}\paren{\boldsymbol{s}-\frac{\boldsymbol{e}_r}{N}}}, \quad \boldsymbol{s}\in\cc{D}^\circ_n,\; k = 1,\ldots,m.
\end{align}	
To avoid unnecessary algebraic complexity in Eq. \eqref{G_mRegLat}, the preconditioning coefficients have not been normalized to be of norm one. It can be shown that after proper normalization, $G_{2m}$ admits the conditions of Definition \ref{DecorFltNonReglat} with order $2m$. Namely, \eqref{G_mRegLat} gives a recursive way of constructing the preconditioned process of even orders for regular lattices. It is also worth mentioning that although $G_{2m}$ defined by \eqref{G_mRegLat} is a stationary process, preconditioning does not necessarily preserve stationarity for irregular lattices.
\end{rem}

\begin{rem}
The preconditioned coefficients in Definition \ref{DecorFltNonReglat} are carefully chosen so that $G_m\paren{\cdot}$ carries no information about the directional derivatives of $G$ of order less than $m$. Strictly speaking, the Taylor expansion of $G$ around $\boldsymbol{s}$ ensures the existence of an stochastic process $\Delta_m$ such that for any $\boldsymbol{t}\in\cc{N}_m\paren{\boldsymbol{s}}$,
\begin{equation*}
G\paren{\boldsymbol{t}} = \sum_{b=0}^{m-1}\sum_{\boldsymbol{r}\in \bb{Z}^d_{+}, \;\LpNorm{r}{1}=b}\frac{1}{b!} \InnerProd{\paren{\boldsymbol{t}-\boldsymbol{s}}^{\boldsymbol{r}}}{D^{\boldsymbol{r}}G\paren{\boldsymbol{s}}} + \Delta_m\paren{\boldsymbol{t}}.
\end{equation*}
Here $D^{\boldsymbol{r}} G\paren{\cdot}$ denotes the $\boldsymbol{r}^{\mbox{th}}$ directional derivative of $G$. Replacing this representation of $G$ into $G_m$ yields
\begin{equation*}
G_m\paren{\boldsymbol{s}} = N^{\nu} \sum_{b=0}^{m-1}\sum_{\boldsymbol{r}\in \bb{Z}^d_{+}, \;\LpNorm{r}{1}=b}\frac{1}{b!} \InnerProd{\sum_{\boldsymbol{t}\in \cc{N}_m\paren{\boldsymbol{s}}} a_{m,\boldsymbol{s}}\paren{\boldsymbol{t}} \paren{ \boldsymbol{t}-\boldsymbol{s} }^{\boldsymbol{r}}}{D^{\boldsymbol{r}}G\paren{\boldsymbol{s}}} + N^{\nu} \sum_{\boldsymbol{t}\in \cc{N}_m\paren{\boldsymbol{s}}} a_{m,\boldsymbol{s}}\paren{\boldsymbol{t}}\Delta_m\paren{\boldsymbol{t}}.
\end{equation*}
The first condition in Definition \ref{DecorFltNonReglat} implies that 
\begin{equation*}
G_m\paren{\boldsymbol{s}} = N^{\nu} \sum_{\boldsymbol{t}\in \cc{N}_m\paren{\boldsymbol{s}}} a_{m,\boldsymbol{s}}\paren{\boldsymbol{t}}\Delta_m\paren{\boldsymbol{t}}.
\end{equation*}
We finally present a concrete example satisfying the conditions in Definition \ref{DecorFltNonReglat}. Note that Remark \ref{PreCondRegLat} constructs the preconditioning coefficients for regularly observed GPs. It is also easy to show that Definition is almost surely well-defined for randomly perturbed lattices (if the perturbation vector is absolutely continuous with respect to the Lebesgue measure). We refer the reader to Chapter $3$ of \cite{lee2012local} for further discussion.
\end{rem}

\subsection{The IF algorithm}\label{ACS}

Anitescu, Stein and Chen \cite{anitescu2017inversion} introduced a parameter estimation method based on an ``inversion-free'' surrogate loss for the Gaussian process that is both easy to compute and optimize. Let $Y_m$ represent the column vector of the preconditioned samples, i.e., $Y_m = \brac{G_m\paren{\boldsymbol{s}}:\; \boldsymbol{s}\in\cc{D}_n}^\top$. We use $K_m$ to denote the covariance function of $G_m$ normalized by factor $\phi_0$. $K_m$ can be easily expressed in terms of the correlation function of $G$, $K\paren{\cdot,\rho_0}$, and the preconditioning coefficients.
\begin{equation*}
K_m\paren{\boldsymbol{s}, \boldsymbol{t};\rho_0} = \frac{\bb{E}G_m\paren{\boldsymbol{s}}G_m\paren{\boldsymbol{t}}}{\phi_0} = N^{2\nu} \sum_{ \boldsymbol{s'}\in \cc{N}_m\paren{\boldsymbol{s}} }\sum_{ \boldsymbol{t'}\in \cc{N}_m\paren{\boldsymbol{t}} } a_{m,\boldsymbol{s}}\paren{\boldsymbol{s'}}a_{m,\boldsymbol{t}}\paren{\boldsymbol{t'}} K\paren{\boldsymbol{t'}-\boldsymbol{s'};\rho_0}.
\end{equation*}
We also use $\phi_0K_{n,m}\paren{\rho_0}$ to denote the covariance matrix of $Y_m$. That is
\begin{equation}\label{K_n,m}
\bb{E} Y_mY^\top_m = \phi_0K_{n,m}\paren{\rho_0} \coloneqq \phi_0 \brac{K_m\paren{\boldsymbol{s},\boldsymbol{t};\rho_0}}_{\boldsymbol{s},\boldsymbol{t}\in\cc{D}_n}.
\end{equation}
Recall that $\rho_0$ lies in a compact and connected space $\Theta_0$. The IF estimator \cite{anitescu2017inversion} of the covariance parameters $\paren{\phi_0,\rho_0}$ is given by
\begin{equation}\label{ACSOptProb}
\paren{\hat{\phi}_n, \hat{\rho}_n} = \argmax_{\phi>0, \rho\in\Theta_0} \set{ \phi Y^\top_m K_{n,m}\paren{\rho} Y_m - \frac{\phi^2}{2}\LpNorm{K_{n,m}\paren{\rho}}{2}^2 }.
\end{equation}
Note that \eqref{ACSOptProb} can be alternatively formulated as a moment matching minimization problem,
\begin{equation*}
\paren{\hat{\phi}_n, \hat{\rho}_n} = \argmin_{\phi>0, \rho\in\Theta_0} \LpNorm{Y_mY^\top_m - \phi K_{n,m}\paren{\rho} }{2}.
\end{equation*}

\begin{rem}
From a computational perspective, the loss function in \eqref{ACSOptProb} does not depend on the Cholesky factorization of $K_{n,m}$ and can be evaluated in order $n^2$ flops even for irregularly spaced observations. Moreover, storing the whole matrix $K_{n,m}$ is not necessary for computing the objective function and its directional derivatives. In particular, storing $Y_m$ and $\cc{D}_n$, which need $\cc{O}\paren{n}$ storage, suffices for estimating the covariance parameters. 
\end{rem}

\section{The local inversion-free (LIF) algorithm}\label{LocACSSection}

We are ready to present in this section a broad class of scalable covariance estimation algorithms, building on the IF surrogate loss approach and the preconditioning technique described
in the previous section. The asymptotic theory for our estimator will be presented
in the following section.

We previously used $Y_m = \brac{G_m\paren{\boldsymbol{s}}:\;\boldsymbol{s}\in\cc{D}_n}^\top$ to denote the column vector of the preconditioned samples of order $m$. Let $\cc{B} = \set{B_t:\;t=1\ldots,b_n}$ be a partition of $\cc{D}_n$ into $b_n$ bins, i.e., $B_i\cap B_j = \emptyset$ for distinct $i,j\in\set{1,\ldots,b_n}$ and $ \cup^{b_n}_{t=1} B_t = \cc{D}_n$. We write $Y_{B_t,m} = \brac{G_m\paren{\boldsymbol{s}}:\;\boldsymbol{s}\in B_t}^\top$ to represent the column vector of the preconditioned data in $B_t,\; t=1\ldots,b_n$. Furthermore let $\phi_0K_{B_t,m}\paren{\rho_0}$ denote the covariance matrix of $Y_{B_t,m}$. Namely,
\begin{equation}\label{K_Bt,m}
\bb{E} Y_{B_t,m}Y^\top_{B_t,m} = \phi_0K_{B_t,m}\paren{\rho_0} \coloneqq \phi_0 \brac{K_m\paren{\boldsymbol{s},\boldsymbol{t};\rho_0}}_{\boldsymbol{s},\boldsymbol{t}\in B_t},\quad \forall \;t=1\ldots,b_n,
\end{equation}
in which $\phi_0K_m\paren{\cdot,\cdot,\rho_0}$ stands for the covariance function of $G_m$ with the parameters $\paren{\phi_0,\rho_0}$.

The LIF objective function associated to a binning scheme $\cc{B}$ is constructed by summing the IF loss functions corresponding to the $B_t$'s over $\cc{B}$. The unknown covariance parameters are estimated by maximizing the LIF function, with
\begin{equation}\label{LocACSOptProb}
\paren{\hat{\phi}_{n,\cc{B}}, \hat{\rho}_{n,\cc{B}}} = \argmax_{\phi>0, \rho\in\Theta_0} \set{ \sum_{t=1}^{b_n} \paren{\phi Y^\top_{B_t,m} K_{B_t,m}\paren{\rho} Y_{B_t,m} - \frac{\phi^2}{2}\LpNorm{K_{B_t,m}\paren{\rho}}{2}^2} },
\end{equation}
where $\hat{\phi}_{n,\cc{B}}$ and $\hat{\rho}_{n,\cc{B}}$ respectively denote the estimated variance and range parameters. 

Several remarks are in order.

\begin{rem}
The LIF class of estimators can be enriched in two possible ways. First we can drop the assumption that $\set{B_t}^{b_n}_{t=1}$ forms a partition for $\cc{D}_n$. Namely, the distinct clusters may not be mutually exclusive. The LIF loss can also be extended by considering a weighted average of the IF functions. Given a $b_n$-dimensional vector of strictly positive entries $w\in\bb{R}^{b_n}$, we may define
\begin{equation*}
\paren{\hat{\phi}_{n,\cc{B},w}, \hat{\rho}_{n,\cc{B},w}} = \argmax_{\phi>0, \rho\in\Theta_0} \set{ \sum_{t=1}^{b_n} w_t\paren{\phi Y^\top_{B_t,m} K_{B_t,m}\paren{\rho} Y_{B_t,m} - \frac{\phi^2}{2}\LpNorm{K_{B_t,m}\paren{\rho}}{2}^2} }.
\end{equation*}
However throughout the paper and for simplifying the theoretical analysis, we only consider the case of non-overlapping bins. It will also be assumed that $w_i = 1$ for all $i\in\set{1,\ldots,b_n}$.
\end{rem}

\begin{rem}\label{BlcDiagApproxKnmRem}
It is informative to take an alternative viewpoint of the LIF objective function in \eqref{LocACSOptProb} as corresponding to a block diagonal approximation of the covariance matrix. Interestingly, as a consequence of the asymptotic theory developed in the next section, this approximation does not affect the asymptotic estimation rate, but it can substantially help to speed up the computation.

The block diagonal approximation of $K_{n,m}\paren{\rho}$ corresponding to partitioning scheme $\cc{B}$, to be denoted by $K^{\cc{B}}_{n,m}\paren{\rho}$, can be described as follows. Choose any $\boldsymbol{s},\boldsymbol{s'}\in\cc{D}_n$, and let $t,t'$ denote the index of the elements in $\cc{B}$ containing $\boldsymbol{s}$ and $\boldsymbol{s'}$, i.e., $\boldsymbol{s}\in B_t$ and $\boldsymbol{s'}\in B_{t'}$. The entries of $K^{\cc{B}}_{n,m}\paren{\rho}$ can be equivalently represented by 
\begin{equation}\label{Ktilde_n,m}
\paren{K^{\cc{B}}_{n,m}\paren{\rho}}_{\boldsymbol{s},\boldsymbol{s'}} = \brac{K_{n,m}\paren{\rho}}_{\boldsymbol{s},\boldsymbol{s'}} \bbM{1}_{\set{t=t'}}.
\end{equation}
Observe that 
\begin{equation*}
\sum_{t=1}^{b_n} \LpNorm{K_{B_t,m}\paren{\rho}}{2}^2 = \LpNorm{K^{\cc{B}}_{n,m}\paren{\rho}}{2}^2,\quad\mbox{and}\quad \sum_{t=1}^{b_n} Y^\top_{B_t,m} K_{B_t,m}\paren{\rho} Y_{B_t,m} = Y^\top_m K^{\cc{B}}_{n,m}\paren{\rho}Y_m.
\end{equation*}
These identities provide an alternative form for Eq. \eqref{LocACSOptProb} in terms of  $K^{\cc{B}}_{n,m}\paren{\rho}$, namely
\begin{equation}\label{LIFAltForm}
\paren{\hat{\phi}_{n,\cc{B}}, \hat{\rho}_{n,\cc{B}}} = \argmax_{\phi>0, \rho\in\Theta_0} \paren{\phi Y^\top_m K^{\cc{B}}_{n,m}\paren{\rho} Y_m - \frac{\phi^2}{2}\LpNorm{K^{\cc{B}}_{n,m}\paren{\rho}}{2}^2}.
\end{equation}
Simply put, any member of the LIF class is equivalent to applying the IF procedure on an appropriate block diagonal approximation of the covariance matrix. 
\end{rem}

\begin{rem}
The following equivalent formulation for the optimization problem in \eqref{LIFAltForm} is more convenient for our subsequent theoretical analysis. Due to the quadratic dependence of the LIF loss on $\phi$, $\hat{\phi}_{n,\cc{B}}$ can be explicitly expressed in terms of $\hat{\rho}_{n,\cc{B}}$ as  
\begin{equation}\label{LIFAltForm2}
\hat{\phi}_{n,\cc{B}} = \frac{Y^\top_m K^{\cc{B}}_{n,m}\paren{\hat{\rho}_{n,\cc{B}}} Y_m}{ \LpNorm{K^{\cc{B}}_{n,m}\paren{\hat{\rho}_{n,\cc{B}}}}{2}^2},\quad\mbox{where}\quad \hat{\rho}_{n,\cc{B}} = \argmax_{\rho\in\Theta_0} \frac{Y^\top_m K^{\cc{B}}_{n,m}\paren{\rho} Y_m }{\LpNorm{K^{\cc{B}}_{n,m}\paren{\rho}}{2}}.
\end{equation}
The term \emph{profile LIF loss} refers to the objective function in Eq. \eqref{LIFAltForm2}, whose maximizer is $\hat{\rho}_{n,\cc{B}}$. The profile LIF loss is indeed proportional to the angle between $K^{\cc{B}}_{n,m}\paren{\rho}$ and $Y_mY^\top_m$.
\end{rem}

Finally, the following remarks focus on computational and numerical properties of the LIF method.

\begin{rem}
For the trivial partition $\cc{B} = \set{\cc{D}_n}$, the optimization problem \eqref{LocACSOptProb} is exactly the same as the IF algorithm. Note that the objective function in Eq. \eqref{LocACSOptProb} can be evaluated in $\sum_{t=1}^{b_n} \abs{B_t}^2$ floating point operations. For instance if all $\abs{B_t}$'s have the same order (as $n$ grows), then $\sum_{t=1}^{b_n} \abs{B_t}^2 \asymp n^2/b_n$. Thus the LIF objective function can be computed almost $b_n$ times faster than the one in \eqref{ACSOptProb}. In Section \ref{SimulStud}, we numerically assess the connection between the partitioning scheme of $\cc{D}_n$ and the estimation performance of \eqref{LocACSOptProb}. 
\end{rem}

\begin{rem}
The LIF objective function is much easier to compute than the log-likelihood with a proper choice of $b_n$ and the bins. However, implementing one iteration of any gradient-based optimizer for \eqref{LocACSOptProb}, such as the \emph{Broyden–Fletcher–Goldfarb–Shanno (BFGS)} method, can still be very challenging on a single computing core, particularly for large data sets ($n \approx 10^6$ or more), as it may require multiple evaluations of the LIF loss. Thus developing effective parallel schemes for computing the LIF function is a necessity for high resolution spatial GPs. For simplicity assume that all the bins have roughly the same size and we have access to $p$ identical processor with $q$ cores. For any $t=1,\ldots,b_n$, let $f_t\paren{Y_{B_t,m};\phi,\rho}$ stand for the IF function, with the parameters $\paren{\phi,\rho}$, associated to $B_t$. In the following we introduce a distributed memory parallel scheme for evaluating the LIF function.
\begin{enumerate}
\item The master processor assigns a label in $\set{1,\ldots,p}$ to each bin (each processor roughly receives $b_n/p$ bins). More specifically if $B_t$ is labelled as $i$, then the local memory of processor $i$ stores $G_m\paren{\boldsymbol{s}}$, $\cc{N}_m\paren{\boldsymbol{s}}$, and the preconditioning coefficients $\set{ a_{m,\boldsymbol{s}}\paren{\boldsymbol{t}}: \boldsymbol{t}\in \cc{N}_m\paren{\boldsymbol{s}}}$ for any $\boldsymbol{s}\in B_t$.
\item Inside each processor, the terms $f_t\paren{Y_{B_t,m};\phi,\rho}$ can be evaluated by employing basic shared memory parallel schemes for computing $\LpNorm{K_{B_t,m}\paren{\rho}}{2}$ and $K_{B_t,m}\paren{\rho}Y_{B_t,m}$. Finally the master processor aggregates the received quantities $\set{f_t\paren{Y_{B_t,m};\phi,\rho}:\; t=1,\ldots,b_n }$ from the slave processors to compute the LIF objective function.
\end{enumerate}
\end{rem}

\section{Fixed-domain asymptotic theory}\label{FxddomAnal} 

The goal of this section is to investigate the fixed-domain asymptotic properties of the LIF estimator \eqref{LIFAltForm2}. Throughout this section we assume that $G$ is a real valued GP with \emph{isotropic Matern} covariance function observed on a bounded domain $\cc{D}\subset\bb{R}^d$ with $d\leq 3$. In particular, for any $\boldsymbol{s},\boldsymbol{s'}\in\cc{D}$
\begin{equation*}
\cov\Bigparen{G\paren{\boldsymbol{s}}, G\paren{\boldsymbol{t}} } = \frac{\phi_0}{2^{\nu-1}\Gamma\paren{\nu}} \paren{\frac{\LpNorm{\boldsymbol{s}-\boldsymbol{t}}{2}}{\rho_0}}^{\nu} \cc{K}_{\nu} \paren{\frac{\LpNorm{\boldsymbol{s}-\boldsymbol{t}}{2}}{\rho_0}}.
\end{equation*}
Recall that $\nu>0$ is a known bounded constant controlling the mean squared smoothness of $G$; larger $\nu$ corresponds to smoother GP. The strictly positive scalars $\phi_0$ and $\rho_0$ respectively stand for the variance and the range parameters of $G$. 

Recall that the Matern covariance function admits a relatively simple form for its spectral density:
\begin{equation}\label{MatSpDen}
\hat{K}\paren{\boldsymbol{\omega};\phi_0,\rho_0} = \frac{\phi_0\rho^{-2\nu}_0}{\pi^{d/2}} \paren{\frac{1}{\rho^2_0} + \LpNorm{\boldsymbol{\omega}}{2}^2 }^{-\paren{\nu+d/2}}. 
\end{equation}
It is known that (see e.g., \cite{zhang2004inconsistent, keshavarz2018optimal}) for any bounded region $\cc{D}\subset\bb{R}^d$ with $d \leq 3$, the Matern covariance models with parameters $\paren{\phi_1,\rho_1}$ and $\paren{\phi_2,\rho_2}$ yield absolutely continuous measures (with respect to each other) whenever $\phi_1\rho^{-2\nu}_1 = \phi_2\rho^{-2\nu}_2$. In this case, $\paren{\phi_1,\rho_1}$ and $\paren{\phi_2,\rho_2}$ are almost surely not distinguishable when observing a single realization of $G$. In other words, given a single realization of $G$ in $\cc{D}$, we are only able to estimate $\phi_0\rho^{-2\nu}_0$ in \eqref{MatSpDen}. The quantity $\phi_0\rho^{-2\nu}_0$, which is usually referred to as the \emph{microergodic} parameter, is sufficient for interpolation purposes \cite{zhang2004inconsistent}. Thus, it suffices to focus on the estimation rate for $\phi_0\rho^{-2\nu}_0$ in our asymptotic analysis.

Recall from Remark \ref{BlcDiagApproxKnmRem} that $K^{\cc{B}}_{n,m}\paren{\cdot}$ stands for the block diagonal approximation $K_{n,m}\paren{\cdot}$. Define a real valued (stochastic) mapping over $\Theta_0$ by
\begin{equation}\label{PhiHatnB}
\hat{\phi}_{n,\cc{B}}\paren{\rho} \coloneqq \frac{Y^\top_m K^{\cc{B}}_{n,m}\paren{\rho} Y_m}{ \LpNorm{K^{\cc{B}}_{n,m}\paren{\rho}}{2}^2 },\quad \forall\;\rho\in\Theta_0.
\end{equation}
For ease of presentation, we omit the dependence of $\hat{\phi}_{n,\cc{B}}\paren{\cdot}$ on $m$ in our notation. It is also apparent from \eqref{LIFAltForm2} that $\hat{\phi}_{n,\cc{B}} = \hat{\phi}_{n,\cc{B}}\paren{\hat{\rho}_{n,\cc{B}}}$. 

Before presenting the main results let us consider an interesting special instance in the LIF class of estimators that reveals a 
key reason behind the $\sqrt{n}$-consistency of any LIF estimation method.

\begin{rem}\label{Rem4.1}
Suppose that $\cc{B}$ comprises only singleton sets, i.e. $\abs{B_t} = 1$ for any $B_t\in\cc{B}$. In this case $\phi K_{B_t,m}\paren{\rho}$ (the covariance matrix of  $\brac{G_m\paren{\boldsymbol{s}}:\;\boldsymbol{s}\in B_t }^\top$ associated to $\phi$ and $\rho$) is a scalar which is approximately proportional to $\phi\rho^{-2\nu}$. More specifically, using a similar approach as in the proof of Proposition \ref{UppBndCovGmNonRegLatt} shows that for $B_t = \set{\boldsymbol{s}}$
\begin{equation}\label{Eq3.4}
\phi K_{B_t,m}\paren{\rho} = C_{\boldsymbol{s}} \phi\rho^{-2\nu} + \varepsilon_{n}\paren{\boldsymbol{s},\rho,\phi},
\end{equation}
in which $C_{\boldsymbol{s}}$ is a known scalar, independent of $\phi$ and $\rho$, and $\varepsilon_{n}\paren{\boldsymbol{s},\rho,\phi}$ is a vanishing sequence in $n$ (which also depends on $m,d,\nu$ as well). Substituting Eq. \eqref{Eq3.4} into Eq. \eqref{PhiHatnB} leads to
\begin{equation}\label{AnderesAlg}
\hat{\phi}_{n,\cc{B}}\paren{\rho}\rho^{-2\nu} = \paren{\frac{\sum_{\boldsymbol{s}\in\cc{D}_n } C_{\boldsymbol{s}} G^2_m\paren{\boldsymbol{s}} }{\sum_{\boldsymbol{s}\in\cc{D}_n } C^2_{\boldsymbol{s}} }} + o\paren{1},\quad\forall\;\rho\in\Theta_0.
\end{equation}
$\hat{\phi}_{n,\cc{B}}\paren{\rho}\rho^{-2\nu}$ has a simpler representation for regular lattices as $C_{\boldsymbol{s}}$ is constant over $\cc{D}^\circ_n$ ($\cc{D}^\circ_n$ has been defined in Remark \ref{PreCondRegLat} and denotes the interior of $\cc{D}_n$). Furthermore, the profile LIF loss has (roughly) no dependence on $\rho$, since
\begin{equation*}
\frac{\sum_{t=1}^{b_n} Y^\top_{B_t,m} K_{B_t,m}\paren{\rho} Y_{B_t,m} }{\sqrt{\sum_{t=1}^{b_n} \LpNorm{K_{B_t,m}\paren{\rho}}{2}^2}} = \frac{\sum_{\boldsymbol{s}\in\cc{D}_n} C_{\boldsymbol{s}} G^2_m\paren{\boldsymbol{s}} }{\sqrt{\sum_{\boldsymbol{s}\in\cc{D}_n}C^2_{\boldsymbol{s}}}} + o\paren{1}.
\end{equation*}
Simply put, there is no need to estimate $\rho$ using the profile LIF loss, for this particular scenario. For an arbitrarily chosen $\rho$, $\phi_0\rho^{-2\nu}_0$ can indeed be estimated by $\hat{\phi}_{n,\cc{B}}\paren{\rho}\rho^{-2\nu}$. The estimator in Eq. \eqref{AnderesAlg} is in fact identical to the one proposed by Anderes \cite{anderes2010consistent}. He also investigated its fixed-domain asymptotic properties for regular lattices employing some techniques for studying the quadratic variation of stationary spatial Gaussian processes
\end{rem}

The first main result of this section states that for appropriately chosen preconditioning order $m$, regardless of the choice of $\cc{B}$ and $\rho$, $\hat{\phi}_{n,\cc{B}}\paren{\rho}\rho^{-2\nu}$ is a $\sqrt{n}$-consistent estimate of $\phi_0\rho^{-2\nu}_0$.

\begin{thm}\label{CnstncyLocInvFreeThm}
Let $G$ be observed on a lattice $\cc{D}_n$ satisfying Assumption \ref{RegCondLattice}. Suppose that the preconditioning order $m$ satisfies $m\geq \paren{\nu+d/2}$. For a given binning scheme $\cc{B}$ of $\cc{D}_n$, there are bounded positive scalars $C_{\cc{B}}$ and $n_0$, depending on $m, d, \nu, \Theta_0, \cc{B}$ and the geometric structure of $\cc{D}_n$, such that 
\begin{equation}\label{CnstncyLIFRate}
\bb{P}\paren{ \sup_{\rho\in\Theta_0}\abs{ \frac{\hat{\phi}_{n,\cc{B}}\paren{\rho}\rho^{-2\nu}}{\phi_0\rho^{-2\nu}_0}-1 }\geq C_{\cc{B}}\sqrt{\frac{\log n}{n}} }\leq \frac{1}{n},\quad\forall\; n\geq n_0.
\end{equation}
\end{thm}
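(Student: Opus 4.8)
## Proof proposal

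\textbf{Overall strategy.} The quantity to control is a ratio of two quadratic forms in the Gaussian vector $Y_m$. The natural plan is to write $\hat{\phi}_{n,\cc{B}}(\rho)\rho^{-2\nu}$ as $\InnerProd{Y_mY^\top_m}{\rho^{-2\nu}K^{\cc{B}}_{n,m}(\rho)}{}/\LpNorm{K^{\cc{B}}_{n,m}(\rho)}{2}^2$, replace $Y_mY^\top_m$ by its mean $\phi_0 K_{n,m}(\rho_0)$ plus a fluctuation term, and show (i) the ``bias'' term $\phi_0\InnerProd{K_{n,m}(\rho_0)}{\rho^{-2\nu}K^{\cc{B}}_{n,m}(\rho)}{}/\LpNorm{K^{\cc{B}}_{n,m}(\rho)}{2}^2$ is $\phi_0\rho_0^{-2\nu}(1+o(1))$ uniformly in $\rho$, and (ii) the fluctuation term $\InnerProd{Y_mY^\top_m - \phi_0 K_{n,m}(\rho_0)}{\rho^{-2\nu}K^{\cc{B}}_{n,m}(\rho)}{}/\LpNorm{K^{\cc{B}}_{n,m}(\rho)}{2}^2$ is $\cc{O}(\sqrt{n^{-1}\log n})$ uniformly in $\rho$ with the stated probability. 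Since the whole expression is a supremum over the compact set $\Theta_0$, I would first establish a pointwise bound for each fixed $\rho$ and then upgrade to a uniform bound via a chaining/covering argument, using smoothness of $\rho\mapsto K^{\cc{B}}_{n,m}(\rho)$ in the relevant norms (this is exactly what the sensitivity analysis of the correlation matrix in Appendices \ref{AppendixCovMatrixIrregLat} and \ref{AuxRes} is designed to supply).

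\textbf{Key entrywise estimates.} The backbone is a sharp description of the entries of $K_{n,m}(\rho)$. Using the preconditioning conditions in Definition \ref{DecorFltNonReglat} together with the condition $m\geq \nu+d/2$ (which guarantees that $\Delta_m$ in the Taylor remark is mean-square differentiable enough for the relevant sums to converge), I would show, as in Proposition \ref{UppBndCovGmNonRegLatt} and the computation in Remark \ref{Rem4.1}, that the diagonal entries satisfy $K_m(\boldsymbol{s},\boldsymbol{s};\rho) = C_{\boldsymbol{s}}\rho^{-2\nu} + \varepsilon_n(\boldsymbol{s},\rho)$ with $\varepsilon_n$ negligible, while the off-diagonal entries decay rapidly in the rescaled distance $N\LpNorm{\boldsymbol{t}-\boldsymbol{s}}{2}$ — essentially because $G_m$ behaves like an $m$-th order increment whose covariance is an $m$-fold difference of the Matern kernel. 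From Assumption \ref{RegCondLattice} the number of sites within rescaled distance $k$ of any $\boldsymbol{s}$ is $\cc{O}(k^d)$, so these decay bounds give $\LpNorm{K^{\cc{B}}_{n,m}(\rho)}{2}^2 \asymp n$ and $\OpNorm{K^{\cc{B}}_{n,m}(\rho)}{2}{2} = \cc{O}(1)$, both uniformly in $\rho\in\Theta_0$ and in the binning scheme $\cc{B}$ (block-diagonal truncation only removes terms, so these bounds survive). Likewise $\InnerProd{K_{n,m}(\rho_0)}{K^{\cc{B}}_{n,m}(\rho)}{} = \rho_0^{-2\nu}\rho^{-2\nu}\sum_{\boldsymbol{s}}C_{\boldsymbol{s}}^2 + \cc{O}(\text{lower order})$, since the dominant contribution is from the diagonal. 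Dividing gives claim (i): the bias term equals $\phi_0\rho_0^{-2\nu}(1+o(n^{-1/2}))$ uniformly.

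\textbf{Concentration of the quadratic form.} For claim (ii), fix $\rho$ and write $Z_m = \paren{\phi_0 K_{n,m}(\rho_0)}^{-1/2}Y_m \sim \cc{N}(\zero,I_n)$, so the fluctuation numerator is $\phi_0 Z_m^\top M Z_m - \phi_0\tr M$ with $M = \paren{K_{n,m}(\rho_0)}^{1/2}\rho^{-2\nu}K^{\cc{B}}_{n,m}(\rho)\paren{K_{n,m}(\rho_0)}^{1/2}$. The Hanson–Wright inequality gives, for $u>0$,
\begin{equation*}
\bb{P}\paren{\abs{Z_m^\top M Z_m - \tr M}\geq u}\leq 2\exp\paren{-c\min\paren{\frac{u^2}{\LpNorm{M}{2}^2},\frac{u}{\OpNorm{M}{2}{2}}}}.
\end{equation*}
From the estimates above, $\OpNorm{M}{2}{2} = \cc{O}(1)$ and $\LpNorm{M}{2}^2 = \cc{O}(n)$, so taking $u \asymp \sqrt{n\log n}$ yields a pointwise tail of order $n^{-c'}$, and dividing by $\LpNorm{K^{\cc{B}}_{n,m}(\rho)}{2}^2\asymp n$ produces the rate $\sqrt{n^{-1}\log n}$. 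To pass to the supremum over $\Theta_0$, I would cover $\Theta_0$ by $\cc{O}(n^{C})$ balls of radius $n^{-C}$ (permissible since $\Theta_0$ is compact and finite-dimensional), control the oscillation of the quadratic form across each ball using Lipschitz bounds on $\rho\mapsto K^{\cc{B}}_{n,m}(\rho)$ in $\LpNorm{\cdot}{2}$ and operator norm (supplied by the sensitivity analysis in the appendices), and then take a union bound, enlarging the constant $C_{\cc{B}}$. Combining (i) and (ii) through the triangle inequality and the elementary bound $\abs{a/b - 1}\lesssim\abs{a-b}$ when $b$ is bounded below gives \eqref{CnstncyLIFRate}.

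\textbf{Main obstacle.} The delicate part is \emph{not} the Gaussian concentration but establishing the uniform-in-$\rho$ control of $K^{\cc{B}}_{n,m}(\rho)$ and its derivative in $\rho$ with constants that are genuinely independent of the binning scheme $\cc{B}$ and robust to the irregularity of $\cc{D}_n$. In particular I expect the bookkeeping needed to show $\LpNorm{K^{\cc{B}}_{n,m}(\rho)}{2}^2\asymp n$ from below uniformly — i.e. that block-diagonal truncation does not destroy the diagonal-dominated lower bound — together with the sensitivity estimates $\OpNorm{\partial_\rho K^{\cc{B}}_{n,m}(\rho)}{2}{2} = \cc{O}(1)$ under Assumption \ref{RegCondLattice}, to be where the technical weight of the argument lies, which is why those estimates are isolated in the appendices.
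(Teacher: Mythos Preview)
Your proposal is correct and matches the paper's proof almost exactly: the same bias--variance split, the same matrix $M=\sqrt{K_{n,m}(\rho_0)}\,\rho^{-2\nu}K^{\cc{B}}_{n,m}(\rho)\,\sqrt{K_{n,m}(\rho_0)}$, Hanson--Wright plus a covering of $\Theta_0$ for the stochastic part, and the entrywise decay of Proposition~\ref{UppBndCovGmNonRegLatt} to feed the required norm estimates. Two small deviations are worth flagging: (i) in the boundary case $m=\nu+d/2$ the paper only gets $\OpNorm{L^{\cc{B}}_{n,m}(\rho)}{2}{2}\lesssim \log n$ (not $\cc{O}(1)$), which propagates to a $\log^2 n$ Lipschitz constant in the chaining step but does not change the conclusion; (ii) for the bias the paper works with $L^{\cc{B}}_{n,m}(\rho)=\rho^{2\nu}K^{\cc{B}}_{n,m}(\rho)$ and bounds $\abs{\InnerProd{L^{\cc{B}}_{n,m}(\rho)-L^{\cc{B}}_{n,m}(\rho_0)}{L^{\cc{B}}_{n,m}(\rho)}}$ via $\SpNorm{\cdot}{1}\cdot\OpNorm{\cdot}{2}{2}$, exploiting that the difference is PSD so its nuclear norm is just its trace---this is a cleaner route to the $o(n^{-1/2})$ bias than your diagonal-dominance computation, but the two are equivalent in spirit.
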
 

Theorem \ref{CnstncyLocInvFreeThm} establishes (high probability) uniform concentration of $\hat{\phi}_{n,\cc{B}}\paren{\rho}\rho^{-2\nu}$ around $\phi_0\rho^{-2\nu}_0$ in a small ball of radius $\cc{O}(\sqrt{n^{-1}\log n})$. The $\sqrt{n}$-consistency of the global (or local) maximizers of the LIF objective function is an immediate consequence of Theorem \ref{CnstncyLocInvFreeThm}. It is known that an analogous bound as in Eq. \eqref{CnstncyLIFRate} holds for the MLE, regardless of how $m$ is chosen. Namely, the MLE is $\sqrt{n}$-consistent even for raw data, $m=0$. Thus Theorem \ref{CnstncyLocInvFreeThm} implicitly says that, for sufficiently decorrelated samples, there are surrogates losses that can be optimized considerably faster than the log-likelihood on a wide range of irregular grids, and without sacrificing the asymptotic efficiency.

In the case that $\nu$ is either known or can be rather precisely estimated, Theorem \ref{CnstncyLocInvFreeThm} gives a straightforward way of choosing $m$. For instance the choice of $m = \lceil \nu+1 \rceil$ is sufficient when $G$ is observed within a two dimensional region. Recall from Remark \ref{PreCondRegLat} that for the regular lattices, if $m'$ represents the number of times the Laplace operator is applied to the data, then the transformed process is a preconditioned GP of order $2m'$. Thus for Gaussian processes observed on $d$-dimensional regular lattices, $m = 2m'$ and so $m'$ should not be smaller than $\nu/2+d/4$.

\begin{rem}
For pedagogical reasons, we outline a brief sketch of the proof of Theorem \ref{CnstncyLocInvFreeThm}; full details are postponed to Section \ref{Proofs}. The bias-variance decomposition plays a canonical role in our analysis. In particular,
\begin{equation*}
\sup_{\rho\in\Theta_0}\abs{ \frac{\hat{\phi}_{n,\cc{B}}\paren{\rho}\rho^{-2\nu}}{\phi_0\rho^{-2\nu}_0}-1 } \leq P_1+P_2\coloneqq \sup_{\rho\in\Theta_0}\abs{\frac{\bb{E} \hat{\phi}_{n,\cc{B}}\paren{\rho}\rho^{-2\nu}}{\phi_0\rho^{-2\nu}_0} - 1 }+\sup_{\rho\in\Theta_0} \abs{\frac{\hat{\phi}_{n,\cc{B}}\paren{\rho}\rho^{-2\nu}-\bb{E}\hat{\phi}_{n,\cc{B}}\paren{\rho}\rho^{-2\nu}}{\phi_0\rho^{-2\nu}_0}}.
\end{equation*}
We show that $P_1 = o\paren{1/\sqrt{n}}$ by employing a novel approach to investigate the large sample properties of the eigenvalues of $K^{\cc{B}}_{n,m}\paren{\rho}$. On the other hand, $P_2$ is in fact the supremum of a chi-squared process over $\Theta_0$. Employing the classical chaining argument it can be shown that $P_2$ is of order $\sqrt{n^{-1}\log n}$, with high probability. We refer the reader to Appendix \ref{AppendixCovMatrixIrregLat} for further details.
\end{rem}

\begin{cor}\label{ConstncyStatnrypts}
Under the same notation and conditions as in Theorem \ref{CnstncyLocInvFreeThm}, the following inequality holds for any stationary point $(\hat{\phi}_{n,\cc{B}}, \hat{\rho}_{n,\cc{B}})$ of the LIF loss \eqref{LocACSOptProb}. 
\begin{equation*}
\bb{P}\paren{ \abs{ \frac{\hat{\phi}_{n,\cc{B}}\hat{\rho}^{-2\nu}_{n,\cc{B}}}{\phi_0\rho^{-2\nu}_0}-1}\geq C_{\cc{B}}\sqrt{\frac{\log n}{n}} } \leq \frac{1}{n},\quad\mbox{as}\; n\rightarrow\infty.
\end{equation*}
\end{cor}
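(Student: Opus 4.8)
The plan is to reduce the claim directly to Theorem \ref{CnstncyLocInvFreeThm} by exploiting the explicit profile structure of the LIF objective in the variance parameter. First I would revisit the first-order optimality condition of the optimization problem \eqref{LocACSOptProb}, written in the equivalent block diagonal form \eqref{LIFAltForm}. Writing $L_{n,\cc{B}}\paren{\phi,\rho}\coloneqq \phi Y^\top_m K^{\cc{B}}_{n,m}\paren{\rho} Y_m - \tfrac{\phi^2}{2}\LpNorm{K^{\cc{B}}_{n,m}\paren{\rho}}{2}^2$, the feasible set in the $\phi$-coordinate is the open half-line $\paren{0,\infty}$, so any stationary point must satisfy $\partial_\phi L_{n,\cc{B}}\paren{\phi,\rho} = Y^\top_m K^{\cc{B}}_{n,m}\paren{\rho} Y_m - \phi\LpNorm{K^{\cc{B}}_{n,m}\paren{\rho}}{2}^2 = 0$; since $K^{\cc{B}}_{n,m}\paren{\rho}$ is block diagonal with almost surely positive definite blocks $K_{B_t,m}\paren{\rho}$, the quadratic form $Y^\top_m K^{\cc{B}}_{n,m}\paren{\rho} Y_m$ is almost surely strictly positive and this equation has the unique, strictly positive root $\hat{\phi}_{n,\cc{B}}\paren{\rho}$ of \eqref{PhiHatnB}. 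Consequently every stationary point $\paren{\hat{\phi}_{n,\cc{B}},\hat{\rho}_{n,\cc{B}}}$ of \eqref{LocACSOptProb} necessarily satisfies $\hat{\phi}_{n,\cc{B}} = \hat{\phi}_{n,\cc{B}}\paren{\hat{\rho}_{n,\cc{B}}}$ with $\hat{\rho}_{n,\cc{B}}\in\Theta_0$, exactly as recorded below \eqref{PhiHatnB} for the global maximizer.

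Next I would invoke the pointwise (almost sure) domination
\begin{equation*}
\abs{ \frac{\hat{\phi}_{n,\cc{B}}\hat{\rho}^{-2\nu}_{n,\cc{B}}}{\phi_0\rho^{-2\nu}_0}-1} = \abs{ \frac{\hat{\phi}_{n,\cc{B}}\paren{\hat{\rho}_{n,\cc{B}}}\hat{\rho}^{-2\nu}_{n,\cc{B}}}{\phi_0\rho^{-2\nu}_0}-1} \leq \sup_{\rho\in\Theta_0}\abs{ \frac{\hat{\phi}_{n,\cc{B}}\paren{\rho}\rho^{-2\nu}}{\phi_0\rho^{-2\nu}_0}-1},
\end{equation*}
which holds on the almost sure event that $\hat{\phi}_{n,\cc{B}}=\hat{\phi}_{n,\cc{B}}\paren{\hat{\rho}_{n,\cc{B}}}$ with $\hat{\rho}_{n,\cc{B}}\in\Theta_0$. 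Hence the event that the left-hand side exceeds $C_{\cc{B}}\sqrt{\log n/n}$ is, up to a null set, contained in the event that the supremum on the right exceeds $C_{\cc{B}}\sqrt{\log n/n}$, and Theorem \ref{CnstncyLocInvFreeThm} --- specifically the bound \eqref{CnstncyLIFRate}, applied with the same constants $C_{\cc{B}}$ and $n_0$ --- bounds the probability of the latter event by $1/n$ for every $n\geq n_0$. This is precisely the asserted inequality.

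There is essentially no analytic obstacle here: the probabilistic work is carried entirely by Theorem \ref{CnstncyLocInvFreeThm}, and the two points that need a line of care are (i) checking that the constraint $\phi>0$ is never active, so that stationarity in $\phi$ delivers the profile identity $\hat{\phi}_{n,\cc{B}} = \hat{\phi}_{n,\cc{B}}\paren{\hat{\rho}_{n,\cc{B}}}$ rather than a boundary condition --- this is where the almost sure positive definiteness of $K^{\cc{B}}_{n,m}\paren{\rho}$ enters --- and (ii) observing that no first-order condition in $\rho$ is required at all, because \eqref{CnstncyLIFRate} already controls $\hat{\phi}_{n,\cc{B}}\paren{\rho}\rho^{-2\nu}$ uniformly over the whole of $\Theta_0$. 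It is precisely this uniformity in $\rho$ that renders the passage from the estimator to arbitrary stationary points --- including non-global local maxima and saddle points --- entirely costless.
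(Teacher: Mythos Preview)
Your argument is correct and is precisely the intended one: the paper does not spell out a separate proof of the corollary because it is immediate from Theorem \ref{CnstncyLocInvFreeThm} via the profile identity $\hat{\phi}_{n,\cc{B}} = \hat{\phi}_{n,\cc{B}}\paren{\hat{\rho}_{n,\cc{B}}}$ (recorded just after \eqref{PhiHatnB} and derived from \eqref{LIFAltForm2}). Your additional care in verifying that stationarity in $\phi$ on the open half-line forces this identity for \emph{every} stationary point --- not just the global maximizer --- is exactly the point that makes the passage from Theorem \ref{CnstncyLocInvFreeThm} to the corollary rigorous.
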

It has been argued in \cite{kaufman2013role} that estimating $\rho_0$ can improve the statistical performance, especially for small $n$. The first advantage of Corollary \ref{ConstncyStatnrypts} is that it establishes the consistency of an arbitrary stationary point of the LIF objective function. Allowing the range parameter to be estimated in a large bounded space, which is crucial in practice, is another advantage of Corollary \ref{ConstncyStatnrypts}. 

Remark \ref{BlcDiagApproxKnmRem} may induce a false impression that the convergence rate of $\hat{\phi}_{n,\cc{B}}\hat{\rho}^{-2\nu}_{n,\cc{B}}$ is determined by how well the covariance matrix of the preconditioned samples $K_{n,m}\paren{\rho}$ can be approximated by $K^{\cc{B}}_{n,m}\paren{\rho}$. Yet, Corollary \ref{ConstncyStatnrypts} discloses the somewhat surprising fact that the LIF algorithm is $\sqrt{n}$-consistent, regardless of the choice of $\cc{B}$. The fast enough decay rate of the off-diagonal entries of $K_{n,m}\paren{\rho}$ is a heuristic explanation for the $\sqrt{n}$-consistency of the LIF estimator. In other words since $K_{n,m}\paren{\rho}$ can be suitably approximated by any block diagonal matrix induced by a partitioning scheme, splitting the preconditioned data into different bins does not affect the convergence rate of the LIF estimate. However the influence of the partitioning scheme may become more pronounced in practical situations with moderate sample sizes.

\begin{rem}\label{Rem4.3}
It has been discussed in \cite{anitescu2017inversion} that the global solution of the IF optimization problem, in Eq. \eqref{ACSOptProb}, has the same convergence rate as the MLE, when the covariance matrix of the preconditioned samples has a uniformly bounded condition number over $\Theta_0$. Such a restriction on the covariance matrix rarely holds in practice, unless under some strong conditions on the spectral density and the geometric structure of $\cc{D}_n$ (see \cite{stein2012interpolation}). However Corollary \ref{ConstncyStatnrypts} requires much weaker restrictions on the covariance matrix. Two sufficient conditions on $K^{\cc{B}}_{n,m}\paren{\cdot}$ can be spotted by going through our proof of Theorem \ref{ConstncyStatnrypts}.
\begin{enumerate}
\item The largest eigenvalue of $K^{\cc{B}}_{n,m}\paren{\cdot}$ should be uniformly bounded over $\Theta_0$. Namely, 
\begin{equation*}
\max_{\rho\in\Theta_0} \OpNorm{K^{\cc{B}}_{n,m}\paren{\cdot}}{2}{2} \asymp 1.
\end{equation*}
\item $K^{\cc{B}}_{n,m}\paren{\rho}$ must have $\cc{O}\paren{n}$ non-negligible positive eigenvalues, for any $\rho\in\Theta_0$. That is,
\begin{equation*}
\inf_{\rho\in\Theta_0} \LpNorm{K^{\cc{B}}_{n,m}\paren{\rho}}{2} \asymp \sqrt{n}.
\end{equation*}
\end{enumerate}
Note that the above conditions do not rule out the existence of near zero eigenvalues and so the conditions number is still allowed to diverge as $n$ tends to infinity. In this regard, our asymptotic understanding expands the applicability of inversion-free techniques.
\end{rem}

\noindent Now we establish the asymptotic distribution of all the stationary points of the LIF loss function.

\begin{thm}\label{AsympNormLocInvFreeThm}
Under the same notation and conditions as in Theorem \ref{CnstncyLocInvFreeThm}, there exists a bounded sequence $\sigma_{n,\cc{B}}$ such that for any stationary point $(\hat{\phi}_{n,\cc{B}}, \hat{\rho}_{n,\cc{B}})$ of the LIF loss 
\begin{equation*}
\frac{\sqrt{n}}{\sigma_{n,\cc{B}}}\paren{\frac{\hat{\phi}_{n,\cc{B}}\hat{\rho}^{-2\nu}_{n,\cc{B}}}{\phi_0\rho^{-2\nu}_0}-1}\cp{d} \cc{N}\paren{0,1}.
\end{equation*}
\end{thm}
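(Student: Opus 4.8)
The plan is to reduce the asymptotic normality of an arbitrary stationary point $(\hat\phi_{n,\cc{B}},\hat\rho_{n,\cc{B}})$ to a central limit theorem for a quadratic form in the Gaussian vector $Y_m$, and to control the error introduced by the random plug-in of $\hat\rho_{n,\cc{B}}$. First I would note that, by Remark \ref{BlcDiagApproxKnmRem} and the profile formulation \eqref{LIFAltForm2}, at any stationary point $\hat\phi_{n,\cc{B}}\hat\rho_{n,\cc{B}}^{-2\nu} = \hat\phi_{n,\cc{B}}(\hat\rho_{n,\cc{B}})\,\hat\rho_{n,\cc{B}}^{-2\nu}$ with $\hat\phi_{n,\cc{B}}(\rho)$ as in \eqref{PhiHatnB}. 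Write $Z_m \coloneqq Y_m/\sqrt{\phi_0}$, so $Z_m \sim \cc{N}(0, K_{n,m}(\rho_0))$, and for each fixed $\rho$ decompose
\begin{equation*}
\frac{\hat\phi_{n,\cc{B}}(\rho)\rho^{-2\nu}}{\phi_0\rho_0^{-2\nu}} - 1 = \frac{\rho^{-2\nu}\,Z_m^\top K^{\cc{B}}_{n,m}(\rho) Z_m - \rho_0^{-2\nu}\LpNorm{K^{\cc{B}}_{n,m}(\rho)}{2}^2}{\rho_0^{-2\nu}\LpNorm{K^{\cc{B}}_{n,m}(\rho)}{2}^2}.
\end{equation*}
The numerator is a centered quadratic form plus a deterministic bias; the bias was already shown to be $o(1/\sqrt n)$ in the proof of Theorem \ref{CnstncyLocInvFreeThm}, so it is asymptotically negligible after multiplying by $\sqrt n$. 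Thus the leading stochastic term is $\sqrt n\,\rho^{-2\nu}\big(Z_m^\top K^{\cc{B}}_{n,m}(\rho) Z_m - \bb{E} Z_m^\top K^{\cc{B}}_{n,m}(\rho) Z_m\big) / \big(\rho_0^{-2\nu}\LpNorm{K^{\cc{B}}_{n,m}(\rho)}{2}^2\big)$.

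Next I would establish a CLT for this centered quadratic form at a fixed $\rho$. Writing $A_n(\rho) \coloneqq \sqrt{K_{n,m}(\rho_0)}\,K^{\cc{B}}_{n,m}(\rho)\sqrt{K_{n,m}(\rho_0)}$, the quadratic form equals in distribution $\sum_i \lambda_{n,i}(\rho)(\xi_i^2 - 1)$ for i.i.d.\ standard normals $\xi_i$, where $\lambda_{n,i}(\rho)$ are the eigenvalues of $A_n(\rho)$. Its variance is $2\LpNorm{A_n(\rho)}{2}^2 = 2\,\tr\big((K^{\cc{B}}_{n,m}(\rho) K_{n,m}(\rho_0))^2\big)$. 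The Lyapunov / fourth-moment condition for the CLT reduces to showing $\OpNorm{A_n(\rho)}{2}{2}^2 / \LpNorm{A_n(\rho)}{2}^2 \to 0$, i.e.\ that no single eigenvalue dominates. Here I would invoke the spectral facts behind Remark \ref{Rem4.3}: under the hypotheses of Theorem \ref{CnstncyLocInvFreeThm} one has $\OpNorm{K_{n,m}(\rho_0)}{2}{2}\asymp 1$, $\OpNorm{K^{\cc{B}}_{n,m}(\rho)}{2}{2}\asymp 1$, and $\LpNorm{K^{\cc{B}}_{n,m}(\rho)}{2}^2 \asymp n$ (the $\cc{O}(n)$ non-negligible eigenvalues), together with a uniform lower bound $\LpNorm{A_n(\rho)}{2}^2 \gtrsim n$ obtained from the fine eigenvalue analysis developed for the auxiliary propositions in Section \ref{Proofs} and Appendix \ref{AppendixCovMatrixIrregLat}. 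This gives $\OpNorm{A_n}{2}{2}^2/\LpNorm{A_n}{2}^2 = \cc{O}(1/n)\to 0$, so the Lindeberg–Feller CLT yields $\sqrt n\big(\hat\phi_{n,\cc{B}}(\rho)\rho^{-2\nu}/(\phi_0\rho_0^{-2\nu}) - 1\big)/\sigma_{n,\cc{B}}(\rho) \cp{d}\cc{N}(0,1)$ with $\sigma_{n,\cc{B}}^2(\rho) \coloneqq 2n\,\tr\big((K^{\cc{B}}_{n,m}(\rho)K_{n,m}(\rho_0))^2\big)/\LpNorm{K^{\cc{B}}_{n,m}(\rho)}{2}^4$, which is bounded and bounded away from zero.

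Finally I would upgrade the fixed-$\rho$ statement to the random stationary point. By Corollary \ref{ConstncyStatnrypts}, $\hat\rho_{n,\cc{B}} \in \Theta_0$ is consistent in the weak sense that $\hat\phi_{n,\cc{B}}\hat\rho_{n,\cc{B}}^{-2\nu}$ concentrates around $\phi_0\rho_0^{-2\nu}$ within $\cc{O}(\sqrt{n^{-1}\log n})$, so it suffices to show that $\rho\mapsto \sqrt n\big(\hat\phi_{n,\cc{B}}(\rho)\rho^{-2\nu}/(\phi_0\rho_0^{-2\nu}) - 1\big)$, as a process on $\Theta_0$, has a stochastic modulus of continuity that is $o_P(1)$ on balls of radius $\cc{O}(\sqrt{n^{-1}\log n})$ around $\rho_0$; this is exactly the kind of chaining estimate already used to bound $P_2$ in the proof of Theorem \ref{CnstncyLocInvFreeThm}, combined with the sensitivity bounds on $\rho\mapsto K^{\cc{B}}_{n,m}(\rho)$ and its derivative from Appendix \ref{AuxRes}. (The multiplicative structure in $\rho^{-2\nu}$ and the ratio form mean that in fact the dependence of the profile loss on $\rho$ is weak, cf.\ Remark \ref{Rem4.1}.) Setting $\sigma_{n,\cc{B}} \coloneqq \sigma_{n,\cc{B}}(\rho_0)$ and noting $\sigma_{n,\cc{B}}(\hat\rho_{n,\cc{B}})/\sigma_{n,\cc{B}}(\rho_0)\to 1$ by continuity then gives the claim. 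The main obstacle is the uniform-in-$\rho$ lower bound $\LpNorm{A_n(\rho)}{2}^2 \gtrsim n$ coupled with the negligibility of the top eigenvalue: establishing that the block-diagonal truncation $K^{\cc{B}}_{n,m}(\rho)$ retains order-$n$ of the "energy" of $K_{n,m}(\rho_0)$ in the twisted inner product, uniformly over $\Theta_0$ and over all binning schemes, is where the delicate spectral analysis of the preconditioned covariance matrix is genuinely needed.
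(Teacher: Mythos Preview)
Your fixed-$\rho$ CLT argument is essentially the same as the paper's Claim~2: write the centered quadratic form via $M^{\cc{B}}_{n,m}(\rho)=\sqrt{L_{n,m}(\rho_0)}\,L^{\cc{B}}_{n,m}(\rho)\,\sqrt{L_{n,m}(\rho_0)}\cdot n/\LpNorm{L^{\cc{B}}_{n,m}(\rho)}{2}^2$, verify $\OpNorm{M^{\cc{B}}_{n,m}(\rho)}{2}{2}/\LpNorm{M^{\cc{B}}_{n,m}(\rho)}{2}\to 0$, and invoke a Gaussian-quadratic-form CLT. The bias term is indeed $o(n^{-1/2})$ from the $P_1$ analysis. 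So far so good.

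The genuine gap is in your passage from fixed $\rho$ to the random $\hat\rho_{n,\cc{B}}$. You argue that it suffices to control the modulus of continuity of $\rho\mapsto T_n(\rho)\coloneqq\sqrt n\big(\hat\phi_{n,\cc{B}}(\rho)\rho^{-2\nu}/(\phi_0\rho_0^{-2\nu})-1\big)$ on balls of radius $\cc{O}(\sqrt{n^{-1}\log n})$ around $\rho_0$. But $\rho_0$ is \emph{not identifiable}: Corollary~\ref{ConstncyStatnrypts} only says the microergodic combination $\hat\phi_{n,\cc{B}}\hat\rho_{n,\cc{B}}^{-2\nu}$ concentrates, which tells you nothing about where $\hat\rho_{n,\cc{B}}$ itself sits in $\Theta_0$. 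The stationary point can land anywhere in $\Theta_0$, so local equicontinuity near $\rho_0$ is irrelevant. And if you try to upgrade to global equicontinuity, the chaining bound you cite from the $P_2$ analysis only gives $\sup_{\rho\in\Theta_0}|T_n(\rho)-\bb{E}T_n(\rho)|=\cc{O}_{\bb{P}}(\sqrt{\log n})$ after multiplying by $\sqrt n$, which is not $o_{\bb{P}}(1)$.

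The paper circumvents this entirely by exploiting a monotonicity structure you did not use: with $L^{\cc{B}}_{n,m}(\rho)\coloneqq\rho^{2\nu}K^{\cc{B}}_{n,m}(\rho)$, one has $L^{\cc{B}}_{n,m}(\rho_1)\preceq L^{\cc{B}}_{n,m}(\rho_2)$ whenever $\rho_1\le\rho_2$ (this follows from the spectral representation of the Matern kernel; see Eq.~\eqref{PosDefDelta}). Hence for any realization and any $\rho\in\Theta_0$, the numerator $U^\top L^{\cc{B}}_{n,m}(\rho)U$ is sandwiched between its values at $\rho_{\min}$ and $\rho_{\max}$, while the denominator $\LpNorm{L^{\cc{B}}_{n,m}(\rho)}{2}^2$ differs from its endpoint values by $o(\sqrt n)$ (via the nuclear-norm bound on $\Delta^{\cc{B}}$). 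This yields deterministic-up-to-$o_{\bb{P}}(1)$ inequalities
\[
T_n(\rho_{\min})(1-p_n)\ \le\ T_n(\hat\rho_{n,\cc{B}})\ \le\ T_n(\rho_{\max})(1+q_n),
\]
and a separate argument shows $\sigma_{n,m}(\rho,\rho_0)$ is asymptotically the same for all $\rho\in\Theta_0$, so both endpoints have the same Gaussian limit. A squeeze lemma for weak convergence then closes the argument. Your route could be repaired only by proving the much stronger statement that $T_n(\cdot)$ is asymptotically constant over all of $\Theta_0$; the paper's monotonicity-plus-squeeze device is what makes this tractable.
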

Theorem \ref{AsympNormLocInvFreeThm} formulates the asymptotic distribution of the LIF estimator for joint estimation of $\phi_0$ and $\rho_0$. To our knowledge, for the MLE, such a result has only appeared in \cite{kaufman2013role}. Note that unlike the full or tapered MLE, in which $\sigma_{n,\cc{B}} = \sqrt{2}$ (see Theorem $2$ of \cite{wang2011fixed}), here $m,d,\nu$, the geometric structure and the portioning scheme of $\cc{D}_n$ also affect the asymptotic standard deviation. We could not obtain a simple closed form expression for $\sigma_{n,\cc{B}}$. A  complicated expression is stated in the proof of Theorem \ref{AsympNormLocInvFreeThm}.

\begin{rem}
We conclude this section with a succinct discussion of the role of $\Theta_0$ in the optimization problem presented in Eq. \eqref{LIFAltForm}. The main results in this section can be generalized to the following constrained optimization problem
\begin{equation*}
\paren{\hat{\phi}_{n,\cc{B}}, \hat{\rho}_{n,\cc{B}}} = \argmax_{\phi>0, \rho\in\Theta_n} \paren{\phi Y^\top_m K^{\cc{B}}_{n,m}\paren{\rho} Y_m - \frac{\phi^2}{2}\LpNorm{K^{\cc{B}}_{n,m}\paren{\rho}}{2}^2}.
\end{equation*}
Here, $\set{\Theta_n}^{\infty}_{n=1}$ represents a class of nested subsets of $\paren{0,\infty}$, i.e., $\Theta_p\subseteq \Theta_q$ $\forall\;p\leq q$, whose diameter grows polynomially in $n$. Namely, $\diam\paren{\Theta_n}\lesssim n^\zeta$ for an arbitrary bounded scalar $\zeta \geq 0$. As sample size grows, such a formulation of the LIF algorithm demands less restrictive assumptions on the range parameter and bears more resemblance to an unconstrained maximization problem.  
\end{rem}

\section{Simulation studies}\label{SimulStud}

This section is devoted to appraising the computational and statistical properties of the LIF algorithm on synthetic stationary Gaussian process data\footnote{See Section 3.5 of \cite{keshavarz2017detection} for more complete numerical studies.}. The purpose of our study is two-fold: investigating the scalability and efficiency of the proposed method in large datasets, as well as corroborating the fixed-domain asymptotic theory presented in Section \ref{FxddomAnal}. We consider two different scenarios regarding the sample size $n$. In moderate-size settings which are designed for constructing confidence intervals of unknown parameters through independent experiments, $n = 10^4$. Moreover, large-scale simulations with $n = 2.5\times 10^5$ are conducted to study the numerical capabilities of the LIF algorithm, particularly when the exact and approximate evaluation of the likelihood function are extremely challenging. The computations have been performed on a UM Flux Ivy bridge compute node with $20$ cores (Intel Xeon processor) and $3$ GB memory per core. For expediting execution times of the simulations (up to $100$ times), the LIF algorithm has been implemented in \emph{C$++$} and \emph{R} using the \emph{RcppParallel}\footnote{https://cran.r-project.org/web/packages/RcppParallel/index.html} package.

Throughout this section $G$ is a real-valued stationary Matern GP observed on an irregularly spaced lattice $\cc{D}_n$. We consider two cases of isotropy and geometric anisotropy for the covariance function. For circumventing the obstacles of computing the Cholesky factorization of the covariance matrix, spectral methods are used for constructing $G$ on $\cc{D}_n$ \cite{keshavarz2016consistency}. We now concisely describe the geometry of $\cc{D}_n$. Let $\cc{D} = \brac{0,T}^2$ be a square of side-length $T$. $\cc{D}_n$ is a two dimensional randomly perturbed lattice of size $n = N^2$ if there exists a non-negative $\delta$, representing the perturbation parameter, such that for any point $\boldsymbol{t}\in\cc{D}_n$, there are a corresponding point in the regular lattice $\boldsymbol{s}\in\set{T/N,2T/N,\ldots,T}^2$ and a randomly chosen $\boldsymbol{p}\in\brac{-T/N,T/N}^2$ (with uniform distribution) for which $\boldsymbol{t} = \boldsymbol{s} + \delta\boldsymbol{p}$. The scalar quantity $\delta$ controls the amount of irregularity in the set of sampling locations.

Partitioning $\cc{D}_n$ into $b_n$ bins is necessary for implementing the LIF algorithm. For brevity the bins are labelled $1$ to $b_n$. In the following, we elucidate three schemes for constructing the bins.
\begin{enumerate}
\item \emph{Uniformly Chosen (UC) bins}: Any $\boldsymbol{s}\in\cc{D}_n$ is randomly assigned to a bin in $\set{1,\ldots,b_n}$ with a uniform distribution. So the average size of all bins are the same.
\item \emph{Non Uniformly Chosen (NUC) bins}: The points in $\cc{D}_n$ are independently assigned to bins labelled with $\set{1,\ldots,b_n}$, according to a non-uniform distribution $Q$. Throughout this section, we assume that $Q$ is proportional to $\brac{1,\ldots,1,2,\ldots,2}^\top$. For instance in the case that $b_n = 4$, an arbitrary $\brac{1/6,1/6,1/3,1/3}^\top$. Thus on average half of the bins are twice a big as the other half.
\item \emph{Rectangular bins}: $\cc{D}_n$ is segregated into $b_n$ rectangular subregions and all the points in each subregion belong to the same bin.
\end{enumerate}
Figure \ref{Fig:Fig2} illustrates the three methods of constructing subgroups for a randomly perturbed lattice of size $100$ and $\delta = 0.5$. For illustration, $b_n$ is chosen to be $4$ for each scenario in Figure \ref{Fig:Fig2}.

\begin{figure}
\centering
\subimport{/}{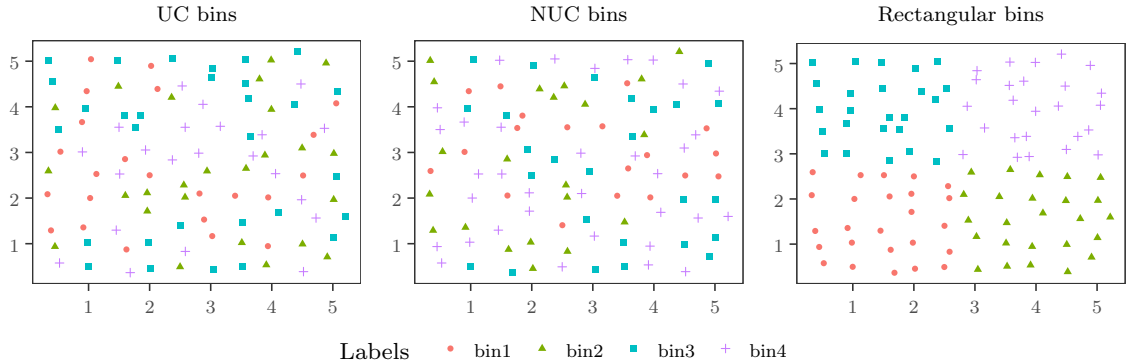}  
\vspace{-.11in}
\caption{Three partitioning schemes of $10^2$ points of a perturbed lattices on $\cc{D} = \brac{0,5}^2$ with $\delta = 0.5$}
\label{Fig:Fig2}
\end{figure}

We present three sets of simulation studies to assess the performance of the LIF algorithm. In all the experiments, $G$ is a Matern GP observed on a randomly perturbed lattice. The developed asymptotic insight in Section \ref{FxddomAnal} is rather limited, as it is restricted to isotropic GPs. Therefore we present two sets of numerical studies for evaluating the performance of our proposed method for the geometric anisotropic processes (multiple range parameters). Note that the claim in Remark \ref{Rem4.1} is not valid for geometric anistotropic GPs. In other words the profile LIF loss directly depends on range parameters and therefore needs to be numerically maximized. The L-BFGS-B (limited-memory BFGS with bound constraints \cite{byrd1995limited}) algorithm is utilized for maximizing the profile LIF loss. The finite difference approximation with step size $10^{-3}$ is used for computing the gradient. We stop the optimization procedure if either the relative change in the objective function is below $10^{-5}$ or it reaches $50$ iterations.

\subsection{Moderate-scale simulations for isotropic GPs} 

In all the experiments of this section, $\cc{D} = \brac{0,5}^2$ and $\cc{D}_n$ is a perturbed lattice with $\delta \in \set{1,3}$ and $100^2$ points, i.e. $n = 10^4$. We generate $100$ realizations of an isotropic Matern GP $G$ with parameters $\phi_0 = 1, \rho_0 = 5$, and $\nu = 0.5$ on $100$ independent realizations of $\cc{D}_n$. The preconditioning order $m = 2$ is chosen for satisfying the condition $m\geq \nu+d/2$ in the statement of Theorems \ref{CnstncyLocInvFreeThm} and \ref{AsympNormLocInvFreeThm}. Furthermore for any $\boldsymbol{s}\in\cc{D}_n$, $\cc{N}_m\paren{\boldsymbol{s}}$ consists of the seven closest points in $\cc{D}_n$ to $\boldsymbol{s}$ ($\abs{\cc{N}_m\paren{\boldsymbol{s}}} = 7$). For any $\boldsymbol{s}\in\cc{D}_n$, we adopt the following procedure for choosing the preconditioning coefficients $\set{a_{m,\boldsymbol{s}}\paren{\boldsymbol{t}}:\; \boldsymbol{t}\in\cc{N}_m\paren{\boldsymbol{s}} }$.
\begin{enumerate}
\item Let $a_{m,\boldsymbol{s}}\paren{\boldsymbol{s}} = 1$ and solve the system of linear equations introduced in the second condition of Definition \ref{DecorFltNonReglat} to compute $\set{a_{m,\boldsymbol{s}}\paren{\boldsymbol{t}}:\; \boldsymbol{t}\in\cc{N}_m\paren{\boldsymbol{s}}\setminus\boldsymbol{s} }$.
\item Each coefficient is normalized by dividing by the quantity $\sqrt{\sum_{\boldsymbol{t}\in \cc{N}_m\paren{\boldsymbol{s}}} a^2_{m,\boldsymbol{s}}\paren{\boldsymbol{t}}}$.
\end{enumerate}
The goal is to estimate  $\phi_0\rho^{-2\nu}_0$, which has the central role in the asymptotic analysis in Section \ref{FxddomAnal}. According to Theorems \ref{CnstncyLocInvFreeThm} and \ref{AsympNormLocInvFreeThm}, estimating $\rho_0$ is not necessary for the isotropic Matern covariance functions. In other words, $\rho$ can be fixed in the optimization problem in Eq. \eqref{LocACSOptProb}. Therefore we select $\rho = 10$ and maximize the LIF function with respect to $\phi$, i.e. $\hat{\rho}_{n,\cc{B}} = 10$. For each realization of $G$, $\hat{\phi}_{n,\cc{B}}$ is evaluated for $b_n\in\set{1,2,4,8,16}$ and three partitioning approaches UC, NUC, and rectangular. For brevity define  
\begin{equation}\label{StandEstimParam}
\hat{\xi}_{n,\cc{B}} = \frac{\hat{\phi}_{n,\cc{B}}\hat{\rho}^{-2\nu}_{n,\cc{B}}}{\phi_0\rho^{-2\nu}_0}.
\end{equation}
Theorem \ref{AsympNormLocInvFreeThm} suggests that $\hat{\xi}_{n,\cc{B}}$ is normally distributed centered at $1$. Figures \ref{Fig:Fig3} and \ref{Fig:Fig4} respectively exhibit the histogram of $\hat{\xi}_{n,\cc{B}}$ for the cases of $\delta = 1$ and $3$, different choices of $b_n$ and partitioning schemes. Each plot also shows a kernel density estimate (KDE) of the histogram for a simpler comparison with the normal distribution. Table \ref{Table1} presents the mean and standard deviation of each histogram in Figures \ref{Fig:Fig3} and \ref{Fig:Fig4}. According to Table \ref{Table1}, for different values of $\delta,b_n$ and bin shapes, $\hat{\xi}_{n,\cc{B}}$ is concentrated around $1$ with the bias of order $10^{-3}$ and the standard deviation near $0.04$, with a bell shaped density.

\begin{figure}[!htbp]
\centering
\subimport{/}{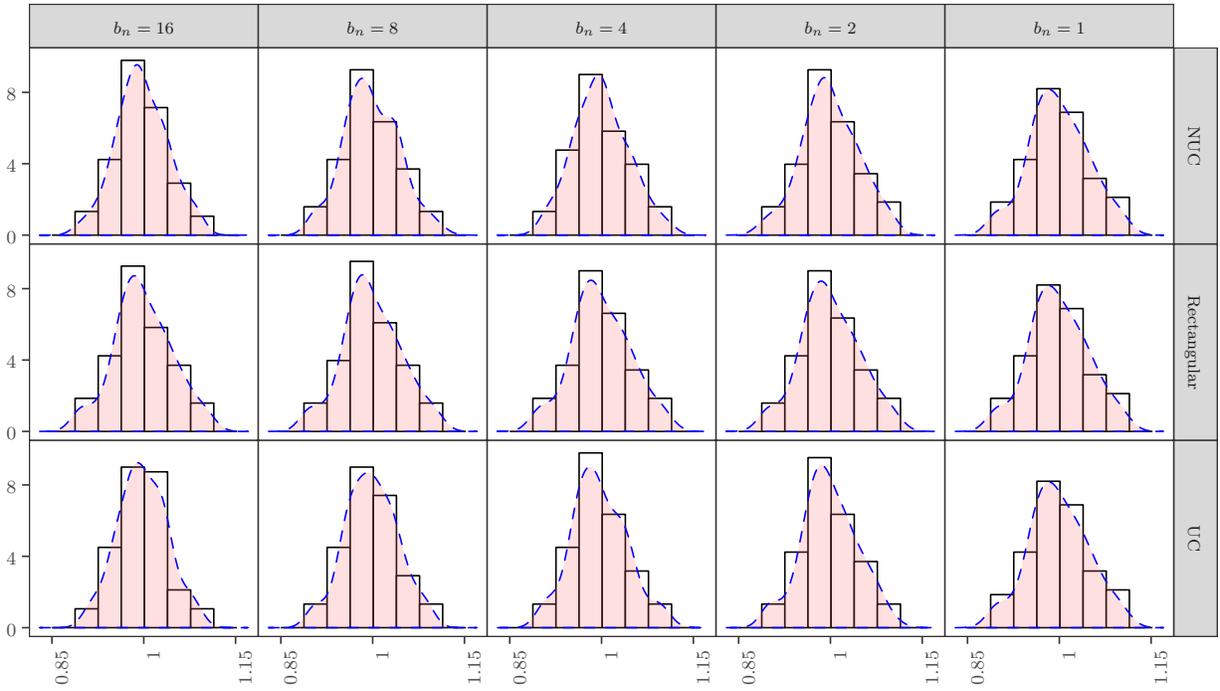}  
\vspace{-.23in}
\caption{The histogram of $\hat{\xi}_{n,\cc{B}}$ with $m=2$, $b_n=1,2,4,8,16$ and $3$ binning schemes for isotropic Matern GP with $\paren{\phi_0,\rho_0,\nu} = \paren{1,5,0.5}$ observed on a perturbed lattice with $\delta = 1$ and $n=10^4$.}
\label{Fig:Fig3}
\end{figure}

\begin{figure}[!htbp]
\centering
\subimport{/}{Histogram_NU05_Rho55_m2_Delta3.tikz}  
\vspace{-.23in}
\caption{The histogram of $\hat{\xi}_{n,\cc{B}}$ with $m=2$, $b_n=1,2,4,8,16$ and $3$ binning schemes for isotropic Matern GP with $\paren{\phi_0,\rho_0,\nu} = \paren{1,5,0.5}$ observed on a perturbed lattice with $\delta = 3$ and $n=10^4$.}
\label{Fig:Fig4}
\end{figure}

\vspace{3mm}
\begin{center} 
\begin{adjustbox}{width=0.98\columnwidth}
\begin{tabular}{cc|c|c|c|c|c|}
\cline{3-7} 
& & $b_n = 16$ & $b_n = 8$ & $b_n = 4$ & $b_n = 2$ & $b_n = 1$ \\ \cline{1-7}
\multicolumn{1}{|c}{\multirow{6}{*}{$\delta = 1$} } &
\multicolumn{1}{|c|}{\multirow{2}{*}{NUC}}  & 
\multicolumn{1}{|c|}{$\bb{E} \hat{\xi}_{n,\cc{B}} = 0.9968$} & $\bb{E} \hat{\xi}_{n,\cc{B}} = 0.9979$ & $\bb{E} \hat{\xi}_{n,\cc{B}} = 0.9993$ & $\bb{E} \hat{\xi}_{n,\cc{B}} = 0.9993$ & $\bb{E} \hat{\xi}_{n,\cc{B}} = 0.9990$  \\ 
\multicolumn{1}{|c|}{} & &
\multicolumn{1}{|c|}{$\sd \hat{\xi}_{n,\cc{B}} = 0.0417$} & $\sd \hat{\xi}_{n,\cc{B}} = 0.0442$ & $\sd \hat{\xi}_{n,\cc{B}} = 0.0448$ & $\sd \hat{\xi}_{n,\cc{B}} = 0.0459$ & $\sd \hat{\xi}_{n,\cc{B}} = 0.0481$  \\ \cline{2-7} 
\multicolumn{1}{|c|}{} &
\multicolumn{1}{|c|}{\multirow{2}{*}{Rectangular}}  & 
\multicolumn{1}{|c|}{$\bb{E} \hat{\xi}_{n,\cc{B}} = 0.9989$} & $\bb{E} \hat{\xi}_{n,\cc{B}} = 0.9990$ & $\bb{E} \hat{\xi}_{n,\cc{B}} = 0.9991$ & $\bb{E} \hat{\xi}_{n,\cc{B}} = 0.9992$ & $\bb{E} \hat{\xi}_{n,\cc{B}} = 0.9990$  \\ 
\multicolumn{1}{|c|}{} & &
\multicolumn{1}{|c|}{$\sd \hat{\xi}_{n,\cc{B}} = 0.0475$} & $\sd \hat{\xi}_{n,\cc{B}} = 0.0476$ & $\sd \hat{\xi}_{n,\cc{B}} = 0.0477$ & $\sd \hat{\xi}_{n,\cc{B}} = 0.0478$ & $\sd \hat{\xi}_{n,\cc{B}} = 0.0481$  \\ \cline{2-7} 
\multicolumn{1}{|c|}{} &
\multicolumn{1}{|c|}{\multirow{2}{*}{UC}}  & 
\multicolumn{1}{|c|}{$\bb{E} \hat{\xi}_{n,\cc{B}} = 0.9980$} & $\bb{E} \hat{\xi}_{n,\cc{B}} = 0.9980$ & $\bb{E} \hat{\xi}_{n,\cc{B}} = 0.9965$ & $\bb{E} \hat{\xi}_{n,\cc{B}} = 0.9984$ & $\bb{E} \hat{\xi}_{n,\cc{B}} = 0.9990$  \\ 
\multicolumn{1}{|c|}{} & &
\multicolumn{1}{|c|}{$\sd \hat{\xi}_{n,\cc{B}} = 0.0403$} & $\sd \hat{\xi}_{n,\cc{B}} = 0.0424$ & $\sd \hat{\xi}_{n,\cc{B}} = 0.0443$ & $\sd \hat{\xi}_{n,\cc{B}} = 0.0450$ & $\sd \hat{\xi}_{n,\cc{B}} = 0.0481$  \\ \cline{1-7} 
\multicolumn{1}{|c}{\multirow{6}{*}{$\delta = 3$} } &
\multicolumn{1}{|c|}{\multirow{2}{*}{NUC}}  & 
\multicolumn{1}{|c|}{$\bb{E} \hat{\xi}_{n,\cc{B}} = 0.9953$} & $\bb{E} \hat{\xi}_{n,\cc{B}} = 0.9962$ & $\bb{E} \hat{\xi}_{n,\cc{B}} = 0.9962$ & $\bb{E} \hat{\xi}_{n,\cc{B}} = 0.9965$ & $\bb{E} \hat{\xi}_{n,\cc{B}} = 0.9955$  \\ 
\multicolumn{1}{|c|}{} & &
\multicolumn{1}{|c|}{$\sd \hat{\xi}_{n,\cc{B}} = 0.0463$} & $\sd \hat{\xi}_{n,\cc{B}} = 0.0472$ & $\sd \hat{\xi}_{n,\cc{B}} = 0.0500$ & $\sd \hat{\xi}_{n,\cc{B}} = 0.0524$ & $\sd \hat{\xi}_{n,\cc{B}} = 0.0534$  \\ \cline{2-7} 
\multicolumn{1}{|c|}{} &
\multicolumn{1}{|c|}{\multirow{2}{*}{Rectangular}}  & 
\multicolumn{1}{|c|}{$\bb{E} \hat{\xi}_{n,\cc{B}} = 0.9955$} & $\bb{E} \hat{\xi}_{n,\cc{B}} = 0.9953$ & $\bb{E} \hat{\xi}_{n,\cc{B}} = 0.9954$ & $\bb{E} \hat{\xi}_{n,\cc{B}} = 0.9954$ & $\bb{E} \hat{\xi}_{n,\cc{B}} = 0.9955$  \\ 
\multicolumn{1}{|c|}{} & &
\multicolumn{1}{|c|}{$ \sd_{n,\cc{B}} = 0.0536$} & $\sd \hat{\xi}_{n,\cc{B}} = 0.0536$ & $\sd \hat{\xi}_{n,\cc{B}} = 0.0534$ & $\sd \hat{\xi}_{n,\cc{B}} = 0.0535$ & $\sd \hat{\xi}_{n,\cc{B}} = 0.0534$  \\ \cline{2-7} 
\multicolumn{1}{|c|}{} &
\multicolumn{1}{|c|}{\multirow{2}{*}{UC}}  & 
\multicolumn{1}{|c|}{$\bb{E} \hat{\xi}_{n,\cc{B}} = 0.9966$} & $\bb{E} \hat{\xi}_{n,\cc{B}} = 0.9954$ & $\bb{E} \hat{\xi}_{n,\cc{B}} = 0.9954$ & $\bb{E} \hat{\xi}_{n,\cc{B}} = 0.9952$ & $\bb{E} \hat{\xi}_{n,\cc{B}} = 0.9955$  \\ 
\multicolumn{1}{|c|}{} & &
\multicolumn{1}{|c|}{$\sd \hat{\xi}_{n,\cc{B}} = 0.0456$} & $\sd \hat{\xi}_{n,\cc{B}} = 0.0465$ & $\sd \hat{\xi}_{n,\cc{B}} = 0.0496$ & $\sd \hat{\xi}_{n,\cc{B}} = 0.0513$ & $\sd \hat{\xi}_{n,\cc{B}} = 0.0534$  \\ \cline{1-7}
\end{tabular}
\end{adjustbox}
\captionof{table}{The mean and standard deviation of $\hat{\xi}_{n,\cc{B}}$ exhibited in histograms in Figures \ref{Fig:Fig3} and \ref{Fig:Fig4}.}
\label{Table1}
\end{center}  

\vspace{2mm}

Next we conduct the same experiment on a smoother isotropic Matern GP with $\phi_0 = 1, \rho_0 = 2.5$, and $\nu = 1$. We seek to gauge the sensitivity of our estimation algorithm to the preconditioning order $m$ by considering two cases of $m=2$ and $3$. Notice that the condition $m\geq \nu+d/2$ holds for both choices of $m$. However evaluating the LIF loss is a more difficult task for $m = 3$ because of dealing with larger conditioning sets ($\abs{\cc{N}_3\paren{\boldsymbol{s}}} = 11$ for any $\boldsymbol{s}\in\cc{D}_n$). Table \ref{Table2} summarizes the mean and standard deviation of $\hat{\xi}_{n,\cc{B}}$ for the different choices of $m, b_n, \delta$, and partitioning schemes.

\vspace{3mm}
\begin{center} 
\begin{adjustbox}{width=0.95\columnwidth}
\begin{tabular}{ccc|c|c|c|c|c|}
\cline{4-8}
& & & $b_n = 16$ & $b_n = 8$ & $b_n = 4$ & $b_n = 2$ & $b_n = 1$ \\ \cline{1-8}
\multicolumn{1}{|c}{\multirow{12}{*}{$m=2$} } &
\multicolumn{1}{|c}{\multirow{6}{*}{$\delta = 1$} } &
\multicolumn{1}{|c}{\multirow{2}{*}{NUC}}  & 
\multicolumn{1}{|c|}{$\bb{E} \hat{\xi}_{n,\cc{B}} = 1.0465$} & $\bb{E} \hat{\xi}_{n,\cc{B}} = 1.0459$ & $\bb{E} \hat{\xi}_{n,\cc{B}} = 1.0478$ & $\bb{E} \hat{\xi}_{n,\cc{B}} = 1.0481$ & $\bb{E} \hat{\xi}_{n,\cc{B}} = 1.0489$  \\ 
\multicolumn{1}{|c}{} & \multicolumn{1}{|c}{} & \multicolumn{1}{|c}{} &
\multicolumn{1}{|c|}{$\sd \hat{\xi}_{n,\cc{B}} = 0.3188$} & $\sd \hat{\xi}_{n,\cc{B}} = 0.3222$ & $\sd \hat{\xi}_{n,\cc{B}} = 0.3315$ & $\sd \hat{\xi}_{n,\cc{B}} = 0.3439$ & $\sd \hat{\xi}_{n,\cc{B}} = 0.3555$  \\ \cline{3-8} 
\multicolumn{1}{|c}{} & \multicolumn{1}{|c}{} & 
\multicolumn{1}{|c}{\multirow{2}{*}{Rectangular}}  & 
\multicolumn{1}{|c|}{$\bb{E} \hat{\xi}_{n,\cc{B}} = 1.0491$} & $\bb{E} \hat{\xi}_{n,\cc{B}} = 1.0489$ & $\bb{E} \hat{\xi}_{n,\cc{B}} = 1.0487$ & $\bb{E} \hat{\xi}_{n,\cc{B}} = 1.0491$ & $\bb{E} \hat{\xi}_{n,\cc{B}} = 1.04889$  \\ 
\multicolumn{1}{|c}{} & \multicolumn{1}{|c}{} &  \multicolumn{1}{|c}{} &
\multicolumn{1}{|c|}{$\sd \hat{\xi}_{n,\cc{B}} = 0.3548$} & $\sd \hat{\xi}_{n,\cc{B}} = 0.3550$ & $\sd \hat{\xi}_{n,\cc{B}} = 0.3554$ & $\sd \hat{\xi}_{n,\cc{B}} = 0.3556$ & $\sd \hat{\xi}_{n,\cc{B}} = 0.3555$  \\ \cline{3-8} 
\multicolumn{1}{|c}{} & \multicolumn{1}{|c}{}  &
\multicolumn{1}{|c}{\multirow{2}{*}{UC}}  & 
\multicolumn{1}{|c|}{$\bb{E} \hat{\xi}_{n,\cc{B}} = 1.0458$} & $\bb{E} \hat{\xi}_{n,\cc{B}} = 1.0464$ & $\bb{E} \hat{\xi}_{n,\cc{B}} = 1.0470$ & $\bb{E} \hat{\xi}_{n,\cc{B}} = 1.0488$ & $\bb{E} \hat{\xi}_{n,\cc{B}} = 1.0489$  \\ 
\multicolumn{1}{|c}{} & \multicolumn{1}{|c}{} & \multicolumn{1}{|c}{} &
\multicolumn{1}{|c|}{$\sd \hat{\xi}_{n,\cc{B}} = 0.3173$} & $\sd \hat{\xi}_{n,\cc{B}} = 0.3215$ & $\sd \hat{\xi}_{n,\cc{B}} = 0.3289$ & $\sd \hat{\xi}_{n,\cc{B}} = 0.3418$ & $\sd \hat{\xi}_{n,\cc{B}} = 0.3555$  \\ \cline{2-8} 
\multicolumn{1}{|c}{} &
\multicolumn{1}{|c}{\multirow{6}{*}{$\delta = 3$} } &
\multicolumn{1}{|c}{\multirow{2}{*}{NUC}}  & 
\multicolumn{1}{|c|}{$\bb{E} \hat{\xi}_{n,\cc{B}} = 1.0302$} & $\bb{E} \hat{\xi}_{n,\cc{B}} = 1.0315$ & $\bb{E} \hat{\xi}_{n,\cc{B}} = 1.0329$ & $\bb{E} \hat{\xi}_{n,\cc{B}} = 1.0366$ & $\bb{E} \hat{\xi}_{n,\cc{B}} = 1.0393$  \\ 
\multicolumn{1}{|c}{} & \multicolumn{1}{|c}{} & \multicolumn{1}{|c}{} &
\multicolumn{1}{|c|}{$\sd \hat{\xi}_{n,\cc{B}} = 0.3790$} & $\sd \hat{\xi}_{n,\cc{B}} = 0.3847$ & $\sd \hat{\xi}_{n,\cc{B}} = 0.4926$ & $\sd \hat{\xi}_{n,\cc{B}} = 0.4075$ & $\sd \hat{\xi}_{n,\cc{B}} = 0.4105$  \\ \cline{3-8} 
\multicolumn{1}{|c}{} & \multicolumn{1}{|c}{} &
\multicolumn{1}{|c}{\multirow{2}{*}{Rectangular}}  & 
\multicolumn{1}{|c|}{$\bb{E} \hat{\xi}_{n,\cc{B}} = 1.0396$} & $\bb{E} \hat{\xi}_{n,\cc{B}} = 1.0392$ & $\bb{E} \hat{\xi}_{n,\cc{B}} = 1.0393$ & $\bb{E} \hat{\xi}_{n,\cc{B}} = 1.0394$ & $\bb{E} \hat{\xi}_{n,\cc{B}} = 1.0393$  \\ 
\multicolumn{1}{|c}{} & \multicolumn{1}{|c}{} & \multicolumn{1}{|c}{} &
\multicolumn{1}{|c|}{$ \sd_{n,\cc{B}} = 0.4196$} & $\sd \hat{\xi}_{n,\cc{B}} = 0.4196$ & $\sd \hat{\xi}_{n,\cc{B}} = 0.4201$ & $\sd \hat{\xi}_{n,\cc{B}} = 0.4204$ & $\sd \hat{\xi}_{n,\cc{B}} = 0.4105$  \\ \cline{3-8} 
\multicolumn{1}{|c}{} & \multicolumn{1}{|c}{} &
\multicolumn{1}{|c}{\multirow{2}{*}{UC}}  & 
\multicolumn{1}{|c|}{$\bb{E} \hat{\xi}_{n,\cc{B}} = 1.0304$} & $\bb{E} \hat{\xi}_{n,\cc{B}} = 1.0323$ & $\bb{E} \hat{\xi}_{n,\cc{B}} = 1.0337$ & $\bb{E} \hat{\xi}_{n,\cc{B}} = 1.0363$ & $\bb{E} \hat{\xi}_{n,\cc{B}} = 1.0393$  \\ 
\multicolumn{1}{|c}{} & \multicolumn{1}{|c}{} & \multicolumn{1}{|c}{} &
\multicolumn{1}{|c|}{$\sd \hat{\xi}_{n,\cc{B}} = 0.3789$} & $\sd \hat{\xi}_{n,\cc{B}} = 0.3846$ & $\sd \hat{\xi}_{n,\cc{B}} = 0.3927$ & $\sd \hat{\xi}_{n,\cc{B}} = 0.4048$ & $\sd \hat{\xi}_{n,\cc{B}} = 0.4105$  \\ \cline{1-8}
\multicolumn{1}{|c}{\multirow{12}{*}{$m=3$} } &
\multicolumn{1}{|c}{\multirow{6}{*}{$\delta = 1$} } &
\multicolumn{1}{|c}{\multirow{2}{*}{NUC}}  & 
\multicolumn{1}{|c|}{$\bb{E} \hat{\xi}_{n,\cc{B}} = 1.0237$} & $\bb{E} \hat{\xi}_{n,\cc{B}} = 1.0237$ & $\bb{E} \hat{\xi}_{n,\cc{B}} = 1.0262$ & $\bb{E} \hat{\xi}_{n,\cc{B}} = 1.0279$ & $\bb{E} \hat{\xi}_{n,\cc{B}} = 1.0315$  \\ 
\multicolumn{1}{|c}{} & \multicolumn{1}{|c}{} & \multicolumn{1}{|c}{} &
\multicolumn{1}{|c|}{$\sd \hat{\xi}_{n,\cc{B}} = 0.4104$} & $\sd \hat{\xi}_{n,\cc{B}} = 0.4177$ & $\sd \hat{\xi}_{n,\cc{B}} = 0.4285$ & $\sd \hat{\xi}_{n,\cc{B}} = 0.4464$ & $\sd \hat{\xi}_{n,\cc{B}} = 0.4635$  \\ \cline{3-8} 
\multicolumn{1}{|c}{} & \multicolumn{1}{|c}{} & 
\multicolumn{1}{|c}{\multirow{2}{*}{Rectangular}}  & 
\multicolumn{1}{|c|}{$\bb{E} \hat{\xi}_{n,\cc{B}} = 1.0311$} & $\bb{E} \hat{\xi}_{n,\cc{B}} = 1.0312$ & $\bb{E} \hat{\xi}_{n,\cc{B}} = 1.0313$ & $\bb{E} \hat{\xi}_{n,\cc{B}} = 1.0316$ & $\bb{E} \hat{\xi}_{n,\cc{B}} = 1.0315$  \\ 
\multicolumn{1}{|c}{} & \multicolumn{1}{|c}{} &  \multicolumn{1}{|c}{} &
\multicolumn{1}{|c|}{$\sd \hat{\xi}_{n,\cc{B}} = 0.4616$} & $\sd \hat{\xi}_{n,\cc{B}} = 0.4620$ & $\sd \hat{\xi}_{n,\cc{B}} = 0.4626$ & $\sd \hat{\xi}_{n,\cc{B}} = 0.4633$ & $\sd \hat{\xi}_{n,\cc{B}} = 0.4635$  \\ \cline{3-8} 
\multicolumn{1}{|c}{} & \multicolumn{1}{|c}{}  &
\multicolumn{1}{|c}{\multirow{2}{*}{UC}}  & 
\multicolumn{1}{|c|}{$\bb{E} \hat{\xi}_{n,\cc{B}} = 1.0232$} & $\bb{E} \hat{\xi}_{n,\cc{B}} = 1.0239$ & $\bb{E} \hat{\xi}_{n,\cc{B}} = 1.0267$ & $\bb{E} \hat{\xi}_{n,\cc{B}} = 1.0296$ & $\bb{E} \hat{\xi}_{n,\cc{B}} = 1.0315$  \\ 
\multicolumn{1}{|c}{} & \multicolumn{1}{|c}{} & \multicolumn{1}{|c}{} &
\multicolumn{1}{|c|}{$\sd \hat{\xi}_{n,\cc{B}} = 0.4096$} & $\sd \hat{\xi}_{n,\cc{B}} = 0.4156$ & $\sd \hat{\xi}_{n,\cc{B}} = 0.41275$ & $\sd \hat{\xi}_{n,\cc{B}} = 0.4463$ & $\sd \hat{\xi}_{n,\cc{B}} = 0.4635$  \\ \cline{2-8} 
\multicolumn{1}{|c}{} &
\multicolumn{1}{|c}{\multirow{6}{*}{$\delta = 3$} } &
\multicolumn{1}{|c}{\multirow{2}{*}{NUC}}  & 
\multicolumn{1}{|c|}{$\bb{E} \hat{\xi}_{n,\cc{B}} = 1.0206$} & $\bb{E} \hat{\xi}_{n,\cc{B}} = 1.0228$ & $\bb{E} \hat{\xi}_{n,\cc{B}} = 1.0223$ & $\bb{E} \hat{\xi}_{n,\cc{B}} = 1.0255$ & $\bb{E} \hat{\xi}_{n,\cc{B}} = 1.0271$  \\ 
\multicolumn{1}{|c}{} & \multicolumn{1}{|c}{} & \multicolumn{1}{|c}{} &
\multicolumn{1}{|c|}{$\sd \hat{\xi}_{n,\cc{B}} = 0.3771$} & $\sd \hat{\xi}_{n,\cc{B}} = 0.3835$ & $\sd \hat{\xi}_{n,\cc{B}} = 0.3934$ & $\sd \hat{\xi}_{n,\cc{B}} = 0.4069$ & $\sd \hat{\xi}_{n,\cc{B}} = 0.4216$  \\ \cline{3-8} 
\multicolumn{1}{|c}{} & \multicolumn{1}{|c}{} &
\multicolumn{1}{|c}{\multirow{2}{*}{Rectangular}}  & 
\multicolumn{1}{|c|}{$\bb{E} \hat{\xi}_{n,\cc{B}} = 1.0271$} & $\bb{E} \hat{\xi}_{n,\cc{B}} = 1.0276$ & $\bb{E} \hat{\xi}_{n,\cc{B}} = 1.0274$ & $\bb{E} \hat{\xi}_{n,\cc{B}} = 1.0273$ & $\bb{E} \hat{\xi}_{n,\cc{B}} = 1.0271$  \\ 
\multicolumn{1}{|c}{} & \multicolumn{1}{|c}{} & \multicolumn{1}{|c}{} &
\multicolumn{1}{|c|}{$ \sd_{n,\cc{B}} = 0.4202$} & $\sd \hat{\xi}_{n,\cc{B}} = 0.4215$ & $\sd \hat{\xi}_{n,\cc{B}} = 0.4219$ & $\sd \hat{\xi}_{n,\cc{B}} = 0.4218$ & $\sd \hat{\xi}_{n,\cc{B}} = 0.4216$  \\ \cline{3-8} 
\multicolumn{1}{|c}{} & \multicolumn{1}{|c}{} &
\multicolumn{1}{|c}{\multirow{2}{*}{UC}}  & 
\multicolumn{1}{|c|}{$\bb{E} \hat{\xi}_{n,\cc{B}} = 1.0214$} & $\bb{E} \hat{\xi}_{n,\cc{B}} = 1.0204$ & $\bb{E} \hat{\xi}_{n,\cc{B}} = 1.02037$ & $\bb{E} \hat{\xi}_{n,\cc{B}} = 1.0249$ & $\bb{E} \hat{\xi}_{n,\cc{B}} = 1.0271$  \\ 
\multicolumn{1}{|c}{} & \multicolumn{1}{|c}{} & \multicolumn{1}{|c}{} &
\multicolumn{1}{|c|}{$\sd \hat{\xi}_{n,\cc{B}} = 0.3764$} & $\sd \hat{\xi}_{n,\cc{B}} = 0.3798$ & $\sd \hat{\xi}_{n,\cc{B}} = 0.3921$ & $\sd \hat{\xi}_{n,\cc{B}} = 0.4045$ & $\sd \hat{\xi}_{n,\cc{B}} = 0.4216$  \\ \cline{1-8}
\end{tabular}
\end{adjustbox}
\captionof{table}{The mean and standard deviation of $\hat{\xi}_{n,\cc{B}}$ in experiments with $m = 2, 3$, $b_n = 1, 2, 4, 8, 16$ and $3$ binning schemes for isotropic Matern GP with $\paren{\phi_0,\rho_0,\nu} = \paren{1,2.5,1}$ observed on a perturbed lattice with $\delta = 1, 3$.} 
\label{Table2}
\end{center}

\begin{rem}\label{Rem5.1}
The above experiments explicate some aspects of the LIF method which were not thoroughly explained by the asymptotic theory. In the following we list some critical observations of the simulation studies in this section.
\begin{enumerate}[label = (\alph*),leftmargin=*]
\item In most of the entries in Tables \ref{Table1} and \ref{Table2}, the bias of $\hat{\xi}_{n,\cc{B}}$ is considerably smaller than its standard deviation. We have shown that (see the proof of Theorem \ref{CnstncyLocInvFreeThm} for further details) for isotropic Matern GPs observed in a $d$-dimensional space
\begin{equation*}
\bb{E} \hat{\xi}_{n,\cc{B}} - 1 = \cc{O}\paren{n^{-2/d}},\;\mbox{and}\quad \sd \hat{\xi}_{n,\cc{B}} = \cc{O}\paren{n^{-1/2}}.
\end{equation*}
So for $d=2$, the bias to standard deviation ratio is order $n^{-1/2}$, converging to zero as $n\rightarrow\infty$.
\item As long as $m$ is chosen to satisfy $m\geq \nu+d/2$, increasing the preconditioning order does not improve the estimation performance. On the other hand larger $m$ requires more challenging computation for evaluating the LIF loss function. So choosing $m = \lceil \nu+d/2 \rceil$ can optimally balance between statistical efficiency and computational tractability. 
\item Comparing the results in Tables \ref{Table1} and \ref{Table2} shows that $\hat{\xi}_{n,\cc{B}}$ has larger bias and standard deviation for $\nu = 1$. Namely estimating $\phi_0\rho^{-2\nu}_0$ is more difficult when $\nu = 1$. We give a qualitative justification for this phenomenon. It has been argued in Remark \ref{Rem4.3} that the LIF algorithm is consistent when the largest eigenvalue of $K^{\cc{B}}_{n,m}\paren{\cdot}$ is uniformly bounded (independent of $n$) and its Frobenius norm is of order $\sqrt{n}$. Simply put, the effective rank of $K^{\cc{B}}_{n,m}\paren{\cdot}$ should be of order $n$. Define the quantity $\Psi^{\cc{B}}_{n,m}$ as
\begin{equation*}
\Psi^{\cc{B}}_{n,m} \coloneqq \frac{ \OpNorm{K^{\cc{B}}_{n,m}}{2}{2}\sqrt{n} }{ \LpNorm{K^{\cc{B}}_{n,m}}{2} },
\end{equation*}
Observe that $\Psi^{\cc{B}}_{n,m}$ is no smaller than $1$ and attains its minimum for the identity matrix. If $K^{\cc{B}}_{n,m}\paren{\cdot}$ can be well approximated by a rank deficient matrix of rank $r_n = o\paren{n}$, then $\Psi^{\cc{B}}_{n,m}$ grows with the same rate as $\sqrt{n/r_n}$. So roughly speaking the LIF algorithm works better for smaller $\Psi^{\cc{B}}_{n,m}$. Here we compare $\Psi^{\cc{B}}_{n,m}$ for the two cases of $\nu = 0.5$ and $1$. For avoiding the computational challenges of evaluating the operator norm of large matrices, we focus on smaller size perturbed grids on $\cc{D} = \brac{0,2.5}^2$ of size $2500$ ($N = 50$) and with $\delta\in\paren{0.5,1.5}$. The range parameter of $G$ is assumed to be $\rho_0 = 1.25$. Note that $\rho_0$, the diameter of $\cc{D}$ and $\delta$ have been chosen in such a way that the lattice of size $50^2$ imitates the local neighbouring properties of $\cc{D}_n$ in Tables \ref{Table1} and \ref{Table2}. Figure \ref{Fig:Fig11} displays $\Psi^{\cc{B}}_{n,m}$ in four different scenarios of $\paren{\nu,\delta}$. It is apparent that $\Psi^{\cc{B}}_{n,m}$ is always larger for $\nu = 1$, which can explain the higher bias and variance of the LIF estimate.
\begin{figure}[!htbp]
\centering
\begin{tikzpicture}[x=1pt,y=1pt]
\definecolor{fillColor}{RGB}{255,255,255}
\path[use as bounding box,fill=fillColor,fill opacity=0.00] (0,0) rectangle (469.75,231.26);
\begin{scope}
\path[clip] (  0.00,  0.00) rectangle (469.75,231.26);
\definecolor{drawColor}{RGB}{255,255,255}
\definecolor{fillColor}{RGB}{255,255,255}

\path[draw=drawColor,line width= 0.6pt,line join=round,line cap=round,fill=fillColor] (  0.00,  0.00) rectangle (469.76,231.26);
\end{scope}
\begin{scope}
\path[clip] ( 16.51, 16.51) rectangle (464.25,225.76);
\definecolor{fillColor}{RGB}{255,255,255}

\path[fill=fillColor] ( 16.51, 16.51) rectangle (464.25,225.76);
\definecolor{drawColor}{gray}{0.92}

\path[draw=drawColor,line width= 0.6pt,line join=round] ( 16.51, 52.44) --
	(464.25, 52.44);

\path[draw=drawColor,line width= 0.6pt,line join=round] ( 16.51,105.28) --
	(464.25,105.28);

\path[draw=drawColor,line width= 0.6pt,line join=round] ( 16.51,158.13) --
	(464.25,158.13);

\path[draw=drawColor,line width= 0.6pt,line join=round] ( 16.51,210.97) --
	(464.25,210.97);

\path[draw=drawColor,line width= 0.6pt,line join=round] ( 80.47, 16.51) --
	( 80.47,225.76);

\path[draw=drawColor,line width= 0.6pt,line join=round] (187.08, 16.51) --
	(187.08,225.76);

\path[draw=drawColor,line width= 0.6pt,line join=round] (293.69, 16.51) --
	(293.69,225.76);

\path[draw=drawColor,line width= 0.6pt,line join=round] (400.29, 16.51) --
	(400.29,225.76);
\definecolor{drawColor}{RGB}{255,0,0}

\path[draw=drawColor,line width= 0.4pt,line join=round,line cap=round] ( 80.47, 56.31) circle (  1.96);

\path[draw=drawColor,line width= 0.4pt,line join=round,line cap=round] ( 80.47, 55.62) circle (  1.96);

\path[draw=drawColor,line width= 0.4pt,line join=round,line cap=round] ( 80.47, 61.71) circle (  1.96);

\path[draw=drawColor,line width= 0.4pt,line join=round,line cap=round] ( 80.47, 61.09) circle (  1.96);
\definecolor{drawColor}{gray}{0.20}

\path[draw=drawColor,line width= 0.6pt,line join=round] ( 80.47, 39.19) -- ( 80.47, 50.64);

\path[draw=drawColor,line width= 0.6pt,line join=round] ( 80.47, 28.92) -- ( 80.47, 26.65);

\path[draw=drawColor,line width= 0.6pt,line join=round,line cap=round,fill=fillColor] ( 40.50, 39.19) --
	( 40.50, 28.92) --
	(120.45, 28.92) --
	(120.45, 39.19) --
	( 40.50, 39.19) --
	cycle;

\path[draw=drawColor,line width= 1.1pt,line join=round] ( 40.50, 33.27) -- (120.45, 33.27);
\definecolor{drawColor}{RGB}{255,0,0}

\path[draw=drawColor,line width= 0.4pt,line join=round,line cap=round] (187.08, 76.44) circle (  1.96);

\path[draw=drawColor,line width= 0.4pt,line join=round,line cap=round] (187.08, 86.77) circle (  1.96);
\definecolor{drawColor}{gray}{0.20}

\path[draw=drawColor,line width= 0.6pt,line join=round] (187.08, 53.34) -- (187.08, 75.45);

\path[draw=drawColor,line width= 0.6pt,line join=round] (187.08, 38.58) -- (187.08, 30.45);

\path[draw=drawColor,line width= 0.6pt,line join=round,line cap=round,fill=fillColor] (147.10, 53.34) --
	(147.10, 38.58) --
	(227.06, 38.58) --
	(227.06, 53.34) --
	(147.10, 53.34) --
	cycle;

\path[draw=drawColor,line width= 1.1pt,line join=round] (147.10, 43.26) -- (227.06, 43.26);
\definecolor{drawColor}{RGB}{255,0,0}

\path[draw=drawColor,line width= 0.4pt,line join=round,line cap=round] (293.69,165.95) circle (  1.96);
\definecolor{drawColor}{gray}{0.20}

\path[draw=drawColor,line width= 0.6pt,line join=round] (293.69, 77.44) -- (293.69,112.67);

\path[draw=drawColor,line width= 0.6pt,line join=round] (293.69, 51.20) -- (293.69, 31.35);

\path[draw=drawColor,line width= 0.6pt,line join=round,line cap=round,fill=fillColor] (253.71, 77.44) --
	(253.71, 51.20) --
	(333.66, 51.20) --
	(333.66, 77.44) --
	(253.71, 77.44) --
	cycle;

\path[draw=drawColor,line width= 1.1pt,line join=round] (253.71, 63.46) -- (333.66, 63.46);
\definecolor{drawColor}{RGB}{255,0,0}

\path[draw=drawColor,line width= 0.4pt,line join=round,line cap=round] (400.29,187.69) circle (  1.96);

\path[draw=drawColor,line width= 0.4pt,line join=round,line cap=round] (400.29,211.04) circle (  1.96);

\path[draw=drawColor,line width= 0.4pt,line join=round,line cap=round] (400.29,203.17) circle (  1.96);

\path[draw=drawColor,line width= 0.4pt,line join=round,line cap=round] (400.29,208.19) circle (  1.96);
\definecolor{drawColor}{gray}{0.20}

\path[draw=drawColor,line width= 0.6pt,line join=round] (400.29,127.77) -- (400.29,172.26);

\path[draw=drawColor,line width= 0.6pt,line join=round] (400.29, 89.67) -- (400.29, 51.03);

\path[draw=drawColor,line width= 0.6pt,line join=round,line cap=round,fill=fillColor] (360.31,127.77) --
	(360.31, 89.67) --
	(440.27, 89.67) --
	(440.27,127.77) --
	(360.31,127.77) --
	cycle;

\path[draw=drawColor,line width= 1.1pt,line join=round] (360.31,107.53) -- (440.27,107.53);

\path[draw=drawColor,line width= 0.6pt,line join=round,line cap=round] ( 16.51, 16.51) rectangle (464.25,225.76);
\end{scope}
\begin{scope}
\path[clip] (  0.00,  0.00) rectangle (469.75,231.26);
\definecolor{drawColor}{gray}{0.30}

\node[text=drawColor,rotate= 90.00,anchor=base,inner sep=0pt, outer sep=0pt, scale=  0.88] at ( 11.56, 52.44) {5};

\node[text=drawColor,rotate= 90.00,anchor=base,inner sep=0pt, outer sep=0pt, scale=  0.88] at ( 11.56,105.28) {7};

\node[text=drawColor,rotate= 90.00,anchor=base,inner sep=0pt, outer sep=0pt, scale=  0.88] at ( 11.56,158.13) {9};

\node[text=drawColor,rotate= 90.00,anchor=base,inner sep=0pt, outer sep=0pt, scale=  0.88] at ( 11.56,210.97) {11};
\end{scope}
\begin{scope}
\path[clip] (  0.00,  0.00) rectangle (469.75,231.26);
\definecolor{drawColor}{gray}{0.20}

\path[draw=drawColor,line width= 0.6pt,line join=round] ( 13.76, 52.44) --
	( 16.51, 52.44);

\path[draw=drawColor,line width= 0.6pt,line join=round] ( 13.76,105.28) --
	( 16.51,105.28);

\path[draw=drawColor,line width= 0.6pt,line join=round] ( 13.76,158.13) --
	( 16.51,158.13);

\path[draw=drawColor,line width= 0.6pt,line join=round] ( 13.76,210.97) --
	( 16.51,210.97);
\end{scope}
\begin{scope}
\path[clip] (  0.00,  0.00) rectangle (469.75,231.26);
\definecolor{drawColor}{gray}{0.20}

\path[draw=drawColor,line width= 0.6pt,line join=round] ( 80.47, 13.76) --
	( 80.47, 16.51);

\path[draw=drawColor,line width= 0.6pt,line join=round] (187.08, 13.76) --
	(187.08, 16.51);

\path[draw=drawColor,line width= 0.6pt,line join=round] (293.69, 13.76) --
	(293.69, 16.51);

\path[draw=drawColor,line width= 0.6pt,line join=round] (400.29, 13.76) --
	(400.29, 16.51);
\end{scope}
\begin{scope}
\path[clip] (  0.00,  0.00) rectangle (469.75,231.26);
\definecolor{drawColor}{gray}{0.30}

\node[text=drawColor,anchor=base,inner sep=0pt, outer sep=0pt, scale=  0.88] at ( 80.47,  5.50) {$\left(\nu,\delta\right) = \left(0.5,0.5\right)$};

\node[text=drawColor,anchor=base,inner sep=0pt, outer sep=0pt, scale=  0.88] at (187.08,  5.50) {$\left(\nu,\delta\right) = \left(0.5,1.5\right)$};

\node[text=drawColor,anchor=base,inner sep=0pt, outer sep=0pt, scale=  0.88] at (293.69,  5.50) {$\left(\nu,\delta\right) = \left(1,0.5\right)$};

\node[text=drawColor,anchor=base,inner sep=0pt, outer sep=0pt, scale=  0.88] at (400.29,  5.50) {$\left(\nu,\delta\right) = \left(1,1.5\right)$};
\end{scope}
\end{tikzpicture} 
\vspace{-.1in}
\caption{The box-plot of $\Psi_{n,m}$ for different values of $\delta$ and $\nu$. Here $\cc{D}_n$ is a perturbed lattice of size $2500$ and $G$ is an isotropic Matern GP with $\phi_0 = 1$ and $\rho_0 = 1.25$.}
\label{Fig:Fig11}
\end{figure}
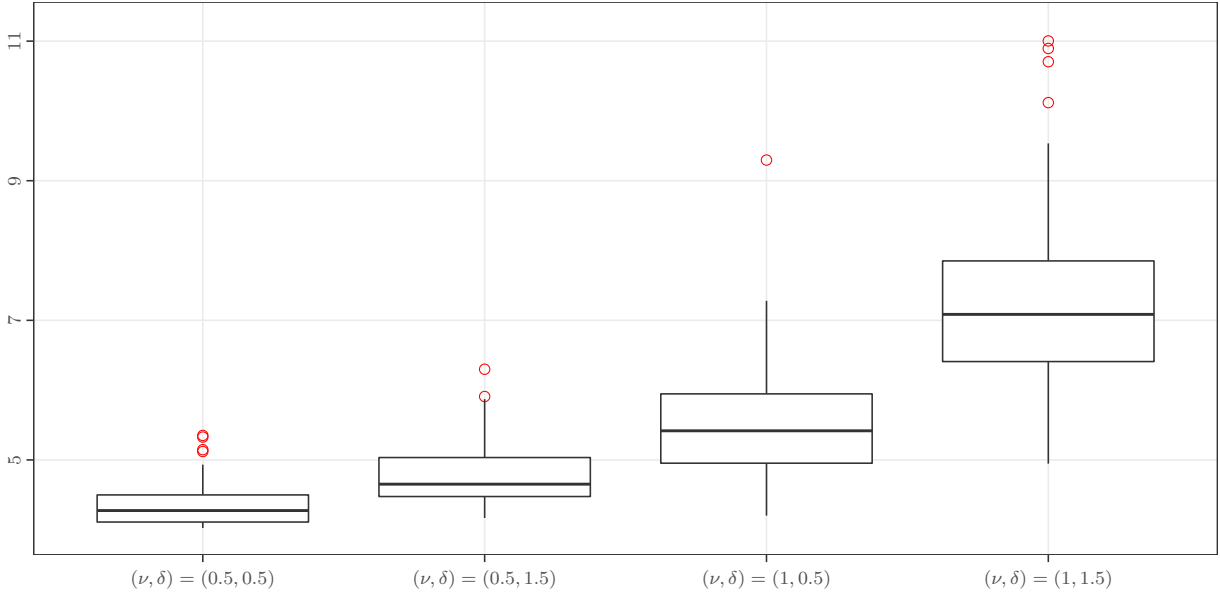 
\end{enumerate}
\end{rem} 

Now we gauge the asymptotic behaviour of the LIF estimate. For doing so we generate $100$ independent realizations of an isotropic Matern GP with $\paren{\phi_0,\rho_0,\nu} = \paren{1,5,0.5}$ on $100$ independently generated perturbed lattices of size $n = N^2$ and with $\delta\in\set{1,3}$ on $\cc{D} = \brac{0,5}^2$. The LIF loss function, with respect to the case of $b_n = 1$, is optimized with respect to $\phi$ and for a fixed $\rho = 10$. We refer the reader to Table \ref{Table3} for the sample average and standard deviation of $\hat{\xi}_{n,\cc{B}}$ for different values of $n$. The results in Table \ref{Table3} shows that the LIF estimate becomes more accurate as $n$ increases (in a fixed domain), when $b_n$ does not grow with $n$.

\begin{center} 
\begin{adjustbox}{width=0.9\columnwidth}
\begin{tabular}{cc|c|c|c|c|c|c|}
\cline{3-8} 
& & $N = 20$ & $N = 30$ & $N = 50$ & $N = 70$ & $N = 100$ & $N = 150$ \\ \cline{1-8}
\multicolumn{1}{|c|}{\multirow{2}{*}{$\delta = 1$}}  & 
\multicolumn{1}{|c|}{bias of $\hat{\xi}_{n,\cc{B}}$} & $0.8643$ & $0.5891$ & $0.2955$ & $0.1593$ & $0.0299$ & $0.0198$ \\ \cline{2-8} 
\multicolumn{1}{|c|}{} &
\multicolumn{1}{|c|}{$\sd$ of $\hat{\xi}_{n,\cc{B}}$} & $0.3716$ & $0.2305$ & $0.1093$ & $0.0700$ & $0.0480$ & $0.0233$  \\ \cline{1-8} 
\multicolumn{1}{|c|}{\multirow{2}{*}{$\delta = 3$}}  & 
\multicolumn{1}{|c|}{bias of $\hat{\xi}_{n,\cc{B}}$} & $3.2033$ & $1.0161$ & $0.5133$ & $0.2157$ & $0.0634$ & $0.0187$  \\ \cline{2-8}
\multicolumn{1}{|c|}{} &
\multicolumn{1}{|c|}{$\sd$ of $\hat{\xi}_{n,\cc{B}}$} & $1.4174$ & $0.4070$ & $0.1218$ & $0.0984$ & $0.0519$ & $0.0355$  \\ \cline{1-8} 
\end{tabular}
\end{adjustbox}
\captionof{table}{The mean and standard deviation of $\hat{\xi}_{n,\cc{B}}$ over $100$ independent experiments for isotropic Matern GP with $\paren{\phi_0,\rho_0,\nu} = \paren{1,5,0.5}$ and for different size of lattice.}
\label{Table3}
\end{center} 

\subsection{Moderate-scale simulations for geometric anisotropic GPs} 

This subsection is devoted to assess the performance of the LIF method for geometric anisotropic Matern GPs in two dimensional fixed domains. Particularly, there is $\rho_0 = \paren{\rho_{0,1},\rho_{0,2}}$ such that for any $\boldsymbol{s} = \paren{s_1,s_2}$ and $\boldsymbol{t} = \paren{t_1,t_2}$,
\begin{equation*}
\cov \Bigparen{ G\paren{\boldsymbol{s}}, G\paren{\boldsymbol{t}} } = \phi_0 f_{\nu}\paren{r},\;\mbox{in which}\; r^2 = \paren{ \frac{ t_1-s_1 }{\rho_{0,1}}}^2+\paren{\frac{ t_2-s_2 }{\rho_{0,2}}}^2.
\end{equation*}
Here $f_{\nu}$ stands for the Matern standard correlation function with the smoothness parameter $\nu$. The  quantities $\hat{\phi}_{n,\cc{B}}\in\bb{R}$ and $\hat{\rho}_{n,\cc{B}}\in\bb{R}^2$ are obtained by maximizing the LIF loss. It is known that $\phi_0$ and $\rho_0$ are not fully discernible in the infill setting (see \cite{stein2012interpolation}, p. $120$). Therefore the focus of our simulation studies is to estimate the quantities $\phi_0\rho^{-2\nu}_{0,1}$ and $\phi_0\rho^{-2\nu}_{0,2}$ (or equivalently $\phi_0\paren{\rho_{0,1}\rho_{0,2}}^{-\nu}$ and $\rho_{0,1}/\rho_{0,2}$). We refer the reader to \cite{anderes2010consistent} for a comprehensive discussion regarding the identifiability of covariance parameters in multi-dimensional geometric anisotropic Matern GPs. For brevity we reformulate $\hat{\xi}_{n,\cc{B}}$ as the following:
\begin{equation}\label{xiAnis}
\hat{\xi}_{n,\cc{B}} = \paren{ \frac{ \hat{\phi}_{n,\cc{B}}\hat{\rho}^{-2\nu}_{1,n,\cc{B}} }{ \phi_0\rho^{-2\nu}_{0,1}}, \frac{ \hat{\phi}_{n,\cc{B}}\hat{\rho}^{-2\nu}_{2,n,\cc{B}} }{ \phi_0\rho^{-2\nu}_{0,2}} } \in \brapar{0,\infty}^2.
\end{equation}
Again, we let $\cc{D}_n$ be a perturbed lattice of size $n = 10^4$ and with $\delta\in\set{1,3}$ on $\cc{D} = \brac{0,5}^2$. We simulate $100$ independent realizations of a Matern GP with $\phi_0 = 1$, $\rho_0 = \paren{1.5,4}$ and $\nu = 0.5$ on $100$ realizations of $\cc{D}_n$. The L-BFGS-B method with the initial guess $\rho = \paren{10,10}$ is used for maximizing the profile LIF loss function in a constrained box $\brac{0.1,50}^2$. In our experiments the boundary points were not touched during optimization, so the final results do not change even when the box constraints are not enforced. The scatter plots of $\hat{\xi}_{n,\cc{B}}$ is depicted in Figure \ref{Fig:Fig9} for $b_n\in\set{4,16}$ and two partitioning approaches. It appears that $\hat{\xi}_{n,\cc{B}}$ is concentrated around $\paren{1,1}$ for all the scenarios. Table \ref{Table5} also accumulates the mean and standard deviation of $\hat{\xi}_{n,\cc{B}}$ displayed in Figure \ref{Fig:Fig9}. 
%

\begin{figure}[!htbp]
\centering
\subimport{/}{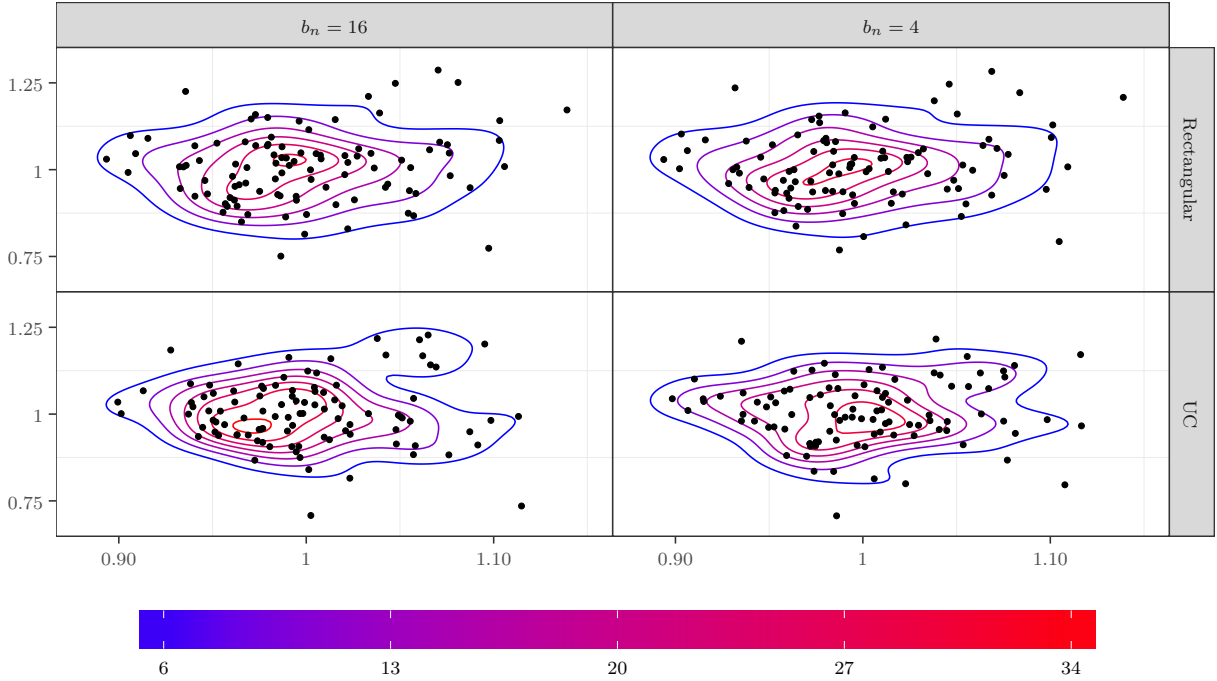}  
\vspace{-.23in}
\caption{The scatter plot and two dimensional KDE of $\hat{\xi}_{n,\cc{B}}$ for an anisotropic Matern GP with $\phi_0 = 1, \rho_0 = \paren{1.5,4}$, and $\nu_0 = 0.5$ observed on a perturbed lattice with $\delta = 1$ and $n=10^4$.}
\label{Fig:Fig9}
\end{figure}

\begin{center} 
\begin{adjustbox}{width=0.8\columnwidth}
\begin{tabular}{c|c|c|}
\cline{2-3} 
& $b_n = 16$ & $b_n = 4$  \\ \cline{1-3}
\multicolumn{1}{|c|}{\multirow{2}{*}{UC}}  & 
\multicolumn{1}{|c|}{$\bb{E} \hat{\xi}_{n,\cc{B}} = \paren{0.9996, 1.0063}$} & $\bb{E} \hat{\xi}_{n,\cc{B}} = \paren{1.0002, 1.0049}$   \\ 
\multicolumn{1}{|c|}{} &
\multicolumn{1}{|c|}{$\sd \hat{\xi}_{n,\cc{B}} = \paren{0.0467, 0.0966}$} & $\sd \hat{\xi}_{n,\cc{B}} = \paren{0.0482, 0.0932}$  \\ \cline{1-3} 
\multicolumn{1}{|c|}{\multirow{2}{*}{Rectangular}}  & 
\multicolumn{1}{|c|}{$\bb{E} \hat{\xi}_{n,\cc{B}} = \paren{0.9993, 1.0081}$} & $\bb{E} \hat{\xi}_{n,\cc{B}} = \paren{0.9994, 1.0104}$  \\ 
\multicolumn{1}{|c|}{} & 
\multicolumn{1}{|c|}{$\sd \hat{\xi}_{n,\cc{B}} = \paren{0.0507, 0.1026}$} & $\sd \hat{\xi}_{n,\cc{B}} = \paren{0.0515, 0.0998}$  \\ \cline{1-3} 
\end{tabular} 
\end{adjustbox}
\captionof{table}{The mean and standard deviation of $\hat{\xi}_{n,\cc{B}}$ exhibited in scatter plots in Figures \ref{Fig:Fig9}.}
\label{Table5}
\end{center}  

\subsection{Large-scale simulations for geometric anisotropic GPs} 

To obtain further insights into the estimation accuracy of the LIF algorithm on large data sets, we carry out a few simulation studies on Matern GPs observed on perturbed lattices. The simulations are separated into two categories described as follows.
\begin{enumerate}[leftmargin=*]
\item We fix $\cc{D} = \brac{0,25}^2$ and choose a perturbed lattice $\cc{D}_n$ of size $2.5\times 10^5$, i.e. $N = 500$, with $\delta = 5$ on $\cc{D}$. $G$ is a geometric anisotropic Matern GP with $\rho_0 = \paren{\rho_{0,1},\rho_{0,2}} = \paren{2,5}$ and $\phi_0 = 1$ observed on $\cc{D}_n$. Such simulation imitates the large-sample infill behaviour, as the diameter of $\cc{D}$ is considerably smaller than $N$. We report the LIF estimates of $\phi_0 \rho^{-2\nu}_{0,1}$ and $\phi_0 \rho^{-2\nu}_{0,2}$.
\item In the second class which emulates the increasing domain setting, we select $\cc{D} = \brac{0,500}^2$. Furthermore, the variance and range parameter of $G$ are given by $\phi_0 = 1$ and $\rho_0 = \paren{10,20}$ and $\nu = 1$. $\cc{D}_n$ is also treated the same as the first category ($N = 500$). In these simulations, the estimates of all unknown parameters will be reported.
\end{enumerate}

Recall $\hat{\xi}_{n,\cc{B}}$ from Eq. \eqref{xiAnis}. Tables \ref{Table6} encapsulates $\hat{\xi}_{n,\cc{B}}$ and the running time of maximizing the profile LIF loss in the box-constrained region $\brac{0.1,50}$ by L-BFGS-B algorithm and with the initial guess $\rho = \paren{4,8}$. Comparing to the case of $\nu = 0.5$, the optimization algorithm is three times slower for $\nu = 1$, which is due to the more complicated form of the covariance function. Furthermore the running time of the LIF loss optimizer is inversely proportional to $b_n$.

\vspace{3mm}

\begin{center} 
\begin{adjustbox}{width=0.8\columnwidth}
\begin{tabular}{cc|c|c|c|}
\cline{3-5} 
& & $b_n = 200$ & $b_n = 50$ & $b_n = 10$  \\ \cline{1-5}
\multicolumn{1}{|c}{\multirow{2}{*}{$\nu = 0.5$} } &
\multicolumn{1}{|c|}{$\hat{\xi}_{n,\cc{B}}$} & $\paren{0.9978, 1.0434}$ & $\paren{0.9988, 1.04085}$ & $\paren{1.0011, 1.0280}$ \\
\cline{2-5}  
\multicolumn{1}{|c|}{} & 
\multicolumn{1}{|c|}{Running time (hour)} & $0.5016$  &  $2.1747$ & $4.8055$ \\ 
\cline{1-5} 
\multicolumn{1}{|c}{\multirow{2}{*}{$\nu = 1$} } &
\multicolumn{1}{|c|}{$\hat{\xi}_{n,\cc{B}}$} & $\paren{0.9910, 1.1060}$ & $\paren{0.9951, 1.0858}$ & $\paren{0.9928, 1.0899}$ \\ 
\cline{2-5} 
\multicolumn{1}{|c|}{} &
\multicolumn{1}{|c|}{Running time (hour)} & $1.4128$  &  $5.4449$ & $13.2018$ \\ 
\cline{1-5}
\end{tabular} 
\end{adjustbox}
\captionof{table}{The summary of the large-sample simulations for the first category.}
\label{Table6}
\end{center}

Table \ref{Table7} presents the summary of results for the case that $\cc{D} = \brac{0,500}^2$. The L-BFGS-B optimizer starts at $\rho = \paren{25,40}$. We only consider the case that $\nu = 1$, because of the more challenging computation. Note that obtaining the estimated parameters in this setting is around twice as slow as the former case.

\vspace{3mm}

\begin{center} 
\begin{adjustbox}{width=0.8\columnwidth}
\begin{tabular}{cc|c|c|c|}
\cline{3-5} 
& & $b_n = 200$ & $b_n = 50$ & $b_n = 10$  \\ \cline{1-5}
\multicolumn{1}{|c}{\multirow{3}{*}{$\nu = 1$} } &
\multicolumn{1}{|c|}{$\hat{\phi}_{n,\cc{B}}$} & $1.0179$ & $1.0072$ & $1.0125$ \\ \cline{2-5}  
\multicolumn{1}{|c|}{} & 
\multicolumn{1}{|c|}{$\hat{\rho}_{n,\cc{B}}$} & $\paren{10.4457,19.8137}$ & $\paren{10.3789,19.8433}$ & $\paren{10.4203,19.8278}$ \\ \cline{2-5} 
\multicolumn{1}{|c|}{} & 
\multicolumn{1}{|c|}{Running time (hour)} & $2.7441$  &  $10.5585$ & $25.6577$ \\ 
\cline{1-5} 
\end{tabular} 
\end{adjustbox}
\captionof{table}{The summary of the large-sample simulations for the second category.}
\label{Table7}
\end{center}

Comparing the different columns in Table \ref{Table6} and \ref{Table7} reveals insensitivity of the LIF estimate to $b_n$. We believe that for large $n$, increasing the number of bins does not improve the statistical accuracy, as long as each bin can separately encode the local dependence structure. For instance when $n = 500^2$ and $b_n = 200$, there are more than $1000$ samples in each bin, which is roughly enough for learning the local dependence structure in a geometric anisotropic GP with two range parameters. We observe that there is a large range of $b_n$ in which decreasing the bin size (which is equivalent to increasing $b_n$) barely degrades the statistical performance of the LIF algorithm, but the computational saving is quite substantial.

Finally, for a systematic evaluation of the role of $b_n$ on the statistical accuracy of the LIF estimate we consider a Geometric anisotropic GP with $\phi_0 = 1$ and $\rho_0 = \paren{5,10}$ and $\nu\in\set{0.5,1}$ on a regular lattice ($\delta = 0$) of size $n = 40^2$ on $\cc{D} = \brac{0,10}^2$. That is, $\cc{D}_n = \set{i/5:\;i=1,\ldots,40}^2$. Similar to the results in Table \ref{Table6}, the L-BFGS-B algorithm with starting point $\rho=\paren{4,8}$ is used for estimating $\hat{\xi}_{n,\cc{B}}$. For each $b_n$, we run $100$ independent experiments for evaluating the empirical mean and standard deviation of $\hat{\xi}_{n,\cc{B}}$. The summary results in Table \ref{Table8} shows that the standard deviation of the LIF estimator increases for larger $b_n$.

\vspace{3mm}

\begin{center} 
\begin{adjustbox}{width=0.8\columnwidth}
\begin{tabular}{cc|c|c|c|c|}
\cline{3-6} 
& & $b_n = 1$ & $b_n = 2$ & $b_n = 4$ & $b_n = 8$  \\ \cline{1-6}
\multicolumn{1}{|c}{\multirow{2}{*}{$\nu = 0.5$} } &
\multicolumn{1}{|c|}{$\bb{E}\hat{\xi}_{n,\cc{B}}$} & $\paren{0.9931, 1.0214}$ & $\paren{0.9902, 1.0286}$ & $\paren{0.9914, 1.0324}$ & $\paren{0.9923, 1.0341}$\\
\cline{2-6}  
\multicolumn{1}{|c|}{} & 
\multicolumn{1}{|c|}{$\sd \hat{\xi}_{n,\cc{B}}$} & $\paren{0.0201, 0.0372}$ & $\paren{0.0239, 0.0398}$ & $\paren{0.0272, 0.0448}$ & $\paren{0.0290, 0.0482}$\\
\cline{1-6} 
\multicolumn{1}{|c}{\multirow{2}{*}{$\nu = 1$} } &
\multicolumn{1}{|c|}{$\bb{E}\hat{\xi}_{n,\cc{B}}$} & $\paren{0.9873, 1.0521}$ & $\paren{0.9821, 1.0593}$ & $\paren{0.9852, 1.0565}$ & $\paren{0.9813, 1.0591}$\\
\cline{2-6} 
\multicolumn{1}{|c|}{} &
\multicolumn{1}{|c|}{$\sd \hat{\xi}_{n,\cc{B}}$} & $\paren{0.0573, 0.1011}$ & $\paren{0.0611, 0.1098}$ & $\paren{0.0659, 0.1149}$ & $\paren{0.0682, 0.1178}$\\
\cline{1-6}
\end{tabular} 
\end{adjustbox}
\captionof{table}{The summary of simulations for assessing the role of $b_n$.}
\label{Table8}
\end{center}

\section{Discussion}\label{Discus}

In this paper we have introduced a family of scalable covariance estimation algorithms, called the local inversion-free (LIF) algorithm, by amalgamating the ideas of the inversion-free estimation procedure in \cite{anitescu2017inversion} and a block diagonal approximation of the covariance matrix of the preconditioned data. We have established $\sqrt{n}$-consistency and asymptotic normality of our method for the isotropic Matern covariance function on a $d$-dimensional irregular lattice (with $d\leq 3$). Prior to this work, it had only been asserted that the inversion-free estimator is statistically comparable to the MLE, when there exists a linear transformation to uniformly control the condition number of the covariance matrix below some constant, independent of the sample size \cite{anitescu2017inversion}. However, our analysis demonstrates that the LIF algorithm has the same convergence rate as the MLE, as long as the largest eigenvalue remains uniformly bounded and a non-negligible fraction of the eigenvalues are further away from zero. The removal of the necessity of uniformly controlling the condition number of the covariance matrix in our asymptotic theory can expand the applicability of surrogate loss maximization methods for estimating the covariance of spatial Gaussian processes. 

Despite the relatively low cost of computing the LIF estimate for GPs observed on irregularly spaced locations, it remains to investigate the applicability of LIF-based algorithms beyond parameter estimation, e.g., prediction. Furthermore, despite recent progresses in preconditioning of stationary GPs, an effective mechanism to reduce the condition number of the covariance matrix for non-stationary random fields is still obscure. However, we have only scratched the surface of scalable non-likelihood based estimation algorithms and still much needs to be done for developing an efficient class of algorithms for a broad family of spatial processes.

We end this discussion by briefly describing a potential way of adjusting the LIF loss function for non-stationary processes with smoothly varying variance and range parameters (with a known smoothness parameter). The main idea is to partition the set of sampling sites $\cc{D}_n$ into $b_n$ small bins, so that the GP inside each bin can be well approximated by a stationary process. For any $\boldsymbol{s}\in\cc{D}_n$, construct the set $\cc{N}_m\paren{\boldsymbol{s}}$ using the nearest neighbours of $\boldsymbol{s}$ inside its associated bin. The vectors of variance and range parameters, denoted by $\boldsymbol{\phi}_0 = \brac{\phi_{0,1},\ldots,\phi_{0,b_n}}^\top$ and $\boldsymbol{\rho}_0 = \brac{\rho_{0,1},\ldots,\rho_{0,b_n}}^\top$, can be simultaneously estimated by optimizing a penalized LIF objective function, namely,
\begin{equation*}
\paren{\hat{\boldsymbol{\phi}}_{n,\cc{B}}, \hat{\boldsymbol{\rho}}_{n,\cc{B}}} = \argmin_{\boldsymbol{\phi}, \boldsymbol{\rho}} \set{ \sum_{t=1}^{b_n} \LpNorm{Y_{B_t,m}Y^\top_{B_t,m}-\phi_tK_{B_t,m}\paren{\rho_t} }{2}^2 + J_{\boldsymbol{\phi}}\paren{\phi_1,\ldots, \phi_{b_n}} + J_{\boldsymbol{\rho}}\paren{\rho_1,\ldots,\rho_{b_n}} },
\end{equation*}
in which $J_{\boldsymbol{\phi}}$ and $J_{\boldsymbol{\rho}}$ are non-negative functions penalizing rapidly varying variance and range parameters. Such a penalized loss function may be optimized using the coordinate descent method.

\section{Proofs}\label{Proofs}

All the constants appearing in this section (including those implicitly defined in $\lesssim$, and $\asymp$), are bounded and depend on $m, \nu, d, \Theta_0$, and the geometric structure of the sampling locations.

\begin{proof}[Proof of Theorem \ref{CnstncyLocInvFreeThm}]
Applying the triangle inequality, we get
\begin{equation}\label{BiasVarDecmp}
\sup_{\rho\in\Theta_0}\abs{ \frac{\hat{\phi}_{n,\cc{B}}\paren{\rho}\rho^{-2\nu}}{\phi_0\rho^{-2\nu}_0}-1 } \leq \sup_{\rho\in\Theta_0}\abs{\frac{\bb{E} \hat{\phi}_{n,\cc{B}}\paren{\rho}\rho^{-2\nu}}{\phi_0\rho^{-2\nu}_0} - 1 }+\sup_{\rho\in\Theta_0} \frac{\abs{\hat{\phi}_{n,\cc{B}}\paren{\rho}\rho^{-2\nu}-\bb{E}\hat{\phi}_{n,\cc{B}}\paren{\rho}\rho^{-2\nu}}}{\phi_0\rho^{-2\nu}_0}.
\end{equation}
Let $P_1$ and $P_2$ respectively stand for the two terms in the right hand side of \eqref{BiasVarDecmp}. For clarity, we break the proof into two parts. The first part is devoted to uniformly control $P_1$. Strictly speaking, we prove that 
\begin{equation*}
P_1\lesssim \paren{\bbM{1}_{\set{d=1}} \frac{1}{n} + \bbM{1}_{\set{d=2}} \frac{\log n}{n} + \bbM{1}_{\set{d\geq 3}} n^{-2/d} } \paren{1+ \bbM{1}_{\set{m=\nu+d/2}} \log n  }.
\end{equation*}
We then show that the stochastic quadratic quantity $P_2$ is of order $\sqrt{n^{-1}\log n}$, with high probability. The concentration inequalities involving the quadratic forms (and their supremum over a bounded space) of GPs presented in \cite{keshavarz2016consistency} are crucial for bounding $P_2$ from above.

Choose an arbitrary $\paren{\phi,\rho}\in\cc{I}\times \Theta_0$. Recall $K^{\cc{B}}_{n,m}\paren{\rho}\in\bb{R}^{n\times n}$ from \eqref{Ktilde_n,m} and $\hat{\phi}_{n,\cc{B}}\paren{\rho}$ from Eq. \eqref{PhiHatnB}. For brevity, define $L^{\cc{B}}_{n,m}\paren{\rho} \coloneqq \rho^{2\nu}K^{\cc{B}}_{n,m}\paren{\rho}$. Observe that
\begin{equation*}
\frac{\bb{E}\hat{\phi}_{n,\cc{B}}\paren{\rho}\rho^{-2\nu}}{\phi_0\rho^{-2\nu}_0} = \frac{\rho^{-2\nu}}{\phi_0\rho^{-2\nu}_0} \frac{\bb{E} Y^\top K^{\cc{B}}_{n,m}\paren{\rho}Y }{\LpNorm{ K^{\cc{B}}_{n,m}\paren{\rho}}{2}^2} = \paren{\frac{\rho_0}{\rho}}^{2\nu} \frac{ \InnerProd{K^{\cc{B}}_{n,m}\paren{\rho}}{K^{\cc{B}}_{n,m}\paren{\rho_0}} }{\LpNorm{ K^{\cc{B}}_{n,m}\paren{\rho}}{2}^2} = \frac{ \InnerProd{L^{\cc{B}}_{n,m}\paren{\rho}}{L^{\cc{B}}_{n,m}\paren{\rho_0}} }{\LpNorm{ L^{\cc{B}}_{n,m}\paren{\rho}}{2}^2}.
\end{equation*} 
Thus, 
\begin{eqnarray}\label{P1UppBnd1D}
P_1 &=& \sup_{\rho\in\Theta_0} \abs{\frac{ \InnerProd{L^{\cc{B}}_{n,m}\paren{\rho}}{L^{\cc{B}}_{n,m}\paren{\rho_0}} }{\LpNorm{ L^{\cc{B}}_{n,m}\paren{\rho}}{2}^2}-1} = \sup_{\rho\in\Theta_0} \abs{\frac{ \InnerProd{L^{\cc{B}}_{n,m}\paren{\rho}-L^{\cc{B}}_{n,m}\paren{\rho_0}}{L^{\cc{B}}_{n,m}\paren{\rho}} }{\LpNorm{ L^{\cc{B}}_{n,m}\paren{\rho}}{2}^2}}\nonumber\\
&\RelNum{\paren{a}}{\leq}& \sup_{\rho\in\Theta_0} \brac{\frac{ \SpNorm{L^{\cc{B}}_{n,m}\paren{\rho}-L^{\cc{B}}_{n,m}\paren{\rho_0}}{1} \OpNorm{L^{\cc{B}}_{n,m}\paren{\rho}}{2}{2} }{\LpNorm{ L^{\cc{B}}_{n,m}\paren{\rho}}{2}^2}}.
\end{eqnarray}
Here $\paren{a}$ is implied by the generalized Cauchy-Schwartz inequality. We assess the large sample behaviour of the terms appearing in the second line of \eqref{P1UppBnd1D} in Appendix \ref{AppendixCovMatrixIrregLat}. Lemma \ref{LowBndFrobNormLnm} states that $\min_{\rho\in\Theta_0} \LpNorm{L^{\cc{B}}_{n,m}\paren{\rho}}{2} \gtrsim \sqrt{n}$. For brevity define $\Delta^{\cc{B}}\paren{\rho,\rho_0}\coloneqq L^{\cc{B}}_{n,m}\paren{\rho}-L^{\cc{B}}_{n,m}\paren{\rho_0}$.  Furthermore, Lemma \ref{SensAnalS1Norm} implies that
\begin{eqnarray}\label{UppBndNucNormDelta}
\sup_{\rho\in\Theta_0} \SpNorm{\Delta^{\cc{B}}\paren{\rho,\rho_0}}{1} &\lesssim& \paren{ \bbM{1}_{\set{d=1}} + \bbM{1}_{\set{d=2}} \log n + \bbM{1}_{\set{d\geq 3}} n^{1-2/d} } \diam\paren{\Theta_0} \nonumber\\
&\asymp& \paren{ \bbM{1}_{\set{d=1}} + \bbM{1}_{\set{d=2}} \log n + \bbM{1}_{\set{d\geq 3}} n^{1-2/d} }. 
\end{eqnarray}
Thus the upper bound on $P_1$ in \eqref{P1UppBnd1D} can be rewritten as 
\begin{equation}\label{UppBndP1GenCase}
P_1\lesssim \paren{ \frac{\bbM{1}_{\set{d=1}}}{n} + \bbM{1}_{\set{d=2}} \frac{\log n}{n} + \bbM{1}_{\set{d\geq 3}} n^{-2/d} } \sup_{\rho\in\Theta_0} \OpNorm{L^{\cc{B}}_{n,m}\paren{\rho}}{2}{2}.
\end{equation}
So it is only needed to find a uniform upper bound on the largest eigenvalue of $L^{\cc{B}}_{n,m}\paren{\rho}$ on $\Theta_0$. Notice that $L^{\cc{B}}_{n,m}\paren{\rho}$ is a block diagonalized version of $L_{n,m}\paren{\rho}$. Hence
\begin{equation*}
\OpNorm{L^{\cc{B}}_{n,m}\paren{\rho}}{2}{2}\leq \OpNorm{L_{n,m}\paren{\rho}}{2}{2},\quad\forall\;\rho\in\Theta_0
\end{equation*}
In other words, we only need to focus on the case of no partitioning. For $d$-dimensional regular lattices, the exact procedure as Theorems $2.1$ and $2.3$ of \cite{stein2012difference} demonstrates that all the eigenvalues of $L_{n,m}\paren{\rho}$ are universally bounded. Namely, 
\begin{equation}\label{UppBndOpNormLnmRegLat}
\sup_{\rho\in\Theta_0} \lambda_j\paren{ L_{n,m}\paren{\rho} } \leq \alpha_{\max},\quad\forall\; j = 1,\ldots, \abs{\cc{D}_n}
\end{equation}
for some bounded $\alpha^{\max}>0$. Thus $P_1$ admits the following inequality for regular lattices.
\begin{equation}\label{UppBndP1RegLatt}
P_1\lesssim \paren{ \frac{\bbM{1}_{\set{d=1}}}{n} + \bbM{1}_{\set{d=2}} \frac{\log n}{n} + \bbM{1}_{\set{d\geq 3}} n^{-2/d} }.
\end{equation}
However the operator norm of $L_{n,m}\paren{\rho}$ is not necessarily uniformly bound on $\Theta_0$, for a general irregular lattice satisfying Assumption \ref{RegCondLattice}. For such case, we show in Proposition \ref{UppBndCovGmNonRegLatt} that 
\begin{equation}\label{DecayRateEntrLnm}
\abs{\paren{ L_{n,m}\paren{\rho} }_{\boldsymbol{s},\boldsymbol{t}}} \lesssim \paren{1+ \lfloor n^{1/d} \rfloor\LpNorm{\boldsymbol{t}-\boldsymbol{s}}{2} }^{-2\paren{m-\nu}},\quad\; \boldsymbol{s},\boldsymbol{t}\in\cc{D}_n.
\end{equation}
Lemma \ref{OpNormPolDecayOffDiag} also introduces an upper bound on the operator norm of the matrices satisfying \eqref{DecayRateEntrLnm}. Applying Lemma \ref{OpNormPolDecayOffDiag} yields
\begin{equation}\label{UppBndOpNormLnm}
\sup_{\rho\in\Theta_0} \OpNorm{L^{\cc{B}}_{n,m}\paren{\rho}}{2}{2} \leq \sup_{\rho\in\Theta_0} \OpNorm{L_{n,m}\paren{\rho}}{2}{2} \lesssim \paren{1+ \bbM{1}_{\set{m=\nu+d/2}}\log n}.
\end{equation}
The desired bound on $P_1$ is obtained by combining \eqref{UppBndP1GenCase} and \eqref{UppBndOpNormLnm}. The next goal is control $P_2$ from above. Let $Z\in\bb{R}^{n}$ be a standard Gaussian vector and define the symmetric matrix $M^{\cc{B}}_{n,m}\paren{\rho}$ by
\begin{equation}\label{ffM}
M^{\cc{B}}_{n,m}\paren{\rho} = \sqrt{L_{n,m}\paren{\rho_0}}\brac{\frac{nL^{\cc{B}}_{n,m}\paren{\rho}}{\LpNorm{L^{\cc{B}}_{n,m}\paren{\rho}}{2}^2}}\sqrt{L_{n,m}\paren{\rho_0}},\quad\forall\;\rho\in\Theta_0.
\end{equation}
We first introduce an equivalent representation for $\hat{\phi}_{n,\cc{B}}\paren{\rho}\rho^{-2\nu}$ in terms of $Z$ and $M^{\cc{B}}_{n,m}\paren{\rho}$. Obviously, the Gaussian vectors $Y$ and $\sqrt{\phi_0K_{n,m}\paren{\rho_0}}Z = \phi^{1/2}_0\rho^{-\nu}_0\sqrt{L_{n,m}\paren{\rho_0}}Z$ have the same distribution. Thus,
\begin{equation*}
\hat{\phi}_{n,\cc{B}}\paren{\rho}\rho^{-2\nu} = \rho^{-2\nu} \frac{Y^\top K^{\cc{B}}_{n,m}\paren{\rho}Y}{\LpNorm{K^{\cc{B}}_{n,m}\paren{\rho}}{2}^2} = \frac{Y^\top L^{\cc{B}}_{n,m}\paren{\rho}Y}{\LpNorm{L^{\cc{B}}_{n,m}\paren{\rho}}{2}^2} \eqd \frac{Z^\top M^{\cc{B}}_{n,m}\paren{\rho} Z}{n} \phi_0\rho^{-2\nu}_0,
\end{equation*}
and so $P_2$ can be rewritten as the supremum of a centered $\chi^2$ process over $\Theta_0$, i.e.,
\begin{equation*}
P_2 = \frac{1}{n}\sup_{\rho\in\Theta_0} \abs{ Z^\top M^{\cc{B}}_{n,m}\paren{\rho} Z - \tr\set{ M^{\cc{B}}_{n,m}\paren{\rho} } }.
\end{equation*}
So if $M^{\cc{B}}_{n,m}\paren{\rho}$ admits the three conditions in Proposition \ref{ModalProp}, then there are bounded scalars $C$ and $n_0\in\bb{N}$ such that for any $n\geq n_0$, we have
\begin{equation}\label{P2Rate}
\bb{P}\paren{ P_2 \geq C\sqrt{\frac{\log n}{n}} } = \bb{P}\paren{ \sup_{\rho\in\Theta_0} \abs{ Z^\top M^{\cc{B}}_{n,m}\paren{\rho} Z - \tr\set{ M^{\cc{B}}_{n,m}\paren{\rho} } }\geq C\sqrt{n\log n} } \leq \frac{1}{n}.
\end{equation}
Thus we require to verify the conditions $\paren{a}-\paren{c}$ in Proposition \ref{ModalProp}. \\
\emph{Validating condition $\paren{a}$}. We should substantiate the uniform boundedness of $n^{-1/2}\LpNorm{M^{\cc{B}}_{n,m}\paren{\rho}}{2}$ over $\Theta_0$. Namely, we must prove that $U$ defined as the following is bounded.
\begin{equation*}
U\coloneqq \sup_{\rho\in\Theta_0}\frac{\LpNorm{M^{\cc{B}}_{n,m}\paren{\rho}}{2}}{\sqrt{n}} = \sup_{\rho\in\Theta_0} \frac{\sqrt{n}\LpNorm{\sqrt{L_{n,m}\paren{\rho_0}} L^{\cc{B}}_{n,m}\paren{\rho} \sqrt{L_{n,m}\paren{\rho_0}}}{2}}{\LpNorm{L^{\cc{B}}_{n,m}\paren{\rho}}{2}^2}.
\end{equation*}
We prove in Lemma \ref{LowBndFrobNormLnm} that $\min_{\rho\in\Theta_0} n^{-1}\LpNorm{L^{\cc{B}}_{n,m}\paren{\rho}}{2}^2 > 0$ for large enough $n$. Thus, $U$ can be bounded above by some $U'$ given by
\begin{equation*}
U \lesssim U'\coloneqq \sup_{\rho\in\Theta_0} \frac{\LpNorm{\sqrt{L_{n,m}\paren{\rho_0}} L^{\cc{B}}_{n,m}\paren{\rho} \sqrt{L_{n,m}\paren{\rho_0}}}{2}}{\sqrt{n}}.
\end{equation*}
Finally, Lemma \ref{FrobNormAsympNumeratMnm} ensures the boundedness of $U'$ (and consequently $U$).\\
\emph{Validating condition $\paren{b}$}. Pick two arbitrary distinct $\rho_1,\rho_2\in\Theta_0$ with $\abs{\rho_2-\rho_1} \leq 1$. Our objective is to demonstrate the Lipschitz property of $\OpNorm{M^{\cc{B}}_{n,m}\paren{\rho_2}-M^{\cc{B}}_{n,m}\paren{\rho_1}}{2}{2}$ (with a constant of order $\log^2 n$). Obviously
\begin{equation*}
\frac{\OpNorm{M^{\cc{B}}_{n,m}\paren{\rho_2}-M^{\cc{B}}_{n,m}\paren{\rho_1}}{2}{2}}{n\abs{\rho_2-\rho_1}}\leq\frac{\OpNorm{L_{n,m}\paren{\rho_0}}{2}{2}}{\abs{\rho_2-\rho_1}} \OpNorm{ \frac{L^{\cc{B}}_{n,m}\paren{\rho_2}}{\LpNorm{L^{\cc{B}}_{n,m}\paren{\rho_2}}{2}^2}-\frac{L^{\cc{B}}_{n,m}\paren{\rho_1}}{\LpNorm{L^{\cc{B}}_{n,m}\paren{\rho_1}}{2}^2} }{2}{2}.
\end{equation*}
We have argued in \eqref{UppBndOpNormLnm} that $\OpNorm{L_{n,m}\paren{\rho_0}}{2}{2}\lesssim \paren{1+\bbM{1}_{\set{m=\nu+d/2}}\log n} \leq \log n$. Hence,
\begin{equation}\label{CondBeq1}
\frac{\OpNorm{M^{\cc{B}}_{n,m}\paren{\rho_2}-M^{\cc{B}}_{n,m}\paren{\rho_1}}{2}{2}}{n\abs{\rho_2-\rho_1}\log n}\lesssim \frac{1}{\abs{\rho_2-\rho_1}}\OpNorm{ \frac{L^{\cc{B}}_{n,m}\paren{\rho_2}}{\LpNorm{L^{\cc{B}}_{n,m}\paren{\rho_2}}{2}^2}-\frac{L^{\cc{B}}_{n,m}\paren{\rho_1}}{\LpNorm{L^{\cc{B}}_{n,m}\paren{\rho_1}}{2}^2} }{2}{2}.
\end{equation}
Furthermore, we know from the triangle inequality that  
\begin{eqnarray}\label{CondBeq2}
\OpNorm{ \frac{L^{\cc{B}}_{n,m}\paren{\rho_2}}{\LpNorm{L^{\cc{B}}_{n,m}\paren{\rho_2}}{2}^2}-\frac{L^{\cc{B}}_{n,m}\paren{\rho_1}}{\LpNorm{L^{\cc{B}}_{n,m}\paren{\rho_1}}{2}^2} }{2}{2}&\leq& \frac{\OpNorm{L^{\cc{B}}_{n,m}\paren{\rho_2}-L^{\cc{B}}_{n,m}\paren{\rho_1}}{2}{2}}{\LpNorm{L^{\cc{B}}_{n,m}\paren{\rho_2}}{2}^2 } \nonumber\\
&+& \OpNorm{ \frac{L^{\cc{B}}_{n,m}\paren{\rho_1}}{\LpNorm{L^{\cc{B}}_{n,m}\paren{\rho_2}}{2}^2}-\frac{L^{\cc{B}}_{n,m}\paren{\rho_1}}{\LpNorm{L^{\cc{B}}_{n,m}\paren{\rho_1}}{2}^2} }{2}{2}.
\end{eqnarray}
Let $\varPsi^1_n\paren{\rho_1,\rho_2}$ and $\varPsi^2_n\paren{\rho_1,\rho_2}$ stand for the first and second terms in the right hand side of \eqref{CondBeq2}, which we aim to control from above. The fact that $\min_{\rho\in\Theta_0} n^{-1}\LpNorm{L^{\cc{B}}_{n,m}\paren{\rho}}{2}^2 > 0$ (see Lemma \ref{LowBndFrobNormLnm}) comes in handy for finding a simpler upper bound on $\varPsi^1_n\paren{\rho_1,\rho_2}$ and $\varPsi^2_n\paren{\rho_1,\rho_2}$.
\begin{equation*}
\varPsi^1_n\paren{\rho_1,\rho_2} \coloneqq  \frac{\OpNorm{L^{\cc{B}}_{n,m}\paren{\rho_2}-L^{\cc{B}}_{n,m}\paren{\rho_1}}{2}{2}}{\LpNorm{L^{\cc{B}}_{n,m}\paren{\rho_2}}{2}^2}\lesssim \frac{\OpNorm{L^{\cc{B}}_{n,m}\paren{\rho_2}-L^{\cc{B}}_{n,m}\paren{\rho_1}}{2}{2}}{n}.
\end{equation*}
Furthermore, Lemma \ref{SensAnalOpNorm} indicates that
\begin{equation*}
\OpNorm{L^{\cc{B}}_{n,m}\paren{\rho_2}-L^{\cc{B}}_{n,m}\paren{\rho_1}}{2}{2} \lesssim \paren{1+\bbM{1}_{\set{m=\nu+d/2}}\log n} \abs{\rho_2-\rho_1} \leq \abs{\rho_2-\rho_1} \log n.
\end{equation*}
So $\varPsi^1_n\paren{\rho_1,\rho_2} \lesssim n^{-1}\log n \abs{\rho_2-\rho_1}$. Now we consider $\varPsi^2_n\paren{\rho_1,\rho_2}$. Observe that
\begin{eqnarray*}
\varPsi^2_n\paren{\rho_1,\rho_2} &\coloneqq& \OpNorm{ L^{\cc{B}}_{n,m}\paren{\rho_1}}{2}{2}\paren{\frac{\LpNorm{ L^{\cc{B}}_{n,m}\paren{\rho_2}}{2}^2 - \LpNorm{ L^{\cc{B}}_{n,m}\paren{\rho_1}}{2}^2}{\LpNorm{ L^{\cc{B}}_{n,m}\paren{\rho_1}}{2}^2\LpNorm{ L^{\cc{B}}_{n,m}\paren{\rho_2}}{2}^2}}\\
&\leq& \OpNorm{ L^{\cc{B}}_{n,m}\paren{\rho_2}}{2}{2}\frac{\LpNorm{ L^{\cc{B}}_{n,m}\paren{\rho_2}}{2} + \LpNorm{ L^{\cc{B}}_{n,m}\paren{\rho_1}}{2}}{\LpNorm{ L^{\cc{B}}_{n,m}\paren{\rho_1}}{2}^2\LpNorm{ L^{\cc{B}}_{n,m}\paren{\rho_2}}{2}^2}\LpNorm{ L^{\cc{B}}_{n,m}\paren{\rho_2}-L^{\cc{B}}_{n,m}\paren{\rho_1}}{2}.
\end{eqnarray*}
It is known from \eqref{UppBndOpNormLnm} that $\OpNorm{ L^{\cc{B}}_{n,m}\paren{\rho_2}}{2}{2}\lesssim \log n$. Moreover, it is easy to verify that
\begin{equation*}
\frac{\LpNorm{ L^{\cc{B}}_{n,m}\paren{\rho_2}}{2} + \LpNorm{ L^{\cc{B}}_{n,m}\paren{\rho_1}}{2}}{\LpNorm{ L^{\cc{B}}_{n,m}\paren{\rho_1}}{2}^2\LpNorm{ L^{\cc{B}}_{n,m}\paren{\rho_2}}{2}^2} = \frac{1/\LpNorm{ L^{\cc{B}}_{n,m}\paren{\rho_1}}{2} + 1/\LpNorm{ L^{\cc{B}}_{n,m}\paren{\rho_2}}{2}}{\LpNorm{ L^{\cc{B}}_{n,m}\paren{\rho_1}}{2}\LpNorm{ L^{\cc{B}}_{n,m}\paren{\rho_2}}{2}} \lesssim \frac{1/\sqrt{n}+1\sqrt{n}}{\sqrt{n}\sqrt{n}} \asymp n^{-3/2}.
\end{equation*}
Thus, the upper bound on $\varPsi^2_n\paren{\rho_1,\rho_2}$ can be simplified as 
\begin{eqnarray*}
\frac{\varPsi^2_n\paren{\rho_1,\rho_2}}{\abs{\rho_2-\rho_1} } &\leq& \frac{\log n}{n^{3/2}} \paren{\frac{\LpNorm{ L^{\cc{B}}_{n,m}\paren{\rho_2}-L^{\cc{B}}_{n,m}\paren{\rho_1}}{2}}{\abs{\rho_2-\rho_1}}}\\
&\RelNum{\paren{c}}{\lesssim}&  \frac{\log n}{n^{3/2}}\paren{\bbM{1}_{\set{d=1}} + \bbM{1}_{\set{d=2}} \log n + \bbM{1}_{\set{d=3}} n^{1/3} + \bbM{1}_{\set{d\geq 4}} n^{1/2} } \\
&=& \frac{\log n}{n}\paren{\bbM{1}_{\set{d=1}}\frac{1}{\sqrt{n}}+ \bbM{1}_{\set{d=2}}\frac{\log n}{\sqrt{n}} + \bbM{1}_{\set{d=3}} n^{-1/6} + \bbM{1}_{\set{d>3}} } \lesssim \frac{\log n}{n},
\end{eqnarray*}
where the inequality $\paren{c}$ follows from Lemma \ref{SensAnalFrobNorm}. In summary, \eqref{CondBeq1} can be rewritten as
\begin{equation*}
\frac{\OpNorm{M^{\cc{B}}_{n,m}\paren{\rho_2}-M^{\cc{B}}_{n,m}\paren{\rho_1}}{2}{2}}{\abs{\rho_2-\rho_1}}\leq n\log n\paren{ \frac{\varPsi^1_n\paren{\rho_1,\rho_2} + \varPsi^2_n\paren{\rho_1,\rho_2}}{\abs{\rho_2-\rho_1} } } \lesssim n\log n \frac{\log n}{n} = \log^2 n,
\end{equation*}
showing that the condition $\paren{b}$ of Proposition \ref{ModalProp} holds.\\
\emph{Validating condition $\paren{c}$}. Choose an arbitrary $\rho\in\Theta_0$. We should prove that $V_n$, which is defined as the following, converging to zero as $n$ goes to infinity.
\begin{equation}\label{Vn}
V_n \coloneqq \OpNorm{M^{\cc{B}}_{n,m}\paren{\rho}}{2}{2} \sqrt{\frac{\log n}{n}}.
\end{equation}
$V_n$ can be equivalently written as 
\begin{equation*}
V_n = \frac{\OpNorm{\sqrt{L_{n,m}\paren{\rho_0}} L^{\cc{B}}_{n,m}\paren{\rho} \sqrt{L_{n,m}\paren{\rho_0}}}{2}{2}\sqrt{n\log n}}{\LpNorm{ L^{\cc{B}}_{n,m}\paren{\rho} }{2}^2}.
\end{equation*}
Lemma \ref{LowBndFrobNormLnm}, which says the Frobenius norm of $ L_{n,m}\paren{\rho}$ is of order $\sqrt{n}$ (uniformly on $\Theta_0$) provides a simpler asymptotic expression for $V_n$.
\begin{equation*}
V_n \asymp  \OpNorm{\sqrt{L_{n,m}\paren{\rho_0}} L^{\cc{B}}_{n,m}\paren{\rho} \sqrt{L_{n,m}\paren{\rho_0}}}{2}{2} \sqrt{\frac{\log n}{n}} \leq \OpNorm{L^{\cc{B}}_{n,m}\paren{\rho} }{2}{2} \OpNorm{L_{n,m}\paren{\rho_0} }{2}{2} \sqrt{\frac{\log n}{n}}.
\end{equation*}
We refer the reader to Eq. \eqref{UppBndOpNormLnm} for an upper bound on the operator norm of $L_{n,m}$ and $L^{\cc{B}}_{n,m}$ matrices over $\Theta_0$. So, $V_n$ can be bounded above by
\begin{equation}\label{VnRate}
V_n \lesssim \paren{1+\bbM{1}_{\set{m=\nu+d/2}}\log n}^2\sqrt{\frac{\log n}{n}}\rightarrow 0,\quad\mbox{as}\; n\rightarrow\infty.
\end{equation}
\end{proof}

\begin{proof}[Proof of Theorem \ref{AsympNormLocInvFreeThm}]
	
Let $\rho_{\max}$ and $\rho_{\min}$ respectively denote the largest and smallest element of $\Theta_0$. Recall the positive semi-definite class of matrices $L^{\cc{B}}_{n,m}\paren{\rho} \coloneqq \rho^{2\nu} K^{\cc{B}}_{n,m}\paren{\rho},\;\rho\in\Theta_0$. Moreover, define
\begin{equation}\label{EqFormffTn}
T_n\paren{\rho,Y} \coloneqq \sqrt{n}\paren{\frac{\hat{\phi}_{n,\cc{B}}\paren{\rho}\rho^{-2\nu}}{\phi_0\rho^{-2\nu}_0}- 1} = \sqrt{n}\paren{\frac{Y^\top L^{\cc{B}}_{n,m}\paren{\rho}Y}{\phi_0\rho^{-2\nu}_0\LpNorm{L^{\cc{B}}_{n,m}\paren{\rho}}{2}^2}- 1}.
\end{equation}
For notational convenience, the dependence to $\phi_0,\rho_0$ and $m$ has been dropped in $T_n$. We aim to show that $\sigma^{-1}_n T_n\paren{\hat{\rho}_n,Y}\cp{d} N\paren{0,1}$ for some scalar bounded sequence $\sigma_n$. The proof is broken into two parts for easier digestion. We first find probabilistic upper and lower bounds on $T_n\paren{\hat{\rho}_n,Y}$ in terms of $T_n\paren{\rho_{\max},Y}$ and $T_n\paren{\rho_{\min},Y}$. The precise statement of this claim is as following.

\begin{clawithinpf}\label{Claim1Thm2}
There are non-negative sequences of random variables $\set{p_n}^{\infty}_{n=1}$ and $\set{q_n}^{\infty}_{n=1}$ converging to zero in probability and scalar $n_0\in\bb{N}$ (depending on $\rho_0,m,d,\nu$, and $\Theta_0$) such that for any $n\geq n_0$
\begin{equation}\label{BndsT_n}
T_n\paren{\rho_{\min},Y}\paren{1-p_n} \leq T_n\paren{\hat{\rho}_n,Y}\leq T_n\paren{\rho_{\max},Y}\paren{1+q_n}.
\end{equation}
\end{clawithinpf}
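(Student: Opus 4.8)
The plan is to deduce the claim from an \emph{approximate monotonicity} of the map $\rho\mapsto\hat\phi_{n,\cc{B}}(\rho)\rho^{-2\nu}$, which itself rests on a Loewner-order monotonicity of $\rho\mapsto L^{\cc{B}}_{n,m}(\rho)$. Write $A_n(\rho)\coloneqq Y^\top_m L^{\cc{B}}_{n,m}(\rho)Y_m$ and $B_n(\rho)\coloneqq\LpNorm{L^{\cc{B}}_{n,m}(\rho)}{2}^2$, so that $\hat\phi_{n,\cc{B}}(\rho)\rho^{-2\nu}=A_n(\rho)/B_n(\rho)$; at any stationary point one has $\hat\phi_{n,\cc{B}}=\hat\phi_{n,\cc{B}}(\hat\rho_n)$, $\hat\rho_n\in\Theta_0=[\rho_{\min},\rho_{\max}]$, and $T_n(\hat\rho_n,Y)=\sqrt n\bigl(A_n(\hat\rho_n)/(\phi_0\rho^{-2\nu}_0 B_n(\hat\rho_n))-1\bigr)$, and moreover $A_n(\rho)\ge 0$, $B_n(\rho)>0$, $T_n(\rho,Y)+\sqrt n>0$ almost surely since $K^{\cc{B}}_{n,m}(\rho)\succeq\zero$ and $Y_m\ne\zero$. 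The key observation is that $\partial_\rho L^{\cc{B}}_{n,m}(\rho)\succeq\zero$: the kernel $\rho^{2\nu}K(\LpNorm{\cdot}{2})$ with range $\rho$ has spectral density $\pi^{-d/2}\paren{\rho^{-2}+\LpNorm{\cdot}{2}^2}^{-(\nu+d/2)}$ by \eqref{MatSpDen}, and its $\rho$-derivative $2(\nu+d/2)\rho^{-3}\pi^{-d/2}\paren{\rho^{-2}+\LpNorm{\cdot}{2}^2}^{-(\nu+d/2)-1}$ is again a non-negative function of the frequency, hence a valid (positive multiple of a) covariance kernel; since $L_{n,m}(\rho)$ is obtained from $\rho^{2\nu}K(\LpNorm{\cdot}{2})$ by the fixed linear preconditioning operation, $\partial_\rho L_{n,m}(\rho)$ is positive semi-definite, and so is its block-diagonal truncation $\partial_\rho L^{\cc{B}}_{n,m}(\rho)$. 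In particular both $A_n(\cdot)$ and $B_n(\cdot)$ are non-decreasing on $\Theta_0$.

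Next I would show that $B_n(\cdot)$ is almost constant at the resolution that matters after the $\sqrt n$ rescaling. Lemma \ref{LowBndFrobNormLnm} together with the entrywise decay \eqref{DecayRateEntrLnm} gives $B_n(\rho)\asymp n$ uniformly on $\Theta_0$, while the positive semi-definiteness just established, the trace duality $\InnerProd{P}{Q}\le\OpNorm{P}{2}{2}\tr Q$ for $P,Q\succeq\zero$, and the operator-norm bound \eqref{UppBndOpNormLnm} yield
\[
0\le\partial_\rho B_n(\rho)=2\InnerProd{L^{\cc{B}}_{n,m}(\rho)}{\partial_\rho L^{\cc{B}}_{n,m}(\rho)}\le 2\OpNorm{L^{\cc{B}}_{n,m}(\rho)}{2}{2}\tr\bigl(\partial_\rho L^{\cc{B}}_{n,m}(\rho)\bigr)\lesssim\log n\sum_{\boldsymbol{s}\in\cc{D}_n}\partial_\rho L_{n,m}(\rho)_{\boldsymbol{s},\boldsymbol{s}}.
\]
The sensitivity analysis of Appendix \ref{AuxRes} should provide $\sup_{\rho\in\Theta_0}\partial_\rho L_{n,m}(\rho)_{\boldsymbol{s},\boldsymbol{s}}\lesssim n^{-2/d}$ for every $\boldsymbol{s}\in\cc{D}_n$ — the improvement over the generic entrywise rate in \eqref{DecayRateEntrLnm} coming precisely from the extra order of smoothness of $\partial_\rho[\rho^{2\nu}K(\LpNorm{\cdot}{2})]$ at the origin — so that $\sup_{\rho}\partial_\rho B_n(\rho)\lesssim n^{1-2/d}\log n$ and, integrating over the bounded set $\Theta_0$,
\[
\varepsilon_n\coloneqq\frac{B_n(\rho_{\max})-B_n(\rho_{\min})}{B_n(\rho_{\min})}\lesssim n^{-2/d}\log n\xrightarrow[n\to\infty]{}0\qquad(d\le 3).
\]
Feeding $A_n(\rho_{\min})\le A_n(\hat\rho_n)\le A_n(\rho_{\max})$ and $B_n(\rho_{\min})\le B_n(\hat\rho_n)\le B_n(\rho_{\max})$ into $\hat\phi_{n,\cc{B}}(\hat\rho_n)\hat\rho^{-2\nu}_n=A_n(\hat\rho_n)/B_n(\hat\rho_n)$ then produces the finite-$n$ sandwich
\[
\hat\phi_{n,\cc{B}}(\rho_{\min})\rho^{-2\nu}_{\min}(1-\varepsilon_n)\le\hat\phi_{n,\cc{B}}(\hat\rho_n)\hat\rho^{-2\nu}_n\le\hat\phi_{n,\cc{B}}(\rho_{\max})\rho^{-2\nu}_{\max}(1+\varepsilon_n),
\]
valid for every $n$ and every stationary point; notably the argument uses only $\hat\rho_n\in\Theta_0$, not that it maximizes the profile loss.

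It then remains to translate the last display into the stated inequalities for $T_n$. Dividing by $\phi_0\rho^{-2\nu}_0$ and substituting $\hat\phi_{n,\cc{B}}(\rho)\rho^{-2\nu}/(\phi_0\rho^{-2\nu}_0)=1+T_n(\rho,Y)/\sqrt n$ turns it into
\[
T_n(\rho_{\min},Y)(1-\varepsilon_n)-\varepsilon_n\sqrt n\le T_n(\hat\rho_n,Y)\le T_n(\rho_{\max},Y)(1+\varepsilon_n)+\varepsilon_n\sqrt n,
\]
where $\varepsilon_n\sqrt n\lesssim n^{1/2-2/d}\log n\to 0$ for $d\le 3$, so the additive remainders are deterministic $o(1)$. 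Since $T_n(\rho_{\min},Y)$ and $T_n(\rho_{\max},Y)$ are asymptotically non-degenerate — a single-matrix Gaussian quadratic-form central limit theorem following from condition $(c)$ of Proposition \ref{ModalProp}, which was already verified in the proof of Theorem \ref{CnstncyLocInvFreeThm} — these $o(1)$ remainders can be absorbed into non-negative sequences $p_n,q_n$ tending to zero in probability, giving \eqref{BndsT_n}.

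The step I expect to be the main obstacle is the sharp estimate $\sup_{\rho\in\Theta_0}\partial_\rho B_n(\rho)=o(\sqrt n)$. Passing through Lemma \ref{SensAnalFrobNorm} instead would only bound $\partial_\rho B_n(\rho)$ by $\LpNorm{\partial_\rho L^{\cc{B}}_{n,m}(\rho)}{2}\,\LpNorm{L^{\cc{B}}_{n,m}(\rho)}{2}$, which for $d=3$ is of order $n^{5/6}\gg\sqrt n$ and hence useless after multiplication by $\sqrt n$; it is exactly the Loewner monotonicity $\partial_\rho L^{\cc{B}}_{n,m}(\rho)\succeq\zero$ — which lets the Frobenius norm of $\partial_\rho L^{\cc{B}}_{n,m}(\rho)$ be replaced by its trace, i.e.\ a sum of its small diagonal entries — combined with the fine, uniform-in-$\rho$ diagonal sensitivity $\partial_\rho L_{n,m}(\rho)_{\boldsymbol{s},\boldsymbol{s}}\lesssim n^{-2/d}$ on irregular lattices, that rescues the scheme. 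Establishing these two facts rigorously, especially the latter on general lattices satisfying Assumption \ref{RegCondLattice}, is where the real effort lies.
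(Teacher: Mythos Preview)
Your approach is essentially the paper's own: both rest on the Loewner monotonicity $\rho\mapsto L^{\cc{B}}_{n,m}(\rho)$ (your spectral-density argument reproduces \eqref{PosDefDelta}), which makes the numerator $A_n$ monotone, and both bound the variation of the denominator $B_n$ by the trace-duality estimate $\InnerProd{L^{\cc{B}}_{n,m}}{\Delta^{\cc{B}}}\le\OpNorm{L^{\cc{B}}_{n,m}}{2}{2}\,\SpNorm{\Delta^{\cc{B}}}{1}$. Your sum $\sum_{\boldsymbol{s}}\partial_\rho L_{n,m}(\rho)_{\boldsymbol{s},\boldsymbol{s}}$ is exactly the nuclear-norm bound of Lemma \ref{SensAnalS1Norm}, whose proof is precisely the diagonal-sensitivity estimate you anticipated via Lemma \ref{UppBndffQLem}; the sharp rate is $N^{-2}$ times a $d$-dependent factor (an extra $\log N$ for $d=2$ and an extra $N$ for $d=1$) rather than a clean $n^{-2/d}$, but this does not disturb the $o(\sqrt n)$ conclusion for $d\le 3$.

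The one place the paper is tidier is your last step: absorbing the additive $\varepsilon_n\sqrt n=o(1)$ remainder into a multiplicative factor of $T_n(\rho_{\max},Y)$ is awkward because $T_n$ changes sign and can be near zero, so ``asymptotic non-degeneracy'' does not buy you $q_n\to 0$ in probability. The paper avoids this by writing the sandwich directly for the strictly positive quantity $T'_n(\rho)\coloneqq 1+T_n(\rho,Y)/\sqrt n=A_n(\rho)/(\phi_0\rho_0^{-2\nu}B_n(\rho))$ with a \emph{deterministic} $q'_n=o(n^{-1/2})$; from there \eqref{BndsT_n} is immediate.
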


\noindent Next, we substantiate the asymptotic normality of $T_n\paren{\rho,Y}$ for an arbitrary $\rho\in\Theta_0$. 

\begin{clawithinpf}\label{Claim2Thm2}
There is a bounded sequence $\sigma_{n,m}$ such that $\frac{1}{\sigma_{n,m}}T_n\paren{\rho,Y}\cp{d} N\paren{0,1}$, for any fixed $\rho\in\Theta_0$.
\end{clawithinpf}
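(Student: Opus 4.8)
The plan is to reduce $T_n(\rho,Y)$ to a centered quadratic form in a standard Gaussian vector and then invoke a classical central limit theorem for such forms. As in the proof of Theorem \ref{CnstncyLocInvFreeThm}, write $Y \eqd \phi_0^{1/2}\rho_0^{-\nu}\sqrt{L_{n,m}(\rho_0)}\,Z$ with $Z\sim N(\zero_n,I_n)$, and set $A_n \coloneqq \sqrt{L_{n,m}(\rho_0)}\,L^{\cc B}_{n,m}(\rho)\,\sqrt{L_{n,m}(\rho_0)} \succeq 0$. Since $\tr A_n = \InnerProd{L^{\cc B}_{n,m}(\rho_0)}{L^{\cc B}_{n,m}(\rho)}$ — the block-diagonal structure of $L^{\cc B}_{n,m}$ annihilates the off-block entries of $L_{n,m}(\rho_0)$ — a short computation gives
\[
T_n(\rho,Y) \eqd \frac{\sqrt n}{\LpNorm{L^{\cc B}_{n,m}(\rho)}{2}^2}\bigl(Z^\top A_n Z - \tr A_n\bigr) + R_n,\qquad R_n \coloneqq \sqrt n\paren{\frac{\bb{E}\hat\phi_{n,\cc B}(\rho)\rho^{-2\nu}}{\phi_0\rho^{-2\nu}_0}-1}.
\]
The deterministic remainder $R_n$ has absolute value at most $\sqrt n$ times the bias term $P_1$ bounded in the first part of the proof of Theorem \ref{CnstncyLocInvFreeThm}; since $d\le 3$, that bound forces $R_n\to 0$.

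Next I would fix the variance scaling. Put $\sigma_{n,m}^2 \coloneqq 2n\LpNorm{A_n}{2}^2\big/\LpNorm{L^{\cc B}_{n,m}(\rho)}{2}^4$, the exact variance of the quadratic part (recall $\var(Z^\top A_n Z)=2\LpNorm{A_n}{2}^2$ for symmetric $A_n$). The denominator is $\asymp n^2$ by Lemma \ref{LowBndFrobNormLnm}. For the numerator, Lemma \ref{FrobNormAsympNumeratMnm} gives $\LpNorm{A_n}{2}\lesssim\sqrt n$; conversely, Cauchy--Schwarz gives $\LpNorm{A_n}{2}\ge \tr A_n/\sqrt n$, and since $\tr A_n = (1+o(1))\LpNorm{L^{\cc B}_{n,m}(\rho)}{2}^2\asymp n$ by the same bias analysis, $\LpNorm{A_n}{2}\gtrsim\sqrt n$. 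Hence $\LpNorm{A_n}{2}\asymp\sqrt n$ and $\sigma_{n,m}\asymp 1$; in particular $\sigma_{n,m}$ is bounded and bounded away from zero, which is what the theorem requires.

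The core step is the CLT for the normalized quadratic form. Diagonalizing $A_n=U\Lambda U^\top$ with eigenvalues $\lambda_1,\dots,\lambda_n\ge 0$, one has $Z^\top A_n Z - \tr A_n \eqd \sum_{i=1}^n\lambda_i(g_i^2-1)$ with $g_i$ i.i.d.\ $N(0,1)$: a sum of independent, mean-zero summands with total variance $2\sum_i\lambda_i^2 = 2\LpNorm{A_n}{2}^2$. A Lyapunov computation (fourth moments of $\lambda_i(g_i^2-1)$) shows this triangular array is asymptotically normal provided $\max_i\lambda_i\big/\LpNorm{A_n}{2} = \OpNorm{A_n}{2}{2}\big/\LpNorm{A_n}{2}\to 0$. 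Now $\OpNorm{A_n}{2}{2}\le \OpNorm{L_{n,m}(\rho_0)}{2}{2}\,\OpNorm{L^{\cc B}_{n,m}(\rho)}{2}{2}\lesssim(\log n)^2$ by \eqref{UppBndOpNormLnm}, while $\LpNorm{A_n}{2}\asymp\sqrt n$, so the ratio is $\lesssim(\log n)^2/\sqrt n\to 0$. Therefore $\bigl(Z^\top A_n Z-\tr A_n\bigr)\big/\bigl(\sqrt 2\,\LpNorm{A_n}{2}\bigr)\cp{d}N(0,1)$, and combining with $\sigma_{n,m}\asymp 1$ and $R_n\to 0$ through Slutsky's theorem yields $\sigma_{n,m}^{-1}T_n(\rho,Y)\cp{d}N(0,1)$.

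I expect the main obstacle to be ensuring that $\sigma_{n,m}$ stays bounded away from zero, i.e.\ that $A_n$ retains $\Omega(n)$ non-negligible eigenvalues even though $L_{n,m}(\rho_0)$ may be arbitrarily ill-conditioned: the naive bound $\LpNorm{A_n}{2}^2\ge\lambda_{\min}(L_{n,m}(\rho_0))^2\LpNorm{L^{\cc B}_{n,m}(\rho)}{2}^2$ is worthless here. The trace-based lower bound $\LpNorm{A_n}{2}\ge\tr A_n/\sqrt n$ sidesteps this, but it borrows the quantitative estimate $\tr A_n\asymp n$ from the proof of Theorem \ref{CnstncyLocInvFreeThm}, so the consistency and normality arguments are genuinely intertwined; verifying the negligibility condition $\OpNorm{A_n}{2}{2}/\LpNorm{A_n}{2}\to 0$ likewise rests on the delicate operator-norm bound \eqref{UppBndOpNormLnm}, which is the place where the polynomial decay of the preconditioned covariance (Proposition \ref{UppBndCovGmNonRegLatt}) really pays off.
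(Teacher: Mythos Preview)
Your reduction to a centered Gaussian quadratic form, the handling of the deterministic bias $R_n$, and the CLT via the ratio $\OpNorm{A_n}{2}{2}/\LpNorm{A_n}{2}\to 0$ are all correct and track the paper's argument closely (the paper packages the same ratio condition by citing an auxiliary CLT lemma rather than writing out Lyapunov). Your trace--Cauchy--Schwarz lower bound $\LpNorm{A_n}{2}\ge \tr A_n/\sqrt n$ with $\tr A_n=(1+o(1))\LpNorm{L^{\cc B}_{n,m}(\rho)}{2}^2\asymp n$ is a clean alternative to the paper's Lemma \ref{FrobNormAsympNumeratMnm}, and in fact more self-contained.

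There is, however, one genuine gap. The claim asserts the existence of a \emph{single} bounded sequence $\sigma_{n,m}$, not depending on $\rho$, such that $\sigma_{n,m}^{-1}T_n(\rho,Y)\cp{d}N(0,1)$ for every fixed $\rho\in\Theta_0$. Your $\sigma_{n,m}^2=2n\LpNorm{A_n}{2}^2/\LpNorm{L^{\cc B}_{n,m}(\rho)}{2}^4$ manifestly depends on $\rho$ (through $A_n$ and the denominator), and showing $\sigma_{n,m}(\rho)\asymp 1$ uniformly is not the same as showing that $\sigma_{n,m}(\rho)/\sigma_{n,m}(\rho')\to 1$ for distinct $\rho,\rho'$. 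This $\rho$-independence is not cosmetic: it is precisely what allows the squeeze argument (Claim \ref{Claim1Thm2} plus Lemma \ref{SqueezeThm}) to conclude Theorem \ref{AsympNormLocInvFreeThm}, since the sandwich involves $T_n(\rho_{\min},Y)$ and $T_n(\rho_{\max},Y)$ and both must converge to the \emph{same} Gaussian after a common normalization. The paper closes this gap by proving $\lim_{n\to\infty}\sigma_{n,m}(\rho,\rho_0)/\sigma_{n,m}(\rho_1,\rho_2)=1$ for all $\rho,\rho_1,\rho_2\in\Theta_0$; the argument decomposes the ratio into two factors and controls each via the nuclear-norm sensitivity bound $\SpNorm{L^{\cc B}_{n,m}(\rho)-L^{\cc B}_{n,m}(\rho')}{1}\lesssim n^\gamma$ with $\gamma<1/2$ (Lemma \ref{SensAnalS1Norm}) together with the operator-norm bound \eqref{UppBndOpNormLnm} and Lemma \ref{FrobNormAsympNumeratMnm}. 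You will need to supply this step.
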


\noindent As both upper and lower bounds on $\sigma^{-1}_{n,m}T_n\paren{\hat{\rho}_n,Y}$ in \eqref{BndsT_n} weakly converge to a random variable distributed as $N\paren{0,1}$, the squeeze theorem for the weak convergence (see Lemma \ref{SqueezeThm} for its rigorous statement) concludes the proof. The rest of the proof serves to establish Claims \ref{Claim1Thm2} and \ref{Claim2Thm2}.

\begin{proof}[Proof of Claim \ref{Claim1Thm2}]
Define $T'_n\paren{\rho} \coloneqq 1+ T_n\paren{\rho,Y}/\sqrt{n}$. Claim \ref{Claim2Thm2} obviously holds if we can show that 
\begin{equation}\label{ffT'_nUppLowBnd}
T'_n\paren{\rho_{\min}}\paren{1-p'_n} \leq T'_n\paren{\hat{\rho}_n}\leq T'_n\paren{\rho_{\max}}\paren{1+q'_n},
\end{equation}
for any realization of $Y$ and for sequences $\set{p'_n}^{\infty}_{n=1}, \set{q'_n}^{\infty}_{n=1}$ converging to zero faster than $n^{-1/2}$. Let $Z$ be a standard Gaussian column vector with the same length as $Y$. Define $U \coloneqq \sqrt{L_{n,m}\paren{\rho_0}}Z$, which obviously has no dependence on $\rho$. Then,
\begin{equation}\label{ffT'_nEqForm}
T'_n\paren{\rho} =  \frac{U^\top L^{\cc{B}}_{n,m}\paren{\rho}U}{\LpNorm{L^{\cc{B}}_{n,m}\paren{\rho}}{2}^2},
\end{equation}
We only prove the right hand side inequality in  Eq. \eqref{ffT'_nUppLowBnd} and the other side can be shown similarly. We separately analyze the numerator and denominator in \eqref{ffT'_nEqForm}. We know that $L^{\cc{B}}_{n,m}\paren{\rho}\preceq L^{\cc{B}}_{n,m}\paren{\rho_{\max}}$ for any $\rho\in\Theta_0$ (see \eqref{PosDefDelta} for the details). Thus, $U^\top L^{\cc{B}}_{n,m}\paren{\rho}U\leq U^\top L^{\cc{B}}_{n,m}\paren{\rho_{\max}}U$ almost surely. Namely,
\begin{equation}\label{ffT'_nrhoUppBnd}
T'_n\paren{\rho} \leq \frac{U^\top L^{\cc{B}}_{n,m}\paren{\rho_{\max}}U}{\LpNorm{L^{\cc{B}}_{n,m}\paren{\rho}}{2}^2} \quad\Leftrightarrow \quad \set{\frac{T'_n\paren{\rho}}{T'_n\paren{\rho_{\max}}}-1}\leq \frac{\LpNorm{L^{\cc{B}}_{n,m}\paren{\rho_{\max}}}{2}^2-\LpNorm{L^{\cc{B}}_{n,m}\paren{\rho}}{2}^2}{{\LpNorm{L^{\cc{B}}_{n,m}\paren{\rho}}{2}^2}}.
\end{equation}
Recall that we have defined $\Delta^{\cc{B}}\paren{\rho_2,\rho_1} \coloneqq L^{\cc{B}}_{n,m}\paren{\rho_2} - L^{\cc{B}}_{n,m}\paren{\rho_1}$, for any $\rho_1,\rho_2\in\Theta_0$. It is sufficient to show that
\begin{equation}\label{q'n}
q'_n\coloneqq \frac{\LpNorm{L^{\cc{B}}_{n,m}\paren{\rho_{\max}}}{2}^2-\LpNorm{L^{\cc{B}}_{n,m}\paren{\rho}}{2}^2}{{\LpNorm{L^{\cc{B}}_{n,m}\paren{\rho}}{2}^2}} = o\paren{\frac{1}{\sqrt{n}}},\quad\mbox{as}\;n\rightarrow\infty.
\end{equation}
As we know from Lemma \ref{LowBndFrobNormLnm} that $\LpNorm{L^{\cc{B}}_{n,m}\paren{\rho}}{2} \gtrsim \sqrt{n}$, we just need to show that 
\begin{equation*}
\psi_n\coloneqq \LpNorm{L^{\cc{B}}_{n,m}\paren{\rho_{\max}}}{2}^2-\LpNorm{L^{\cc{B}}_{n,m}\paren{\rho}}{2}^2 = o\paren{\sqrt{n}},\quad\mbox{as}\;n\rightarrow\infty.
\end{equation*}
On the other hand we have
\begin{eqnarray*}
\LpNorm{L^{\cc{B}}_{n,m}\paren{\rho_{\max}}}{2}^2-\LpNorm{L^{\cc{B}}_{n,m}\paren{\rho}}{2}^2 &=& \LpNorm{L^{\cc{B}}_{n,m}\paren{\rho_{\max}}}{2}^2- \LpNorm{L^{\cc{B}}_{n,m}\paren{\rho_{\max}} - \Delta^{\cc{B}}\paren{\rho_{\max},\rho}}{2}^2\\
&\leq&  2\InnerProd{L^{\cc{B}}_{n,m}\paren{\rho_{\max}}}{\Delta^{\cc{B}}\paren{\rho_{\max},\rho}}\\
&\leq& 2\OpNorm{L^{\cc{B}}_{n,m}\paren{\rho_{\max}}}{2}{2} \SpNorm{\Delta^{\cc{B}}\paren{\rho_{\max},\rho}}{1}.
\end{eqnarray*}
Eq. \eqref{UppBndOpNormLnm} provides an upper bounds on  $\OpNorm{L^{\cc{B}}_{n,m}\paren{\rho_{\max}}}{2}{2}$. So
\begin{eqnarray*}
\psi_n &\leq& 2\OpNorm{L^{\cc{B}}_{n,m}\paren{\rho_{\max}}}{2}{2} \SpNorm{\Delta^{\cc{B}}\paren{\rho_{\max},\rho}}{1} \lesssim \paren{1+ \bbM{1}_{\set{m=\nu+d/2}}\log n}\SpNorm{\Delta^{\cc{B}}\paren{\rho_{\max},\rho}}{1} \\
&\leq& \SpNorm{\Delta^{\cc{B}}\paren{\rho_{\max},\rho}}{1}\log n.
\end{eqnarray*}
We now employ analogous techniques as Eq. \eqref{UppBndNucNormDelta} (see also Lemma \ref{SensAnalS1Norm}) to control $\SpNorm{\Delta^{\cc{B}}\paren{\rho_{\max},\rho}}{1}$ from above. Since we only consider the case of $d\leq 3$, the bound in Eq. \eqref{UppBndNucNormDelta} can be rewritten as the following.
\begin{equation}\label{gammaDefn}
\exists\; 0<\gamma<\frac{1}{2},\;\;\suchthat\;\; \SpNorm{\Delta^{\cc{B}}\paren{\rho_{\max},\rho}}{1}\lesssim n^\gamma.
\end{equation}
Thus $\psi_n$ can be upper bounded by $\psi_n \lesssim n^\gamma\log n = o\paren{\sqrt{n}}$, which concludes the proof.   
\end{proof}

\begin{proof}[Proof of Claim \ref{Claim2Thm2}]
For brevity let $\xi_n \coloneqq T_n\paren{\rho,Y}+\sqrt{n}$. We suppress the dependence of $\rho$ and $Y$ on $\xi_n$. Let us decompose $T_n\paren{\rho,Y}$ into two parts as
\begin{eqnarray}\label{LHSEqForm}
T_n\paren{\rho,Y} &=& \paren{\frac{T_n\paren{\rho,Y}-\bb{E}T_n\paren{\rho,Y}}{\sqrt{\var T_n\paren{\rho,Y}}}} \sqrt{\var T_n\paren{\rho,Y}} + \bb{E}T_n\paren{\rho,Y}\nonumber\\
&=& \paren{\frac{\xi_n-\bb{E}\xi_n}{\sqrt{\var \xi_n}}} \sqrt{\var \xi_n} + \bb{E}T_n\paren{\rho,Y}.
\end{eqnarray}
Recall that we defined $P_1 \coloneqq \sup_{\rho\in\Theta_0} n^{-1/2}\bb{E}T_n\paren{\rho,Y}$ in the proof of Theorem \ref{CnstncyLocInvFreeThm}. A prudent look at Eqs. \eqref{UppBndP1GenCase} and \eqref{UppBndP1RegLatt} reveals that $P_1\lesssim n^{\gamma-1} \log n$ for some $\gamma<1/2$ ($\gamma$ is the same as in \eqref{gammaDefn}). Hence,
\begin{equation*}
\bb{E}T_n\paren{\rho,Y}\leq \sqrt{n}P_1 \lesssim n^{-1/2+\gamma}\log n\rightarrow 0,\quad\mbox{as}\; n\rightarrow\infty.
\end{equation*}
Namely, $\bb{E}T_n\paren{\rho,Y}$ tends to zero as $n$ grows to infinity. Thus, it is sufficient to obtain the asymptotic distribution of the first term in the right hand side of \eqref{LHSEqForm}. Now we express $\xi_n$ as a quadratic term of a Gaussian random vector. Using identity \eqref{EqFormffTn}, one can easily show that
\begin{equation}\label{xi_nEqForm}
\xi_n \eqd Z^\top \frac{M^{\cc{B}}_{n,m}\paren{\rho}}{\sqrt{n}} Z,
\end{equation}
in which $Z$ is a standard Gaussian vector of proper size and $M^{\cc{B}}_{n,m}\paren{\rho}$ has been defined in \eqref{ffM}. The explicit expressions for the expected value and standard deviation of $\xi_n$ are given by
\begin{equation*}
\bb{E}\xi_n = \sqrt{\frac{1}{n}} \tr\set{M^{\cc{B}}_{n,m}\paren{\rho}},\quad \sqrt{\var \xi_n} = \sqrt{\frac{2}{n}}\LpNorm{M^{\cc{B}}_{n,m}\paren{\rho}}{2}.
\end{equation*}
We showed in the proof of Theorem \ref{CnstncyLocInvFreeThm} that $\OpNorm{M^{\cc{B}}_{n,m}\paren{\rho}}{2}{2}/\LpNorm{M^{\cc{B}}_{n,m}\paren{\rho}}{2} \rightarrow 0$ when $n\rightarrow\infty$ (see \eqref{Vn} and \eqref{VnRate}). Thus applying Lemma $A.4$ of \cite{keshavarz2016consistency}, on asymptotic normality of the normalized generalized $\chi^2$ random variables, leads to
\begin{equation*}
\paren{\frac{\xi_n-\bb{E}\xi_n}{\sqrt{\var \xi_n}}} \cp{d} N\paren{0,1}.
\end{equation*}
Finally we study the limiting behaviour of $\sqrt{\var \xi_n}$, which is denoted by $\sigma_{n,m}\paren{\rho,\rho_0}$. Notice that
\begin{equation*}
\sigma_{n,m}\paren{\rho,\rho_0} \coloneqq \sqrt{\frac{2}{n}}\LpNorm{M^{\cc{B}}_{n,m}\paren{\rho}}{2} = \frac{\sqrt{2n}}{\LpNorm{L^{\cc{B}}_{n,m}\paren{\rho}}{2}^2}\LpNorm{\sqrt{L_{n,m}\paren{\rho_0}} L^{\cc{B}}_{n,m}\paren{\rho} \sqrt{L_{n,m}\paren{\rho_0}}}{2}.
\end{equation*}
We claim that 
\begin{equation}\label{Varofxi_n}
\lim\limits_{n\rightarrow\infty} \frac{\sigma_{n,m}\paren{\rho,\rho_0}}{\sigma_{n,m}\paren{\rho_1,\rho_2}}= 1,\quad \forall\;\rho_1,\rho_2\in\Theta_0.
\end{equation}
Thus, $\sigma_{n,m}$ has no dependence to $\rho$, $\rho_0$, and $\Theta_0$. In other words, $\sigma_{n,m}$ only depends on $m,d,\nu$, and the topology of $\cc{D}_n$ . Assuming that the claim holds, for proving the boundedness of $\sigma_{n,m}$, we just need to check that $\sigma_{n,m}\paren{\rho,\rho_0} \asymp 1$ for some $\rho'_1,\rho'_2\in\Theta_0$. Applying Lemma \ref{LowBndFrobNormLnm} on the denominator of $\sigma_{n,m}\paren{\rho'_1,\rho'_2}$, we get,
\begin{equation*}
f_{n,m}\paren{\rho'_1,\rho'_2} \lesssim   \frac{\LpNorm{\sqrt{L_{n,m}\paren{\rho'_2}} L^{\cc{B}}_{n,m}\paren{\rho'_1} \sqrt{L_{n,m}\paren{\rho'_2}}}{2}}{\sqrt{n}}.
\end{equation*}
So, $\sigma_{n,m}\paren{\rho'_1,\rho'_2}\asymp 1$ as a result of Lemma \ref{FrobNormAsympNumeratMnm}. We now turn to substantiate \eqref{Varofxi_n}. It is sufficient to verify the following identities for any $\rho_1,\rho_2\in\Theta_0$.
\begin{equation}\label{Varofxi_n2}
\lim\limits_{n\rightarrow\infty} \frac{\sigma_{n,m}\paren{\rho,\rho_0}}{\sigma_{n,m}\paren{\rho_1,\rho_0}}= 1,\quad\lim\limits_{n\rightarrow\infty} \frac{\sigma_{n,m}\paren{\rho_1,\rho_0}}{\sigma_{n,m}\paren{\rho_1,\rho_2}}= 1.
\end{equation}
To avoid repetition, we only demonstrate the left hand side identity in \eqref{Varofxi_n2} and the other one can be substantiated using analogous techniques. Observe that
\begin{equation*}
\frac{\sigma_{n,m}\paren{\rho,\rho_0}}{\sigma_{n,m}\paren{\rho_1,\rho_0}} = \brac{\frac{ \LpNorm{L^{\cc{B}}_{n,m}\paren{\rho_1}}{2} }{\LpNorm{L^{\cc{B}}_{n,m}\paren{\rho}}{2}}}^2  \frac{\LpNorm{\sqrt{L_{n,m}\paren{\rho_0}} L^{\cc{B}}_{n,m}\paren{\rho} \sqrt{L_{n,m}\paren{\rho_0}}}{2}}{\LpNorm{\sqrt{L_{n,m}\paren{\rho_0}} L^{\cc{B}}_{n,m}\paren{\rho_1} \sqrt{L_{n,m}\paren{\rho_0}}}{2}} \coloneqq a_n b_n.
\end{equation*}
We prove that both $a_n$ and $b_n$ converge to one as $n$ tends to infinity. Notice that $\abs{a_n-1}$ has the same limiting behaviour as $q'_n$ defined at \eqref{q'n}. So for avoiding the redundancy we just state that $\abs{a_n-1} \lesssim n^{\gamma-1} \log n = o\paren{n^{-1/2}}$ and refer the reader to the proof of Claim \ref{Claim1Thm2}. The last step of the proof is devoted to control $\abs{b_n-1}$ from above.

\begin{eqnarray*}
\abs{b_n-1} &=& \abs{\frac{\LpNorm{\sqrt{L_{n,m}\paren{\rho_0}} L^{\cc{B}}_{n,m}\paren{\rho} \sqrt{L_{n,m}\paren{\rho_0}}}{2}}{\LpNorm{\sqrt{L_{n,m}\paren{\rho_0}} L^{\cc{B}}_{n,m}\paren{\rho_1} \sqrt{L_{n,m}\paren{\rho_0}}}{2}}-1}\leq \frac{\OpNorm{L_{n,m}\paren{\rho_0}}{2}{2}\LpNorm{L^{\cc{B}}_{n,m}\paren{\rho}-L^{\cc{B}}_{n,m}\paren{\rho_1}} {2}}{\LpNorm{\sqrt{L_{n,m}\paren{\rho_0}} L^{\cc{B}}_{n,m}\paren{\rho_1} \sqrt{L_{n,m}\paren{\rho_0}}}{2}}\\
&=& \frac{\OpNorm{L_{n,m}\paren{\rho_0}}{2}{2}\LpNorm{\Delta^{\cc{B}}\paren{\rho,\rho_0}}{2}}{\LpNorm{\sqrt{L_{n,m}\paren{\rho_0}} L^{\cc{B}}_{n,m}\paren{\rho_1} \sqrt{L_{n,m}\paren{\rho_0}}}{2}} \leq \frac{\OpNorm{L_{n,m}\paren{\rho_0}}{2}{2}\SpNorm{\Delta^{\cc{B}}\paren{\rho,\rho_0}}{1}}{\LpNorm{\sqrt{L_{n,m}\paren{\rho_0}} L^{\cc{B}}_{n,m}\paren{\rho_1} \sqrt{L_{n,m}\paren{\rho_0}}}{2}}\\ 
&\RelNum{\paren{a}}{\lesssim}& \frac{\log n\SpNorm{\Delta^{\cc{B}}\paren{\rho,\rho_0}}{1}}{\LpNorm{\sqrt{L_{n,m}\paren{\rho_0}} L^{\cc{B}}_{n,m}\paren{\rho_1} \sqrt{L_{n,m}\paren{\rho_0}}}{2}} \RelNum{\paren{b}}{\lesssim} \frac{\SpNorm{\Delta^{\cc{B}}\paren{\rho,\rho_0}}{1}\log n}{\sqrt{n}}.
\end{eqnarray*}
Here $\paren{a}$ and $\paren{b}$ are successively implied from Eq. \eqref{UppBndOpNormLnm} and Lemma \ref{FrobNormAsympNumeratMnm}. Using similar techniques as Eq. \eqref{gammaDefn} implies that
\begin{equation*}
\abs{b_n-1} \lesssim \frac{\SpNorm{\Delta\paren{\rho,\rho_0}}{1}\log n}{\sqrt{n}} \lesssim \frac{n^\gamma\log n}{\sqrt{n}}\rightarrow 0,\quad\mbox{as}\; n\rightarrow\infty.
\end{equation*}
Namely $\lim\limits_{n\rightarrow\infty}b_n = 1$, which concludes the proof.
\end{proof}

\end{proof}

\appendix
\appendixpage
\makeatletter
\def\@seccntformat#1{\csname Pref@#1\endcsname \csname the#1\endcsname\quad}
\def\Pref@section{~}
\makeatother

\counterwithin{thm}{section}
\counterwithin{assu}{section}
\counterwithin{lem}{section}
\counterwithin{cor}{section}
\counterwithin{prop}{section}

\section{Large sample behavior of covariance matrices of GPs observed on irregular grids}\label{AppendixCovMatrixIrregLat}

Throughout this section, we put the following restrictions on the irregular lattice $\cc{D}_n$ with $n$ points. To avoid repetition, we omit these common assumptions in the statement of all the results in this section. Moreover, the scalars implicitly expressed in $\asymp$ and $\lesssim$ relations are bounded and generally depend on $m,d,\nu,\Theta_0$ and the topological structure of $\cc{D}_n$. 
\begin{itemize}
\item $\cc{D}_n$ is a $d$-dimensional grid satisfying Assumption \ref{RegCondLattice}. It is expedient to define $N \coloneqq \lfloor n^{1/d} \rfloor$.
\item The set of coefficients $\set{a_{m,\boldsymbol{s}}\paren{\boldsymbol{t}}: \boldsymbol{s}\in\cc{D}_n, \boldsymbol{t}\in\cc{N}_m\paren{\boldsymbol{s}}}$, admit the conditions in Definition \ref{DecorFltNonReglat}. 
\end{itemize}

Before jumping into stating the theoretical results in the subsequent sections, we recall some key assumptions and notations that we have used in the body of the paper. $G$ represents a centered, isotropic Matern GP whose one time realization has been observed at $\cc{D}_n$. The range parameters $\rho$ belongs to a compact $\Theta_0\subset\paren{0,\infty}$. We also write $\set{G_m\paren{\boldsymbol{s}}: \boldsymbol{s}\in\cc{D}_n }$ to denote the preconditioned process of order-$m$ (see Definition \ref{DecorFltNonReglat}). $m$ is chosen in such a way that $m\geq \paren{\nu+d/2}$. Let $\cc{B} = \set{B_t}^{b_n}_{t=1}$ be an arbitrary partition of $\cc{D}_n$. We have defined $K^{\cc{B}}_{n,m}\paren{\rho}$ in Eq. \eqref{Ktilde_n,m}, a matrix which is proportional to the block diagonal approximation of to the covariance of $\brac{G_m\paren{\boldsymbol{s}}:\; \boldsymbol{s}\in\cc{D}_n}$, associated to the partitioning scheme $\cc{B}$. We also define $L^{\cc{B}}_{n,m}\paren{\rho}\coloneqq \rho^{2\nu}K^{\cc{B}}_{n,m}\paren{\rho}$ for notational convenience. 

\subsection{How do the off-diagonal entries of $K^{\cc{B}}_{n,m}\paren{\rho}$ decay?}\label{OffDiagDecay}

The main objective of this section is to study the decay rate of the off-diagonal entries of $K^{\cc{B}}_{n,m}\paren{\rho}$, which comes in handy for analyzing the asymptotic behavior of different norms of $K^{\cc{B}}_{n,m}\paren{\rho}$ in Section \ref{Proofs}.For achieving this goal, we need a spectral representation for the entries of $K^{\cc{B}}_{n,m}\paren{\rho}$. For brevity define the complex valued function $f^N_{\boldsymbol{s}}:\bb{R}^d\setminus\set{\zero_d}\mapsto\bb{C}$, for any $\boldsymbol{s}\in\cc{D}_n$, by
\begin{equation}\label{fNs}
f^N_{\boldsymbol{s}}\paren{\boldsymbol{\omega}} \coloneqq \LpNorm{\boldsymbol{\omega}}{2}^{-\paren{\nu+d/2}} \sum_{ \boldsymbol{s'}\in \cc{N}_m\paren{\boldsymbol{s}} } a_{m,\boldsymbol{s}}\paren{\boldsymbol{s'}}\exp\paren{j\InnerProd{N\boldsymbol{\omega}}{\boldsymbol{s'}-\boldsymbol{s}} },\quad \forall\; \boldsymbol{\omega}\ne \zero_d,
\end{equation}
and the strictly increasing function $h_N:\paren{0,\infty}\mapsto\paren{0,1}$ with
\begin{equation}\label{hN}
h_N\paren{x} \coloneqq \brac{1+\paren{Nx}^{-2}}^{-\paren{\nu+d/2}}.
\end{equation}
Choose $\boldsymbol{s},\boldsymbol{t}\in\cc{D}_n$ arbitrarily. The entries of $K_{n,m}$ (corresponding to the single bin scenario) can be expressed in terms of the Matern spectral density.
\begin{eqnarray*}
\paren{K_{n,m}\paren{\rho}}_{\boldsymbol{s},\boldsymbol{t}} &=& \frac{N^{2\nu}}{\rho^{2\nu}}\sum_{ \boldsymbol{s'}\in \cc{N}_m\paren{\boldsymbol{s}} }\sum_{ \boldsymbol{t'}\in \cc{N}_m\paren{\boldsymbol{t}} } a_{m,\boldsymbol{s}}\paren{\boldsymbol{s'}}a_{m,\boldsymbol{t}}\paren{\boldsymbol{t'}} \int_{\bb{R}^d} e^{j\InnerProd{\boldsymbol{\omega}}{\boldsymbol{t'}-\boldsymbol{s'}} } \paren{\LpNorm{\boldsymbol{\omega}}{2}^2 +\frac{1}{\rho^2} }^{-\paren{\nu+d/2}} d\boldsymbol{\omega}\\
&=& \frac{N^{2\nu}}{\rho^{2\nu}}\sum_{ \boldsymbol{s'}\in \cc{N}_m\paren{\boldsymbol{s}} }\sum_{ \boldsymbol{t'}\in \cc{N}_m\paren{\boldsymbol{t}} } a_{m,\boldsymbol{s}}\paren{\boldsymbol{s'}}a_{m,\boldsymbol{t}}\paren{\boldsymbol{t'}} \int_{\bb{R}^d} \frac{\exp\paren{j\InnerProd{\boldsymbol{\omega}}{\boldsymbol{t'}-\boldsymbol{s'}} }}{\LpNorm{\boldsymbol{\omega}}{2}^{2\nu+d}}  h_N\paren{\frac{\rho\LpNorm{\boldsymbol{\omega}}{2}}{N}} d\boldsymbol{\omega}.
\end{eqnarray*} 
Change of variable method introduces an equivalent form of the above identity (replace $N\boldsymbol{\omega}$ instead of $\boldsymbol{\omega}$).
\begin{eqnarray}\label{SpctRepKnm}
\paren{K_{n,m}\paren{\rho}}_{\boldsymbol{s},\boldsymbol{t}} &=& \frac{1}{\rho^{2\nu}}\sum_{ \boldsymbol{s'}\in \cc{N}_m\paren{\boldsymbol{s}} }\sum_{ \boldsymbol{t'}\in \cc{N}_m\paren{\boldsymbol{t}} } a_{m,\boldsymbol{s}}\paren{\boldsymbol{s'}}a_{m,\boldsymbol{t}}\paren{\boldsymbol{t'}} \int_{\bb{R}^d} \frac{\exp\paren{j\InnerProd{N\boldsymbol{\omega}}{\boldsymbol{t'}-\boldsymbol{s'}} }}{\LpNorm{\boldsymbol{\omega}}{2}^{2\nu+d}}  h_N\paren{\rho\LpNorm{\boldsymbol{\omega}}{2}} d \boldsymbol{\omega} \nonumber\\
&=& \rho^{-2\nu} \int_{\bb{R}^d} \exp\paren{j\InnerProd{\boldsymbol{t}-\boldsymbol{s}}{\boldsymbol{\omega}} } f^N_{\boldsymbol{s}}\paren{\boldsymbol{\omega}}\overline{f^N_{\boldsymbol{t}}\paren{\boldsymbol{\omega}}} h_N\paren{\rho\LpNorm{\boldsymbol{\omega}}{2}} d\boldsymbol{\omega}.
\end{eqnarray}
Next we examine the behavior of $f^N_{\boldsymbol{s}}\paren{\cdot}$ for large $\boldsymbol{\omega}$. Such analysis is decisive for controlling the entries of $K^{\cc{B}}_{n,m}\paren{\rho}$ from above.

\begin{lem}\label{UppBndfmdLemma}
There exists $\beta \in\paren{1,\infty}$ (depending on $m,\nu,d$ and $\cc{D}_n$) such that
\begin{equation}\label{UppBndfmd}
\max_{\boldsymbol{s}\in\cc{D}_n}\abs{f^N_{\boldsymbol{s}}\paren{\boldsymbol{\omega}}}^2\leq \frac{\beta}{1+\LpNorm{\boldsymbol{\omega}}{2}^{2\nu+d}},\quad \forall\;\boldsymbol{\omega}\ne \zero_d.
\end{equation}
\end{lem}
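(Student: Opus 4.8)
The plan is to strip off the radial weight and reduce the estimate to a statement about a trigonometric polynomial whose low-order Fourier moments vanish. Write $\boldsymbol{u}_{\boldsymbol{s'}} \coloneqq N\paren{\boldsymbol{s'}-\boldsymbol{s}}$ for $\boldsymbol{s'}\in\cc{N}_m\paren{\boldsymbol{s}}$, so that the exponents in \eqref{fNs} become $\InnerProd{N\boldsymbol{\omega}}{\boldsymbol{s'}-\boldsymbol{s}} = \InnerProd{\boldsymbol{\omega}}{\boldsymbol{u}_{\boldsymbol{s'}}}$ and $f^N_{\boldsymbol{s}}\paren{\boldsymbol{\omega}} = \LpNorm{\boldsymbol{\omega}}{2}^{-\paren{\nu+d/2}} g_{\boldsymbol{s}}\paren{\boldsymbol{\omega}}$ with $g_{\boldsymbol{s}}\paren{\boldsymbol{\omega}} \coloneqq \sum_{\boldsymbol{s'}\in\cc{N}_m\paren{\boldsymbol{s}}} a_{m,\boldsymbol{s}}\paren{\boldsymbol{s'}} \exp\paren{j\InnerProd{\boldsymbol{\omega}}{\boldsymbol{u}_{\boldsymbol{s'}}}}$. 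I would first record two uniform (in $\boldsymbol{s}\in\cc{D}_n$, and in $n$) structural facts: (i) $\LpNorm{\boldsymbol{u}_{\boldsymbol{s'}}}{2} \lesssim 1$, since $\cc{N}_m\paren{\boldsymbol{s}}$ consists of neighbours within distance $\cc{O}\paren{N^{-1}}$ of $\boldsymbol{s}$ (the radius bound preceding Definition~\ref{DecorFltNonReglat}, which rests on Assumption~\ref{RegCondLattice}); and (ii) $\abs{\cc{N}_m\paren{\boldsymbol{s}}}$ is bounded by a constant depending only on $m,d$ and the lattice-regularity constants, so that the normalization $\sum_{\boldsymbol{s'}} a^2_{m,\boldsymbol{s}}\paren{\boldsymbol{s'}} = 1$ together with Cauchy--Schwarz gives $\sum_{\boldsymbol{s'}} \abs{a_{m,\boldsymbol{s}}\paren{\boldsymbol{s'}}} \le \sqrt{\abs{\cc{N}_m\paren{\boldsymbol{s}}}} \lesssim 1$.

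From (ii) the trivial bound $\abs{g_{\boldsymbol{s}}\paren{\boldsymbol{\omega}}} \le \sum_{\boldsymbol{s'}} \abs{a_{m,\boldsymbol{s}}\paren{\boldsymbol{s'}}} \lesssim 1$ holds for every $\boldsymbol{\omega}$. The decay of $g_{\boldsymbol{s}}$ near the origin is where the vanishing-moment condition~(1) of Definition~\ref{DecorFltNonReglat} enters: expand $e^{jx} = \sum_{k=0}^{m-1} \paren{jx}^k/k! + R_m\paren{x}$ with $\abs{R_m\paren{x}} \le \abs{x}^m/m!$; then, after the multinomial expansion of $\InnerProd{\boldsymbol{\omega}}{\boldsymbol{u}_{\boldsymbol{s'}}}^k$ and the identity $\boldsymbol{u}_{\boldsymbol{s'}}^{\boldsymbol{r}} = N^{\LpNorm{\boldsymbol{r}}{1}}\paren{\boldsymbol{s'}-\boldsymbol{s}}^{\boldsymbol{r}}$, each sum $\sum_{\boldsymbol{s'}} a_{m,\boldsymbol{s}}\paren{\boldsymbol{s'}}\InnerProd{\boldsymbol{\omega}}{\boldsymbol{u}_{\boldsymbol{s'}}}^k$ with $k<m$ is a linear combination of the moments $\sum_{\boldsymbol{s'}} a_{m,\boldsymbol{s}}\paren{\boldsymbol{s'}}\paren{\boldsymbol{s'}-\boldsymbol{s}}^{\boldsymbol{r}}$, $\LpNorm{\boldsymbol{r}}{1} = k < m$, all of which vanish by condition~(1). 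Hence $g_{\boldsymbol{s}}\paren{\boldsymbol{\omega}} = \sum_{\boldsymbol{s'}} a_{m,\boldsymbol{s}}\paren{\boldsymbol{s'}} R_m\paren{\InnerProd{\boldsymbol{\omega}}{\boldsymbol{u}_{\boldsymbol{s'}}}}$, and (i)--(ii) yield $\abs{g_{\boldsymbol{s}}\paren{\boldsymbol{\omega}}} \le \tfrac{1}{m!}\LpNorm{\boldsymbol{\omega}}{2}^m \sum_{\boldsymbol{s'}} \abs{a_{m,\boldsymbol{s}}\paren{\boldsymbol{s'}}}\LpNorm{\boldsymbol{u}_{\boldsymbol{s'}}}{2}^m \lesssim \LpNorm{\boldsymbol{\omega}}{2}^m$. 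Combining the two bounds, $\abs{g_{\boldsymbol{s}}\paren{\boldsymbol{\omega}}}^2 \le C\min\set{1,\LpNorm{\boldsymbol{\omega}}{2}^{2m}}$ for a constant $C$ depending only on $m,d$ and $\cc{D}_n$.

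It then remains to assemble the estimate for $\abs{f^N_{\boldsymbol{s}}\paren{\boldsymbol{\omega}}}^2 = \LpNorm{\boldsymbol{\omega}}{2}^{-\paren{2\nu+d}}\abs{g_{\boldsymbol{s}}\paren{\boldsymbol{\omega}}}^2$ by splitting at $\LpNorm{\boldsymbol{\omega}}{2} = 1$. For $\LpNorm{\boldsymbol{\omega}}{2}\le 1$ I would use the $\LpNorm{\boldsymbol{\omega}}{2}^{2m}$ branch: since $m\ge \nu+d/2$ the exponent $2m-\paren{2\nu+d}$ is nonnegative, so $\abs{f^N_{\boldsymbol{s}}\paren{\boldsymbol{\omega}}}^2 \le C\LpNorm{\boldsymbol{\omega}}{2}^{2m-2\nu-d} \le C \le 2C/\paren{1+\LpNorm{\boldsymbol{\omega}}{2}^{2\nu+d}}$; for $\LpNorm{\boldsymbol{\omega}}{2}>1$ the constant branch gives $\abs{f^N_{\boldsymbol{s}}\paren{\boldsymbol{\omega}}}^2 \le C\LpNorm{\boldsymbol{\omega}}{2}^{-\paren{2\nu+d}} \le 2C/\paren{1+\LpNorm{\boldsymbol{\omega}}{2}^{2\nu+d}}$. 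Taking $\beta \coloneqq \max\set{2C,2}$ (to guarantee $\beta>1$) and then maximizing over $\boldsymbol{s}\in\cc{D}_n$ yields \eqref{UppBndfmd}. The argument is elementary; the only points needing care are the uniformity of the constants over $\boldsymbol{s}$ and $n$ — which is exactly what the neighbourhood-radius bound and the uniform bound on $\abs{\cc{N}_m\paren{\boldsymbol{s}}}$ under Assumption~\ref{RegCondLattice} provide — and the use of condition~(1) of Definition~\ref{DecorFltNonReglat} in full, i.e. for \emph{all} multi-indices $\boldsymbol{r}$ with $\LpNorm{\boldsymbol{r}}{1}<m$ rather than merely the axis-aligned ones.
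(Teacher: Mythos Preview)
Your proposal is correct and follows essentially the same approach as the paper: Cauchy--Schwarz plus the normalization $\sum a^2=1$ handle the large-$\LpNorm{\boldsymbol{\omega}}{2}$ regime, and the Taylor expansion together with the vanishing-moment condition~(1) of Definition~\ref{DecorFltNonReglat} handle the small-$\LpNorm{\boldsymbol{\omega}}{2}$ regime. The one mild difference is that the paper only computes $\limsup_{\boldsymbol{\omega}\to\zero_d}\abs{f^N_{\boldsymbol{s}}\paren{\boldsymbol{\omega}}}^2$ and then invokes continuity of $\max_{\boldsymbol{s}}\abs{f^N_{\boldsymbol{s}}}^2$ on $\bb{R}^d\setminus\set{\zero_d}$ to bridge the two regions, whereas you use the explicit Lagrange remainder bound $\abs{R_m\paren{x}}\le \abs{x}^m/m!$ to get $\abs{g_{\boldsymbol{s}}\paren{\boldsymbol{\omega}}}\lesssim \LpNorm{\boldsymbol{\omega}}{2}^m$ for \emph{all} $\boldsymbol{\omega}$, which makes the splitting at $\LpNorm{\boldsymbol{\omega}}{2}=1$ completely explicit and avoids any compactness argument; this is a small but genuine improvement in transparency, and it also makes the uniformity in $\boldsymbol{s}$ and $n$ of the resulting constant more visible.
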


\begin{proof}
Define the bounded integer $g_m$ by $g_m \coloneqq \max_{\boldsymbol{s}\in\cc{D}_n} \abs{\cc{N}_m\paren{\boldsymbol{s}}}$. Choose an arbitrary $\boldsymbol{s}\in\cc{D}_n$. $f^N_{\boldsymbol{s}}$ is trivially continuous and well defined at any $\boldsymbol{\omega}\ne \zero_d$, so is the function $\max_{\boldsymbol{s}\in\cc{D}_n}\abs{f^N_{\boldsymbol{s}}}^2$ (due to the continuity of the $\max$ operator). Thus for validating Eq. \eqref{UppBndfmd}, we only require to show that
\begin{enumerate}
\item $\max_{\boldsymbol{s}\in\cc{D}_n}\abs{f^N_{\boldsymbol{s}}\paren{\boldsymbol{\omega}}}^2 \lesssim \paren{1+\LpNorm{\boldsymbol{\omega}}{2}^{2\nu+d}}^{-1}$, for any $\boldsymbol{\omega}$ with $\LpNorm{\boldsymbol{\omega}}{2}^{2\nu+d} \geq g_m$.
\item There exists a bounded constant $\pi_m$ such that $\max_{\boldsymbol{s}\in\cc{D}_n} \limsup\limits_{\boldsymbol{\omega}\rightarrow\zero_d}\abs{f^N_{\boldsymbol{s}}\paren{\boldsymbol{\omega}}}^2\leq \pi_m$.
\end{enumerate}
The first claim is an implication of the Cauchy-Schwartz inequality. In Definition \ref{DecorFltNonReglat}, we normalize the coefficients $a_{m,\boldsymbol{s}}\paren{\boldsymbol{s'}}$'s to have unit Euclidean norm. Thus
\begin{eqnarray*}
\abs{f^N_{\boldsymbol{s}}\paren{\boldsymbol{\omega}}}^2 &\leq& \LpNorm{\boldsymbol{\omega}}{2}^{-\paren{2\nu+d}} \abs{\cc{N}_m\paren{\boldsymbol{s}}} \sum_{ \boldsymbol{s'}\in \cc{N}_m\paren{\boldsymbol{s}} } a^2_{m,\boldsymbol{s}}\paren{\boldsymbol{s'}} = \LpNorm{\boldsymbol{\omega}}{2}^{-\paren{2\nu+d}} \abs{\cc{N}_m\paren{\boldsymbol{s}}} \\
&\leq& g_m\LpNorm{\boldsymbol{\omega}}{2}^{-\paren{2\nu+d}} \leq \frac{1+g_m}{1+\LpNorm{\boldsymbol{\omega}}{2}^{2\nu+d}}.
\end{eqnarray*}
For proving the other claim we need to study the Taylor expansion of $f^N_{\boldsymbol{s}}$ near the origin. The second condition of Definition \ref{DecorFltNonReglat} implies that for any natural number $r<m$,
\begin{equation*}
\sum_{ \boldsymbol{s'}\in \cc{N}_m\paren{\boldsymbol{s}} } a_{m,\boldsymbol{s}}\paren{\boldsymbol{s'}} \paren{\InnerProd{\boldsymbol{\omega}}{\boldsymbol{s'}-\boldsymbol{s}} }^r = 0,\quad\forall\;\boldsymbol{\omega}\in\bb{R}^d,\;\forall\;\boldsymbol{s}\in\cc{D}_n.
\end{equation*}
So
\begin{eqnarray}\label{TayExpanNearZerofsN}
\limsup\limits_{\boldsymbol{\omega}\rightarrow\zero_d}\abs{f^N_{\boldsymbol{s}}\paren{\boldsymbol{\omega}}}^2 &=& \lim\limits_{\boldsymbol{\omega}\rightarrow\zero_d} \frac{1}{\LpNorm{\boldsymbol{\omega}}{2}^{2\nu+d}} \abs{\sum_{r=0}^{\infty} \frac{\paren{jN}^r}{r!}\sum_{ \boldsymbol{s'}\in \cc{N}_m\paren{\boldsymbol{s}} } a_{m,\boldsymbol{s}}\paren{\boldsymbol{s'}} \paren{\InnerProd{\boldsymbol{\omega}}{\boldsymbol{s'}-\boldsymbol{s}} }^r}^2 \nonumber\\
&=& \limsup\limits_{\boldsymbol{\omega}\rightarrow\zero_d} \frac{1}{{\LpNorm{\boldsymbol{\omega}}{2}^{2\nu+d}}}\abs{\sum_{r=m}^{\infty} \frac{\paren{jN}^r}{r!}\sum_{ \boldsymbol{s'}\in \cc{N}_m\paren{\boldsymbol{s}} } a_{m,\boldsymbol{s}}\paren{\boldsymbol{s'}} \paren{\InnerProd{\boldsymbol{\omega}}{\boldsymbol{s'}-\boldsymbol{s}} }^r}^2\nonumber\\
&=& \frac{N^{2m}}{m!} \limsup\limits_{\boldsymbol{\omega}\rightarrow\zero_d} \frac{1}{\LpNorm{\boldsymbol{\omega}}{2}^{2\nu+d}}\abs{\sum_{ \boldsymbol{s'}\in \cc{N}_m\paren{\boldsymbol{s}} } a_{m,\boldsymbol{s}}\paren{\boldsymbol{s'}} \paren{\InnerProd{\boldsymbol{\omega}}{\boldsymbol{s'}-\boldsymbol{s}} }^m}^2.
\end{eqnarray}
Cauchy-Schwartz inequality helps to further simplify the complex expressions in Eq. \eqref{TayExpanNearZerofsN}.
\begin{eqnarray*}
\limsup\limits_{\boldsymbol{\omega}\rightarrow\zero_d}\abs{f^N_{\boldsymbol{s}}\paren{\boldsymbol{\omega}}}^2 
&\leq& \limsup\limits_{\boldsymbol{\omega}\rightarrow\zero_d} \frac{N^{2m}\LpNorm{\boldsymbol{\omega}}{2}^{2m-2\nu-d}}{m!} \sum_{ \boldsymbol{s'}\in \cc{N}_m\paren{\boldsymbol{s}} } a^2_{m,\boldsymbol{s}}\paren{\boldsymbol{s'}} \sum_{ \boldsymbol{s'}\in \cc{N}_m\paren{\boldsymbol{s}} } \LpNorm{\boldsymbol{s'}-\boldsymbol{s}}{2}^{2m}\\
&=& \frac{\sum_{ \boldsymbol{s'}\in \cc{N}_m\paren{\boldsymbol{s}} } \LpNorm{N\paren{\boldsymbol{s'}-\boldsymbol{s}}}{2}^{2m}}{m!} \bbM{1}_{\set{2m = 2\nu+d}}.
\end{eqnarray*} 
Since $N\LpNorm{\boldsymbol{s}'-\boldsymbol{s}}{2} \asymp 1$ for any $\boldsymbol{s}'\in\cc{N}_m\paren{\boldsymbol{s}}$, then
\begin{equation*}
\exists \;\pi_m\in\paren{0,\infty}\;\; \suchthat \;\;\max_{s\in\cc{D}_n}\paren{\frac{\sum_{ \boldsymbol{s'}\in \cc{N}_m\paren{\boldsymbol{s}} } \LpNorm{N\paren{\boldsymbol{s'}-\boldsymbol{s}}}{2}^{2m}}{m!}} \leq \pi_m.
\end{equation*}
Hence, 
\begin{equation*}
\limsup\limits_{\boldsymbol{\omega}\rightarrow\zero_d}\abs{f^N_{\boldsymbol{s}}\paren{\boldsymbol{\omega}}}^2 \leq Q_m \bbM{1}_{\set{2m = 2\nu+d}} \leq Q_m.
\end{equation*}
It is straightforward to find a closed form expression for $\beta$ in terms of $g_m$ and $\pi_m$.
\end{proof}

\begin{prop}\label{UppBndCovGmNonRegLatt}
For any pair $\boldsymbol{s},\boldsymbol{t}\in\cc{D}_n$ and any partition $\cc{B}$ of $\cc{D}_n$,
\begin{equation}\label{UppBndCovGmNonRegLattIneq}
\abs{ \paren{K^{\cc{B}}_{n,m}\paren{\rho}}_{\boldsymbol{s},\boldsymbol{t}} }\lesssim \rho^{-2\nu} \paren{1+N\LpNorm{\boldsymbol{t}-\boldsymbol{s}}{2} }^{-2\paren{m-\nu}}.
\end{equation}	
\end{prop}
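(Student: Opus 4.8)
The plan is to reduce to the un-partitioned matrix and then argue in two regimes according to the size of $N\LpNorm{\boldsymbol{t}-\boldsymbol{s}}{2}$. First, if $\boldsymbol{s},\boldsymbol{t}\in\cc{D}_n$ lie in distinct bins of $\cc{B}$, then $\paren{K^{\cc{B}}_{n,m}\paren{\rho}}_{\boldsymbol{s},\boldsymbol{t}}=0$ by \eqref{Ktilde_n,m} and the inequality is vacuous; otherwise $\paren{K^{\cc{B}}_{n,m}\paren{\rho}}_{\boldsymbol{s},\boldsymbol{t}}=\paren{K_{n,m}\paren{\rho}}_{\boldsymbol{s},\boldsymbol{t}}$, so it suffices to establish the bound for $K_{n,m}\paren{\rho}$. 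Fix $\boldsymbol{s},\boldsymbol{t}$ and $\rho\in\Theta_0$, set $u\coloneqq\LpNorm{\boldsymbol{t}-\boldsymbol{s}}{2}$, and recall $N=\lfloor n^{1/d}\rfloor$ and $g_m\coloneqq\max_{\boldsymbol{s}\in\cc{D}_n}\abs{\cc{N}_m\paren{\boldsymbol{s}}}<\infty$.

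For the regime where $Nu$ stays bounded, I would use the spectral representation \eqref{SpctRepKnm} and simply dominate the integral by the $L^1$ norm of its integrand. Since $0<h_N<1$ and, by Cauchy--Schwarz combined with Lemma \ref{UppBndfmdLemma}, $\abs{f^N_{\boldsymbol{s}}\paren{\boldsymbol{\omega}}\,\overline{f^N_{\boldsymbol{t}}\paren{\boldsymbol{\omega}}}}\leq\tfrac12\paren{\abs{f^N_{\boldsymbol{s}}\paren{\boldsymbol{\omega}}}^2+\abs{f^N_{\boldsymbol{t}}\paren{\boldsymbol{\omega}}}^2}\leq\beta\paren{1+\LpNorm{\boldsymbol{\omega}}{2}^{2\nu+d}}^{-1}$, which is integrable on $\bb{R}^d$ because $2\nu+d>d$, one obtains $\abs{\paren{K_{n,m}\paren{\rho}}_{\boldsymbol{s},\boldsymbol{t}}}\lesssim\rho^{-2\nu}$ uniformly in $\boldsymbol{s},\boldsymbol{t},\rho$. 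As $\paren{1+Nu}^{-2\paren{m-\nu}}\asymp 1$ whenever $Nu$ is bounded, this yields the claim in this regime; it is worth noting that Lemma \ref{UppBndfmdLemma} is exactly what powers this step, and that the standing assumption $m\geq\nu+d/2$ is precisely the condition that makes that lemma hold.

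For the regime $Nu\to\infty$ I would instead use the physical-space expression $\paren{K_{n,m}\paren{\rho}}_{\boldsymbol{s},\boldsymbol{t}}=N^{2\nu}\sum_{\boldsymbol{s}'\in\cc{N}_m\paren{\boldsymbol{s}}}\sum_{\boldsymbol{t}'\in\cc{N}_m\paren{\boldsymbol{t}}}a_{m,\boldsymbol{s}}\paren{\boldsymbol{s}'}a_{m,\boldsymbol{t}}\paren{\boldsymbol{t}'}K\paren{\boldsymbol{t}'-\boldsymbol{s}';\rho}$ (with $K\paren{\cdot;\rho}$ the unit-variance Matern correlation function; see Section \ref{ACS}), and extract the decay from the moment conditions of Definition \ref{DecorFltNonReglat}. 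Because $\set{a_{m,\boldsymbol{s}}\paren{\boldsymbol{s}'}}$ annihilates all polynomials in $\boldsymbol{s}'$ of degree $<m$, a degree-$m$ Taylor expansion of $\boldsymbol{s}'\mapsto K\paren{\boldsymbol{t}'-\boldsymbol{s}';\rho}$ about $\boldsymbol{s}$, followed by the analogous expansion in $\boldsymbol{t}'$ about $\boldsymbol{t}$, together with $\LpNorm{\boldsymbol{s}'-\boldsymbol{s}}{2},\LpNorm{\boldsymbol{t}'-\boldsymbol{t}}{2}\lesssim 1/N$ and $\sum_{\boldsymbol{s}'}\abs{a_{m,\boldsymbol{s}}\paren{\boldsymbol{s}'}}\leq\sqrt{g_m}$, leads to
\begin{equation*}
\abs{\paren{K_{n,m}\paren{\rho}}_{\boldsymbol{s},\boldsymbol{t}}}\lesssim N^{2\nu-2m}\sup_{\substack{\LpNorm{\boldsymbol{\xi}-\boldsymbol{s}}{2}\lesssim 1/N\\ \LpNorm{\boldsymbol{\zeta}-\boldsymbol{t}}{2}\lesssim 1/N}}\ \max_{\abs{\boldsymbol{\gamma}}=2m}\abs{D^{\boldsymbol{\gamma}}\Psi_\rho\paren{\boldsymbol{\zeta}-\boldsymbol{\xi}}},\qquad \Psi_\rho\paren{\boldsymbol{z}}\coloneqq K\paren{\boldsymbol{z};\rho}.
\end{equation*}
Once $Nu$ is above a fixed threshold one has $\LpNorm{\boldsymbol{\zeta}-\boldsymbol{\xi}}{2}\asymp u$, so $\Psi_\rho$ is evaluated away from the origin, where it is real analytic; using the Bessel recursion $\paren{r^{-1}\,d/dr}^k\paren{r^\nu\cc{K}_\nu\paren{r}}=\paren{-1}^k r^{\nu-k}\cc{K}_{\nu-k}\paren{r}$ and the chain rule for $\boldsymbol{z}\mapsto\LpNorm{\boldsymbol{z}}{2}$, one shows $\abs{D^{\boldsymbol{\gamma}}\Psi_\rho\paren{\boldsymbol{z}}}\lesssim\rho^{-2\nu}\LpNorm{\boldsymbol{z}}{2}^{2\nu-2m}$ for $0<\LpNorm{\boldsymbol{z}}{2}\leq 1$, and $\abs{D^{\boldsymbol{\gamma}}\Psi_\rho\paren{\boldsymbol{z}}}\lesssim\rho^{-2m}e^{-\LpNorm{\boldsymbol{z}}{2}/\rho}\paren{1+\LpNorm{\boldsymbol{z}}{2}}^{2m}$ for $\LpNorm{\boldsymbol{z}}{2}\geq 1$. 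Since $u\leq\diam\paren{\cc{D}}$ is bounded and $\Theta_0$ is compact in $\paren{0,\infty}$, both cases reduce the right-hand side to $\lesssim\rho^{-2\nu}\paren{Nu}^{2\nu-2m}\asymp\rho^{-2\nu}\paren{1+Nu}^{-2\paren{m-\nu}}$, which is the assertion.

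I expect the main obstacle to be the near-origin derivative estimate $\abs{D^{\boldsymbol{\gamma}}\Psi_\rho\paren{\boldsymbol{z}}}\lesssim\rho^{-2\nu}\LpNorm{\boldsymbol{z}}{2}^{2\nu-2m}$ for $\abs{\boldsymbol{\gamma}}=2m$: because $\nu$ is an arbitrary positive real, one must isolate the nonsmooth component of the Matern correlation, which as a function of $r=\LpNorm{\boldsymbol{z}}{2}$ has the form $c_\nu r^{2\nu}\times\paren{\text{entire function of }r^2}$, with an extra $\log r$ factor when $2\nu$ is an even integer, then check that differentiating $2m>2\nu$ times removes any logarithm and produces the clean power $\LpNorm{\boldsymbol{z}}{2}^{2\nu-2m}$, and track the $\rho$-dependence sharply enough that the various powers of $\rho$ coalesce into $\rho^{-2\nu}$ and the $N^{2\nu-2m}$ prefactor is exactly absorbed. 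The remaining ingredients --- the reduction to one bin, the $L^1$ estimate via Lemma \ref{UppBndfmdLemma}, and the Taylor-remainder bookkeeping --- are routine.
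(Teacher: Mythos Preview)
Your proposal is correct and follows essentially the same route as the paper: reduce to the unpartitioned matrix, handle the near-diagonal regime via the spectral representation \eqref{SpctRepKnm} together with Lemma \ref{UppBndfmdLemma}, and handle the far regime by Taylor expansion with the moment-annihilation conditions of Definition \ref{DecorFltNonReglat} plus a derivative bound on the Matern correlation. The only cosmetic differences are that the paper performs a single order-$2m$ Taylor expansion in the combined increment $\paren{\boldsymbol{t}'-\boldsymbol{t}}-\paren{\boldsymbol{s}'-\boldsymbol{s}}$ (rather than iterating two order-$m$ expansions) and, for the estimate you flag as the main obstacle, simply invokes Lemma~4 of \cite{anderes2010consistent}, which gives $\abs{D^{\boldsymbol{r}}f_{\nu,1}\paren{\boldsymbol{z}}}\lesssim\LpNorm{\boldsymbol{z}}{2}^{2\paren{\nu-m}}$ directly.
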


\begin{proof}
Without loss of generality we can assume that $\cc{B}$ has only a single bin, i.e. $\cc{B} = \set{\cc{D}_n}$. In other words, we just need to validate Eq. \eqref{UppBndCovGmNonRegLattIneq} for the entries of $K_{n,m}\paren{\rho}$. For simplicity, let $f_{\nu,\rho}$ denotes the Matern correlation function with parameters $\paren{\rho,\nu}$. Notice that $f_{\nu,\rho}\paren{x} = f_{\nu,1}\paren{x/\rho}$. We first prove the inequality \eqref{UppBndCovGmNonRegLattIneq} for the case of $\LpNorm{\boldsymbol{t}-\boldsymbol{s} }{2} = O\paren{N^{-1}}$. It suffices to show that the largest diagonal entry of $K_{n,m}\paren{\rho}$ is of order $\rho^{-2\nu}$. That is,
\begin{equation*}
\rho^{2\nu} \max_{\boldsymbol{s}\in \cc{D}_n} \abs{ \paren{K_{n,m}\paren{\rho}}_{\boldsymbol{s},\boldsymbol{s}} } \lesssim 1.
\end{equation*}
The proof of this result hinges on the inequality \eqref{SpctRepKnm} for $\boldsymbol{s} = \boldsymbol{t}$. Trivially,
\begin{equation*}
\rho^{2\nu} \max_{\boldsymbol{s}\in \cc{D}_n} \abs{ \paren{K_{n,m}\paren{\rho}}_{\boldsymbol{s},\boldsymbol{s}} } =  \max_{\boldsymbol{s}\in \cc{D}_n} \int_{\bb{R}^d}  \abs{f^N_{\boldsymbol{s}}\paren{\boldsymbol{\omega}}}^2 h_N\paren{\rho\LpNorm{\boldsymbol{\omega}}{2}} d\boldsymbol{\omega} \leq \max_{\boldsymbol{s}\in \cc{D}_n} \int_{\bb{R}^d} \abs{f^N_{\boldsymbol{s}}\paren{\boldsymbol{\omega}}}^2 d\boldsymbol{\omega}.
\end{equation*}
We finish the proof of this part by using Lemma \ref{UppBndfmdLemma}.
\begin{equation*}
\rho^{2\nu} \max_{\boldsymbol{s}\in \cc{D}_n} \abs{ \paren{K_{n,m}\paren{\rho}}_{\boldsymbol{s},\boldsymbol{s}} } \leq \max_{\boldsymbol{s}\in \cc{D}_n} \int_{\bb{R}^d} \abs{f^N_{\boldsymbol{s}}\paren{\boldsymbol{\omega}}}^2 d\boldsymbol{\omega} \lesssim \int_{\bb{R}^d}  \frac{d\boldsymbol{\omega}}{1+\LpNorm{\boldsymbol{\omega}}{2}^{2\nu+d} } \asymp  \int_{0}^{\infty} \frac{x^{d-1}}{1+x^{2\nu+d}} dx \asymp 1.
\end{equation*}
So without loss of generality we can assume that $\LpNorm{\boldsymbol{t}-\boldsymbol{s} }{2}>h/N$, for some large enough $h$. Trivially, 
\begin{equation*}
\psi\coloneqq \frac{\paren{K_{n,m}\paren{\rho}}_{\boldsymbol{s},\boldsymbol{t}}}{N^{2\nu}} = \sum_{ \boldsymbol{s'}\in \cc{N}_m\paren{\boldsymbol{s}} }\sum_{ \boldsymbol{t'}\in \cc{N}_m\paren{\boldsymbol{t}} } a_{m,\boldsymbol{s}}\paren{\boldsymbol{s'}}a_{m,\boldsymbol{t}}\paren{\boldsymbol{t'}} f_{\nu,\rho}\paren{\boldsymbol{t'}-\boldsymbol{s'}}.
\end{equation*}
The key step of the proof is to replace $f_{\nu,\rho}\paren{\cdot}$ with its exact Taylor expansion of order $2m$. Strictly speaking, we have
\begin{eqnarray*}
f_{\nu,\rho}\paren{\boldsymbol{t'}-\boldsymbol{s'}} &=& \sum_{\abs{r}<2m} \frac{D^r f_{\nu,\rho}\paren{\boldsymbol{t}-\boldsymbol{s}}}{r!} \brac{  \paren{\boldsymbol{t'}-\boldsymbol{t}} - \paren{\boldsymbol{s'}-\boldsymbol{s}} }^r \\
&+& \sum_{\abs{r}=2m} R_r\paren{\boldsymbol{t}-\boldsymbol{s}}\frac{\brac{  \paren{\boldsymbol{t'}-\boldsymbol{t}} - \paren{\boldsymbol{s'}-\boldsymbol{s}} }^r}{r!},
\end{eqnarray*}
in which $R_r$ denotes the residual function given by 
\begin{equation}\label{ResidFunc}
R_r\paren{\boldsymbol{t}-\boldsymbol{s}} = 2m\int_{0}^{1} \paren{1-x}^{2m-1} D^r f_{\nu,\rho}\Bigparen{\paren{\boldsymbol{t}-\boldsymbol{s}} + x\brac{  \paren{\boldsymbol{t'}-\boldsymbol{t}} - \paren{\boldsymbol{s'}-\boldsymbol{s}} }} dx.
\end{equation}
Thus, 
\begin{eqnarray}\label{CovGmTylrExp}
\psi &=&  \sum_{\abs{r}<2m} \frac{ D^r f_{\nu,\rho}\paren{\boldsymbol{t}-\boldsymbol{s}}}{r!} \sum_{ \boldsymbol{s'}\in \cc{N}_m\paren{\boldsymbol{s}} }\sum_{ \boldsymbol{t'}\in \cc{N}_m\paren{\boldsymbol{t}} } a_{m,\boldsymbol{s}}\paren{\boldsymbol{s'}}a_{m,\boldsymbol{t}}\paren{\boldsymbol{t'}}\brac{  \paren{\boldsymbol{t'}-\boldsymbol{t}} - \paren{\boldsymbol{s'}-\boldsymbol{s}} }^r \nonumber\\
&+& \sum_{\abs{r}=2m} \frac{ R_r\paren{\boldsymbol{t}-\boldsymbol{s}}}{r!} \sum_{ \boldsymbol{s'}\in \cc{N}_m\paren{\boldsymbol{s}} }\sum_{ \boldsymbol{t'}\in \cc{N}_m\paren{\boldsymbol{t}} } a_{m,\boldsymbol{s}}\paren{\boldsymbol{s'}}a_{m,\boldsymbol{t}}\paren{\boldsymbol{t'}}\brac{  \paren{\boldsymbol{t'}-\boldsymbol{t}} - \paren{\boldsymbol{s'}-\boldsymbol{s}} }^r.
\end{eqnarray}
The first constraint on $\set{a_{m,\boldsymbol{s}}\paren{\boldsymbol{t}}: \boldsymbol{s}\in\cc{D}_n, \boldsymbol{t}\in\cc{N}_m\paren{\boldsymbol{s}}}$ in Definition \ref{DecorFltNonReglat} easily implies that 
\begin{equation*}
\sum_{ \boldsymbol{s'}\in \cc{N}_m\paren{\boldsymbol{s}} }\sum_{ \boldsymbol{t'}\in \cc{N}_m\paren{\boldsymbol{t}} } a_{m,\boldsymbol{s}}\paren{\boldsymbol{s'}}a_{m,\boldsymbol{t}}\paren{\boldsymbol{t'}}\brac{  \paren{\boldsymbol{t'}-\boldsymbol{t}} - \paren{\boldsymbol{s'}-\boldsymbol{s}} }^r = 0.
\end{equation*}
for any $\abs{r}<2m$. So the first term in the right hand side of \eqref{CovGmTylrExp} vanishes. Henceforth, we only need control the second term from above. Observe that
\begin{equation}\label{Rom2UppBnd}
\abs{\psi}\leq \sum_{\abs{r}=2m} \abs{ \sum_{ \boldsymbol{s'}\in \cc{N}_m\paren{\boldsymbol{s}} }\sum_{ \boldsymbol{t'}\in \cc{N}_m\paren{\boldsymbol{t}} } a_{m,\boldsymbol{s}}\paren{\boldsymbol{s'}}a_{m,\boldsymbol{t}}\paren{\boldsymbol{t'}}\brac{  \paren{\boldsymbol{t'}-\boldsymbol{t}} - \paren{\boldsymbol{s'}-\boldsymbol{s}} }^r} \max_{\abs{r} = 2m} \abs{\frac{R_r\paren{\boldsymbol{t}-\boldsymbol{s}}}{r!}}.
\end{equation}
The next step is to introduce a uniform upper bound on the residual functions using Eq. \eqref{ResidFunc} and the chain rule of derivative.
\begin{eqnarray}
\max_{\abs{r} = 2m} \abs{R_r\paren{\boldsymbol{t}-\boldsymbol{s}}} &\leq& \max_{\abs{r} = 2m} \max_{x\in\brac{0,1}} \abs{D^r f_{\nu,\rho}\Bigset{\paren{\boldsymbol{t}-\boldsymbol{s}} + x\brac{  \paren{\boldsymbol{t'}-\boldsymbol{t}} - \paren{\boldsymbol{s'}-\boldsymbol{s}} }}}\nonumber\\
&\leq& \rho^{-2m} \max_{\abs{r} = 2m} \max_{x\in\brac{0,1}} \abs{D^r f_{\nu,1}\Bigset{\frac{\paren{\boldsymbol{t}-\boldsymbol{s}} + x\brac{  \paren{\boldsymbol{t'}-\boldsymbol{t}} - \paren{\boldsymbol{s'}-\boldsymbol{s}} }}{\rho}}}.
\end{eqnarray}
As the maximum distance between $\boldsymbol{s}$ and the points $\boldsymbol{s'}\in\cc{N}_m\paren{\boldsymbol{s}}$ is of order $1/N$, so we can choose $h$ large enough such that 
\begin{equation}\label{LowBndNormofResArg}
\min_{x\in\brac{0,1}}\LpNorm{\paren{\boldsymbol{t}-\boldsymbol{s}} + x\brac{  \paren{\boldsymbol{t'}-\boldsymbol{t}}}}{2}\geq \frac{\LpNorm{\boldsymbol{t}-\boldsymbol{s}}{2}}{2}.
\end{equation}
Now we apply Lemma $4$ of \cite{anderes2010consistent} to get an upper bound on $D^r f_{\nu,1}\paren{\cdot}$ in terms of the Euclidean norm of its argument. So for any $x\in\brac{0,1}$, we have
\begin{equation*}
\abs{D^r f_{\nu,1}\Bigset{\frac{\paren{\boldsymbol{t}-\boldsymbol{s}} + x\brac{  \paren{\boldsymbol{t'}-\boldsymbol{t}} - \paren{\boldsymbol{s'}-\boldsymbol{s}} }}{\rho}}} \lesssim \LpNorm{\frac{\paren{\boldsymbol{t}-\boldsymbol{s}} + x\brac{  \paren{\boldsymbol{t'}-\boldsymbol{t}} - \paren{\boldsymbol{s'}-\boldsymbol{s}} }}{\rho}}{2}^{2\paren{\nu-m}}.
\end{equation*}
Combining this inequality and Eq. \eqref{LowBndNormofResArg} shows that for any pair $\paren{\boldsymbol{s},\boldsymbol{t}}$ with $\LpNorm{\boldsymbol{t}-\boldsymbol{s} }{2}\geq h/N$
\begin{equation}\label{Rom2UppBnd2}
\max_{\abs{r}=2m}\abs{R_r\paren{\boldsymbol{t}-\boldsymbol{s}}}\lesssim  \rho^{-2m}\paren{ \frac{\LpNorm{\boldsymbol{t}-\boldsymbol{s} }{2}}{\rho} }^{2\paren{\nu-m}} \lesssim \rho^{-2\nu} \paren
{\frac{1}{N} + \LpNorm{\boldsymbol{t}-\boldsymbol{s} }{2}}^{2\paren{\nu-m}}.
\end{equation}
Substituting \eqref{Rom2UppBnd2} into \eqref{Rom2UppBnd} yields (in which $\hat{C}^{\rho,\nu}_{m,d}$ is another bounded scalar)
\begin{equation*}
\abs{\psi}\lesssim \rho^{-2\nu}\paren{\frac{1}{N} + \LpNorm{\boldsymbol{t}-\boldsymbol{s} }{2} }^{2\paren{\nu-m}} \sum_{\abs{r}=2m} \underbrace{\abs{ \sum_{ \boldsymbol{s'}\in \cc{N}_m\paren{\boldsymbol{s}} }\sum_{ \boldsymbol{t'}\in \cc{N}_m\paren{\boldsymbol{t}} } a_{m,\boldsymbol{s}}\paren{\boldsymbol{s'}}a_{m,\boldsymbol{t}}\paren{\boldsymbol{t'}}\brac{  \paren{\boldsymbol{t'}-\boldsymbol{t}} - \paren{\boldsymbol{s'}-\boldsymbol{s}} }^r}}_{\varpi_r}.
\end{equation*}
In the sequel, we prove that $\varpi_r = \cc{O}\paren{N^{-2m}}$ for any $\abs{r} = 2m$ using the following series of inequalities. 
\begin{eqnarray*}
\varpi_r&\RelNum{\paren{a}}{\leq}& \paren{\sum_{ \boldsymbol{s'}\in \cc{N}_m\paren{\boldsymbol{s}} }a^2_{m,\boldsymbol{s}}\paren{\boldsymbol{s'}}}^{1/2}\paren{\sum_{ \boldsymbol{t'}\in \cc{N}_m\paren{\boldsymbol{t}} }a^2_{m,\boldsymbol{t}}\paren{\boldsymbol{t'}}}^{1/2} \max\set{\abs{\paren{\boldsymbol{t'}-\boldsymbol{t}} - \paren{\boldsymbol{s'}-\boldsymbol{s}}}^r :\;\begin{array}{c}
\boldsymbol{s'}\in \cc{N}_m\paren{\boldsymbol{s}}\\
\boldsymbol{t'}\in \cc{N}_m\paren{\boldsymbol{t}}
\end{array} }\\
&\RelNum{\paren{b}}{=}& \max\set{\abs{\paren{\boldsymbol{t'}-\boldsymbol{t}} - \paren{\boldsymbol{s'}-\boldsymbol{s}}}^r :\;\begin{array}{c}
\boldsymbol{s'}\in \cc{N}_m\paren{\boldsymbol{s}}\\
\boldsymbol{t'}\in \cc{N}_m\paren{\boldsymbol{t}}
\end{array} } \RelNum{\paren{c}}{=} \cc{O}\paren{N^{-2m}}.
\end{eqnarray*}
Here, $\paren{a}$ is an obvious implication of the Holder inequality. The identity $\paren{b}$ is exactly same as the second condition in Definition \ref{DecorFltNonReglat} and $\paren{c}$ holds for the class of non-regular lattices satisfying Assumption \ref{RegCondLattice}. Hence
\begin{eqnarray*}
\abs{\paren{K_{n,m}\paren{\rho}}_{\boldsymbol{s},\boldsymbol{t}}} &=& N^{2\nu}\abs{\psi}\lesssim \paren{\frac{N}{\rho}}^{2\nu} \paren{\frac{1}{N} + \LpNorm{\boldsymbol{t}-\boldsymbol{s} }{2} }^{2\paren{\nu-m}} \sum_{\abs{r}=2m} N^{-2m} \\
&\lesssim& \rho^{-2\nu} \paren{1+N\LpNorm{\boldsymbol{t}-\boldsymbol{s} }{2}}^{-2\paren{m-\nu}}
\end{eqnarray*}
\end{proof}

\subsection{Sensitivity of $L^{\cc{B}}_{n,m}\paren{\rho}$ with respect to $\rho$}

Recall that we defined $L^{\cc{B}}_{n,m}\paren{\rho}$ as the block diagonal approximation of $L_{n,m}\paren{\rho} = \rho^{2\nu} K_{n,m}\paren{\rho}$, corresponding to the partitioning scheme $\cc{B} = \set{B_t}^{b_n}_{t=1}$ of $\cc{D}_n$. This section is dedicated to study the sensitivity of $L^{\cc{B}}_{n,m}\paren{\rho}$ with respect to $\rho$, for large $n$. In other words, we are interested to study the quantity
\begin{equation*}
\frac{\norm{L^{\cc{B}}_{n,m}\paren{\rho_2}-L^{\cc{B}}_{n,m}\paren{\rho_1}}{}}{\abs{\rho_2-\rho_1}},\quad\rho_1,\rho_2\in\Theta_0,
\end{equation*}
as $n$ tends to infinity. Here $\norm{\cdot}{}$ represents either nuclear, Frobenius or operator norm. The presented results are decisive in Section \ref{Proofs}. The quantity $Q_N$, which will be defined in the next lemma, appears numerous times in this section.  

\begin{lem}\label{UppBndffQLem}
Let $\rho_1,\rho_2$ be distinct points in $\Theta_0$ such that $\rho_2> \rho_1$. Define 
\begin{equation*}
Q_N\coloneqq \int_{\bb{R}^d} \abs{f^N_{\boldsymbol{s}}\paren{\boldsymbol{\omega}} f^N_{\boldsymbol{t}}\paren{\boldsymbol{\omega}}} \abs{ h_N\paren{\rho_2\LpNorm{\boldsymbol{\omega}}{2}} - h_N\paren{\rho_1\LpNorm{\boldsymbol{\omega}}{2}} } d\boldsymbol{\omega}
\end{equation*}
Choose an arbitrary pairs of $\boldsymbol{s},\boldsymbol{t}\in\cc{D}_n$. 
\begin{equation*}
\frac{Q_N}{\rho_2-\rho_1}\lesssim \frac{\paren{\bbM{1}_{\set{d\geq 3}}+ \bbM{1}_{\set{d=2}}\log N + \bbM{1}_{\set{d=1}} N }}{N^2}.
\end{equation*}
\end{lem}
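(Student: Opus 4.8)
The quantity $Q_N$ factors naturally into two pieces, the product $|f^N_{\boldsymbol s}(\boldsymbol\omega) f^N_{\boldsymbol t}(\boldsymbol\omega)|$ and the increment $|h_N(\rho_2\|\boldsymbol\omega\|_2) - h_N(\rho_1\|\boldsymbol\omega\|_2)|$, so the plan is to control each factor separately and then integrate. For the first factor I would invoke Lemma \ref{UppBndfmdLemma} directly: since $\max_{\boldsymbol s}|f^N_{\boldsymbol s}(\boldsymbol\omega)|^2 \le \beta/(1+\|\boldsymbol\omega\|_2^{2\nu+d})$, the Cauchy–Schwarz inequality gives $|f^N_{\boldsymbol s}(\boldsymbol\omega) f^N_{\boldsymbol t}(\boldsymbol\omega)| \le \beta/(1+\|\boldsymbol\omega\|_2^{2\nu+d})$ uniformly over $\boldsymbol s,\boldsymbol t\in\cc D_n$. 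For the second factor, recall $h_N(x) = [1+(Nx)^{-2}]^{-(\nu+d/2)}$ is strictly increasing and takes values in $(0,1)$. I would estimate the increment in $\rho$ by the mean value theorem: $|h_N(\rho_2 r) - h_N(\rho_1 r)| \le (\rho_2-\rho_1)\,r\,\sup_{\rho\in\Theta_0} |h_N'(\rho r)|$ where $r = \|\boldsymbol\omega\|_2$. A direct differentiation shows $h_N'(y) = 2(\nu+d/2) N^{-2} y^{-3} [1+(Ny)^{-2}]^{-(\nu+d/2)-1}$, so $y\,h_N'(y) \asymp N^{-2} y^{-2} [1+(Ny)^{-2}]^{-(\nu+d/2)-1}$, and since $\rho$ ranges in a compact set bounded away from $0$ this yields, after absorbing $\rho$-constants,
\begin{equation*}
\frac{|h_N(\rho_2 r) - h_N(\rho_1 r)|}{\rho_2-\rho_1} \lesssim \frac{1}{N^2 r^2}\Bigl(1+\frac{1}{(Nr)^2}\Bigr)^{-(\nu+d/2)-1} \le \frac{1}{N^2 r^2} \wedge \frac{r^{2\nu+d}}{N^{-2\nu-d}\cdot N^2\cdot r^2}\,,
\end{equation*}
the last bound coming from the two natural ways to use the bracket (drop it, or keep the $(Nr)^{-2}$ term); effectively the increment is $\lesssim N^{-2} r^{-2}$ for $r\gtrsim N^{-1}$ and $\lesssim N^{2\nu+d-2} r^{2\nu+d-2}$ for $r\lesssim N^{-1}$.

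Combining the two bounds, the plan is to split the integral $\int_{\bb R^d}\cdots d\boldsymbol\omega$ at $\|\boldsymbol\omega\|_2 = 1/N$ and pass to polar coordinates, picking up a Jacobian $r^{d-1}$. On the inner region $r \le 1/N$ the integrand is $\lesssim r^{-(2\nu+d)}\cdot N^{2\nu+d-2} r^{2\nu+d-2} \cdot r^{d-1} = N^{2\nu+d-2} r^{d-3}$, whose radial integral over $(0,1/N)$ behaves like $N^{2\nu+d-2}\cdot N^{-(d-2)} = N^{2\nu}$ when $d\ge 3$ — wait, this must be reconciled with the claimed $N^{-2}$ order, so the more careful accounting is that the product $|f^N_{\boldsymbol s} f^N_{\boldsymbol t}|$ near the origin is actually bounded (the singularity $r^{-(2\nu+d)}$ is cancelled by the vanishing-moment conditions, as in the proof of Lemma \ref{UppBndfmdLemma} where $\limsup_{\boldsymbol\omega\to\zero}|f^N_{\boldsymbol s}(\boldsymbol\omega)|^2$ is finite). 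So on the inner region I would instead use $|f^N_{\boldsymbol s} f^N_{\boldsymbol t}| \lesssim 1$ together with the small-$r$ increment bound, giving integrand $\lesssim N^{2\nu+d-2} r^{2\nu+d-2}\cdot r^{d-1}$, and $\int_0^{1/N} r^{2\nu+2d-3}\,dr \asymp N^{-(2\nu+2d-2)}$, so the inner contribution is $\lesssim N^{2\nu+d-2}\cdot N^{-(2\nu+2d-2)} = N^{-d} \le N^{-2}$, consistent with the claim for all $d\le 3$ (and in fact smaller). On the outer region $r > 1/N$ the integrand is $\lesssim r^{-(2\nu+d)}\cdot N^{-2} r^{-2}\cdot r^{d-1} = N^{-2} r^{-2\nu-3}$, and $\int_{1/N}^\infty r^{-2\nu-3}\,dr$ converges (since $2\nu+3 > 1$) to a finite constant; this is where the $N^{-2}$ factor is preserved, and it is the dominant term.

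The main obstacle, and the only place the dimension $d$ enters the stated bound, is the borderline behaviour of the radial integral $\int_{1/N}^\infty r^{d-1}\cdot(\text{increment})\,dr$ when one does \emph{not} yet use the decay of $|f^N_{\boldsymbol s} f^N_{\boldsymbol t}|$ — i.e. if the $r^{-(2\nu+d)}$ decay of the $f$-product is only barely enough in low dimensions. Concretely, a cleaner route that surfaces the $\bbM 1_{\set{d=2}}\log N$ and $\bbM 1_{\set{d=1}}N$ terms is: bound the increment crudely by $|h_N(\rho_2 r) - h_N(\rho_1 r)| \lesssim (\rho_2-\rho_1)\min\{N^{-2} r^{-2}, 1\}$ (dropping the extra bracket power), use $|f^N_{\boldsymbol s} f^N_{\boldsymbol t}| \lesssim (1+\|\boldsymbol\omega\|_2^{2\nu+d})^{-1} \lesssim 1\wedge \|\boldsymbol\omega\|_2^{-(2\nu+d)}$, and split at $r = 1/N$. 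The inner piece gives $\int_0^{1/N} r^{d-1}\,dr \asymp N^{-d}$; the outer piece gives $N^{-2}\int_{1/N}^\infty r^{-2\nu-d-2}\cdot r^{d-1}\,dr = N^{-2}\int_{1/N}^\infty r^{-2\nu-3}\,dr \asymp N^{-2}$ always, so actually the crude route already yields $N^{-2}$ uniformly; the $\log N$ and $N$ corrections in the statement arise only if one is forced to weaken the $f$-product bound to $\lesssim 1$ over an intermediate range $1/N \le r \le$ const (because the cancellation argument of Lemma \ref{UppBndfmdLemma} only controls $\limsup_{\boldsymbol\omega\to\zero}$, not a uniform small-ball bound), in which case $\int_{1/N}^{1} N^{-2} r^{-2}\cdot r^{d-1}\,dr = N^{-2}\int_{1/N}^1 r^{d-3}\,dr$ equals $\asymp N^{-2}$ for $d\ge 3$, $\asymp N^{-2}\log N$ for $d = 2$, and $\asymp N^{-2}\cdot N = N^{-1}$ for $d = 1$. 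This last trichotomy is exactly the claimed bound, so the write-up will hinge on carefully tracking over which radial ranges the sharp $f$-decay versus the trivial $f$-bound is available, and I expect that bookkeeping — rather than any single hard estimate — to be the crux.
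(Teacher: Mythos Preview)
Your final route is essentially the paper's proof: bound $|f^N_{\boldsymbol s}f^N_{\boldsymbol t}|$ via Lemma~\ref{UppBndfmdLemma}, bound the $h_N$-increment by a two-regime estimate, pass to polar coordinates, split at $r=1/N$, and on the outer piece split again at $r=1$ to produce $N^{-2}\int_{1/N}^1 r^{d-3}\,dr$, which gives the trichotomy. The only methodological difference is that the paper obtains the increment bound from the elementary inequality
\[
(1+x)^{-\alpha}-(1+y)^{-\alpha}<\bigl[\alpha(y-x)\bigr]\wedge\bigl(x^{-\alpha}-y^{-\alpha}\bigr),\qquad 0<x<y,\ \alpha>0,
\]
applied with $\alpha=\nu+d/2$, $x=(N\rho_2 r)^{-2}$, $y=(N\rho_1 r)^{-2}$; this yields directly $|h_N(\rho_2 r)-h_N(\rho_1 r)|/(\rho_2-\rho_1)\lesssim (Nr)^{2\nu+d}\wedge (Nr)^{-2}$, which is cleaner than your mean-value computation but equivalent in effect.

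Two corrections to your diagnosis. First, your ``crude route'' computation contains an arithmetic slip: $\int_{1/N}^{\infty} r^{-2\nu-3}\,dr \asymp N^{2\nu+2}$, not $O(1)$, since the singularity at $0$ is approached as $N\to\infty$. That is the real reason the crude route fails; replacing $(1+r^{2\nu+d})^{-1}$ by $r^{-(2\nu+d)}$ on $[1/N,1]$ is legal but far too loose. Second, your explanation for why one ``is forced to weaken the $f$-product bound to $\lesssim 1$'' is off: Lemma~\ref{UppBndfmdLemma} already gives the uniform bound $\beta/(1+\|\boldsymbol\omega\|_2^{2\nu+d})$ for all $\boldsymbol\omega\neq\zero_d$, not merely a $\limsup$ at the origin. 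The point is simply that this bound itself is $\asymp 1$ on $r\in[1/N,1]$ and $\asymp r^{-(2\nu+d)}$ only for $r\gtrsim 1$, so the split at $r=1$ is the natural one dictated by where $(1+r^{2\nu+d})^{-1}$ changes regime, not by any deficiency of the lemma.
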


\begin{proof}
Lemma \ref{UppBndfmdLemma} provides an upper bound on the term $f^N_{\boldsymbol{s}}\paren{\boldsymbol{\omega}} f^N_{\boldsymbol{t}}\paren{\boldsymbol{\omega}}$. 
\begin{equation}\label{UppBnd1Term}
\abs{f^N_{\boldsymbol{s}}\paren{\boldsymbol{\omega}} f^N_{\boldsymbol{t}}\paren{\boldsymbol{\omega}}}\lesssim \paren{1+\LpNorm{\boldsymbol{\omega}}{2}^{2\nu+d}}^{-1}.
\end{equation}
For controlling  the other term of the integrand from above, we employ the following inequality, which will be justified later.
\begin{equation}\label{ModalIneq1Prop1}
\paren{1+x}^{-\alpha}- \paren{1+y}^{-\alpha} < \brac{\alpha\paren{y-x}}\wedge \paren{x^{-\alpha}-y^{-\alpha}},\quad\forall\;\; 0<x<y<\infty,\;\; \alpha>0.
\end{equation}
Using \eqref{ModalIneq1Prop1} (with $\alpha = \nu+\frac{d}{2}$) yields
\begin{eqnarray*}
\abs{ \frac{h_N\paren{\rho_2\LpNorm{\boldsymbol{\omega}}{2}} - h_N\paren{\rho_1\LpNorm{\boldsymbol{\omega}}{2}}}{\rho_2-\rho_1} }\leq \brac{\paren{N\LpNorm{\boldsymbol{\omega}}{2}}^{2\nu+d}\paren{\frac{\rho^{2\nu+d}_2-\rho^{2\nu+d}_1}{\rho_2-\rho_1}}}\wedge \brac{ \frac{\paren{\nu+d/2}\paren{1/\rho^2_1-1/\rho^2_2}}{\paren{ N\LpNorm{\boldsymbol{\omega}}{2}}^2\paren{\rho_2-\rho_1}}}.
\end{eqnarray*}
The fact that $\Theta_0$ is compact and does not contain zero simplify the last inequality as the following.
\begin{equation}\label{UppBnd2Term}
\abs{ \frac{h_N\paren{\rho_2\LpNorm{\boldsymbol{\omega}}{2}} - h_N\paren{\rho_1\LpNorm{\boldsymbol{\omega}}{2}}}{\rho_2-\rho_1} } \lesssim \brac{\paren{N\LpNorm{\boldsymbol{\omega}}{2}}^{2\nu+d} \wedge \paren{N\LpNorm{\boldsymbol{\omega}}{2}}^{-2} }.
\end{equation}
Combining \eqref{UppBnd1Term} and \eqref{UppBnd2Term} leads to
\begin{eqnarray}\label{UppBndffQ}
\frac{Q_N}{\paren{\rho_2-\rho_1}}&\lesssim& \int_{\bb{R}^d} \brac{\paren{N\LpNorm{\boldsymbol{\omega}}{2}}^{2\nu+d} \wedge \paren{N\LpNorm{\boldsymbol{\omega}}{2}}^{-2}} \frac{d\boldsymbol{\omega}}{1+\LpNorm{\boldsymbol{\omega}}{2}^{2\nu+d}}\nonumber\\
&\RelNum{\paren{b}}{\asymp}& \int_{0}^{\infty} \brac{\paren{Nu}^{2\nu+d} \wedge \paren{Nu}^{-2}} \frac{u^{d-1} du}{1+u^{2\nu+d}}\nonumber\\
&=& N^{2\nu+d} \int_{0}^{\frac{1}{N}} \frac{u^{2\nu+2d-1}}{1+u^{2\nu+d}} du + \frac{1}{N^2} \int_{\frac{1}{N}}^{\infty} \frac{u^{d-3}}{1+u^{2\nu+d}} du
\end{eqnarray}
The change of variable $u = \LpNorm{\boldsymbol{\omega}}{2}$ in the integral validates $\RelNum{\paren{b}}{\asymp}$. For brevity, let $\psi_1$ and $\psi_2$ stand for the two expressions in the last line of \eqref{UppBndffQ}, respectively from left to right. We ultimately introduce tight upper bounds on $\psi_1$ and $\psi_2$. Observe that
\begin{equation*}
\psi_1 = N^{2\nu+d} \int_{0}^{\frac{1}{N}} \frac{u^{2\nu+2d-1}}{ 1+u^{2\nu+d} } du\leq N^{2\nu+d}\int_{0}^{\frac{1}{N}} u^{2\nu+2d-1} du \asymp N^{2\nu+d} N^{-2\paren{\nu+d}} = N^{-d}.
\end{equation*}
Furthermore,
\begin{eqnarray*}
\psi_2 &=& \frac{1}{N^2} \int_{\frac{1}{N}}^{\infty} \frac{u^{d-3}}{ 1+u^{2\nu+d} } du = \frac{1}{N^2} \brac{\int_{\frac{1}{N}}^{1} \frac{u^{d-3}}{ 1+u^{2\nu+d} } du + \int_{1}^{\infty} \frac{u^{d-3}}{ 1+u^{2\nu+d} } du}\\
&\leq& \frac{1}{N^2} \brac{ \int_{\frac{1}{N}}^{1} u^{d-3} du+ \int_{1}^{\infty} u^{-\paren{2\nu+3}} du }\lesssim \frac{1}{N^2}\paren{\int_{\frac{1}{N}}^{1} u^{d-3} du+1}\\
&\asymp& \frac{\Bigparen{1+ \bbM{1}_{\set{d=2}}\log N + \bbM{1}_{\set{d=1}} N}}{N^2}.
\end{eqnarray*}
Replacing the upper bounds on $\psi_1$ and $\psi_2$ into \eqref{UppBndffQ} yields
\begin{equation*}
\frac{Q_N}{\paren{\rho_2-\rho_1}}\lesssim \frac{\Bigparen{1+ \bbM{1}_{\set{d=2}}\log N + \bbM{1}_{\set{d=1}} N}}{N^2} + N^{-d} \asymp \frac{\Bigparen{1+ \bbM{1}_{\set{d=2}}\log N + \bbM{1}_{\set{d=1}} N}}{N^2}
\end{equation*}
In the sequel, we prove Eq. \eqref{ModalIneq1Prop1}. Choose an arbitrary $\alpha>0$ and define $g_1,g_2:\paren{0,\infty}\mapsto\bb{R}$ by
\begin{equation*}
g_1\paren{u} = \alpha u-\paren{1+u}^{-\alpha},\quad g_2\paren{u} = u^{-\alpha} - \paren{1+u}^{-\alpha}.
\end{equation*}
Notice that \eqref{ModalIneq1Prop1} is equivalent to the two inequalities $g_1\paren{x}< g_1\paren{y}$ and $g_2\paren{y}< g_2\paren{x}$. Namely, we need to show that both $g_1$ and $-g_2$ are strictly increasing function. For any $u\in\paren{0,\infty}$, we have
\begin{equation*}
g'_1\paren{u} = \alpha\paren{1- \paren{1+u}^{-\paren{\alpha+1}} } > 0,\quad g'_2\paren{u} = -\alpha \paren{ u^{-\paren{\alpha+1}} - \paren{1+u}^{-\paren{\alpha+1}} } < 0,
\end{equation*}
which concludes the proof.  
\end{proof}

For notational convenience and from now on define, $\Delta^{\cc{B}}\paren{\rho_1,\rho_2} \coloneqq L^{\cc{B}}_{n,m}\paren{\rho_2} - L^{\cc{B}}_{n,m}\paren{\rho_1}$, for any $\rho_1,\rho_2\in\Theta_0$. When we deal with a single bin (no partitioning), $\Delta$ and $L$ respectively refer to $\Delta^{\cc{B}}$ and $L^{\cc{B}}$. 

\begin{lem}\label{SensAnalS1Norm}
Choose $\rho_1,\rho_2\in\Theta_0$ such that $\rho_2\ne \rho_1$. Then
\begin{equation}\label{SensAnalLnm}
\frac{\SpNorm{\Delta^{\cc{B}}\paren{\rho_1,\rho_2}}{1}}{\abs{\rho_2-\rho_1}}\lesssim \paren{\bbM{1}_{\set{d=1}} + \bbM{1}_{\set{d=2}} \log N + \bbM{1}_{\set{d\geq 3}} N^{d-2}}.
\end{equation}
Furthermore for any $d\geq 3$, $\SpNorm{\Delta^{\cc{B}}\paren{\rho_1,\rho_2}}{1} \asymp N^{d-2}\abs{\rho_2-\rho_1}$.
\end{lem}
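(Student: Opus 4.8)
The plan is to read off from the spectral representation \eqref{SpctRepKnm} that, once we assume $\rho_2>\rho_1$, the matrix $\Delta^{\cc{B}}\paren{\rho_1,\rho_2}$ is positive semi-definite, which collapses its nuclear norm to its trace, i.e.\ to a sum of diagonal entries, each of which is controlled by Lemma \ref{UppBndffQLem}. Without loss of generality let $\rho_2>\rho_1$, since both sides of \eqref{SensAnalLnm} are symmetric in $\paren{\rho_1,\rho_2}$. Put $g_N\paren{\boldsymbol{\omega}}\coloneqq h_N\paren{\rho_2\LpNorm{\boldsymbol{\omega}}{2}}-h_N\paren{\rho_1\LpNorm{\boldsymbol{\omega}}{2}}$, which is non-negative because $h_N$ is increasing (see \eqref{hN}). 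By \eqref{SpctRepKnm},
\[
\paren{L_{n,m}\paren{\rho_2}-L_{n,m}\paren{\rho_1}}_{\boldsymbol{s},\boldsymbol{t}}=\int_{\bb{R}^d} e^{j\InnerProd{\boldsymbol{t}-\boldsymbol{s}}{\boldsymbol{\omega}}}\,f^N_{\boldsymbol{s}}\paren{\boldsymbol{\omega}}\,\overline{f^N_{\boldsymbol{t}}\paren{\boldsymbol{\omega}}}\,g_N\paren{\boldsymbol{\omega}}\,d\boldsymbol{\omega};
\]
writing $\boldsymbol{u}\paren{\boldsymbol{\omega}}\coloneqq\brac{e^{-j\InnerProd{\boldsymbol{s}}{\boldsymbol{\omega}}}f^N_{\boldsymbol{s}}\paren{\boldsymbol{\omega}}}_{\boldsymbol{s}\in\cc{D}_n}$, this reads $\Delta\paren{\rho_1,\rho_2}=\int_{\bb{R}^d}g_N\paren{\boldsymbol{\omega}}\,\boldsymbol{u}\paren{\boldsymbol{\omega}}\boldsymbol{u}\paren{\boldsymbol{\omega}}^{*}\,d\boldsymbol{\omega}$, a non-negative combination of the rank-one positive semi-definite matrices $\boldsymbol{u}\paren{\boldsymbol{\omega}}\boldsymbol{u}\paren{\boldsymbol{\omega}}^{*}$. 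Hence $\Delta\paren{\rho_1,\rho_2}$ is positive semi-definite, and so is its block-diagonal restriction $\Delta^{\cc{B}}\paren{\rho_1,\rho_2}$, whose diagonal blocks are principal submatrices of $\Delta\paren{\rho_1,\rho_2}$. Since the block-diagonal approximation preserves diagonal entries, it follows that
\[
\SpNorm{\Delta^{\cc{B}}\paren{\rho_1,\rho_2}}{1}=\tr\Delta^{\cc{B}}\paren{\rho_1,\rho_2}=\tr\Delta\paren{\rho_1,\rho_2}=\sum_{\boldsymbol{s}\in\cc{D}_n}\int_{\bb{R}^d}\abs{f^N_{\boldsymbol{s}}\paren{\boldsymbol{\omega}}}^2\,g_N\paren{\boldsymbol{\omega}}\,d\boldsymbol{\omega}.
\]

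For the upper bound, each summand above is exactly the quantity $Q_N$ of Lemma \ref{UppBndffQLem} in the case $\boldsymbol{t}=\boldsymbol{s}$ (the modulus on the $h_N$-difference there being redundant since $\rho_2>\rho_1$). Lemma \ref{UppBndffQLem} then gives, uniformly in $\boldsymbol{s}$, $\paren{\Delta\paren{\rho_1,\rho_2}}_{\boldsymbol{s},\boldsymbol{s}}\lesssim\abs{\rho_2-\rho_1}\,N^{-2}\paren{\bbM{1}_{\set{d\geq 3}}+\bbM{1}_{\set{d=2}}\log N+\bbM{1}_{\set{d=1}}N}$; summing the $n\asymp N^d$ terms and recalling $N=\lfloor n^{1/d}\rfloor$ yields \eqref{SensAnalLnm}, because $N^d\cdot N^{-2}\cdot N=1$, $N^d\cdot N^{-2}\log N=\log N$, and $N^d\cdot N^{-2}=N^{d-2}$ in the cases $d=1$, $d=2$, $d\geq 3$ (and $\diam\paren{\Theta_0}\asymp1$).

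For the matching lower bound when $d\geq 3$ it suffices to prove $\paren{\Delta\paren{\rho_1,\rho_2}}_{\boldsymbol{s},\boldsymbol{s}}\gtrsim\abs{\rho_2-\rho_1}\,N^{-2}$ uniformly in $\boldsymbol{s}$ and then sum over the $n\asymp N^d$ points. Restrict the spectral integral to the box $\cc{U}_K\coloneqq\set{\boldsymbol{\omega}\in\bb{R}^d:\ K\leq\omega_i\leq 2K,\ i=1,\ldots,d}$ for a large constant $K$ fixed below; there $\LpNorm{\boldsymbol{\omega}}{2}\asymp K$, so compactness of $\Theta_0$ away from $0$ gives $g_N\paren{\boldsymbol{\omega}}\asymp\abs{\rho_2-\rho_1}\paren{N\LpNorm{\boldsymbol{\omega}}{2}}^{-2}\asymp\abs{\rho_2-\rho_1}N^{-2}$, while $\abs{f^N_{\boldsymbol{s}}\paren{\boldsymbol{\omega}}}^2=\LpNorm{\boldsymbol{\omega}}{2}^{-\paren{2\nu+d}}\abs{P_{\boldsymbol{s}}\paren{N\boldsymbol{\omega}}}^2\asymp\abs{P_{\boldsymbol{s}}\paren{N\boldsymbol{\omega}}}^2$, where $P_{\boldsymbol{s}}\paren{\boldsymbol{\eta}}\coloneqq\sum_{\boldsymbol{s'}\in\cc{N}_m\paren{\boldsymbol{s}}}a_{m,\boldsymbol{s}}\paren{\boldsymbol{s'}}e^{j\InnerProd{\boldsymbol{\eta}}{\boldsymbol{s'}-\boldsymbol{s}}}$ (all implied constants may depend on $K$, which is itself a fixed constant). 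After the substitution $\boldsymbol{\eta}=N\boldsymbol{\omega}$, everything reduces to the Parseval-type lower bound $\int_{N\cc{U}_K}\abs{P_{\boldsymbol{s}}\paren{\boldsymbol{\eta}}}^2\,d\boldsymbol{\eta}\gtrsim\paren{KN}^d$. Expanding the square, the diagonal terms contribute $\paren{KN}^d\sum_{\boldsymbol{s'}}\abs{a_{m,\boldsymbol{s}}\paren{\boldsymbol{s'}}}^2=\paren{KN}^d$ by the normalization in Definition \ref{DecorFltNonReglat}, and an off-diagonal term carries the factor $\prod_{i=1}^d\int_{KN}^{2KN}e^{j\eta_i\paren{s'_i-s''_i}}\,d\eta_i$, whose modulus is at most $\prod_{i=1}^d\min\paren{KN,\,2/\abs{s'_i-s''_i}}$; since Assumption \ref{RegCondLattice} forces $\max_i\abs{s'_i-s''_i}\gtrsim 1/N$ whenever $\boldsymbol{s'}\neq\boldsymbol{s''}$ in $\cc{N}_m\paren{\boldsymbol{s}}$, this modulus is $\lesssim\paren{KN}^{d-1}N=\paren{KN}^d/K$, and taking $K$ a large enough multiple of $\max_{\boldsymbol{s}}\abs{\cc{N}_m\paren{\boldsymbol{s}}}^2$ makes the total off-diagonal contribution at most half of $\paren{KN}^d$. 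This gives the uniform lower bound on each diagonal entry, whence $\SpNorm{\Delta^{\cc{B}}\paren{\rho_1,\rho_2}}{1}\gtrsim N^{d-2}\abs{\rho_2-\rho_1}$. The easy part here is the positive-semidefiniteness reduction together with the upper bound, which drops straight out of Lemma \ref{UppBndffQLem}; the main obstacle is the last Parseval-type estimate, that is, bounding the off-diagonal oscillatory integrals uniformly in $n$ and in $\boldsymbol{s}\in\cc{D}_n$ using only the minimal-separation bound of Assumption \ref{RegCondLattice} and the uniform bound on $\abs{\cc{N}_m\paren{\boldsymbol{s}}}$, and checking that the cancellation threshold for $K$ does not deteriorate as $\boldsymbol{s}$ and $n$ vary.
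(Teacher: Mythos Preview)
Your reduction to the trace via positive semi-definiteness and your upper bound are exactly the paper's argument: both of you use the spectral representation \eqref{SpctRepKnm}, observe that $h_N$ is increasing so that $\Delta^{\cc{B}}\paren{\rho_1,\rho_2}$ is PSD (the paper writes this out as the quadratic-form computation \eqref{PosDefDelta}, you phrase it as $\int g_N\,\boldsymbol{u}\boldsymbol{u}^{*}$), and then both of you feed each diagonal entry into Lemma \ref{UppBndffQLem}.

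The difference is in the lower bound for $d\geq 3$. The paper restricts the spectral integral to $\set{\LpNorm{\boldsymbol{\omega}}{2}\geq 1}$, uses \eqref{LowBndhN} to get $g_N\paren{\boldsymbol{\omega}}\gtrsim\paren{N\LpNorm{\boldsymbol{\omega}}{2}}^{-2}$, and then asserts \eqref{PositIntffNs}, namely $\int_{\LpNorm{\boldsymbol{\omega}}{2}\geq 1}\abs{f^N_{\boldsymbol{s}}\paren{\boldsymbol{\omega}}}^2\LpNorm{\boldsymbol{\omega}}{2}^{-2}\,d\boldsymbol{\omega}\asymp 1$, on qualitative grounds (nonvanishing, smooth, polynomially decaying). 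You instead localize to a cube $\cc{U}_K$ with $K$ fixed large, freeze $g_N$ and $\LpNorm{\boldsymbol{\omega}}{2}^{-\paren{2\nu+d}}$ to constants there, and prove the needed $L^2$ lower bound on $P_{\boldsymbol{s}}$ by a direct diagonal/off-diagonal expansion over $N\cc{U}_K$. Your route is more explicit and makes the uniformity in $\boldsymbol{s}$ and $n$ transparent, since the only inputs are the minimum-separation constant $C_{\min}$ from Assumption \ref{RegCondLattice} and the uniform bound $g_m=\max_{\boldsymbol{s}}\abs{\cc{N}_m\paren{\boldsymbol{s}}}$; the paper's route is shorter but leaves \eqref{PositIntffNs} as a soft claim whose uniform lower bound ultimately rests on the same ingredients. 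Both are valid; yours is the more self-contained of the two.
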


\begin{proof}
Without loss of generality assume that $\rho_2>\rho_1$. We claim that $\Delta^{\cc{B}}\paren{\rho_1,\rho_2}$ is a positive semi-definite matrix. If such property holds then $\cc{S}_1$ norm and trace are the same. Namely the absolute sum of eigenvalues can be expressed only in terms of the diagonal entries. To see this is so begin by obtaining the spectral representation for the entries of $\Delta^{\cc{B}}$. Recall $f^N_{\boldsymbol{s}}\paren{\cdot}$ and $h_N\paren{\cdot}$ from Eq. \eqref{fNs} and \eqref{hN}, respectively. Now choose an arbitrary unit norm vector $v\in \bb{R}^{n}$ ($n = \abs{D_n}$). Observe that 
\begin{eqnarray}\label{PosDefDelta}
v^\top\Delta^{\cc{B}}\paren{\rho_1,\rho_2}v &=& \sum_{\boldsymbol{s},\boldsymbol{t}\in\cc{D}_n} v_{\boldsymbol{s}}v_{\boldsymbol{t}} \paren{\Delta^{\cc{B}}\paren{\rho_1,\rho_2}}_{\boldsymbol{s},\boldsymbol{t}} =  \sum_{\boldsymbol{s},\boldsymbol{t}\in\cc{D}_n} v_{\boldsymbol{s}}v_{\boldsymbol{t}} \brac{\rho^{2\nu}_2 \paren{K^{\cc{B}}_{n,m}\paren{\rho_2}}_{\boldsymbol{s},\boldsymbol{t}} - \rho^{2\nu}_1\paren{K^{\cc{B}}_{n,m}\paren{\rho_1}}_{\boldsymbol{s},\boldsymbol{t}} }\nonumber\\
&\RelNum{\paren{a}}{=}& \sum_{t=1}^{b_n} \sum_{\boldsymbol{s}\in B_t}v_{\boldsymbol{s}}v_{\boldsymbol{t}}\brac{\rho^{2\nu}_2 \paren{K_{n,m}\paren{\rho_2}}_{\boldsymbol{s},\boldsymbol{t}} - \rho^{2\nu}_1\paren{K_{n,m}\paren{\rho_1}}_{\boldsymbol{s},\boldsymbol{t}} }\nonumber\\
&\RelNum{\paren{b}}{=}& \sum_{t=1}^{b_n}\int_{\bb{R}^d} \sum_{\boldsymbol{s},\boldsymbol{t}\in B_t} v_{\boldsymbol{s}}v_{\boldsymbol{t}} e^{j\InnerProd{\boldsymbol{t}-\boldsymbol{s}}{N\boldsymbol{\omega}}}f^N_{\boldsymbol{s}}\paren{\boldsymbol{\omega}}\overline{f^N_{\boldsymbol{t}}\paren{\boldsymbol{\omega}}} \Bigbrac{ h_N\paren{\rho_2\LpNorm{\boldsymbol{\omega}}{2}}-h_N\paren{\rho_1\LpNorm{\boldsymbol{\omega}}{2}} } d\boldsymbol{\omega}\nonumber\\
&=& \sum_{t=1}^{b_n} \int_{\bb{R}^d} \abs{\sum_{\boldsymbol{s}\in B_t}v_{\boldsymbol{s}}e^{j\InnerProd{\boldsymbol{s}}{N\boldsymbol{\omega}}}f^N_{\boldsymbol{s}}\paren{\boldsymbol{\omega}} }^2\Bigbrac{ h_N\paren{\rho_2\LpNorm{\boldsymbol{\omega}}{2}}-h_N\paren{\rho_1\LpNorm{\boldsymbol{\omega}}{2}} } d\boldsymbol{\omega} \RelNum{\paren{c}}{>} 0.
\end{eqnarray}
in which $\paren{a}$ follows from the fact that $(K^{\cc{B}}_{n,m}\paren{\rho_2})_{\boldsymbol{s},\boldsymbol{t}}=0$ when $\boldsymbol{s}$ and $\boldsymbol{t}$ belong to distinct bins. The identity $\paren{b}$ is a simple application of Eq. \eqref{SpctRepKnm}. Furthermore, inequality $\paren{c}$ follows from the monotonicity of $h_N$. Now obviously we have
\begin{equation*}
\abs{\cc{D}_n}\min_{ \boldsymbol{s}\in\cc{D}_n } \abs{\paren{\Delta^{\cc{B}}\paren{\rho_1,\rho_2}}_{\boldsymbol{s},\boldsymbol{s}}}\leq\SpNorm{\Delta^{\cc{B}}\paren{\rho_1,\rho_2}}{1} = \tr\paren{ \Delta^{\cc{B}}\paren{\rho_1,\rho_2} } \leq  \abs{\cc{D}_n}\max_{ \boldsymbol{s}\in\cc{D}_n } \abs{\paren{\Delta^{\cc{B}}\paren{\rho_1,\rho_2}}_{\boldsymbol{s},\boldsymbol{s}}}.
\end{equation*}
The rest of the proof is devoted to study the behavior of the diagonal entries of $\Delta^{\cc{B}}\paren{\rho_1,\rho_2}$. We need to show that
\begin{align*}
&\abs{\frac{\paren{\Delta^{\cc{B}}\paren{\rho_1,\rho_2}}_{\boldsymbol{s},\boldsymbol{s}}}{\rho_2-\rho_1}} \lesssim N^{-2}\paren{\bbM{1}_{\set{d\geq 3}}+ \bbM{1}_{\set{d=2}}\log N + \bbM{1}_{\set{d=1}} N },\quad \forall\; \boldsymbol{s}\in\cc{D}_n,\\
&\abs{\frac{\paren{\Delta^{\cc{B}}\paren{\rho_1,\rho_2}}_{\boldsymbol{s},\boldsymbol{s}}}{\rho_2-\rho_1}} \gtrsim N^{-2},\quad \forall\; \boldsymbol{s}\in\cc{D}_n,\mbox{and}\; \forall\; d\geq 3.
\end{align*}
Applying similar techniques as \eqref{PosDefDelta} as well as Lemma \ref{UppBndffQLem} yields
\begin{eqnarray*}
\max_{ \boldsymbol{s}\in\cc{D}_n } \abs{\frac{\paren{\Delta^{\cc{B}}\paren{\rho_1,\rho_2}}_{\boldsymbol{s},\boldsymbol{s}}}{\rho_2-\rho_1}}
&=& \max_{ \boldsymbol{s}\in\cc{D}_n } \abs{\int_{\bb{R}^d}  \abs{f^N_{\boldsymbol{s}}\paren{\boldsymbol{\omega}}}^2 \Bigbrac{ \frac{h_N\paren{\rho_2\LpNorm{\boldsymbol{\omega}}{2}}-h_N\paren{\rho_1\LpNorm{\boldsymbol{\omega}}{2}}}{\rho_2-\rho_1} } d\boldsymbol{\omega}}\nonumber\\
&\lesssim& N^{-2}\paren{\bbM{1}_{\set{d\geq 3}}+ \bbM{1}_{\set{d=2}}\log N + \bbM{1}_{\set{d=1}} N }.
\end{eqnarray*}
We now proceed to establish the desired lower bound on $\tr(\Delta^{\cc{B}}\paren{\rho_1,\rho_2})$. Choose any $s\in\cc{D}_n$. Then, 
\begin{eqnarray}\label{LowBndDiagDelta}
\paren{\Delta^{\cc{B}}\paren{\rho_1,\rho_2}}_{\boldsymbol{s},\boldsymbol{s}} &=& \int_{\bb{R}^d}  \abs{f^N_{\boldsymbol{s}}\paren{\boldsymbol{\omega}}}^2 \Bigbrac{ h_N\paren{\rho_2\LpNorm{\boldsymbol{\omega}}{2}}-h_N\paren{\rho_1\LpNorm{\boldsymbol{\omega}}{2}} } d\boldsymbol{\omega} \nonumber\\
&\geq& \int_{\LpNorm{\boldsymbol{\omega}}{2} \geq 1}  \abs{f^N_{\boldsymbol{s}}\paren{\boldsymbol{\omega}}}^2 \Bigbrac{ h_N\paren{\rho_2\LpNorm{\boldsymbol{\omega}}{2}}-h_N\paren{\rho_1\LpNorm{\boldsymbol{\omega}}{2}} } d\boldsymbol{\omega}
\end{eqnarray}
Let us control $h_N\paren{\rho_2\LpNorm{\boldsymbol{\omega}}{2}}-h_N\paren{\rho_1\LpNorm{\boldsymbol{\omega}}{2}}$ from below. Due to the fact that (its proof is similar to \eqref{ModalIneq1Prop1} and we left it to the reader)
\begin{equation*}
\paren{1+x}^{-\alpha}-\paren{1+y}^{-\alpha}\geq \frac{\alpha\paren{y-x}}{2},\quad\forall\;0< x\leq y < 2^{1/\paren{\alpha+1}}-1,
\end{equation*}
it is possible to write 
\begin{equation}\label{LowBndhN}
\frac{h_N\paren{\rho_2\LpNorm{\boldsymbol{\omega}}{2}}-h_N\paren{\rho_1\LpNorm{\boldsymbol{\omega}}{2}}}{\rho_2-\rho_1} \geq \frac{\paren{\nu+\frac{d}{2}}}{2N^2\LpNorm{\boldsymbol{\omega}}{2}^2} \frac{\rho_1+\rho_2}{\rho^2_1\rho^2_2} \gtrsim \paren{N\LpNorm{\boldsymbol{\omega}}{2}}^{-2}.
\end{equation} 
for large enough $N$. Moreover, the class of functions $\set{f^N_{\boldsymbol{s}}\paren{\boldsymbol{\omega}}}_{\boldsymbol{s}\in\cc{D}_n}$ are nonzero (in a large enough neighborhood of the origin), continuously differentiable, with a uniformly bounded derivative when $\LpNorm{\boldsymbol{\omega}}{2}\geq 1$, and decay with the polynomial rate given in Lemma \ref{UppBndfmdLemma}. So
\begin{equation}\label{PositIntffNs}
\int_{\LpNorm{\boldsymbol{\omega}}{2} \geq 1}  \abs{\frac{f^N_{\boldsymbol{s}}\paren{\boldsymbol{\omega}}}{\LpNorm{\boldsymbol{\omega}}{2}}}^2 d\boldsymbol{\omega} \asymp 1, \quad\forall\; \boldsymbol{s}\in\cc{D}_n.
\end{equation}
Replacing \eqref{PositIntffNs} and \eqref{LowBndhN} into Eq. \eqref{LowBndDiagDelta} gives the desirable lower bound.
\end{proof}

\begin{lem}\label{SensAnalOpNorm}
Let $\rho_1,\rho_2\in\Theta_0$. Then 
\begin{equation}\label{SensAnalOpLnmNonReg}
\OpNorm{\Delta^{\cc{B}}\paren{\rho_1,\rho_2}}{2}{2}\lesssim \Bigparen{1 \wedge \abs{\rho_2-\rho_1}} \paren{1+ \bbM{1}_{\set{m=\nu+d/2}}\log N}.
\end{equation}
Moreover, if $\cc{D}_n$ be a $d$-dimensional regular lattice, then
\begin{equation}\label{SensAnalOpLnmReg}
\OpNorm{\Delta^{\cc{B}}\paren{\rho_1,\rho_2}}{2}{2}\lesssim \Bigparen{1 \wedge \abs{\rho_2-\rho_1}}.
\end{equation}
\end{lem}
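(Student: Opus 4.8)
The plan is to reduce to a single bin, prove a crude constant bound and a sharp Lipschitz bound separately, and then take the minimum. Write $\Delta(\rho_1,\rho_2)\coloneqq L_{n,m}(\rho_2)-L_{n,m}(\rho_1)$ for the unpartitioned difference, and observe that $\Delta^{\cc{B}}(\rho_1,\rho_2)$ is obtained from $\Delta(\rho_1,\rho_2)$ by zeroing every entry joining two distinct bins; hence each diagonal block of $\Delta^{\cc{B}}$ is a principal submatrix of $\Delta$, so $\OpNorm{\Delta^{\cc{B}}(\rho_1,\rho_2)}{2}{2}\le\OpNorm{\Delta(\rho_1,\rho_2)}{2}{2}$, and any pointwise bound on the entries of $\Delta$ also holds for the entries of $\Delta^{\cc{B}}$ (a subset of them, padded with zeros). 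The constant bound $\OpNorm{\Delta^{\cc{B}}(\rho_1,\rho_2)}{2}{2}\lesssim 1+\bbM{1}_{\set{m=\nu+d/2}}\log N$ is then immediate from the triangle inequality together with the uniform operator-norm estimate \eqref{UppBndOpNormLnm} applied to $L_{n,m}(\rho_1)$ and $L_{n,m}(\rho_2)$. This disposes of the ``$1$'' branch of the $1\wedge\abs{\rho_2-\rho_1}$ factor (and of the case $\abs{\rho_2-\rho_1}\ge 1$), so from now on assume $\rho_2>\rho_1$ with $\rho_2-\rho_1\le 1$.

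The core step is the pointwise Lipschitz-with-decay estimate $\abs{(\Delta(\rho_1,\rho_2))_{\boldsymbol{s},\boldsymbol{t}}}\lesssim\abs{\rho_2-\rho_1}\,(1+N\LpNorm{\boldsymbol{t}-\boldsymbol{s}}{2})^{-2(m-\nu)}$ for all $\boldsymbol{s},\boldsymbol{t}\in\cc{D}_n$. For pairs with $\LpNorm{\boldsymbol{t}-\boldsymbol{s}}{2}\lesssim 1/N$ (including the diagonal) this follows from Lemma \ref{UppBndffQLem}: the spectral representation \eqref{SpctRepKnm} gives $\abs{(\Delta(\rho_1,\rho_2))_{\boldsymbol{s},\boldsymbol{t}}}\le Q_N\lesssim\abs{\rho_2-\rho_1}N^{-2}(\bbM{1}_{\set{d\geq 3}}+\bbM{1}_{\set{d=2}}\log N+\bbM{1}_{\set{d=1}}N)\lesssim\abs{\rho_2-\rho_1}$, while $(1+N\LpNorm{\boldsymbol{t}-\boldsymbol{s}}{2})^{-2(m-\nu)}\asymp 1$ there. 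For pairs with $\LpNorm{\boldsymbol{t}-\boldsymbol{s}}{2}\ge h/N$ ($h$ a fixed large constant), I would follow the proof of Proposition \ref{UppBndCovGmNonRegLatt} line by line, replacing $f_{\nu,\rho}$ there by $g(x)\coloneqq\rho_2^{2\nu}f_{\nu,1}(x/\rho_2)-\rho_1^{2\nu}f_{\nu,1}(x/\rho_1)$, so that $(\Delta(\rho_1,\rho_2))_{\boldsymbol{s},\boldsymbol{t}}=N^{2\nu}\sum_{\boldsymbol{s'}\in\cc{N}_m\paren{\boldsymbol{s}}}\sum_{\boldsymbol{t'}\in\cc{N}_m\paren{\boldsymbol{t}}}a_{m,\boldsymbol{s}}\paren{\boldsymbol{s'}}a_{m,\boldsymbol{t}}\paren{\boldsymbol{t'}}g(\boldsymbol{t'}-\boldsymbol{s'})$. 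Taylor-expanding $g(\boldsymbol{t'}-\boldsymbol{s'})$ about $\boldsymbol{t}-\boldsymbol{s}$ to order $2m$, the terms of order $<2m$ vanish by the first condition of Definition \ref{DecorFltNonReglat}, and the order-$2m$ remainder is bounded exactly as in \eqref{Rom2UppBnd}, \eqref{Rom2UppBnd2}, provided one has $\max_{\abs{r}=2m}\abs{D^r g(y)}\lesssim\abs{\rho_2-\rho_1}\LpNorm{y}{2}^{2(\nu-m)}$ away from the origin; the factors $N^{2\nu}$ and $\varpi_r=\cc{O}(N^{-2m})$ then recombine into the claimed decay as in the final display of that proof.

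To obtain $\max_{\abs{r}=2m}\abs{D^r g(y)}\lesssim\abs{\rho_2-\rho_1}\LpNorm{y}{2}^{2(\nu-m)}$, I would write $D^r g(y)=\int_{\rho_1}^{\rho_2}\partial_\rho\Bigbrac{\rho^{2\nu-2m}(D^r f_{\nu,1})(y/\rho)}\,d\rho$, carry out the $\rho$-differentiation, and invoke Lemma $4$ of \cite{anderes2010consistent} to bound $D^r f_{\nu,1}$ and $D^{r+\boldsymbol{e}_i}f_{\nu,1}$ by $\LpNorm{y/\rho}{2}^{2(\nu-m)}$ and $\LpNorm{y/\rho}{2}^{2(\nu-m)-1}$ respectively; since $\Theta_0$ is compact and bounded away from $0$ (so $\rho\asymp 1$), every resulting term is $\lesssim\LpNorm{y}{2}^{2(\nu-m)}$, and the length of the $\rho$-interval supplies the factor $\abs{\rho_2-\rho_1}$. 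Feeding the pointwise estimate into the Schur-test bound of Lemma \ref{OpNormPolDecayOffDiag} applied to $\Delta^{\cc{B}}(\rho_1,\rho_2)/\abs{\rho_2-\rho_1}$, and using that $\sum_{\boldsymbol{t}\in\cc{D}_n}(1+N\LpNorm{\boldsymbol{t}-\boldsymbol{s}}{2})^{-2(m-\nu)}$ is $\cc{O}(1)$ when $m>\nu+d/2$ and $\cc{O}(\log N)$ when $m=\nu+d/2$ (by Assumption \ref{RegCondLattice}), yields $\OpNorm{\Delta^{\cc{B}}(\rho_1,\rho_2)}{2}{2}\lesssim\abs{\rho_2-\rho_1}(1+\bbM{1}_{\set{m=\nu+d/2}}\log N)$; combined with the constant bound this is \eqref{SensAnalOpLnmNonReg}. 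For \eqref{SensAnalOpLnmReg}, the pointwise decay estimate still holds, but the $\log N$ in the Schur test can be removed on a regular lattice: there the preconditioned process is stationary (Remark \ref{PreCondRegLat}), so $L_{n,m}(\rho)$ and $\Delta(\rho_1,\rho_2)$ are Toeplitz and, following Theorems $2.1$ and $2.3$ of \cite{stein2012difference}, their operator norms are controlled by the supremum of their spectral symbols; differentiating the symbol in $\rho$ and using $\abs{\partial_\rho h_N(\rho\LpNorm{\boldsymbol{\omega}}{2})}\lesssim (N\LpNorm{\boldsymbol{\omega}}{2})^{2\nu+d}\wedge(N\LpNorm{\boldsymbol{\omega}}{2})^{-2}$ together with $\max_{\boldsymbol{s}}\abs{f^N_{\boldsymbol{s}}(\boldsymbol{\omega})}^2\lesssim(1+\LpNorm{\boldsymbol{\omega}}{2}^{2\nu+d})^{-1}$ (Lemma \ref{UppBndfmdLemma}) gives a uniform $\cc{O}(\abs{\rho_2-\rho_1})$ bound with no $N$-dependence, since \eqref{UppBndOpNormLnmRegLat} already holds on regular lattices without the critical-exponent correction.

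I expect the main obstacle to be the pointwise estimate, and precisely the bound on $\max_{\abs{r}=2m}\abs{D^r g}$: obtaining the factor $\abs{\rho_2-\rho_1}$ and the decay $\LpNorm{y}{2}^{2(\nu-m)}$ simultaneously forces one to track the $\rho$-dependence both through the prefactor $\rho^{2\nu}$ and through the rescaled argument $x/\rho$ of the Matérn correlation, hence to differentiate in $\rho$ and invoke the derivative estimates of \cite{anderes2010consistent} at both orders $2m$ and $2m+1$, all while keeping the near-diagonal behaviour (handled by Lemma \ref{UppBndffQLem}) consistent with the off-diagonal decay so that Lemma \ref{OpNormPolDecayOffDiag} applies cleanly.
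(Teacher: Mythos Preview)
Your argument is correct but follows a substantially more laborious route than the paper. The paper never establishes an entrywise Lipschitz bound on $\Delta(\rho_1,\rho_2)$; instead it exploits the spectral representation and positive semi-definiteness directly. After the same block-diagonal reduction $\OpNorm{\Delta^{\cc{B}}}{2}{2}\le\OpNorm{\Delta}{2}{2}$, the paper applies the mean value theorem to $h_N$ in the spectral integral \eqref{PosDefDelta} to obtain, for $\rho_2-\rho_1\le 1$, the quadratic-form comparison
\[
\frac{v^\top\Delta(\rho_1,\rho_2)v}{\rho_2-\rho_1}\;\lesssim\;\int_{\bb{R}^d}\Bigl|\sum_{\boldsymbol{s}\in\cc{D}_n}v_{\boldsymbol{s}}e^{j\InnerProd{\boldsymbol{s}}{N\boldsymbol{\omega}}}f^N_{\boldsymbol{s}}(\boldsymbol{\omega})\Bigr|^{2}h_N(\rho_2\LpNorm{\boldsymbol{\omega}}{2})\,d\boldsymbol{\omega}\;=\;v^\top L_{n,m}(\rho_2)v,
\]
i.e.\ $\Delta(\rho_1,\rho_2)/(\rho_2-\rho_1)\preceq cL_{n,m}(\rho_2)$, which immediately gives $\OpNorm{\Delta}{2}{2}\lesssim(\rho_2-\rho_1)\OpNorm{L_{n,m}(\rho_2)}{2}{2}$. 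For $\rho_2-\rho_1>1$ the PSD ordering $0\preceq\Delta(\rho_1,\rho_2)\preceq L_{n,m}(\rho_2)$ already yields $\OpNorm{\Delta}{2}{2}\le\OpNorm{L_{n,m}(\rho_2)}{2}{2}$. Both cases then conclude by invoking the existing bound on $\OpNorm{L_{n,m}(\rho_2)}{2}{2}$ (Proposition~\ref{UppBndCovGmNonRegLatt} plus Lemma~\ref{OpNormPolDecayOffDiag} for irregular lattices, or the cited bound from \cite{stein2012difference} for regular ones).

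Your approach---bounding $D^r g$ via $\partial_\rho$ and Anderes' derivative estimates, then feeding the entrywise decay into the Schur test---does work, and has the minor virtue of producing an explicit entrywise Lipschitz estimate that could be reused elsewhere. But the paper's PSD-comparison argument is considerably shorter, avoids the separate near-diagonal/far-from-diagonal case split, does not require derivative bounds of order $2m+1$, and makes the regular-lattice refinement \eqref{SensAnalOpLnmReg} a one-line citation rather than a separate spectral-symbol computation.
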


\begin{proof}
Consider any arbitrary partitioning $\cc{B}$. We know that $\Delta^{\cc{B}}\paren{\rho_1,\rho_2}$ is a block diagonal approximation of $\Delta\paren{\rho_1,\rho_2}$. The basic properties of operator norm implies that
\begin{equation*}
\OpNorm{\Delta^{\cc{B}}\paren{\rho_1,\rho_2}}{2}{2}\leq \OpNorm{\Delta\paren{\rho_1,\rho_2}}{2}{2}.
\end{equation*}
Hence, we just need to find an upper bound on $\OpNorm{\Delta\paren{\rho_1,\rho_2}}{2}{2}$. Without loss of generality, suppose that $\rho_2>\rho_1$. If $\rho_2-\rho_1>1$ then the positive definiteness of $\Delta\paren{\rho_1,\rho_2}$ (see \eqref{PosDefDelta}) implies that
\begin{equation}\label{LMinusDeltaPosDef}
\OpNorm{\Delta\paren{\rho_1,\rho_2}}{2}{2}\leq \OpNorm{L_{n,m}\paren{\rho_2}}{2}{2}.
\end{equation}
Now assume that $\paren{\rho_2-\rho_1}$ is strictly less than $1$. We also showed that for any unit norm column vector $v$ (of the proper size)
\begin{equation*}
\frac{v^\top \Delta\paren{\rho_1,\rho_2} v}{\rho_2-\rho_1} = \int_{\bb{R}^d} \abs{\sum_{\boldsymbol{s}\in\cc{D}_n}v_{\boldsymbol{s}}e^{j\InnerProd{\boldsymbol{s}}{N\boldsymbol{\omega}}}f^N_{\boldsymbol{s}}\paren{\boldsymbol{\omega}} }^2\set{ \frac{h_N\paren{\rho_2\LpNorm{\boldsymbol{\omega}}{2}}-h_N\paren{\rho_1\LpNorm{\boldsymbol{\omega}}{2}}}{\rho_2-\rho_1} } d\boldsymbol{\omega}.
\end{equation*}
The mean value theorem gives an alternative form for $h_N\paren{\rho_2\LpNorm{\boldsymbol{\omega}}{2}}-h_N\paren{\rho_1\LpNorm{\boldsymbol{\omega}}{2}}$.
\begin{equation*}
\exists\; \rho\in\paren{\rho_1,\rho_2}\;\;\suchthat\;\; \frac{h_N\paren{\rho_2\LpNorm{\boldsymbol{\omega}}{2}}-h_N\paren{\rho_1\LpNorm{\boldsymbol{\omega}}{2}}}{\rho_2-\rho_1} = \dot{h}_N\paren{\rho\LpNorm{\boldsymbol{\omega}}{2}} = \frac{2\nu+d}{\rho} \frac{h_N\paren{\rho\LpNorm{\boldsymbol{\omega}}{2}}}{1+ \paren{N\rho\LpNorm{\boldsymbol{\omega}}{2}}^2}.
\end{equation*}
In following identity we show that $\sup_{\rho\in\brac{\rho_1,\rho_2}} \dot{h}_N\paren{\rho\LpNorm{\boldsymbol{\omega}}{2}}\lesssim h_N\paren{\rho_2\LpNorm{\boldsymbol{\omega}}{2}}$.
\begin{eqnarray}\label{LipsConshN}
\frac{h_N\paren{\rho_2\LpNorm{\boldsymbol{\omega}}{2}}-h_N\paren{\rho_1\LpNorm{\boldsymbol{\omega}}{2}}}{\rho_2-\rho_1} &\leq& \frac{2\nu+d}{\rho_1} \frac{h_N\paren{\rho\LpNorm{\boldsymbol{\omega}}{2}}}{1+ \paren{N\rho\LpNorm{\boldsymbol{\omega}}{2}}^2} \leq \frac{2\nu+d}{\rho_1}h_N\paren{\rho\LpNorm{\boldsymbol{\omega}}{2}}\\
&\lesssim& h_N\paren{\rho_2\LpNorm{\boldsymbol{\omega}}{2}}.
\end{eqnarray}
The last inequality in \eqref{LipsConshN} is an easy consequence of the fact that $\inf\paren{\Theta_0} > 0$. Thus,
\begin{equation*}
0\leq \frac{v^\top \Delta\paren{\rho_1,\rho_2} v}{\rho_2-\rho_1} \lesssim \int_{\bb{R}^d} \abs{\sum_{\boldsymbol{s}\in\cc{D}_n}v_{\boldsymbol{s}}e^{j\InnerProd{\boldsymbol{s}}{N\boldsymbol{\omega}}}f^N_{\boldsymbol{s}}\paren{\boldsymbol{\omega}} }^2 h_N\paren{\rho_2\LpNorm{\boldsymbol{\omega}}{2}} d\boldsymbol{\omega} =  v^\top L_{n,m}\paren{\rho_2}v.
\end{equation*}
In other words, there is a bounded constant $c>1$ for which
\begin{equation}\label{LMinusDeltaPosDef2}
\frac{\Delta\paren{\rho_1,\rho_2}}{\rho_2-\rho_1}\preceq cL_{n,m}\paren{\rho_2}\quad \Rightarrow \quad \frac{\OpNorm{\Delta\paren{\rho_1,\rho_2}}{2}{2}}{\rho_2-\rho_1}\lesssim \OpNorm{L_{n,m}\paren{\rho_2}}{2}{2}.
\end{equation}
Combining \eqref{LMinusDeltaPosDef} and \eqref{LMinusDeltaPosDef2} leads to 
\begin{equation*}
\OpNorm{\Delta\paren{\rho_1,\rho_2}}{2}{2} \lesssim \Bigparen{1 \wedge \abs{\rho_2-\rho_1}}\OpNorm{L_{n,m}\paren{\rho_2}}{2}{2}.
\end{equation*}
In the case that $\cc{D}_n$ is a regular lattice, $\OpNorm{L_{n,m}\paren{\rho_2}}{2}{2}$ is known to be less than some bounded scalar $C$ (see \cite{stein2012interpolation}, Theorem $3.1$), which justifies \eqref{SensAnalOpLnmReg}. For arbitrary irregular lattices satisfying Assumption \ref{RegCondLattice}, Proposition \ref{UppBndCovGmNonRegLatt} characterizes the decay rate of the off diagonal entries of $L_{n,m}\paren{\rho_2}$. Thus, applying Lemma \ref{OpNormPolDecayOffDiag} immediately substantiates \eqref{SensAnalOpLnmReg} and ends the proof.
\end{proof}

\begin{lem}\label{SensAnalFrobNorm}
Let $N \coloneqq \lfloor n^{1/d} \rfloor$ and select two distinct $\rho_1$ and $\rho_2$ in $\Theta_0$. Then,
\begin{equation*}
\frac{\LpNorm{\Delta^{\cc{B}}\paren{\rho_1,\rho_2}}{2}}{\abs{\rho_2-\rho_1}}\lesssim \paren{\bbM{1}_{\set{d=1}} + \bbM{1}_{\set{d=2}}\log n + \bbM{1}_{\set{d=3}} n^{1/3} + \bbM{1}_{\set{d\geq 4}} n^{1/2} }.
\end{equation*}
\end{lem}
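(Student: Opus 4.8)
The plan is to reduce to the unpartitioned matrix and then recycle the entrywise and norm–sensitivity estimates already proved in this appendix. Write $\Delta(\rho_1,\rho_2) \coloneqq L_{n,m}(\rho_2)-L_{n,m}(\rho_1)$ for the single–bin case. Since $\Delta^{\cc{B}}(\rho_1,\rho_2)$ is obtained from $\Delta(\rho_1,\rho_2)$ by zeroing out every entry indexed by a pair $(\boldsymbol{s},\boldsymbol{t})$ lying in distinct bins, one trivially has $\LpNorm{\Delta^{\cc{B}}(\rho_1,\rho_2)}{2}\le\LpNorm{\Delta(\rho_1,\rho_2)}{2}$, so it suffices to bound the Frobenius norm of $\Delta(\rho_1,\rho_2)$, with a constant independent of $\cc{B}$.

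Next I would record a uniform entrywise bound. Assume without loss of generality $\rho_2>\rho_1$ (otherwise swap, as $\Delta(\rho_1,\rho_2)=-\Delta(\rho_2,\rho_1)$). Using $L_{n,m}=\rho^{2\nu}K_{n,m}$ and the spectral representation \eqref{SpctRepKnm}, for every $\boldsymbol{s},\boldsymbol{t}\in\cc{D}_n$,
\begin{equation*}
\abs{\paren{\Delta(\rho_1,\rho_2)}_{\boldsymbol{s},\boldsymbol{t}}} = \abs{\int_{\bb{R}^d} e^{j\InnerProd{\boldsymbol{t}-\boldsymbol{s}}{N\boldsymbol{\omega}}} f^N_{\boldsymbol{s}}\paren{\boldsymbol{\omega}}\overline{f^N_{\boldsymbol{t}}\paren{\boldsymbol{\omega}}}\brac{h_N\paren{\rho_2\LpNorm{\boldsymbol{\omega}}{2}}-h_N\paren{\rho_1\LpNorm{\boldsymbol{\omega}}{2}}}d\boldsymbol{\omega}}\le Q_N,
\end{equation*}
which is exactly the quantity estimated in Lemma \ref{UppBndffQLem}; hence $\max_{\boldsymbol{s},\boldsymbol{t}}\abs{\paren{\Delta(\rho_1,\rho_2)}_{\boldsymbol{s},\boldsymbol{t}}}\lesssim\abs{\rho_2-\rho_1}\,N^{-2}\paren{\bbM{1}_{\set{d\geq 3}}+\bbM{1}_{\set{d=2}}\log N+\bbM{1}_{\set{d=1}}N}$. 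Combining this with the crude inequality $\LpNorm{\Delta}{2}^2\leq n^2\paren{\max_{\boldsymbol{s},\boldsymbol{t}}\abs{\paren{\Delta}_{\boldsymbol{s},\boldsymbol{t}}}}^2$, using $n\asymp N^d$, taking square roots and translating powers of $N$ into powers of $n$, delivers
\begin{equation*}
\frac{\LpNorm{\Delta^{\cc{B}}(\rho_1,\rho_2)}{2}}{\abs{\rho_2-\rho_1}}\lesssim\bbM{1}_{\set{d=1}}+\bbM{1}_{\set{d=2}}\log n+\bbM{1}_{\set{d=3}}n^{1/3}+\bbM{1}_{\set{d=4}}n^{1/2},
\end{equation*}
which already establishes the claim in the range $d\leq 4$, in particular in the regime $d\leq 3$ relevant to the paper.

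For $d\geq 5$ the prefactor $n^2$ in the crude bound is too wasteful; there I would instead use Hölder's inequality for singular values, $\LpNorm{\Delta}{2}^2=\tr\paren{\Delta^\top\Delta}\leq\OpNorm{\Delta}{2}{2}\,\SpNorm{\Delta}{1}$, and feed in Lemma \ref{SensAnalOpNorm} (so that $\OpNorm{\Delta(\rho_1,\rho_2)}{2}{2}\lesssim\abs{\rho_2-\rho_1}\log N$, using $1\wedge\abs{\rho_2-\rho_1}\leq\abs{\rho_2-\rho_1}$) together with Lemma \ref{SensAnalS1Norm} (so that $\SpNorm{\Delta(\rho_1,\rho_2)}{1}\lesssim\abs{\rho_2-\rho_1}N^{d-2}$), which yields $\LpNorm{\Delta(\rho_1,\rho_2)}{2}/\abs{\rho_2-\rho_1}\lesssim N^{(d-2)/2}\sqrt{\log N}=o\paren{n^{1/2}}$ and completes the estimate for all $d$.

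No single step is a real obstacle: the genuinely analytic content — the polynomial decay of $f^N_{\boldsymbol{s}}$ (Lemma \ref{UppBndfmdLemma}), the monotonicity and Lipschitz control of $h_N$, and the resulting integral bound (Lemma \ref{UppBndffQLem}) — is already in place. The only point requiring care is the dimension–by–dimension bookkeeping: one must pair the crude entrywise bound with the lattice cardinality for small $d$ (this route is indispensable for $d=1$, since the singular–value Hölder bound would there leak an extra $\sqrt{\log n}$ factor), while for large $d$ the product of the operator and nuclear norms is the efficient choice.
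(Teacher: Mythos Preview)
Your argument is correct, but it differs from the paper's in both regimes. For $d\le 4$ the paper does not use the crude entrywise estimate $\LpNorm{\Delta}{2}\le n\max_{\boldsymbol{s},\boldsymbol{t}}\abs{\Delta_{\boldsymbol{s},\boldsymbol{t}}}$; it simply invokes the trivial Schatten inequality $\LpNorm{\Delta}{2}\le\SpNorm{\Delta}{1}$ and reads off the bound from Lemma~\ref{SensAnalS1Norm}. Since $\Delta$ is positive semi-definite, $\SpNorm{\Delta}{1}=\tr\Delta\le n\max_{\boldsymbol{s}}\Delta_{\boldsymbol{s},\boldsymbol{s}}\le nQ_N$, so the two routes collapse to the same numerical bound, but the paper's path is shorter and reuses the nuclear-norm lemma rather than going back to the integral estimate of Lemma~\ref{UppBndffQLem}. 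For $d\ge 5$ the paper also takes a different path: instead of the Schatten H\"older inequality $\LpNorm{\Delta}{2}^2\le\OpNorm{\Delta}{2}{2}\SpNorm{\Delta}{1}$, it recalls the positive semi-definite domination $\Delta(\rho_1,\rho_2)/\abs{\rho_2-\rho_1}\preceq cL_{n,m}(\rho_2)$ established in \eqref{LMinusDeltaPosDef2} and then uses $\LpNorm{L_{n,m}(\rho_2)}{2}\lesssim\sqrt{n}$ (from Proposition~\ref{UppBndCovGmNonRegLatt} and Lemma~\ref{OpNormPolDecayOffDiag}). This gives a clean $n^{1/2}$ without the extra $\sqrt{\log N}$ factor your H\"older argument incurs; your route still suffices, of course, since the claim is only $\lesssim n^{1/2}$, and in fact your bound $N^{(d-2)/2}\sqrt{\log N}=o(n^{1/2})$ is slightly sharper in order.
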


\begin{proof}
The same logic as in the proof of Lemma \ref{SensAnalOpNorm} leads to
\begin{equation*}
\LpNorm{\Delta^{\cc{B}}\paren{\rho_1,\rho_2}}{2}\leq \LpNorm{\Delta\paren{\rho_1,\rho_2}}{2}.
\end{equation*}
So it suffices to control $\LpNorm{\Delta\paren{\rho_1,\rho_2}}{2}$ from above. When $d\leq 4$, it is trivial that
\begin{equation*}
\LpNorm{\Delta\paren{\rho_1,\rho_2}}{2}\leq \SpNorm{\Delta\paren{\rho_1,\rho_2}}{1}.
\end{equation*}
Substituting the bound on $\SpNorm{\Delta\paren{\rho_1,\rho_2}}{1}$ from Lemma \ref{SensAnalS1Norm} in the above inequality leads to the desired result. Now suppose that $d\geq 5$. In this case, $1-2/d > 1/2$ and so we inevitably need new proof techniques. Without loss of generality assume that $\rho_2\geq \rho_1$. In \eqref{LMinusDeltaPosDef2}, we showed that 
\begin{equation*}
\LpNorm{\Delta\paren{\rho_1,\rho_2}}{2}\leq \LpNorm{L_{n,m}\paren{\rho_2}}{2}\paren{\rho_2-\rho_1}.
\end{equation*}
We also know from Proposition \ref{UppBndCovGmNonRegLatt} that 
\begin{equation}\label{UppBndLoffDiagEntr}
\abs{\paren{L_{n,m}\paren{\rho_2}}_{\boldsymbol{s},\boldsymbol{t}}}\lesssim \paren{1+N\LpNorm{\boldsymbol{t}-\boldsymbol{s}}{2} }^{-2\paren{m-\nu}},
\end{equation}
which means that $\LpNorm{L_{n,m}\paren{\rho_2}}{2}\lesssim \sqrt{n}$ (see the second part of Lemma \ref{OpNormPolDecayOffDiag}). In summary for $d\geq 5$,
\begin{equation*}
\LpNorm{\Delta\paren{\rho_1,\rho_2}}{2}\leq \LpNorm{L_{n,m}\paren{\rho_2}}{2} \abs{\rho_2-\rho_1} \lesssim n^{1/2} \abs{\rho_2-\rho_1}.
\end{equation*}
\end{proof}

\begin{lem}\label{LowBndFrobNormLnm}
There exists a large enough $N_0$ such that for any $N\geq N_0$,
\begin{equation*}
\min_{\rho\in\Theta_0}\frac{ \LpNorm{L^{\cc{B}}_{n,m}\paren{\rho}}{2} }{\sqrt{n}}> 0.
\end{equation*}
\end{lem}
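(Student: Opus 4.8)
The plan is to reduce the statement to a uniform lower bound on the \emph{diagonal} entries of $L_{n,m}\paren{\rho}$ and then simply count the $n$ diagonal terms. First I would use that $L^{\cc{B}}_{n,m}\paren{\rho}$ is block diagonal, the block indexed by $B_t$ being the principal submatrix $L_{B_t,m}\paren{\rho}\coloneqq\rho^{2\nu}K_{B_t,m}\paren{\rho}$ of $L_{n,m}\paren{\rho}$ (see \eqref{K_Bt,m}, \eqref{Ktilde_n,m}); since $\cc{B}$ partitions $\cc{D}_n$,
\begin{equation*}
\LpNorm{L^{\cc{B}}_{n,m}\paren{\rho}}{2}^2 \;=\; \sum_{t=1}^{b_n}\sum_{\boldsymbol{s},\boldsymbol{t}\in B_t}\abs{\paren{L_{n,m}\paren{\rho}}_{\boldsymbol{s},\boldsymbol{t}}}^2 \;\geq\; \sum_{\boldsymbol{s}\in\cc{D}_n}\paren{L_{n,m}\paren{\rho}}_{\boldsymbol{s},\boldsymbol{s}}^2.
\end{equation*}
Thus it is enough to exhibit $c>0$ and $N_0\in\bb{N}$ with $\paren{L_{n,m}\paren{\rho}}_{\boldsymbol{s},\boldsymbol{s}}\geq c$ for all $\boldsymbol{s}\in\cc{D}_n$, all $\rho\in\Theta_0$ and all $N\geq N_0$; this yields $\LpNorm{L^{\cc{B}}_{n,m}\paren{\rho}}{2}^2\geq c^2 n$, which is the claim.

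For the diagonal bound I would invoke the spectral representation \eqref{SpctRepKnm} with $\boldsymbol{s}=\boldsymbol{t}$, which gives
\begin{equation*}
\paren{L_{n,m}\paren{\rho}}_{\boldsymbol{s},\boldsymbol{s}} \;=\; \int_{\bb{R}^d}\abs{f^N_{\boldsymbol{s}}\paren{\boldsymbol{\omega}}}^2 h_N\paren{\rho\LpNorm{\boldsymbol{\omega}}{2}}\,d\boldsymbol{\omega}\;\geq\; \int_{\LpNorm{\boldsymbol{\omega}}{2}\geq 1}\abs{f^N_{\boldsymbol{s}}\paren{\boldsymbol{\omega}}}^2 h_N\paren{\rho\LpNorm{\boldsymbol{\omega}}{2}}\,d\boldsymbol{\omega}.
\end{equation*}
Since $\Theta_0$ is compact and bounded away from zero, set $\rho_{\min}\coloneqq\min\Theta_0>0$; because $h_N$ is strictly increasing and $h_N\paren{\rho_{\min}}=\brac{1+\paren{N\rho_{\min}}^{-2}}^{-\paren{\nu+d/2}}\to 1$, there is $N_0$ such that $h_N\paren{\rho\LpNorm{\boldsymbol{\omega}}{2}}\geq h_N\paren{\rho_{\min}}\geq\tfrac12$ whenever $\LpNorm{\boldsymbol{\omega}}{2}\geq 1$, $\rho\in\Theta_0$ and $N\geq N_0$. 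Bounding $\abs{f^N_{\boldsymbol{s}}\paren{\boldsymbol{\omega}}}^2\geq\abs{f^N_{\boldsymbol{s}}\paren{\boldsymbol{\omega}}/\LpNorm{\boldsymbol{\omega}}{2}}^2$ on $\LpNorm{\boldsymbol{\omega}}{2}\geq 1$, I would then conclude
\begin{equation*}
\paren{L_{n,m}\paren{\rho}}_{\boldsymbol{s},\boldsymbol{s}} \;\geq\; \frac12\int_{\LpNorm{\boldsymbol{\omega}}{2}\geq 1}\abs{\frac{f^N_{\boldsymbol{s}}\paren{\boldsymbol{\omega}}}{\LpNorm{\boldsymbol{\omega}}{2}}}^2 d\boldsymbol{\omega}\;\gtrsim\; 1,
\end{equation*}
the last step being exactly \eqref{PositIntffNs}, which was established in the proof of Lemma \ref{SensAnalS1Norm}.

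To keep this self-contained I would also record why \eqref{PositIntffNs} is uniform in $\boldsymbol{s}$ and $N$. Write $f^N_{\boldsymbol{s}}\paren{\boldsymbol{\omega}}=\LpNorm{\boldsymbol{\omega}}{2}^{-\paren{\nu+d/2}}g^N_{\boldsymbol{s}}\paren{\boldsymbol{\omega}}$ with the trigonometric polynomial $g^N_{\boldsymbol{s}}\paren{\boldsymbol{\omega}}=\sum_{\boldsymbol{s}'\in\cc{N}_m\paren{\boldsymbol{s}}}a_{m,\boldsymbol{s}}\paren{\boldsymbol{s}'}\exp\paren{j\InnerProd{\boldsymbol{\omega}}{N\paren{\boldsymbol{s}'-\boldsymbol{s}}}}$. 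By Assumption \ref{RegCondLattice} the frequency vectors $N\paren{\boldsymbol{s}'-\boldsymbol{s}}$, $\boldsymbol{s}'\in\cc{N}_m\paren{\boldsymbol{s}}$, stay in a fixed ball and are pairwise separated by a fixed positive constant; the coefficient vector $\paren{a_{m,\boldsymbol{s}}\paren{\boldsymbol{s}'}}_{\boldsymbol{s}'}$ lies on the unit sphere and obeys the finitely many moment identities of Definition \ref{DecorFltNonReglat}; and $\abs{\cc{N}_m\paren{\boldsymbol{s}}}\leq g_m$ is bounded. The map sending such a configuration to $\int_{\LpNorm{\boldsymbol{\omega}}{2}\geq1}\abs{g^N_{\boldsymbol{s}}\paren{\boldsymbol{\omega}}}^2\LpNorm{\boldsymbol{\omega}}{2}^{-\paren{2\nu+d}}d\boldsymbol{\omega}$ is continuous (dominated convergence, using $\abs{g^N_{\boldsymbol{s}}}^2\leq g_m$ and integrability of $\LpNorm{\boldsymbol{\omega}}{2}^{-\paren{2\nu+d}}$ at infinity) and strictly positive (a non-trivial trigonometric polynomial with distinct frequencies vanishes only on a Lebesgue-null set), and it is defined on a compact parameter set, hence bounded below by a positive constant independent of $\boldsymbol{s}$ and $N$. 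This uniform positivity is the only delicate point: pointwise positivity of $\paren{L_{n,m}\paren{\rho}}_{\boldsymbol{s},\boldsymbol{s}}$ is immediate from the strict positive definiteness of the Matern covariance, but the normalizing factor $N^{\nu}$ in Definition \ref{DecorFltNonReglat} is precisely what upgrades it to a bound uniform over the moving neighbourhoods $\set{\cc{N}_m\paren{\boldsymbol{s}}}$ and over $N$; everything else in the proof is routine.
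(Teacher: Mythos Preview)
Your proof is correct and follows essentially the same plan as the paper: reduce to the diagonal entries of $L_{n,m}(\rho)$, invoke the spectral representation \eqref{SpctRepKnm} at $\boldsymbol{s}=\boldsymbol{t}$, and bound $h_N$ below on a suitable region to extract a uniform positive lower bound on $\int\abs{f^N_{\boldsymbol{s}}}^2$.

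Two points of comparison are worth noting. First, the paper begins by passing to $\rho_{\min}$ at the matrix level via the L\"owner ordering $L^{\cc{B}}_{n,m}(\rho)\succcurlyeq L^{\cc{B}}_{n,m}(\rho_{\min})$ (from \eqref{PosDefDelta}), whereas you handle all $\rho\in\Theta_0$ simultaneously by using the scalar monotonicity $h_N(\rho\LpNorm{\boldsymbol{\omega}}{2})\geq h_N(\rho_{\min})$ on $\{\LpNorm{\boldsymbol{\omega}}{2}\geq 1\}$; both are valid and essentially equivalent. Second, the paper restricts the integral to a \emph{bounded} region $\{\LpNorm{\boldsymbol{\omega}}{2}\leq R\}$ and claims $\inf_{\LpNorm{\boldsymbol{\omega}}{2}\leq R}h_N(\rho_{\min}\LpNorm{\boldsymbol{\omega}}{2})\geq\tfrac12$, which as written is problematic near the origin (where $h_N\to 0$); your restriction to $\{\LpNorm{\boldsymbol{\omega}}{2}\geq 1\}$ avoids this issue cleanly and lets you feed directly into \eqref{PositIntffNs}. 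Your added compactness argument for the uniformity of \eqref{PositIntffNs} over $\boldsymbol{s}$ and $N$ also fills in a step the paper leaves implicit, so in this respect your write-up is more complete.
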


\begin{proof}
Let $\rho_{\min}$ represents the smallest member of $\Theta_0$. We have shown in the proof of Lemma \ref{SensAnalS1Norm} (inequality \eqref{PosDefDelta}) that
\begin{equation*}
L^{\cc{B}}_{n,m}\paren{\rho} \succcurlyeq L^{\cc{B}}_{n,m}\paren{\rho_{\min}}, \quad\forall\;\rho\in\Theta_0
\end{equation*}
Henceforth, all the eigenvalues of $L^{\cc{B}}_{n,m}\paren{\rho}$ are greater than or equal to the corresponding eigenvalues of $L^{\cc{B}}_{n,m}\paren{\rho_{\min}}$. So $n^{-1/2}\LpNorm{L^{\cc{B}}_{n,m}\paren{\rho}}{2}$ attains its minimum at $\rho=\rho_{\min}$, due to the positive definiteness of $L_{n,m}\paren{\rho}$ and $L^{\cc{B}}_{n,m}\paren{\rho_{\min}}$. As $L^{\cc{B}}_{n,m}\paren{\rho_{\min}}$ is a square matrix of size $n$, it suffices to show that all of its diagonal entries are bounded away from zero.
\begin{equation*}
\LpNorm{L^{\cc{B}}_{n,m}\paren{\rho_{\min}}}{2}^2\geq \sum_{\boldsymbol{s}\in\cc{D}_n} \abs{ \paren{L^{\cc{B}}_{n,m}\paren{\rho_{\min}}}_{\boldsymbol{s},\boldsymbol{s}} }^2 = \sum_{\boldsymbol{s}\in\cc{D}_n} \abs{ \paren{L_{n,m}\paren{\rho_{\min}}}_{\boldsymbol{s},\boldsymbol{s}} }^2.
\end{equation*}
Recall the two functions $f^N_{\boldsymbol{s}}$ and $h_N$ from Eq. \eqref{fNs} and \eqref{hN}, respectively. Now choose an arbitrary $\boldsymbol{s}\in\cc{D}_n$ and a large enough $R\in\paren{0,\infty}$. From the identity \eqref{SpctRepKnm}, we have a closed form expression for the diagonal entries of $L_{n,m}\paren{\rho_{\min}}$.
\begin{equation*}
\paren{L_{n,m}\paren{\rho_{\min}}}_{\boldsymbol{s},\boldsymbol{s}} = \int_{\bb{R}^d} \abs{f^N_{\boldsymbol{s}}\paren{\boldsymbol{\omega}}}^2 h_N\paren{\rho_{\min}\LpNorm{\boldsymbol{\omega}}{2} } d\boldsymbol{\omega} > \int_{\LpNorm{\boldsymbol{\omega}}{2}\leq R} \abs{f^N_{\boldsymbol{s}}\paren{\boldsymbol{\omega}}}^2 h_N\paren{\rho_{\min}\LpNorm{\boldsymbol{\omega}}{2} } d\boldsymbol{\omega}.
\end{equation*}
We trivially can choose $N_0$ (depending on $\Theta_0$ and $R$) such that $\inf_{\LpNorm{\boldsymbol{\omega}}{2}\leq R} h_N\paren{\rho_{\min}\LpNorm{\boldsymbol{\omega}}{2} } \geq \frac{1}{2}$ for any $N\geq N_0$. Thus,
\begin{equation*}
\abs{ \paren{L_{n,m}\paren{\rho_{\min}}}_{\boldsymbol{s},\boldsymbol{s}} } > \frac{1}{2}\int_{\LpNorm{\boldsymbol{\omega}}{2}\leq R} \abs{f^N_{\boldsymbol{s}}\paren{\boldsymbol{\omega}}}^2 d\boldsymbol{\omega}.
\end{equation*}
\end{proof}

\begin{lem}\label{FrobNormAsympNumeratMnm}
There exist a strictly positive scalars $C_1$ and $C_2$ such that
\begin{equation}\label{Eq1FrobNormAsympNumeratMnm}
C_1\sqrt{n}\geq\LpNorm{ \sqrt{L_{n,m}\paren{\rho_1}} L^{\cc{B}}_{n,m}\paren{\rho_2}\sqrt{L_{n,m}\paren{\rho_1}} }{2}\geq C_2\sqrt{n},\quad\forall\;\rho_1,\rho_2\in\Theta_0.
\end{equation}
\end{lem}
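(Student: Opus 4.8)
The plan is to work with the symmetric positive semidefinite matrix
\[
A\;:=\;\sqrt{L_{n,m}(\rho_1)}\;L^{\cc{B}}_{n,m}(\rho_2)\;\sqrt{L_{n,m}(\rho_1)},
\]
which is positive semidefinite because both $L_{n,m}(\rho_1)$ and $L^{\cc{B}}_{n,m}(\rho_2)$ are (the latter by \eqref{PosDefDelta}), and to use throughout the cyclic identity
\[
\LpNorm{A}{2}^2=\tr(A^2)=\tr\!\big(L_{n,m}(\rho_1)\,L^{\cc{B}}_{n,m}(\rho_2)\,L_{n,m}(\rho_1)\,L^{\cc{B}}_{n,m}(\rho_2)\big).
\]
The point of this identity is that $\sqrt{L_{n,m}(\rho_1)}$ never needs to be analysed directly; only the \emph{entries} of $L_{n,m}(\rho_1)$ and $L^{\cc{B}}_{n,m}(\rho_2)$ enter, whose decay is already pinned down by Proposition \ref{UppBndCovGmNonRegLatt}.

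For the \textbf{lower bound}, set $\tau:=\tr(A)=\tr\!\big(L_{n,m}(\rho_1)L^{\cc{B}}_{n,m}(\rho_2)\big)=\langle L^{\cc{B}}_{n,m}(\rho_1),L^{\cc{B}}_{n,m}(\rho_2)\rangle$, the last step holding because one factor is block diagonal, so only the block-diagonal entries of the other survive in the trace. Writing $L^{\cc{B}}_{n,m}(\rho_i)=L^{\cc{B}}_{n,m}(\rho_{\min})+\Delta^{\cc{B}}(\rho_{\min},\rho_i)$ with $\Delta^{\cc{B}}(\rho_{\min},\rho_i)\succeq 0$ (by \eqref{PosDefDelta}) and using that the trace of a product of two positive semidefinite matrices is nonnegative gives, applied twice, $\tau\ge\langle L^{\cc{B}}_{n,m}(\rho_{\min}),L^{\cc{B}}_{n,m}(\rho_{\min})\rangle=\LpNorm{L^{\cc{B}}_{n,m}(\rho_{\min})}{2}^2\gtrsim n$ by Lemma \ref{LowBndFrobNormLnm}. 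Since $A\succeq 0$, Cauchy--Schwarz on its eigenvalues yields $\tau\le\sqrt{n}\,\LpNorm{A}{2}$, so $\LpNorm{A}{2}^2\ge\tau^2/n\gtrsim n$, which supplies $C_2$.

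For the \textbf{upper bound} the key is to avoid routing through $\OpNorm{L_{n,m}(\rho_1)}{2}{2}$, which by \eqref{UppBndOpNormLnm} carries a stray $\log n$ when $m=\nu+d/2$. Instead, dominate entrywise: by Proposition \ref{UppBndCovGmNonRegLatt} there is a constant $c$, uniform over $\Theta_0$, with $\abs{(L_{n,m}(\rho))_{\boldsymbol{s},\boldsymbol{t}}}\le c\,(1+N\LpNorm{\boldsymbol{t}-\boldsymbol{s}}{2})^{-\alpha}$ and $\abs{(L^{\cc{B}}_{n,m}(\rho))_{\boldsymbol{s},\boldsymbol{t}}}\le c\,(1+N\LpNorm{\boldsymbol{t}-\boldsymbol{s}}{2})^{-\alpha}$, where $\alpha:=2(m-\nu)\ge d$. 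As $\LpNorm{A}{2}^2=\tr(L_{n,m}(\rho_1)L^{\cc{B}}_{n,m}(\rho_2)L_{n,m}(\rho_1)L^{\cc{B}}_{n,m}(\rho_2))\ge 0$, expanding the trace and taking absolute values gives $\LpNorm{A}{2}^2\le c^4\,\tr(\Phi^4)$ with $\Phi:=\big[(1+N\LpNorm{\boldsymbol{t}-\boldsymbol{s}}{2})^{-\alpha}\big]_{\boldsymbol{s},\boldsymbol{t}\in\cc{D}_n}$. Now write $\tr(\Phi^4)=\sum_{\boldsymbol{s},\boldsymbol{u}}\big[(\Phi^2)_{\boldsymbol{s},\boldsymbol{u}}\big]^2$, estimate the one-step convolution $(\Phi^2)_{\boldsymbol{s},\boldsymbol{u}}$ by comparing the lattice sum with the corresponding integral over $\bb{R}^d$ — legitimate since Assumption \ref{RegCondLattice} makes $\cc{D}_n$ locally $\asymp N^d$ points per unit volume — to obtain
\[
(\Phi^2)_{\boldsymbol{s},\boldsymbol{u}}\;\lesssim\;(1+N\LpNorm{\boldsymbol{s}-\boldsymbol{u}}{2})^{-\alpha}\,\big(1+\bbM{1}_{\set{\alpha=d}}\log(2+N\LpNorm{\boldsymbol{s}-\boldsymbol{u}}{2})\big),
\]
and then square and sum over $\boldsymbol{u}$: the effective decay exponent becomes $2\alpha\ge 2d>d$, which is absolutely summable over the lattice even against the extra $\log^2$ factor, so $\sum_{\boldsymbol{u}}[(\Phi^2)_{\boldsymbol{s},\boldsymbol{u}}]^2\lesssim 1$ and hence $\tr(\Phi^4)\lesssim n$. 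This supplies $C_1$.

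The \textbf{main obstacle} is exactly this last estimate in the borderline smoothness case $m=\nu+d/2$ (which, by Remark \ref{Rem5.1}, is the recommended choice whenever $\nu+d/2\in\bb{Z}$): there the convolution of two kernels decaying like $r^{-d}$ is genuinely $\asymp r^{-d}\log r$, so the crude bound $\tr(\Phi^4)\le\OpNorm{\Phi}{2}{2}^2\LpNorm{\Phi}{2}^2$ — equivalently, any argument passing through the operator norm of $L_{n,m}$ — would leave a spurious $\log^2 n$ and miss the clean $\sqrt{n}$ rate. The resolution is to perform the convolution once, record its pointwise decay, and square only afterwards, so that the resulting exponent $2\alpha\ge 2d$ lies strictly above $d$ and the logarithms are harmless; the real work is in carrying the logarithmic factor correctly through this two-step iteration and in justifying the integral comparison for an irregular lattice via Assumption \ref{RegCondLattice}. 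No $d\le 3$ case split is needed here, because $4(m-\nu)\ge 2d$ is always comfortably summable.
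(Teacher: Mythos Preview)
Your proposal is correct. The upper bound argument is essentially the paper's: both pass to the cyclic trace form, replace entries by absolute values (the paper writes this as $\tr(A_1\cdots A_b)\le\tr(|A_1|\cdots|A_b|)$), land on $\LpNorm{\,|L_{n,m}|^2\,}{2}$, and then invoke the convolution estimate (Lemma \ref{UppBndOffDiagASquare}) followed by the Frobenius bound for kernels with the resulting decay-plus-log profile (Lemma \ref{FrobNormPolDecayOffDiag}). The only cosmetic difference is that the paper first uses the monotonicity \eqref{PosDefDelta} to reduce to $\rho_1=\rho_2=\rho_1\vee\rho_2$ before taking absolute values, whereas you skip this step since the entrywise bound of Proposition \ref{UppBndCovGmNonRegLatt} is already uniform over $\Theta_0$; both routes reach the same $\tr(\Phi^4)$.

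For the lower bound the arguments diverge. The paper uses monotonicity to reduce to the single-parameter quantity $\LpNorm{\sqrt{L_{n,m}(\rho_1)}\,L^{\cc{B}}_{n,m}(\rho_1)\,\sqrt{L_{n,m}(\rho_1)}}{2}$ and then stops, leaving the $\gtrsim\sqrt{n}$ step implicit. Your route is different and self-contained: you bound the trace $\tau=\tr(A)=\langle L^{\cc{B}}_{n,m}(\rho_1),L^{\cc{B}}_{n,m}(\rho_2)\rangle\ge\LpNorm{L^{\cc{B}}_{n,m}(\rho_{\min})}{2}^2\gtrsim n$ via \eqref{PosDefDelta} and Lemma \ref{LowBndFrobNormLnm}, and then convert to $\LpNorm{A}{2}$ by Cauchy--Schwarz on the eigenvalues. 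This buys you a complete argument without having to analyse the $\rho_1=\rho_2$ case separately, and it makes transparent that the lower bound ultimately rests on the same diagonal estimate that drives Lemma \ref{LowBndFrobNormLnm}.
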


\begin{proof}
For brevity we use $Q$ to refer the Frobenius norm in Eq. \eqref{Eq1FrobNormAsympNumeratMnm}. The cyclic permutation property of trace operator implies that
\begin{equation*}
\LpNorm{ \sqrt{L_{n,m}\paren{\rho_1}} L^{\cc{B}}_{n,m}\paren{\rho_2}\sqrt{L_{n,m}\paren{\rho_1}} }{2} = \LpNorm{ \sqrt{L^{\cc{B}}_{n,m}\paren{\rho_2}} L_{n,m}\paren{\rho_1}\sqrt{L^{\cc{B}}_{n,m}\paren{\rho_2}} }{2}.
\end{equation*}
The inequality \eqref{PosDefDelta} indicates that $L_{n,m}\paren{\rho_1 \vee \rho_2} \succcurlyeq L_{n,m}\paren{\rho_1}$ and $L^{\cc{B}}_{n,m}\paren{\rho_1 \vee \rho_2} \succcurlyeq L^{\cc{B}}_{n,m}\paren{\rho_2}$. So
\begin{eqnarray*}
\LpNorm{ \sqrt{L^{\cc{B}}_{n,m}\paren{\rho_2}} L_{n,m}\paren{\rho_1}\sqrt{L^{\cc{B}}_{n,m}\paren{\rho_2}} }{2}^2 &\leq& \LpNorm{ \sqrt{L^{\cc{B}}_{n,m}\paren{\rho_2}} L_{n,m}\paren{\rho_1 \vee \rho_2}\sqrt{L^{\cc{B}}_{n,m}\paren{\rho_2}} }{2}^2 \\
&=& \LpNorm{ \sqrt{L_{n,m}\paren{\rho_1 \vee \rho_2}} L^{\cc{B}}_{n,m}\paren{\rho_2}\sqrt{L_{n,m}\paren{\rho_1 \vee \rho_2}} }{2}^2 \\
&\leq& \LpNorm{ \sqrt{L_{n,m}\paren{\rho_1 \vee \rho_2}} L^{\cc{B}}_{n,m}\paren{\rho_1 \vee \rho_2}\sqrt{L_{n,m}\paren{\rho_1 \vee \rho_2}} }{2}^2.
\end{eqnarray*}
Thus we may suppose that $\rho_2\geq \rho_1$ without losing the generality. Namely $\rho_1 \vee \rho_2 = \rho_2$. In summary, so far we have
\begin{equation*}
Q \leq \LpNorm{ \sqrt{L_{n,m}\paren{\rho_2}} L^{\cc{B}}_{n,m}\paren{\rho_2}\sqrt{L_{n,m}\paren{\rho_2}} }{2}.
\end{equation*}
On the other hand,
\begin{equation*}
\LpNorm{ \sqrt{L_{n,m}\paren{\rho_2}} L^{\cc{B}}_{n,m}\paren{\rho_2}\sqrt{L_{n,m}\paren{\rho_2}} }{2}^2 = \RHS \coloneqq \tr\set{ L_{n,m}\paren{\rho_2}L^{\cc{B}}_{n,m}\paren{\rho_2}L_{n,m}\paren{\rho_2}L^{\cc{B}}_{n,m}\paren{\rho_2}}.
\end{equation*}
For any matrix $A$, define its absolute value by $\abs{A} = \brac{\abs{A_{\boldsymbol{s},\boldsymbol{t}}}}$. The triangle inequality says that for matrices $A_1,\ldots,A_b$, for some $b\in\bb{N}$, we have
\begin{equation*}
\tr\paren{A_1\ldots A_b}\leq \tr\paren{\abs{A_1}\ldots\abs{A_b}}.
\end{equation*}
This fact help us to find an upper bound on $\RHS$.
\begin{equation*}
\RHS \leq \tr\set{ \abs{L_{n,m}\paren{\rho_2}}\abs{L^{\cc{B}}_{n,m}\paren{\rho_2}}\abs{L_{n,m}\paren{\rho_2}}\abs{L^{\cc{B}}_{n,m}\paren{\rho_2}}}.
\end{equation*} 
Finally, since $\abs{L^{\cc{B}}_{n,m}\paren{\rho_2}}$ is the block diagonalized version of $\abs{L_{n,m}\paren{\rho_2}}$ and both of these matrices have non-negative entries, we get
\begin{eqnarray*}
\tr\set{ \abs{L_{n,m}\paren{\rho_2}}\abs{L^{\cc{B}}_{n,m}\paren{\rho_2}}\abs{L_{n,m}\paren{\rho_2}}\abs{L^{\cc{B}}_{n,m}\paren{\rho_2}}} &\leq& \tr\set{ \abs{L_{n,m}\paren{\rho_2}}\abs{L_{n,m}\paren{\rho_2}}\abs{L_{n,m}\paren{\rho_2}}\abs{L_{n,m}\paren{\rho_2}}}\\
&=& \LpNorm{ \abs{L_{n,m}\paren{\rho_2}}^2 }{2}^2.
\end{eqnarray*}
Combining the above inequalities yields
\begin{equation*}
Q \leq \LpNorm{ \abs{L_{n,m}\paren{\rho_2}}^2 }{2}.
\end{equation*}
Notice that the off-diagonal entries of $L_{n,m}\paren{\rho_2}$ and $\abs{L_{n,m}\paren{\rho_2}}$ decay with the same rate. Thus applying Lemma \ref{UppBndOffDiagASquare} can determine an bound on the entries of $\abs{L_{n,m}\paren{\rho_2}}^2$ as the following.
\begin{equation*}
\abs{\paren{\abs{L_{n,m}\paren{\rho_2}}^2}_{\boldsymbol{s},\boldsymbol{t}}}\lesssim \paren{1+N\LpNorm{\boldsymbol{t}-\boldsymbol{s}}{2} }^{-2\paren{m-\nu}} \set{1+\bbM{1}_{\set{m=\nu+d/2}}\log \paren{1+N\LpNorm{\boldsymbol{t}-\boldsymbol{s}}{2} } }.
\end{equation*}
Finally, Lemma \ref{FrobNormPolDecayOffDiag} guarantees the existence of a bounded scalar $c$ for which $\LpNorm{L^2_{n,m}\paren{\rho_2}}{2}\leq c\sqrt{n}$, finishing the proof of the first part. We now turn to the proof of the other side. Using the same trick as before implies that
\begin{equation*}
Q \geq \LpNorm{ \sqrt{L_{n,m}\paren{\rho_1}} L^{\cc{B}}_{n,m}\paren{\rho_1}\sqrt{L_{n,m}\paren{\rho_1}} }{2}.
\end{equation*} 
\end{proof}

\section{Auxiliary results}\label{AuxRes}

In this section we collect the auxiliary propositions and lemmas which come in handy to substantiate the results in Section \ref{Proofs} and Appendix \ref{AppendixCovMatrixIrregLat}. 

\subsection{The basic properties of matrices with polynomial decaying off-diagonals}

We showed in Appendix \ref{OffDiagDecay} that the off-diagonal entries of $K^{\cc{B}}_{n,m}\paren{\rho}$ decay polynomially in terms of the distance to the main diagonal. In this section, we show that such class of matrices are close to multiplication. We also investigate the large sample properties of their norms.

\begin{lem}\label{UppBndOffDiagASquare}
Let $N = \lfloor n^{1/d} \rfloor$ and suppose that $A_n\in\bb{R}^{n\times n}$ whose entries satisfy
\begin{equation}\label{DecayRateEntrA}
\abs{A_{\boldsymbol{s},\boldsymbol{t}}} \leq C\paren{1+ N\LpNorm{\boldsymbol{t}-\boldsymbol{s}}{2}}^{-\paren{d+\zeta}},\quad\forall\;\boldsymbol{s},\boldsymbol{t}\in\cc{D}_n.
\end{equation}
for some bounded $C>0$ and $\zeta\geq 0$. Then, the entries of $B=A^2$ are bounded above by
\begin{equation}\label{DecayRateEntrB}
\abs{B_{\boldsymbol{s},\boldsymbol{t}}} \lesssim \paren{1+ N\LpNorm{\boldsymbol{t}-\boldsymbol{s}}{2}}^{-\paren{d+\zeta}}\set{1+\bbM{1}_{\set{\zeta=0}}\log\paren{1+ N\LpNorm{\boldsymbol{t}-\boldsymbol{s}}{2}} },\quad\forall\boldsymbol{s},\boldsymbol{t}\in\cc{D}_n.
\end{equation}
\end{lem}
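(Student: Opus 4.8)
The plan is to bound $\abs{B_{\boldsymbol{s},\boldsymbol{t}}} = \abs{\sum_{\boldsymbol{u}\in\cc{D}_n} A_{\boldsymbol{s},\boldsymbol{u}} A_{\boldsymbol{u},\boldsymbol{t}}}$ by $\sum_{\boldsymbol{u}\in\cc{D}_n} \abs{A_{\boldsymbol{s},\boldsymbol{u}}}\abs{A_{\boldsymbol{u},\boldsymbol{t}}}$ and then estimate the latter sum using the decay hypothesis \eqref{DecayRateEntrA}. Writing $r \coloneqq N\LpNorm{\boldsymbol{t}-\boldsymbol{s}}{2}$, $a_{\boldsymbol{u}} \coloneqq N\LpNorm{\boldsymbol{u}-\boldsymbol{s}}{2}$, and $b_{\boldsymbol{u}} \coloneqq N\LpNorm{\boldsymbol{u}-\boldsymbol{t}}{2}$, the triangle inequality gives $a_{\boldsymbol{u}} + b_{\boldsymbol{u}} \geq r$, hence $\max(a_{\boldsymbol{u}}, b_{\boldsymbol{u}}) \geq r/2$, so at least one of the two factors $(1+a_{\boldsymbol{u}})^{-(d+\zeta)}$, $(1+b_{\boldsymbol{u}})^{-(d+\zeta)}$ is at most $(1 + r/2)^{-(d+\zeta)} \lesssim (1+r)^{-(d+\zeta)}$. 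Splitting the index set into $\{a_{\boldsymbol{u}} \geq b_{\boldsymbol{u}}\}$ and $\{a_{\boldsymbol{u}} < b_{\boldsymbol{u}}\}$ and using this observation, we reduce to controlling $(1+r)^{-(d+\zeta)}$ times a sum of the form $\sum_{\boldsymbol{u}\in\cc{D}_n} (1 + c_{\boldsymbol{u}})^{-(d+\zeta)}$ where $c_{\boldsymbol{u}}$ is the rescaled distance from $\boldsymbol{u}$ to one of the two fixed points.

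The heart of the matter is therefore the estimate $\sum_{\boldsymbol{u}\in\cc{D}_n} (1 + N\LpNorm{\boldsymbol{u}-\boldsymbol{v}}{2})^{-(d+\zeta)} \lesssim 1 + \bbM{1}_{\set{\zeta=0}}\log(1+ N\LpNorm{\boldsymbol{t}-\boldsymbol{s}}{2})$, uniformly over $\boldsymbol{v}\in\cc{D}_n$, but where actually we only need the sum restricted to those $\boldsymbol{u}$ with $N\LpNorm{\boldsymbol{u}-\boldsymbol{v}}{2} \lesssim r$ (since on the complementary set the other factor is already small). Here Assumption \ref{RegCondLattice} is the key tool: it says the $i$-th nearest neighbor of $\boldsymbol{v}$ lies at distance $\asymp (i/n)^{1/d}$, equivalently the number of points of $\cc{D}_n$ within rescaled distance $\rho$ of $\boldsymbol{v}$ is $\asymp \rho^d$ (for $1 \lesssim \rho \lesssim N$). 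Dyadically decomposing the annuli $\{2^{k} \leq 1 + c_{\boldsymbol{u}} < 2^{k+1}\}$, each annulus contributes $\asymp 2^{kd} \cdot 2^{-k(d+\zeta)} = 2^{-k\zeta}$; summing over $0 \leq k \lesssim \log_2(1+r)$ gives a geometric series bounded by a constant when $\zeta > 0$, and gives $\asymp \log(1+r)$ when $\zeta = 0$. This produces exactly the claimed bound \eqref{DecayRateEntrB}.

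The main obstacle — really the only nontrivial point — is making the counting argument rigorous near the edge cases: when $r = N\LpNorm{\boldsymbol{t}-\boldsymbol{s}}{2}$ is of order $N$ (the points are at macroscopic distance), the annuli eventually run out of lattice points, but this only helps (fewer terms), so truncating the dyadic sum at $k \asymp \log_2 N$ is harmless; and when $r \lesssim 1$, the bound \eqref{DecayRateEntrB} reduces to $\abs{B_{\boldsymbol{s},\boldsymbol{t}}}\lesssim 1$, which follows from the crude estimate $\sum_{\boldsymbol{u}} \abs{A_{\boldsymbol{s},\boldsymbol{u}}}\abs{A_{\boldsymbol{u},\boldsymbol{t}}} \leq (\sum_{\boldsymbol{u}} \abs{A_{\boldsymbol{s},\boldsymbol{u}}}^2)^{1/2}(\sum_{\boldsymbol{u}}\abs{A_{\boldsymbol{u},\boldsymbol{t}}}^2)^{1/2}$ together with $\sum_{\boldsymbol{u}} (1+N\LpNorm{\boldsymbol{u}-\boldsymbol{v}}{2})^{-2(d+\zeta)} \lesssim 1$ (the same dyadic count with exponent $2(d+\zeta) > d$). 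I would organize the write-up as: (i) reduce to the one-sided sum via the $\max(a,b)\geq r/2$ trick; (ii) state and prove the counting lemma from Assumption \ref{RegCondLattice}; (iii) run the dyadic sum to get the two regimes $\zeta>0$ and $\zeta=0$; (iv) dispatch the small-$r$ case separately. No step requires more than elementary estimates once the lattice-counting bound is in hand.
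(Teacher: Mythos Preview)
Your proposal is correct and follows the same overall skeleton as the paper's proof (treat the diagonal / small-$r$ case separately; extract a factor $(1+r)^{-(d+\zeta)}$; bound the remaining one-sided sum via the lattice regularity). The mechanics differ in two places. First, to extract the decay factor the paper does not use your $\max(a_{\boldsymbol u},b_{\boldsymbol u})\ge r/2$ observation; instead it writes
\[
\frac{1}{xy}=\frac{1}{x(x+y)}+\frac{1}{y(x+y)}
\]
with $x=1+\lVert N(\boldsymbol s-\boldsymbol r)\rVert^{d+\zeta}$, $y=1+\lVert N(\boldsymbol t-\boldsymbol r)\rVert^{d+\zeta}$, and then applies Jensen to get $x+y\gtrsim x+\lVert\Delta\rVert^{d+\zeta}$. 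Second, the paper finishes by comparing the resulting sum to the integral $\int_0^N x^{d-1}\big[(1+x^{d+\zeta})^{-1}-(1+x^{d+\zeta}+\lVert\Delta\rVert^{d+\zeta})^{-1}\big]\,dx$ and evaluating it, whereas you use a dyadic annulus count. Your route is more elementary and arguably cleaner; the paper's route makes the appearance of the $\log$ in the $\zeta=0$ case slightly more explicit through the partial-fraction cancellation. One small point worth tightening in your write-up: your parenthetical ``on the complementary set the other factor is already small'' is the right intuition, but to get $\log(1+r)$ rather than $\log N$ when $\zeta=0$ you should spell out that on $\{b_{\boldsymbol u}>r,\ a_{\boldsymbol u}\ge b_{\boldsymbol u}\}$ both factors are $\lesssim(1+b_{\boldsymbol u})^{-d}$, so the contribution is $\sum_{b_{\boldsymbol u}>r}(1+b_{\boldsymbol u})^{-2d}\lesssim r^{-d}$, which is absorbed into the main term.
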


\begin{proof}
For simplicity let $\Delta = N\paren{\boldsymbol{t}-\boldsymbol{s}}$. Without loss of generality assume that $C=1$. We first justify Eq. \eqref{DecayRateEntrB} for the special case of $\Delta = \zero_d$ (associated to the diagonal entries of $B$). Indeed we need to show that all the diagonal entries of $B$ are smaller than some bounded scalar $C'$, which depends on $d$, $C$, and $\cc{D}_n$, i.e., $\abs{B_{\boldsymbol{s},\boldsymbol{s}}} \leq C'$ for any $\boldsymbol{s}\in\cc{D}_n$. Notice that the pairwise distances among two points in $\cc{D}_n$ have a similar behaviour to that of a $d$-dimensional regular lattice. Thus,
\begin{eqnarray*}
\abs{B_{\boldsymbol{s},\boldsymbol{s}}} &=& \abs{\sum_{\boldsymbol{r}\in\cc{D}_n} A^2_{\boldsymbol{s},\boldsymbol{r}} } \leq \sum_{\boldsymbol{r}\in\cc{D}_n} \paren{1+ N\LpNorm{\boldsymbol{r}-\boldsymbol{s}}{2}}^{-2\paren{d+\zeta}} \lesssim \int_{0}^{\infty} x^{d-1}\paren{1+x}^{-2\paren{d+\zeta}} dx \\
&\lesssim& \int_{1}^{\infty} x^{-\paren{d+1+2\zeta}} dx \asymp 1.
\end{eqnarray*}
Now suppose that $\Delta$ is a non-zero vector. Clearly $1\lesssim \LpNorm{\Delta}{2} \lesssim N$ and so $1+\LpNorm{\Delta}{2}^{d+\zeta} \asymp \LpNorm{\Delta}{2}^{d+\zeta}$. We replace Eq. \eqref{DecayRateEntrA} with the following more algebraically convenient alternative form.
\begin{equation*}
\abs{A_{\boldsymbol{s},\boldsymbol{t}}} \lesssim \brac{1+ \LpNorm{\Delta}{2}^{d+\zeta}}^{-1},\quad\forall\boldsymbol{s},\boldsymbol{t}\in\cc{D}_n,\quad\paren{\boldsymbol{t} = \boldsymbol{s}+\frac{\Delta}{N}}.
\end{equation*}
Next we obtain an upper bound on $\abs{B_{\boldsymbol{s},\boldsymbol{t}}}$ as the sum of two terms.
\begin{eqnarray*}
\abs{B_{\boldsymbol{s},\boldsymbol{t}}} &\lesssim& \sum_{\boldsymbol{r}\in\cc{D}_n} \frac{1}{\paren{1+ \LpNorm{N\paren{\boldsymbol{s}-\boldsymbol{r}}}{2}^{d+\zeta}} \paren{1+ \LpNorm{N\paren{\boldsymbol{t}-\boldsymbol{r}}}{2}^{d+\zeta}}}\\
&=& \sum_{\boldsymbol{r}\in\cc{D}_n} \frac{ \paren{1+ \LpNorm{N\paren{\boldsymbol{s}-\boldsymbol{r}}}{2}^{d+\zeta}}^{-1} }{2+ \LpNorm{N\paren{\boldsymbol{s}-\boldsymbol{r}}}{2}^{d+\zeta}+\LpNorm{N\paren{\boldsymbol{t}-\boldsymbol{r}}}{2}^{d+\zeta}} + \sum_{\boldsymbol{r}\in\cc{D}_n} \frac{ \paren{1+ \LpNorm{N\paren{\boldsymbol{t}-\boldsymbol{r}}}{2}^{d+\zeta}}^{-1} }{2+ \LpNorm{N\paren{\boldsymbol{s}-\boldsymbol{r}}}{2}^{d+\zeta}+\LpNorm{N\paren{\boldsymbol{t}-\boldsymbol{r}}}{2}^{d+\zeta}}.
\end{eqnarray*}
We write $\xi_1$ and $\xi_2$ to denote the first and second terms in the last line of the above expression. The next step serves as controlling $\xi_1$ from above. A similar upper bound can be found on $\xi_2$. For doing so, we introduce a lower bound on the expression in the denominator of $\xi_1$. Define $c = 2^{d+\zeta-1} \geq 1$. Applying Jensen's inequality on the convex univariate function $f\paren{x} = x^{d+\zeta}$ implies that
\begin{eqnarray*}
\LpNorm{N\paren{\boldsymbol{s}-\boldsymbol{r}}}{2}^{d+\zeta}+\LpNorm{N\paren{\boldsymbol{t}-\boldsymbol{r}}}{2}^{d+\zeta} &\geq& \frac{\LpNorm{N\paren{\boldsymbol{s}-\boldsymbol{r}}}{2}^{d+\zeta}}{c+1} + \frac{c}{c+1} \paren{\LpNorm{N\paren{\boldsymbol{s}-\boldsymbol{r}}}{2}^{d+\zeta}+\LpNorm{N\paren{\boldsymbol{t}-\boldsymbol{r}}}{2}^{d+\zeta}}\\
&\geq& \frac{\LpNorm{N\paren{\boldsymbol{s}-\boldsymbol{r}}}{2}^{d+\zeta}}{c+1} + \frac{\paren{\LpNorm{N\paren{\boldsymbol{s}-\boldsymbol{r}}}{2} + \LpNorm{N\paren{\boldsymbol{t}-\boldsymbol{r}}}{2}}^{d+\zeta}}{c+1}\\
&\geq&\frac{\LpNorm{N\paren{\boldsymbol{s}-\boldsymbol{r}}}{2}^{d+\zeta} + \LpNorm{\Delta}{2}^{d+\zeta} }{c+1}.
\end{eqnarray*}
Thus
\begin{equation}\label{UppBndxi1}
\xi_1\lesssim \sum_{\boldsymbol{r}\in\cc{D}_n}  \frac{1}{\paren{1+\LpNorm{N\paren{\boldsymbol{s}-\boldsymbol{r}}}{2}^{d+\zeta}}\paren{1+\LpNorm{N\paren{\boldsymbol{s}-\boldsymbol{r}}}{2}^{d+\zeta} + \LpNorm{\Delta}{2}^{d+\zeta}}}.
\end{equation}
Notice that the points in $\set{N\paren{\boldsymbol{s}-\boldsymbol{r}},\; \boldsymbol{r}\in\cc{D}_n}$ belong to a scaled (with the factor $N$) and translated version of $\cc{D}_n$. Assumption \ref{RegCondLattice} states that the pairwise distances in $\cc{D}_n$ and a regular lattice look alike. Hence, the summation in the right hand side of Eq. \eqref{UppBndxi1}, which only depends on the norm of the elements in $\cc{D}_n-\boldsymbol{s}$, can be upper bounded by an integral. Strictly speaking (in the following $x$ represents $\LpNorm{N\paren{\boldsymbol{s}-\boldsymbol{r}}}{2}$)
\begin{eqnarray*}
\xi_1&\lesssim& \int_{0}^{N} \frac{x^{d-1} dx}{ \paren{1+x^{d+\zeta}} \paren{1+x^{d+\zeta} + \LpNorm{\Delta}{2}^{d+\zeta}} }= \frac{1}{\LpNorm{\Delta}{2}^{d+\zeta}} \int_{0}^{N} \paren{\frac{x^{d-1}}{1+x^{d+\zeta}} - \frac{x^{d-1}}{1+x^{d+\zeta} + \LpNorm{\Delta}{2}^{d+\zeta}}} dx\\
&\lesssim& \LpNorm{\Delta}{2}^{-\paren{d+\zeta}} \brac{1 + \bbM{1}_{\set{\zeta = 0}} \log \paren{\frac{N^d \LpNorm{\Delta}{2}^d}{ N^d + \LpNorm{\Delta}{2}^d} }} \asymp \LpNorm{\Delta}{2}^{-\paren{d+\zeta}} \paren{1 + \bbM{1}_{\set{\zeta = 0}} \log \LpNorm{\Delta}{2}^d }.
\end{eqnarray*}
An analogous bound holds for $\xi_2$. Replacing these upper bounds in $\abs{B_{\boldsymbol{s},\boldsymbol{t}}}\lesssim \xi_1+\xi_2$ ends the proof.
\end{proof}

\begin{lem}\label{OpNormPolDecayOffDiag}
Let $\cc{D}_n$ be a irregular lattice of size $n$ satisfying Assumption \ref{RegCondLattice}. Define $N\coloneqq \lfloor n^{1/d}\rfloor$ and let $\Psi^n\in\bb{R}^{n\times n}$ be a symmetric matrix associated to $\cc{D}_n$ whose entries satisfy
\begin{equation*}
\abs{\Psi^n_{\boldsymbol{s},\boldsymbol{t}}}\leq C\paren{1+N\LpNorm{\boldsymbol{s}-\boldsymbol{t}}{2} }^{-\paren{d+\zeta}}, \quad\forall\; \boldsymbol{s},\boldsymbol{t}\in\cc{D}_n
\end{equation*}	
for some non-negative $\zeta$ and $C\in\paren{0,\infty}$. Then there exist bounded scalar $A,A'>0$ (depending on $C,d$ and $\zeta$) for which
\begin{enumerate}
\item $\OpNorm{\Psi^n}{2}{2}\leq A\Bigparen{1+ \bbM{1}_{\set{\zeta = 0 }}\log n}.$
\item $\LpNorm{\Psi^n}{2}\leq A'\sqrt{n}.$
\end{enumerate}
\end{lem}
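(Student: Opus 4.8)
The plan is to bound both norms by the same elementary device: controlling the row sums of $\abs{\Psi^n}$ and of $(\Psi^n)^2$ via the geometry of $\cc{D}_n$. For part (1), I would use the classical fact (Schur test / Gershgorin-type bound) that for a symmetric matrix, $\OpNorm{\Psi^n}{2}{2} \leq \max_{\boldsymbol{s}\in\cc{D}_n} \sum_{\boldsymbol{t}\in\cc{D}_n} \abs{\Psi^n_{\boldsymbol{s},\boldsymbol{t}}}$. So it suffices to bound $\sum_{\boldsymbol{t}\in\cc{D}_n} (1+N\LpNorm{\boldsymbol{s}-\boldsymbol{t}}{2})^{-(d+\zeta)}$ uniformly in $\boldsymbol{s}$. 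Here I invoke Assumption \ref{RegCondLattice}: the ordered nearest-neighbor distances $r_{\boldsymbol{s},i}$ satisfy $r_{\boldsymbol{s},i}\asymp (i/n)^{1/d}$, hence $N\LpNorm{\boldsymbol{s}-\boldsymbol{t}}{2} \asymp i^{1/d}$ when $\boldsymbol{t}$ is the $i$-th closest point to $\boldsymbol{s}$. Therefore the row sum is of order $\sum_{i=1}^{n-1} (1+i^{1/d})^{-(d+\zeta)} \asymp \sum_{i=1}^{n} i^{-(d+\zeta)/d} = \sum_{i=1}^n i^{-1-\zeta/d}$, which converges (to a bounded constant) when $\zeta>0$ and grows like $\log n$ when $\zeta=0$. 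This gives the claimed bound $\OpNorm{\Psi^n}{2}{2}\leq A(1+\bbM{1}_{\set{\zeta=0}}\log n)$.

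For part (2), I would write $\LpNorm{\Psi^n}{2}^2 = \sum_{\boldsymbol{s},\boldsymbol{t}} \abs{\Psi^n_{\boldsymbol{s},\boldsymbol{t}}}^2 \leq C^2 \sum_{\boldsymbol{s}\in\cc{D}_n}\sum_{\boldsymbol{t}\in\cc{D}_n} (1+N\LpNorm{\boldsymbol{s}-\boldsymbol{t}}{2})^{-2(d+\zeta)}$. The inner sum, by the same reduction via Assumption \ref{RegCondLattice}, is of order $\sum_{i=1}^n i^{-2(d+\zeta)/d} = \sum_{i=1}^n i^{-2-2\zeta/d}$, which is a bounded constant for every $\zeta\geq 0$ (including $\zeta=0$, since $-2<-1$). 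Hence the double sum is $\lesssim n$, i.e. $\LpNorm{\Psi^n}{2}\leq A'\sqrt{n}$.

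The only technical point requiring care — and the step I expect to be the main (though still routine) obstacle — is making the reduction "sum over $\cc{D}_n$ $\lesssim$ sum over $i$" fully rigorous from Assumption \ref{RegCondLattice}. The assumption bounds $r_{\boldsymbol{s},i}$, the distance to the $i$-th closest point, so for each fixed $\boldsymbol{s}$ we enumerate $\cc{D}_n\setminus\set{\boldsymbol{s}}$ in order of increasing distance as $\boldsymbol{t}_1,\ldots,\boldsymbol{t}_{n-1}$ and use $1+N\LpNorm{\boldsymbol{s}-\boldsymbol{t}_i}{2} \geq 1 + C_{\min} N (i/n)^{1/d} \gtrsim i^{1/d}$ (using $N\asymp n^{1/d}$), so the summand indexed by $\boldsymbol{t}_i$ is $\lesssim i^{-(d+\zeta)/d}$; adding the diagonal term $\boldsymbol{t}=\boldsymbol{s}$ (which contributes $C$) and summing over $i$ gives a bound independent of $\boldsymbol{s}$. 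One should note this bound is uniform over $\boldsymbol{s}\in\cc{D}_n$ because the constants $C_{\min},C_{\max}$ in Assumption \ref{RegCondLattice} are uniform. The comparison of the discrete sum $\sum i^{-\alpha}$ to the integral $\int x^{-\alpha}dx$ is standard and yields exactly the $\log n$ versus bounded dichotomy at the critical exponent. Everything else is bookkeeping.
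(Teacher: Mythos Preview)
Your proposal is correct and follows essentially the same approach as the paper: bound $\OpNorm{\Psi^n}{2}{2}$ by the maximum absolute row sum (the paper phrases this as $\OpNorm{\Psi^n}{2}{2}\leq \sqrt{\OpNorm{\Psi^n}{1}{1}\OpNorm{\Psi^n}{\infty}{\infty}}=\OpNorm{\Psi^n}{1}{1}$), then control the row sum via the lattice geometry, and handle the Frobenius norm by the analogous squared-entry sum. The only cosmetic difference is that the paper organizes the sum into annular shells $\Pi_{\boldsymbol{s},l}=\{\boldsymbol{t}: l/N\leq \LpNorm{\boldsymbol{s}-\boldsymbol{t}}{2}<(l+1)/N\}$ with $\abs{\Pi_{\boldsymbol{s},l}}\lesssim (l+1)^{d-1}$, arriving at $\sum_l (l+1)^{-(1+\zeta)}$, whereas you enumerate points by nearest-neighbor rank directly from Assumption~\ref{RegCondLattice} and obtain $\sum_i i^{-1-\zeta/d}$; these are the same sum under the change of variable $i\asymp l^d$.
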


\begin{proof}
We first focus on the operator norm of $\Psi^n$. The symmetry of $\Psi^n$ implies that
\begin{eqnarray}\label{GreshCircThm}
\OpNorm{\Psi^n}{2}{2} &\leq& \sqrt{ \OpNorm{\Psi^n}{1}{1} \OpNorm{\Psi^n}{\infty}{\infty}} = \OpNorm{\Psi^n}{1}{1} = \max_{ \boldsymbol{s}\in\cc{D}_n} \sum_{\boldsymbol{t}\in\cc{D}_n} \abs{\Psi^n_{\boldsymbol{s},\boldsymbol{t}}} \nonumber\\
&\leq& C\max_{ \boldsymbol{s}\in\cc{D}_n} \sum_{\boldsymbol{t}\in\cc{D}_n}\paren{1+N\LpNorm{\boldsymbol{s}-\boldsymbol{t}}{2} }^{-\paren{d+\zeta}}.
\end{eqnarray}
Choose $\boldsymbol{s}\in\cc{D}_n$. Reorder the points in $\cc{D}_n$ based on their distance from $\boldsymbol{s}$. Define the non-overlapping sets $\Pi_{\boldsymbol{s},l}$ by 
\begin{equation*}
\Pi_{\boldsymbol{s},l} = \set{\boldsymbol{t}\in\cc{D}_n: \frac{l}{N}\leq \LpNorm{\boldsymbol{s}-\boldsymbol{t}}{2}< \frac{l+1}{N} }, \quad\forall\; l\in\bb{N} \cup \set{0}.
\end{equation*}
The following facts are trivial implications of Assumption \ref{RegCondLattice}.
\begin{itemize}
\item There exists a bounded constant $a>0$ such that $\Pi_{\boldsymbol{s},l} = \emptyset$ for any $l>aN$.  
\item $\abs{\Pi_{\boldsymbol{s},l}}\lesssim \paren{l+1}^d - l^d \lesssim \paren{l+1}^{d-1}$ for any $l\leq aN$.
\end{itemize}
Thus,
\begin{equation}\label{1OpNormUppBnd}
\sum_{\boldsymbol{t}\in\cc{D}_n}\paren{1+N\LpNorm{\boldsymbol{s}-\boldsymbol{t}}{2} }^{-\paren{d+\zeta}} \leq \sum_{l=0}^{\infty}\abs{\Pi_{\boldsymbol{s},l}} \paren{l+1}^{-\paren{d+\zeta}} \lesssim \sum_{l=0}^{aN} \paren{l+1}^{-\paren{1+\zeta}}.
\end{equation}
We conclude the proof by substituting Eq. \eqref{1OpNormUppBnd} into Eq. \eqref{GreshCircThm}. Now we turn into finding an upper bound on $n^{-1}\LpNorm{\Psi^n}{2}^2$. Using similar techniques as \eqref{1OpNormUppBnd} yields
\begin{eqnarray}\label{1FrobNormUppBnd}
n^{-1}\LpNorm{\Psi^n}{2}^2 &\leq& n^{-1}\sum_{\boldsymbol{s}\in\cc{D}_n}\sum_{l=0}^{\infty} \abs{\Pi_{\boldsymbol{s},l}} \sup_{\boldsymbol{t}\in \Pi_{\boldsymbol{s},l}}\abs{\Psi^n_{\boldsymbol{s},\boldsymbol{t}}}^2 \leq \sum_{l=0}^{\infty} \abs{\Pi_{\boldsymbol{s},l}} \sup_{\boldsymbol{t}\in \Pi_{\boldsymbol{s},l}}\abs{\Psi^n_{\boldsymbol{s},\boldsymbol{t}}}^2\nonumber\\
&\leq& C^2 \sum_{l=0}^{aN} \abs{\Pi_{\boldsymbol{s},l}}\paren{l+1 }^{-2\paren{d+\zeta}}
\lesssim \sum_{l=0}^{\infty} \paren{l+1 }^{-\paren{d+1+2\zeta}} \asymp 1.
\end{eqnarray}
\end{proof}

The next result has a similar flavor as the second part of Lemma \ref{OpNormPolDecayOffDiag}. We omit its proof for avoiding the repetition. 

\begin{lem}\label{FrobNormPolDecayOffDiag}
Let $\cc{D}_n$ be a irregular lattice of size $n$ satisfying Assumption \ref{RegCondLattice}. Define $N\coloneqq \lfloor n^{1/d}\rfloor$ and let $\Psi^n\in\bb{R}^{n\times n}$ be a symmetric matrix associated to $\cc{D}_n$ whose entries satisfy
\begin{equation*}
\abs{\Psi^n_{\boldsymbol{s},\boldsymbol{t}}}\leq C\paren{1+N\LpNorm{\boldsymbol{s}-\boldsymbol{t}}{2} }^{-\paren{d+\zeta}}\set{1+\bbM{1}_{\set{\zeta = 0}} \log\paren{1+N\LpNorm{\boldsymbol{s}-\boldsymbol{t}}{2}} }, \quad\forall\; \boldsymbol{s},\boldsymbol{t}\in\cc{D}_n
\end{equation*}	
for some non-negative $\zeta$ and $C\in\paren{0,\infty}$. Then there exists a bounded scalar $A>0$ (depending on $C,d$ and $\zeta$) for which
\begin{equation*}
\LpNorm{\Psi^n}{2}\leq A\sqrt{n}.
\end{equation*}
\end{lem}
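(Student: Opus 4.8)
The plan is to follow verbatim the argument used for the second part of Lemma \ref{OpNormPolDecayOffDiag}, inserting the extra logarithmic factor at the end and checking that the relevant series still converges. First I would fix an arbitrary $\boldsymbol{s}\in\cc{D}_n$ and decompose $\cc{D}_n$ into the annular shells
\begin{equation*}
\Pi_{\boldsymbol{s},l} = \set{\boldsymbol{t}\in\cc{D}_n:\; \frac{l}{N}\leq \LpNorm{\boldsymbol{s}-\boldsymbol{t}}{2} < \frac{l+1}{N} },\quad l\in\bb{N}\cup\set{0},
\end{equation*}
exactly as in the proof of Lemma \ref{OpNormPolDecayOffDiag}. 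Assumption \ref{RegCondLattice} gives a bounded constant $a>0$ with $\Pi_{\boldsymbol{s},l}=\emptyset$ for $l>aN$, and $\abs{\Pi_{\boldsymbol{s},l}}\lesssim (l+1)^d-l^d\lesssim (l+1)^{d-1}$ for $l\leq aN$. On $\Pi_{\boldsymbol{s},l}$ the hypothesis yields $\abs{\Psi^n_{\boldsymbol{s},\boldsymbol{t}}}\lesssim (l+1)^{-(d+\zeta)}\set{1+\bbM{1}_{\set{\zeta=0}}\log(l+2)}$.

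Next I would bound the squared Frobenius norm by summing over these shells:
\begin{eqnarray*}
n^{-1}\LpNorm{\Psi^n}{2}^2 &\leq& n^{-1}\sum_{\boldsymbol{s}\in\cc{D}_n}\sum_{l=0}^{\infty}\abs{\Pi_{\boldsymbol{s},l}}\sup_{\boldsymbol{t}\in\Pi_{\boldsymbol{s},l}}\abs{\Psi^n_{\boldsymbol{s},\boldsymbol{t}}}^2 \leq \max_{\boldsymbol{s}\in\cc{D}_n}\sum_{l=0}^{aN}\abs{\Pi_{\boldsymbol{s},l}}\sup_{\boldsymbol{t}\in\Pi_{\boldsymbol{s},l}}\abs{\Psi^n_{\boldsymbol{s},\boldsymbol{t}}}^2\\
&\lesssim& \sum_{l=0}^{\infty}(l+1)^{d-1}(l+1)^{-2(d+\zeta)}\set{1+\bbM{1}_{\set{\zeta=0}}\log(l+2)}^2\\
&\asymp& \sum_{l=0}^{\infty}(l+1)^{-(d+1+2\zeta)}\set{1+\bbM{1}_{\set{\zeta=0}}\log^2(l+2)}.
\end{eqnarray*}
For $\zeta>0$ the series converges because the exponent exceeds $1$; for $\zeta=0$ it becomes $\sum_l (l+1)^{-(d+1)}\log^2(l+2)$, which still converges since $d\geq 1$ forces the exponent $d+1\geq 2$, and a $\log^2$ factor is harmless for exponents strictly above $1$. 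Hence $n^{-1}\LpNorm{\Psi^n}{2}^2$ is bounded by a constant depending only on $C,d,\zeta$, which gives $\LpNorm{\Psi^n}{2}\leq A\sqrt{n}$ and completes the proof.

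I do not anticipate a genuine obstacle here: the statement is a minor strengthening of Lemma \ref{OpNormPolDecayOffDiag}(2), and the only new ingredient is verifying that the single extra $\log$ factor (which, after squaring, appears as $\log^2$) does not spoil the convergence of the tail series — and it does not, precisely because the polynomial decay exponent $d+1+2\zeta$ is strictly larger than $1$ for every admissible $d\geq 1$ and $\zeta\geq 0$. The only mild care needed is that the logarithmic correction in the hypothesis is stated in terms of $\log(1+N\LpNorm{\boldsymbol{s}-\boldsymbol{t}}{2})$, which on $\Pi_{\boldsymbol{s},l}$ is $\asymp \log(l+2)$, so the substitution is legitimate.
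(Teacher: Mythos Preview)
Your proposal is correct and is precisely the approach the paper intends: the paper explicitly omits the proof of this lemma, stating only that it ``has a similar flavor as the second part of Lemma~\ref{OpNormPolDecayOffDiag},'' and your argument is exactly that---the shell decomposition from \eqref{1FrobNormUppBnd} with the extra logarithmic factor carried through and the observation that the resulting series $\sum_l (l+1)^{-(d+1+2\zeta)}\log^2(l+2)$ still converges.
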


\subsection{Probabilistic inequalities}

We first extend Proposition $A.3$ of \cite{keshavarz2016consistency} regarding the uniform concentration of generalized $\chi^2$ random processes around its mean. It provides a powerful tool in the proof of Theorems \ref{CnstncyLocInvFreeThm} and \ref{AsympNormLocInvFreeThm}.

\begin{prop}\label{ModalProp}
Let $\Theta_0\subset\bb{R}^b,\;\forall\;n\in\bb{N}$ be a compact space with respect to the Euclidean metric. Consider the class of $n\times n$ matrices $\set{\Pi_n\paren{\theta}}_{\theta\in\Theta_0}$ parametrized by $\theta\in\Theta_0$. Suppose that the following conditions hold
	
\begin{enumerate}[label = (\alph*),leftmargin=*]
\item The normalized Frobenius norm of $\Pi_n\paren{\theta}$ is uniformly bounded on $\Theta_0$, i.e.,
\begin{equation*}
J_{\max}\coloneqq\sup_{n}\sup_{\theta\in\Theta_0} n^{-1/2}\LpNorm{\Pi_n\paren{\theta}}{2} < \infty.
\end{equation*}
\item The mapping $\paren{\theta,\LpNorm{\cdot}{2}}\mapsto \paren{\Pi_n\paren{\theta},\OpNorm{\cdot}{2}{2} }$ is Lipschitz with constant of order $\log^2 n$. Namely, there is $C>0$ for which
\begin{equation}\label{LipscCond}
\OpNorm{\Pi_n\paren{\theta_2}-\Pi_n\paren{\theta_1}}{2}{2} \leq C\log^2 n\LpNorm{\theta_2-\theta_1}{2},\quad\forall\;\theta_1,\theta_2\in\Theta_0\;\;\suchthat\abs{\theta_2-\theta_1}\leq 1.
\end{equation}
\item 
\begin{equation*}
\lim\limits_{n\rightarrow\infty}\OpNorm{\Pi_n\paren{\theta}}{2}{2}\sqrt{\frac{\log n}{n}}= 0, \quad\forall\;\theta\in\Theta_0.
\end{equation*}
\end{enumerate}
Then, there is a finite positive constant $C'$, depending on $C$, $J_{\max}$ and $b$, such that 
\begin{equation}\label{UnifDeviationBound}
\bb{P}\paren{ \sup_{\theta\in\Theta_0}\abs{Z^\top\Pi_n\paren{\theta}Z - \tr\set{\Pi_n\paren{\theta}} } \geq C'\sqrt{n\log n} } \leq \frac{1}{n},\quad\mbox{as}\;n\rightarrow\infty.
\end{equation}
\end{prop}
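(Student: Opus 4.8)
The plan is to prove Proposition \ref{ModalProp} by a chaining/covering-number argument over $\Theta_0$, combining a pointwise Hanson--Wright-type deviation bound with the Lipschitz control (b) to transfer the pointwise bound to a uniform one. First I would recall (or reuse, since this extends Proposition $A.3$ of \cite{keshavarz2016consistency}) the pointwise tail bound for a centered generalized $\chi^2$ variable: for a fixed symmetric $\Pi_n(\theta)$,
\begin{equation*}
\bb{P}\paren{ \abs{Z^\top\Pi_n(\theta)Z - \tr\Pi_n(\theta)} \geq t } \leq 2\exp\paren{-c\paren{\frac{t^2}{\LpNorm{\Pi_n(\theta)}{2}^2}\wedge \frac{t}{\OpNorm{\Pi_n(\theta)}{2}{2}}}}.
\end{equation*}
Plugging $t = C'\sqrt{n\log n}$ and using (a) to bound $\LpNorm{\Pi_n(\theta)}{2}^2 \leq J_{\max}^2 n$ and (c) to see $\OpNorm{\Pi_n(\theta)}{2}{2}= o(\sqrt{n/\log n})$, the quadratic term $t^2/(J_{\max}^2 n)\asymp \log n$ dominates and the linear term is $\gg \log n$, so the pointwise probability is at most $2 n^{-cC'^2/J_{\max}^2}$, which we can make $\leq n^{-K}$ for any fixed $K$ by choosing $C'$ large.

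Next I would set up the discretization. Since $\Theta_0 \subset \bb{R}^b$ is compact, fix an $\varepsilon$-net $\cc{T}_\varepsilon$ of $\Theta_0$ with $\abs{\cc{T}_\varepsilon} \lesssim (\diam\Theta_0/\varepsilon)^b$; choosing $\varepsilon = n^{-\kappa}$ for a suitable large $\kappa$ gives $\abs{\cc{T}_\varepsilon} \lesssim n^{b\kappa}$, which is polynomial in $n$. A union bound over $\cc{T}_\varepsilon$ of the pointwise estimate (taking $K = b\kappa + 2$ in the previous paragraph) then yields
\begin{equation*}
\bb{P}\paren{ \max_{\theta\in\cc{T}_\varepsilon}\abs{Z^\top\Pi_n(\theta)Z - \tr\Pi_n(\theta)} \geq \tfrac{C'}{2}\sqrt{n\log n} } \leq n^{-2} \lesssim \tfrac{1}{n}.
\end{equation*}
It remains to control the oscillation between net points. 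For $\theta$ and its nearest net point $\theta'$, write $Z^\top\Pi_n(\theta)Z - Z^\top\Pi_n(\theta')Z = Z^\top(\Pi_n(\theta)-\Pi_n(\theta'))Z$, bounded in absolute value by $\OpNorm{\Pi_n(\theta)-\Pi_n(\theta')}{2}{2}\LpNorm{Z}{2}^2 \leq C\log^2 n\,\varepsilon\,\LpNorm{Z}{2}^2$ by (b); similarly the trace term is bounded via $\abs{\tr(\Pi_n(\theta)-\Pi_n(\theta'))} \leq \sqrt{n}\LpNorm{\Pi_n(\theta)-\Pi_n(\theta')}{2} \leq \sqrt{n}\,\OpNorm{\cdot}{2}{2}\sqrt{n} = n\,\OpNorm{\cdot}{2}{2} \leq Cn\log^2 n\,\varepsilon$. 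Since $\LpNorm{Z}{2}^2 \leq 2n$ with probability at least $1 - e^{-cn}$, the oscillation is $\lesssim n\log^2 n\,\varepsilon = n^{1-\kappa}\log^2 n$, which is $o(\sqrt{n\log n})$ for any $\kappa > 1/2$. Absorbing this into the remaining $\tfrac{C'}{2}\sqrt{n\log n}$ budget and taking a final union bound over the two bad events completes the argument.

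The main obstacle I anticipate is making the Lipschitz-transfer step genuinely tight: the naive bound $\abs{Z^\top\Delta\Pi Z}\leq \OpNorm{\Delta\Pi}{2}{2}\LpNorm{Z}{2}^2$ costs a factor $n$ in the operator norm, so the $\log^2 n$ Lipschitz constant in hypothesis (b) is \emph{exactly} what is needed to keep $n\log^2 n\,\varepsilon$ negligible while $\varepsilon$ stays polynomially small enough for the net cardinality to remain polynomial --- there is no slack, and one must check the exponents line up. A secondary subtlety is that (b) is only assumed for $\abs{\theta_2-\theta_1}\leq 1$, so for the net step one should take $\varepsilon < 1$ (automatic for large $n$), and one should verify $\Theta_0$ has bounded diameter (which holds since it is compact), so that the net cardinality bound $(\diam\Theta_0/\varepsilon)^b$ is indeed polynomial. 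Everything else --- the pointwise $\chi^2$ bound, the $\LpNorm{Z}{2}^2$ concentration, the union bounds --- is routine and can be cited from \cite{keshavarz2016consistency} or standard concentration inequalities.
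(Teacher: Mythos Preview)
Your proposal is correct and follows essentially the same approach as the paper: a covering of $\Theta_0$, pointwise Hanson--Wright on the net, and Lipschitz control (b) together with $\LpNorm{Z}{2}^2\leq 2n$ to bound the oscillation between a general $\theta$ and its nearest net point. The only cosmetic differences are that the paper chooses the specific mesh $r_n=1/(C\sqrt{n\log^3 n})$ and handles the quadratic and trace oscillations in one stroke via $\abs{\InnerProd{\Delta\Pi}{ZZ^\top-I_n}}\leq\OpNorm{\Delta\Pi}{2}{2}\SpNorm{ZZ^\top-I_n}{1}$, which produces exactly your combined bound $\OpNorm{\Delta\Pi}{2}{2}(\LpNorm{Z}{2}^2+n)$.
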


\begin{proof}
Let $r_n = 1/(C\sqrt{n\log^3 n})$ for $C$ defined in Eq. \eqref{LipscCond}. For large enough $n$, we have $r_n\leq 1$. Let $\cc{N}_{r_n}\paren{\Theta_0}$ represents the $r_n-$covering number of $\Theta_0$. The simple volume argument implies that 
\begin{equation}\label{CovNumberRate}
\abs{\cc{N}_{r_n}\paren{\Theta_0}} \lesssim \paren{\frac{\diam\paren{\Theta_0}}{r_n}}^b = \cc{O}\set{\paren{n\log^3 n}^{b/2}} .
\end{equation}
The key idea is to reduce the supremum over $\Theta_0$ in \eqref{UnifDeviationBound} to the discrete finite space $\cc{N}_{r_n}\paren{\Theta_0}$. Applying union bounded provides an upper bound on a probabilistic statement over $\cc{N}_{r_n}\paren{\Theta_0}$. Using the Hanson-Wright concentration inequality \cite{rudelson2013hanson} concludes the proof. 

For any $\theta\in\Theta_0$, let $\gamma_{\theta}$ stands for the closest element of $\cc{N}_{r_n}\paren{\Theta_0}$ to $\theta$. Thus, $\LpNorm{\theta-\gamma_{\theta}}{2}\leq r_n$. Observe that
\begin{align*}
&\RHS \coloneqq \abs{Z^\top\Pi_n\paren{\theta}Z - \tr\set{\Pi_n\paren{\theta}} - Z^\top\Pi_n\paren{\gamma_{\theta}}Z + \tr\set{\Pi_n\paren{\gamma_{\theta}}}} = \abs{\InnerProd{\Pi_n\paren{\theta}- \Pi_n\paren{\gamma_{\theta}} }{ZZ^\top+I_n}}\\
&\leq \OpNorm{\Pi_n\paren{\theta}- \Pi_n\paren{\beta_{\theta}}}{2}{2}\SpNorm{ZZ^\top+I_n}{1}
\RelNum{\paren{a}}{\leq} C\log^2 n\LpNorm{\theta-\beta_{\theta}}{2}\SpNorm{ZZ^\top+I_n}{1}\\
&\leq Cr_n\log^2 n\SpNorm{ZZ^\top+I_n}{1} = \sqrt{\frac{\log n}{n}}\paren{n+\LpNorm{Z}{2}^2}.
\end{align*}
Here $\paren{a}$ is implied from Eq. \eqref{LipscCond}. The Bernestein's inequality for the sub-exponential random variables states that
\begin{equation}\label{BernesteinIneq}
\bb{P}\paren{\LpNorm{Z}{2}^2\geq n + nt}\leq e^{-\frac{nt^2}{8}},\quad\forall\;t>0.
\end{equation} 
Choosing $t = 1$ in \eqref{BernesteinIneq} shows that $\RHS\geq 3\sqrt{n\log n}$ with probability at most $\exp\paren{-n/8}$. Hence,
\begin{equation*}
\bb{P}\paren{ \sup_{\theta\in\Theta_0}\abs{Z^\top\Pi_n\paren{\theta}Z - \tr\paren{\Pi_n\paren{\theta}} } \geq \sup_{\theta\in\cc{N}_{r_n}\paren{\Theta_0}}\abs{Z^\top\Pi_n\paren{\theta}Z - \tr\paren{\Pi_n\paren{\theta}} }+3\sqrt{n\log n} } \leq e^{-n/8}.
\end{equation*}
Recall $J_{\max}$ from the condition $\paren{a}$. Choose an arbitrary bounded $\xi$ such that $\xi> 1+b/2$. Eq. \eqref{CovNumberRate} can be rewritten as $\abs{\cc{N}_{r_n}\paren{\Theta_0}} n^{-\xi} = o\paren{n^{-1}}$, when $n$ tends to infinity. The proof will be terminated if we show that (for some bounded scalar $C_0$)
\begin{equation*}
\bb{P}\paren{ \sup_{\theta\in\cc{N}_{r_n}\paren{\Theta_0}}\abs{Z^\top\Pi_n\paren{\theta}Z - \tr\set{\Pi_n\paren{\theta}}} \geq  C_0J_{\max}\sqrt{n\log n}}\leq \abs{\cc{N}_{r_n}\paren{\Theta_0}} n^{-\xi} = o\paren{\frac{1}{n}},
\end{equation*}
as $n$ goes to infinity. For proving this claim, it suffices to obtain an appropriate probabilistic upper bound on $\abs{Z^\top\Pi_n\paren{\theta}Z - \tr\set{\Pi_n\paren{\theta}}}$ for any $\theta\in \cc{N}_{r_n}\paren{\Theta_0}$ and then exploiting the union bound trick. Hanson-Wright inequality \cite{rudelson2013hanson} says that for some $C_0 < \infty$ (depending on $\xi$), we have
\begin{equation}\label{HRIneq}
\bb{P}\brac{ \abs{Z^\top\Pi_n\paren{\theta}Z - \tr\set{\Pi_n\paren{\theta}}} \geq  C_0\paren{ \LpNorm{\Pi_n\paren{\theta}}{2}\sqrt{\log n} \vee \OpNorm{\Pi_n\paren{\theta}}{2}{2}\log n}}\leq n^{-\xi}.
\end{equation}
The condition $\paren{c}$ means that, $\OpNorm{\Pi_n\paren{\theta}}{2}{2}\log n = o\paren{\sqrt{n\log n}}$ as $n$ tends to infinity. So
\begin{eqnarray*}
\paren{ \LpNorm{\Pi_n\paren{\theta}}{2}\sqrt{\log n} \vee \OpNorm{\Pi_n\paren{\theta}}{2}{2}\log n} &=& \paren{ \LpNorm{\Pi_n\paren{\theta}}{2}\sqrt{\log n} \vee o\paren{\sqrt{n\log n}}}\\
&\leq& J_{\max}\sqrt{n\log n},\quad\;\mbox{as}\;n\rightarrow\infty,
\end{eqnarray*}
due to the condition $\paren{a}$. Thus Eq. \eqref{HRIneq} can be rewritten as 
\begin{equation*}
\bb{P}\paren{ \abs{Z^\top\Pi_n\paren{\theta}Z - \tr\set{\Pi_n\paren{\theta}}} \geq  C_0J_{\max}\sqrt{n\log n}}\leq n^{-\xi},\quad\forall\;\theta\in\cc{N}_{r_n}\paren{\Theta_0},
\end{equation*}
ending the proof of the claim.
\end{proof}

Next we rigorously state the squeeze theorem for weak convergence. It is beneficial in the proof of Theorem \ref{AsympNormLocInvFreeThm}.
		
\begin{lem}\label{SqueezeThm}
Let $\set{X_n}^{\infty}_{n=1},\set{Y_n}^{\infty}_{n=1}$ be two real valued sequences converging to $U$ in distribution. Suppose that $\set{Z_n}^{\infty}_{n=1}$ satisfies the following inequality
\begin{equation}\label{SqueezeIneq}
X'_n\coloneqq X_n\paren{1-p_n} \leq Z_n \leq Y'_n\coloneqq Y_n\paren{1+q_n}, \quad \forall\;n\in\bb{N},
\end{equation}
in which $p_n,q_n\cp{\bb{P}} 0$. Then $Z_n\cp{d} U$.
\end{lem}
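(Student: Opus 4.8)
The plan is to first upgrade the two envelope sequences in \eqref{SqueezeIneq} to genuine weak limits via Slutsky's theorem, and then run a sandwiching argument at the level of cumulative distribution functions.

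\textbf{Step 1 (Slutsky on the envelopes).} First I would observe that since $X_n\cp{d} U$, the sequence $\set{X_n}$ is tight, hence $X_n = O_{\bb{P}}(1)$; combining this with $p_n\cp{\bb{P}}0$ gives $X_np_n\cp{\bb{P}}0$. I prefer to pass through the additive decomposition $X'_n = X_n - X_np_n$ rather than argue multiplicatively, since $1-p_n$ need not be positive. Slutsky's theorem then yields $X'_n\cp{d} U$, and the identical argument applied to $Y_n\cp{d}U$ and $q_n\cp{\bb{P}}0$ gives $Y'_n\cp{d}U$.

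\textbf{Step 2 (CDF sandwich).} Let $F$ denote the distribution function of $U$ and fix a continuity point $t$ of $F$. From \eqref{SqueezeIneq} we have the set inclusions $\set{Y'_n\leq t}\subseteq\set{Z_n\leq t}\subseteq\set{X'_n\leq t}$, hence
\begin{equation*}
\bb{P}\paren{Y'_n\leq t}\leq \bb{P}\paren{Z_n\leq t}\leq \bb{P}\paren{X'_n\leq t},\quad\forall\;n\in\bb{N}.
\end{equation*}
By Step 1 together with the portmanteau theorem, both $\bb{P}(Y'_n\leq t)$ and $\bb{P}(X'_n\leq t)$ converge to $F(t)$ as $n\to\infty$, so the ordinary squeeze theorem for real sequences forces $\bb{P}(Z_n\leq t)\to F(t)$. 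Since $t$ was an arbitrary continuity point of $F$ and the set of such points is dense, this is precisely the definition of $Z_n\cp{d}U$.

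I do not anticipate any serious obstacle: the only step that warrants a moment's care is the one flagged above, namely that $X_n(1-p_n)$ is neither monotone in $p_n$ nor sign-definite, so one should invoke tightness of $\set{X_n}$ and the decomposition $X_n-X_np_n$ rather than a direct multiplicative Slutsky argument; everything else is a textbook application of Slutsky's lemma and the portmanteau characterization of weak convergence.
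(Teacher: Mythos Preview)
Your proof is correct and follows essentially the same route as the paper's: apply Slutsky's theorem to conclude $X'_n\cp{d}U$ and $Y'_n\cp{d}U$, then sandwich $\bb{P}(Z_n\leq t)$ (the paper uses $\bb{P}(Z_n\geq t)$, which is equivalent) at continuity points of the limiting distribution. Your extra justification of the Slutsky step via tightness and the additive decomposition $X_n - X_np_n$ is a harmless refinement, though the direct multiplicative form of Slutsky (with $1-p_n\cp{\bb{P}}1$) already suffices and is what the paper invokes.
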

			
\begin{proof}
Let $t\in\bb{R}$ be a continuity point of $U$. It suffices to show that $\bb{P}\paren{Z_n\geq t}\rightarrow\bb{P}\paren{U\geq t}$ as $n$ tends to infinity. Eq. \eqref{SqueezeIneq} obviously means that
\begin{equation*}
\bb{P}\paren{X'_n\geq t} \leq \bb{P}\paren{Z_n\geq t}\leq \bb{P}\paren{Y'_n\geq t},\quad \forall\;n\in\bb{N}.
\end{equation*}
Both $X'_n$ and $Y'_n$ weakly converge to $U$ by \emph{Slutsky's theorem}. Hence, $\bb{P}\paren{Y'_n\geq t}\rightarrow\bb{P}\paren{U\geq t}$ and $\bb{P}\paren{X'_n\geq t}\rightarrow\bb{P}\paren{U\geq t}$ as $n\rightarrow\infty$. Namely, both upper and lower bounds on $\bb{P}\paren{Z_n\geq t}$ converge to the same limit. Thus, $\lim\limits_{n\rightarrow\infty}\bb{P}\paren{Z_n\geq t}\rightarrow\bb{P}\paren{U\geq t}$ as a result of the usual \emph{squeeze theorem}.
\end{proof}

\bibliographystyle{abbrv}
\bibliography{Ref}

\end{document}